\documentclass[11pt,a4paper]{article} 
\usepackage{amsfonts}
\usepackage{amsmath,amssymb,array,graphicx}
\usepackage{mathrsfs} 
\usepackage{bbm}
\usepackage[numbers,sort]{natbib}
\usepackage{enumerate}
\usepackage[shortlabels]{enumitem}
\usepackage{graphicx}
\usepackage[dvipsnames]{xcolor}
\usepackage{dsfont}
\usepackage{float} 
\usepackage[hyphens]{url}
\usepackage{epsfig}
\usepackage[utf8]{inputenc}
\usepackage{epstopdf}
\usepackage{url}
\usepackage[normalem]{ulem}
\usepackage{multirow}
\usepackage{subfigure}
\usepackage{leftidx} 
\usepackage{setspace}
\usepackage{bbm} 
\usepackage{tikz}
\usetikzlibrary{trees, positioning}
\usepackage{amsthm}
\usepackage{color} 
\usepackage{multirow,cases}
\usepackage{booktabs}
\usepackage[colorlinks,linkcolor=blue,anchorcolor=blue,citecolor=blue]{hyperref}
\usetikzlibrary{shapes,arrows,positioning,fit,calc}
\usepackage[titletoc,toc,title]{appendix}
\usepackage{verbatim}
\usepackage{tensor}
\usepackage{hyperref}
\usepackage{cleveref}
\usepackage{empheq} 
\usepackage{mathtools} 
\usepackage{authblk}

\theoremstyle{plain}

\newtheorem{definition}{Definition}[section]
\newtheorem{theorem}{Theorem}[section]

\newtheorem{lemma}{Lemma}[section]
\newtheorem{remark}{Remark}[section]
\newtheorem{claim}{Claim}[section]
\newtheorem{assumption}{Assumption}[section]
\newtheorem{problem}{Problem}[section]
\newtheorem{proposition}{Proposition}[section]

\usepackage[a4paper,top=2.7cm,bottom=2.8cm,left=2cm,right=2cm,marginparwidth=1.75cm]{geometry}

\newcommand{\beq}{\begin{equation}}
	\newcommand{\eeq}{\end{equation}}
\newcommand{\bey}{\begin{eqnarray}}
	\newcommand{\eey}{\end{eqnarray}}

\setcounter{footnote}{1}
\setstretch{1.05}

\def\p{{\partial}}

\def\s{{\sigma}}

\def\bA{\mathbf{A}}
\def\bB{\mathbf{B}}
\def\bI{\mathbf{I}}
\def\bJ{\mathbf{J}}

\newcommand{\bigcdot}{\mathbin{\vcenter{\hbox{\scalebox{1.3}{$\cdot$}}}}}

\usepackage{scalerel,stackengine}
\newcommand\pig[1]{\scalerel*[5.5pt]{\big#1}{ 
		\ensurestackMath{\addstackgap[1.15pt]{\big#1}}}}

\newcommand\pigr[1]{\mathclose{\pig{#1}}}

\numberwithin{equation}{section}

\allowdisplaybreaks
\title{Optimal Matching Strategies in Two-sided Markets: A Mean Field Approach}

\author[,1]{\normalsize Erhan Bayraktar\footnote{E-mail: erhan@umich.edu}}
\author[,2]{\normalsize Dantong Chu\footnote{E-mail: dtchu@link.cuhk.edu.hk}}
\author[,3]{\normalsize Bohan Li\footnote{E-mail: bhli@suda.edu.cn}}
\author[,4]{\normalsize Ho Man Tai\footnote{E-mail: homan.tai@sydney.edu.au}}

\affil[1]{\small\it Department of Mathematics, University of Michigan, USA}
\affil[2]{\small\it Department of Systems Engineering and Engineering Management, The Chinese University of Hong Kong, Hong Kong, China}
\affil[3]{\small\it Center for Financial Engineering, Soochow University, Suzhou, China}
\affil[4]{\small\it School of Mathematics and Statistics, The University of Sydney, Sydney, Australia}

\begin{document}
	\maketitle
	\begin{abstract}
	This paper develops a mean field game framework for dynamic two-sided matching markets, extending existing matching theory by integrating micro-macro dynamics in two-sided environments. Unlike traditional matching models focusing on static equilibrium or unilateral optimization, our framework simultaneously captures dynamic interactions and strategic behaviors of both market sides, as well as the equilibrium. We model two types of agents who meet each other via Poisson processes and make simultaneous matching decisions to maximize their respective objective functionals, and find the corresponding equilibrium. Our approach formulates the equilibrium as a fully coupled Hamilton-Jacobi-Bellman and Fokker-Planck system with nonlocal structure coupling two distinct populations. The mathematical analysis addresses significant challenges from the dual-layered coupling structure and nonlocal structure. We also provide insights into individual behaviors shaping aggregate patterns in labor markets through numerical experiments.
	\end{abstract}
	
	\textbf{Keywords:} matching models, searching, labor markets, mean field games, HJB-FP systems, mean field Nash equilibrium, integro-partial differential equations
	\date{}
	
	\textbf{Mathematics Subject Classification (2020):}
35Q89, 
45K05, 
49N80, 
91B39, 
91B68  

	\tableofcontents
	
		\section{Introduction}

		In many economic and social systems, outcomes depend critically on how agents from different groups are paired, whether workers with firms, students with schools, buyers with sellers, or partners in personal relationships. A matching model provides a systematic way to analyze these pairing processes, predict resulting allocations, and evaluate their efficiency or stability.  
		
		\subsection{Literature Review}
		Matching models, particularly the two-sided matching model, originate from the seminal work of Gale--Shapley \cite{gale1962college}, where agents rank members of the opposite group by preference. They proved that a  stable matching, with no pair of agents preferring each other over their assigned partners, always exists in contexts like college admissions and marriage markets. Their deferred acceptance algorithm provides a mechanism to achieve such stable outcomes. Roth--Sotomayor \cite{kamecke1992two} later expanded this framework through game-theoretic modeling and analysis, deepening the theoretical foundation of matching markets.
		
		Research in matching theory has evolved along several distinct trajectories. One prominent strand characterizes equilibrium outcomes in matching markets without explicitly modeling the underlying individual optimization problems \cite{gale1962college,chiappori2020theory}. Another direction examines optimal behavior for agents on one market side while treating the other as exogenous \cite{perthame2018career,li2025repositioning}. Besides, theoretical advances have established foundational insights into the existence and properties of random matching processes \cite{duffie2007existence,duffie2025continuous,duffie2018dynamic,menzel2015large}. In parallel, substantial progress has been demonstrated through extensive applications across diverse domains—including labor economics \cite{mccall1970economics,diamond1982aggregate,perthame2018career}, marriage markets \cite{chiappori2020theory,gale1962college,mortensen1988matching}, ride-hailing systems \cite{alonso2017demand,li2025repositioning}, healthcare \cite{agarwal2015empirical} {\color{black} and matching platform designs \cite{KanoriaSaban2021,immorlica2023designing}.}  Recently, novel methodologies such as optimal transport and mean field game models have been employed to study matching \cite{li2019learning,echenique2024stable,bayraktar2025equilibrium,bayraktar2019rate,paty2022algorithms}. However, these approaches typically focus either on static matching or unilateral dynamic behaviors, seldom simultaneously capturing the dynamic interactions in two-sided markets and the aggregate impact of micro-level decisions on macroscopic equilibrium.

		\subsection{Our Model and Main Contributions}\label{sec. Our Model and Main Contributions}
		
		We advance the existing matching literature by bridging micro-level individual behaviors with macro-level equilibrium outcomes in dynamic, two-sided matching markets, adopting the mean field game framework pioneered by Lasry–Lions \cite{lasry2007mean} and Huang–Malhamé–Caines \cite{huang2006large}.

		{\color{black}We study a dynamic two-sided matching market over a finite horizon $[0,T]$ with two continuum populations, labeled by $\bA$ and $\bB$. Each agent has a fixed quality level and is initially unmatched. Agents meet potential partners from the opposite side according to independent Poisson processes. A match is formed only when both agents are unmatched at the time of the meeting, and they mutually accept each other according to their quality-dependent acceptance criteria. Once matched, agents no longer participate in matching. An agent chooses the acceptance criteria to maximize the payoff consisting of a flow payoff while unmatched, a lump sum equal to the matched partner’s quality if matching occurs before $T$, and a terminal payoff if no match is formed by $T$. Our objective is to characterize a mean field Nash equilibrium in this setting. A detailed formulation is deferred to \Cref{sec Problem Settings}.} Therefore, this problem can be formulated in a way similar to graphon games (or graphon mean field games) in the literature \cite{lacker2023label,8619367,AurellCarmonaLauriere2022,BayraktarWuZhang2023}, with a block graphon on labels, $G(\alpha,\beta):=\mathbf{1}_{\{\alpha \in \bA\}}\mathbf{1}_{\{\beta \in \bB\}}
		+\mathbf{1}_{\{\alpha \in \bB\}}\mathbf{1}_{\{\beta \in \bA\}}$. However, our model departs slightly from standard graphon games because each agent’s decision depends on the acceptance thresholds of the opposite population, so the interaction enters through the joint law of states and controls rather than solely through a state distribution. Further details are given in \Cref{rmk:extended_graphon}.
		
		For instance, in labor markets, type-$\bA$ agents represent job seekers, and type-$\bB$ agents represent hiring managers of firms. See Section~\ref{sec. applications} for more examples and further discussion. To conclude, in our model, agents search for and meet partners, and then evaluate whether to proceed with matching. These two fundamental components, the meeting mechanism and the matching mechanism, will be rigorously formulated in Section \ref{sec. Dynamic Matching Problem with Finite Agents}.

		Our contributions are threefold:
		\begin{enumerate}[(1).]
			\item We advance matching theory by integrating micro-level individual decision-making with macro-level equilibrium outcomes in dynamic, two-sided markets. Traditional models often focus on static stable matchings characterized by aggregate properties, as pioneered in the foundational work \cite{gale1962college} and further developed in labor market analyses by \cite{mortensen1988matching}. While these classical approaches provide powerful equilibrium insights, they typically treat equilibrium outcomes without explicitly modeling the underlying individual optimization processes or temporal dynamics.
			
			Recent contributions \cite{duffie2018dynamic,duffie2025continuous} have examined dynamic matching. Yet, these works, along with others, for example, models of job search with fixed wage distributions \cite{mccall1970economics} or ride-hailing platforms \cite{li2025repositioning}, examine individual agent behavior on one side of the market while treating the opposing side as exogenous. Such unilateral models fail to capture the bidirectional strategic interactions and feedback mechanisms essential in many real-world two-sided markets. {\color{black}Recent works \cite{KanoriaSaban2021,immorlica2023designing} bridges micro-level decisions with macro-level outcomes in two-sided matching markets. However, they focus on stationary equilibria and platform design or policy interventions; by contrast, our framework analyzes dynamic equilibrium, and a detailed comparison is provided later.
			}

			Our framework contributes to the literature by employing mean field game theory to model two interacting, heterogeneous populations whose matching strategies and distributions evolve dynamically. We explicitly represent how agents’ acceptance criteria, modeled as control variables, determine matching outcomes, linking individual acceptance threshold decisions to the evolution of unmatched agent distributions over time. This micro-to-macro linkage allows us to analyze how heterogeneous agent preferences, strategic behavior, and stochastic meeting processes combine to produce emergent equilibrium patterns, thus offering a richer and more realistic understanding of dynamic matching phenomena than previous models.
			
			\item We develop novel mathematical tools for analyzing multi-group mean field games, ensuring the well-posedness of our model. Although our problem can be formulated as a graphon game, it is nonstandard relative to the literature, which prevents us from applying the results or approaches there to our case.  See \Cref{rmk:extended_graphon} for details. The main technical result of our paper is the global well-posedness of the equilibrium of the mean field matching model. This equilibrium can be obtained by the fully coupled Hamilton-Jacobi-Bellman and Fokker-Planck (HJB-FP) system \eqref{HJBx}-\eqref{boundary}, which only involves time derivatives and features a nonlocal structure in the state variable. Beyond addressing the forward–backward coupling of the system, we must further decouple the interactions arising from the interplay between the two agent populations. This dual-layered coupling significantly increases the complexity of the system and necessitates a careful decoupling strategy on both structural and population levels. {\color{black}Moreover, as we consider an unbounded domain for the quality space, compactness arguments in traditional Lebesgue spaces are no longer available. This necessitates the use of techniques from the Wasserstein space to establish the well-posedness, adding another layer of technical complexity beyond the coupled forward–backward structure.}
			
			To prove existence via a fixed-point argument, we derive novel estimates tailored to the system’s nonlocal nature, by treating distinct cases separately. Moreover, we must identify a suitable function space in which to apply Schauder’s fixed point theorem. However, due to the presence of only first-order time derivatives, establishing the required compactness is challenging. 
			To overcome this, we exploit the fact that the solution of the FP equation is a defective probability measure and construct another probability measure from it. This allows us to work within a subset of time-dependent probability measures that are equicontinuous in time, for which compactness can be ensured. A detailed outline of the existence proof is provided in Appendix \ref{sec. Main result and Sketch of Proof}. It is worth noting that, although the system exhibits a forward–backward structure, the existence of solutions does not rely on any monotonicity condition (for example, Lasry--Lions monotonicity or displacement monotonicity), owing to the fact that the controlled processes are purely driven by Poisson processes. Nevertheless, uniqueness still requires certain structural conditions on the coefficient functions, see \Cref{thm unique}.

			\item We offer valuable insights into how individual behaviors shape aggregate matching patterns, particularly in labor markets. The numerical results reveal how job seekers and firms adjust their acceptance thresholds over time. 
			Most job seekers become more willing to reduce their acceptance threshold to accept lower-quality partners as opportunities dwindle. High-quality agents see sharper declines in their thresholds, while low-ability seekers’ values remain stable due to limited prospects. On the contrary, the acceptance thresholds of hiring managers demonstrate a decreasing trend over time in low to mid-level firms, while remaining nearly unchanged for high-level firms. Matching success is highest for mid-quality job seekers, with delays at both ends due to limited appeal or high selectiveness. Jobs offered by low-tier firms are filled quickly, but high-tier ones take longer to fill. Partner quality distributions overlap across adjacent groups, reflecting market imperfections and allowing some agents upward mobility through better-than-expected matches. {\color{black}Moreover, we find that the loss from remaining unmatched relative to matching generally increases with quality, except for very low-quality job seekers and for very high-quality firms.}
		\end{enumerate}
		
		It is worth noting the differences between this current study and the interesting work \cite{duffie2025continuous} (see the discrete-time version in \cite{duffie2007existence} and the directed and enduring matching extensions in \cite{duffie2018dynamic}) which developed an uncontrolled continuous-time random-matching model with the primitives (parameters) being fixed: initial distribution, spontaneous (non-matching) mutation intensity, matching intensity and type-change distribution induced by matching. The matching there is an instantaneous random pairing with fixed exogenous intensities, there is no accept/reject decision and no persistent pair formation, unlike our mutual-acceptance model in which agents decide to accept or reject the matching to maximize individual value. The main result of \cite{duffie2025continuous} is a rigorous construction and analysis of a continuous-time, independent random-matching system, comprising a random type process and a random matching process. In contrast to our strategic mean field games setting, \cite{duffie2025continuous} does not study equilibrium arising from individual optimization.

		{\color{black}
			Another closely related set of papers includes \cite{KanoriaSaban2021,immorlica2023designing}, which study continuous-time two-sided matching markets from the perspective of matching platform design. This class of papers differs from ours in two main respects. First, in terms of setting and objective, they focus on how platforms can improve efficiency in two-sided matching markets. The work \cite{KanoriaSaban2021} considers a market that has two quality levels on one side and a homogeneous population on the other, with additional heterogeneity arising from hidden pair-specific match values. Agents receive Poisson search opportunities and decide whether to request a candidate, whether to pay a screening cost to learn a hidden pair-specific match value, and whether to send a proposal. Only after a proposal is sent does the recipient decide whether to screen and then accept or reject, so matching follows a proposal-response procedure rather than direct mutual acceptance. As a result, agents of the same quality level on the same side use the same acceptance threshold both for screened candidates and for incoming proposals. On this basis, they characterize explicit equilibrium regimes and derive welfare-improving platform interventions. The work \cite{immorlica2023designing} studies the platform-directed search in matching platforms and shows that simple meeting designs can achieve a constant-factor approximation to optimal welfare despite incentive constraints and NP-hardness. In contrast, we consider a market of two continua of agents with heterogeneous quality levels on both sides, where matching occurs via direct mutual acceptance with no proposal--response procedure. We focus on the optimal behaviors of agents of different quality levels and on how these optimal behaviors shape the evolution of the population distributions. Second, mathematically, these papers characterize stationary equilibria in which the distribution is invariant and optimal strategies are independent of time, whereas we study the corresponding dynamic equilibrium in which the population distribution evolves and optimal strategies vary across time; our equilibrium is therefore characterized by a system of fully coupled partial integro-differential equations.}

		\paragraph{Organization of the paper:} The rest of the paper is organized as follows. In Section~\ref{sec Problem Settings}, we formulate the matching model, along with the associated control problems and the mean field Nash game. Section~\ref{sec. main results} presents the main theoretical results, including the global well-posedness of the fully coupled HJB–FP system and the verification theorem. In Section~\ref{sec. Numerical Results of the Matching Model}, we perform numerical experiments and offer economic interpretations of the results. Finally, we conclude the article in \Cref{sec con}. The proofs of the assertions in Section~\ref{sec. main results} are provided in Appendices.

		\section{Problem Settings} \label{sec Problem Settings}
		This section presents a rigorous mathematical formulation of the matching model. {\color{black} Some notations are necessarily technical, so we begin with a brief overview to facilitate reading. We study a matching market over a finite horizon $[0,T]$ with two continuum populations, denoted by $\bA$ and $\bB$. Each agent has a fixed quality level and a time-varying matching status, indicating whether the agent is currently unmatched or already matched. Agents with the same type, quality level, and matching status are homogeneous and therefore treated as indistinguishable. All agents are initially unmatched. The model proceeds in two stages: first, meetings occur; then, matching decisions are made. Let $\bI, \bJ\in \{\bA,\bB\}$ and $\bI \neq \bJ$.
			
			\paragraph{Meeting Mechanism.} Each unmatched type-$\bI$ agent meets potential partners from the opposite side according to a Poisson jump, a standard assumption in searching and matching models \cite{diamond1982aggregate,lucas2014knowledge}. At each meeting, the encountered type-$\bJ$ agent’s quality level and matching status are drawn randomly from the current distribution of type-$\bJ$ agents in the market. If that agent has already matched, then the meeting fails, and the type-$\bI$ agent waits for the next meeting.

			\paragraph{Matching Mechanism.} When a meeting occurs at time $t\in[0,T]$ and both agents are unmatched, they simultaneously decide whether to accept each other. More precisely, a type-$\bI$ agent with quality $x$ chooses an acceptance threshold $u_{\bI}(x,t)$. The type-$\bI$ agent accepts the encountered type-$\bJ$ agent with quality $y$ if and only if $
			y \ge u_{\bI}(x,t)$, and similarly for the type-$\bJ$ agent. Thus, a higher value of $u_{\bI}(x,t)$ indicates more selective behavior. A match is formed only under mutual acceptance: a type-$\bI$ agent of quality $x$ and a type-$\bJ$ agent of quality $y$ match at time $t$ if they meet and if $y \ge u_{\bI}(x,t)$ and $x \ge u_{\bJ}(y,t)$. We assume a first-come, first-served (FCFS) matching rule: as soon as an unmatched agent meets the first mutually acceptable partner, they match immediately. Matches are one-to-one and binding, so matched agents are no longer available for another match, although they may still be encountered by others.
			
			\smallskip
			An unmatched type-$\bI$ agent of quality $x$ chooses an acceptance threshold to maximize the expected discounted payoff which consists of three components: a running payoff $r_{\bI} (x,\bigcdot)$ while searching; a lump sum payoff equal to the matched partner's quality upon matching before the terminal time $T$; and a terminal payoff $h_{\bI}(x)$, interpreted as an outside option, if no match is formed.

			The article aims to characterize the equilibrium of the resulting mean field Nash game. Each agent takes as given the distribution and the acceptance threshold of agents on the opposite side, then chooses a best response (acceptance threshold). In equilibrium, individual best responses and the distribution of agents must be consistent with the aggregate outcomes they jointly generate. There is no direct strategic interaction among agents of the same type;  strategic interaction arises only between agents of opposite types.

			In Section \ref{sec. Probabilistic Setting and Meeting Mechanism}, we construct the probability spaces, filtrations, and Poisson random measures that capture the initial quality draw of a representative agent and the random meeting process, given any distribution flow of the opposite type. Section \ref{sec. Dynamic Matching Problem with Finite Agents} specifies the matching mechanism and formulates the agent’s optimization problem. For a representative agent, meetings are governed by a Poisson process; at each meeting time, the encountered agent’s quality and matching status are sampled from the given distribution flow of the opposite type. Taking as given the acceptance threshold policy of agents on the opposite side, the representative agent chooses an optimal threshold policy. In Section \ref{sec. Mean Field Nash Equilibrium}, we characterize how the population distribution of each type is induced by these individual optimal decisions. The mean field Nash equilibrium is defined as a situation where, starting from a pair of threshold policies and a pair of distribution flows on the two sides, the optimal best responses and the resulting population distributions lead back to the same pair of policies and the same pair of flows. At equilibrium, the consistency condition ensures that the distribution flow used for sampling coincides with the actual distribution generated by the optimal policies, and the assumed threshold policies on the opposite side coincide with the optimal ones.  Section \ref{sec. discussions} compares our setting with two related formulations: the graphon game approach and the finite‑player matching game. Finally, Section \ref{sec. applications} presents two applications illustrating the model's tractability and economic insights.

			\subsection{Probabilistic Setting}\label{sec. Probabilistic Setting and Meeting Mechanism}
			
			We now present a rigorous mathematical formulation of the model. Consider a matching market over the finite time horizon $[0,T]$, consisting of two distinct continuum populations of agents, denoted by type-$\bA$ and type-$\bB$. Each agent is identified by three attributes: type, quality level, and matching status. We begin by constructing the underlying probability space.

			For each type $\bI \in \{\bA,\bB\}$, let $X_{\bI} :=[0,\infty):= \mathbb{R}_{\geq 0}$ denote the set of values of type-$\bI$ agents' quality levels, and let $\mu_{\bI,0}$ be a fixed time independent probability measure on $X_{\bI}$. There exists a canonical probability space $(\Omega^{\bI,0},\mathcal F^{\bI,0},\mathbb P^{\mu_{\bI,0}})$ supporting an $X_{\bI}$-valued random variable $\mathbf{ql}_{\bI}$ with distribution $\mu_{\bI,0}$. This random variable represents the quality level of a representative type-$\bI$ agent; 
			it is drawn at the initial time and remains constant over time. The matching status of a type-$\bI$ agent, denoted by $\mathbf{st}_{\bI,\bigcdot}$, takes values in $\{0,1\}$, where $0$ indicates “unmatched” and $1$ indicates “matched”. All agents are initially unmatched. Accordingly, the state space of a type-$\bI$ agent is $X_{\bI} \times \{0,1\}$, equipped with its product Borel $\sigma$-algebra $\mathcal{B}(X_{\bI} \times \{0,1\})$.

			Let $\bJ \in \{\bA,\bB\} \setminus \{\bI\}$ denote the opposite type. Define $\mathscr{S}_\bI$ the set of flows of probability distribution 
			\begingroup 
			\fontsize{10.2pt}{10pt}\begin{align*}
				\hspace{-10pt}\mathscr{S}_{\bI}
				:=
				\left\{
				\psi_{\bI,\bigcdot}:\mathcal{B}(X_{\bI}\times\{0,1\})\times[0,T]\to[0,1]:
				\begin{aligned}
					&\textup{for any $B\in \mathcal{B}(X_{\bI}\times\{0,1\}),\ \psi_{\bI,\bigcdot}(B):[0,T]\to[0,1]$ is} \\
					&\textup{Borel measurable, $\int_{X_{\bI}} x\,\psi_{\bI,t}(dx,0)<\infty$ for $t\in[0,T]$} 
				\end{aligned}
				\right\}.\hspace{-10pt}
			\end{align*}\normalsize\endgroup 
			Given any $\psi_{\bJ,\bigcdot} \in \mathscr{S}_\bJ$, representing the state distribution of type-$\bJ$ agents, there exists a canonical probability space $(\Omega^{\bJ,1}, \mathcal{F}^{\bJ,1}, \mathbb{P}^{\psi_{\bJ}})$ accommodating a Poisson random measure $N^{\psi_{\bJ}}$ on $X_{\bJ} \times \{0,1\} \times [0,T]$ with compensator $\lambda_{\bJ} \, \psi_{\bJ,t}(dy,dk) \, dt$, where $\lambda_{\bJ}>0$. This probability space describes the environment faced by a representative type-$\bI$ agent. The random measure $N^{\psi_{\bJ}}$ captures the random arrival and random sampling of potential partners from type $\bJ$; a similar setting can be found in \cite{nutz2018mean}.    \begin{remark}
				The probability space $(\Omega^{\bI,0},\mathcal F^{\bI,0},\mathbb P^{\mu_{\bI,0}})$ can be constructed by choosing $\Omega^{\bI,0} = X_\bI$, $\mathcal F^{\bI,0} = \mathcal{B}(X_\bI)$, the Borel $\sigma$-field of $ X_\bI$, and $\mathbb P^{\mu_{\bI,0}} = \mu_{\bI,0}$. The random variable $\mathbf{ql}_\bI$ is the coordinate mapping $\mathbf{ql}_\bI(\omega^{\bI,0}) = \omega^{\bI,0}$. For $(\Omega^{\bJ,1}, \mathcal{F}^{\bJ,1}, \mathbb{P}^{\psi_{\bJ}})$, we first take $\Omega^{\bJ,1}$ to be the space of integer-valued Radon measures on $X_\bJ \times \{0,1\} \times [0,T]$, and $\mathcal{F}^{\bJ,1}$ the Borel $\sigma$-field of $\Omega^{\bJ,1}$; note that this measurable space $(\Omega^{\bJ,1}, \mathcal{F}^{\bJ,1})$ is independent of $\psi_{\bJ,\bigcdot}$. Then we let $N^{\psi_{\bJ}}$ be the coordinate mapping on $\Omega^{\bJ,1}$, and define a family of finite dimensional distributions by using compensator $\lambda_{\bJ} \, \psi_{\bJ,t}(dy,dk) \, dt$. By applying the Kolmogorov extension theorem \cite[Section 3]{resnick1987extreme} and \cite[Section 9.2]{daley2008introduction}, there exists a probability measure $\mathbb{P}^{\psi_{\bJ}}$ such that $N^{\psi_{\bJ}}$ is a Poisson random measure with compensator $\lambda_{\bJ} \, \psi_{\bJ,t}(dy,dk) \, dt$ under $\mathbb{P}^{\psi_{\bJ}}$.
			\end{remark}

			\paragraph{Probability space:} We define the product probability space $
			(\Omega,\mathcal F^{\psi_{\bA},\psi_{\bB}} ,\mathbb P^{\psi_{\bA},\psi_{\bB}} )$ as the completion of the product space of $(\Omega^{\bA,0},\mathcal F^{\bA,0},\mathbb P^{\mu_{\bA,0}})$, $(\Omega^{\bB,0},\mathcal F^{\bB,0},\mathbb P^{\mu_{\bB,0}})$, $(\Omega^{\bA,1}, \mathcal{F}^{\bA,1}, \mathbb{P}^{\psi_{\bA}})$ and $(\Omega^{\bB,1}, \mathcal{F}^{\bB,1}, \mathbb{P}^{\psi_{\bB}})$. We equip the product space with a filtration $\mathbb{F}^{\psi_{\bA},\psi_{\bB}} := (\mathcal{F}^{\psi_{\bA},\psi_{\bB}}_t)_{0\leq t\leq T}$ with $\mathcal{F}^{\psi_{\bA},\psi_{\bB}}_t = \mathcal{F}_{\bA,t}^{\psi_{\bB}} \vee \mathcal{F}_{\bB,t}^{\psi_{\bA}}$, and
			\begin{equation*}
				\mathcal{F}_{\bI,t}^{\psi_{\bJ}} := \sigma\pig( \mathbf{ql}_{\bI} \pig)\vee\sigma\pig( N^{\psi_{\bJ}}(B \times [0,s]) : B \in \mathcal{B}(X_{\bJ} \times \{0,1\}), 0 \le s \le t \pig) \vee \mathcal{N}
			\end{equation*}
			where $\mathcal{N}$ denotes the collection of all $\mathbb P^{\psi_{\bA},\psi_{\bB}}$-null sets in $\mathcal{F}^{\psi_{\bA},\psi_{\bB}}$. This space serves as the underlying probability space for the matching model. Writing $\omega=(\omega^{\bA,0},\omega^{\bB,0},\omega^{\bA,1},\omega^{\bB,1})$ where $\omega^{\bI,0} \in \Omega^{\bI,0}$ and $\omega^{\bI,1} \in \Omega^{\bI,1}$, we regard $\mathbf{ql}_{\bI}(\omega)=\mathbf{ql}_{\bI}(\omega^{\bI,0})$ and $N^{\psi_{\bI}}(\omega)=N^{\psi_{\bI}}(\omega^{\bI,1})$.

			\paragraph{Sequential representation of meetings:}  The Poisson random measure $N^{\psi_{\bJ}}$ can be characterized by the sequence $\{(\tau_{\bJ,k}, Z_{\bJ,\tau_{\bJ,k}})\}$, where:
			\begin{itemize}
				\item $\tau_{\bJ,k}$ takes values in $[0,T]$ and represents the arrival time of the $k$-th meeting event before time $T$. The sequence $0 < \tau_{\bJ,1} < \tau_{\bJ,2} < \cdots$ corresponds to the arrival times of the counting process. We let $\mathcal{T}^{\bJ}:=\{\tau_{\mathbf{J},k}\}$ denote the set of these arrival times.
				\item $Z_{\bJ,\tau_{\bJ,k}} = (\mathbf{QL}_{\bJ,\tau_{\bJ,k}}, \mathbf{ST}_{\bJ,\tau_{\bJ,k}})$ takes values in $X_{\bJ} \times \{0,1\}$ and represents the sample associated with the $k$-th meeting. Here $\mathbf{QL}_{\bJ,\tau_{\bJ,k}}$ is the quality level, and $\mathbf{ST}_{\bJ,\tau_{\bJ,k}}$ is the matching status of the $k$-th encountered type-$\bJ$ agent. The conditional law of $Z_{\bJ,\tau_{\bJ,k}} = (\mathbf{QL}_{\bJ,\tau_{\bJ,k}}, \mathbf{ST}_{\bJ,\tau_{\bJ,k}})$ given the realized arrival time $\tau_{\bJ,k}$ is $\psi_{\bJ,\tau_{\bJ,k}}$. 
			\end{itemize}

			This sequence, together with the agent’s own quality level $\mathbf{ql}_\bI$ drawn from the initial distribution, constitutes the sources of randomness for the representative type-$\bI$ agent's decision-making.\smallskip

			\begin{remark}[\bf Notation: representative agent vs.\ encountered partner]
				For a fixed representative type-$\bI$ agent, we distinguish between the agent's own state and the attributes of encountered type-$\bJ$ agents:
				\begin{itemize}
					\item Lowercase notation $(\mathbf{ql}_{\bI}, \mathbf{st}_{\bI,t})$ denotes the quality and matching status of the representative type-$\bI$ agent.
					
					\item Uppercase notation $(\mathbf{QL}_{\bJ,\tau_{\bJ,k}}, \mathbf{ST}_{\bJ,\tau_{\bJ,k}})$, appearing as marks in the Poisson random measure $N^{\psi_{\bJ}}$, denotes the quality and matching status of the type-$\bJ$ agent encountered at the $k$-th meeting time $\tau_{\bJ,k}$.
				\end{itemize}
				By construction of the product probability space, $\mathbf{ql}_{\bI}$ is independent of the encountered marks $\{Z_{\bJ,\tau_{\bJ,k}}\}$, and therefore also independent of the encountered qualities $\{\mathbf{QL}_{\bJ,\tau_{\bJ,k}}\}$.
			\end{remark} 
			
			For notational simplicity, we write
			$$\mathbb{P} = \mathbb{P}^{\psi_{\bA},\psi_{\bB}}$$ and let $\mathbb{E}$ be the associated expectation, throughout the article.

			\subsection{Dynamic Optimal Threshold Control Problems}\label{sec. Dynamic Matching Problem with Finite Agents}
			In this section, we describe how a match is formed from the perspective of a representative agent. For any $\bI, \bJ \in \{\bA,\bB\}$ with $\bI \neq \bJ$ and any $t \in [0,T]$, a representative type-$\bI$ agent is identified by the triple $(\mathbf{ql}_{\bI}, \mathbf{st}_{\bI,t^-}, t)$. Since matched agents cease to participate in the matching process, we can restrict attention to unmatched agents and write $(x,t)$ as shorthand for the state $(\mathbf{ql}_{\bI}=x, \, \mathbf{st}_{\bI,t^-}=0, \, t)$. The notation $(x,t)$ is used throughout the definition of the value function and the derivation of the HJB-FP equations. In our setting, agents with the same state $(x,t)$ face the same optimization problem and thus adopt the same optimal strategy; consequently, we treat them as indistinguishable.

			We now rigorously define the meeting mechanism for a representative type-$\bI$ agent, for any given distribution flow $\psi_{\bJ,\bigcdot} \in  \mathscr{S}_\bJ$. Recall that $\{(\tau_{\bJ,k}, \mathbf{QL}_{\bJ,\tau_{\bJ,k}}, \mathbf{ST}_{\bJ,\tau_{\bJ,k}})\}$ represents the sequence of all meetings. We assume that meetings between agents of the two types follow a meeting structure similar to the M/G/$\infty$ queue \cite[Example 2.4(B) of Section 2.4]{ross1995stochastic}:
			\begin{assumption}[\bf Meeting Mechanism]\label{ass.meeting rule0} 
				\sloppy We assume that a representative type-$\bI$ agent meets potential partners from the entire population of type-$\bJ$ agents (both matched and unmatched) at times $\tau_{\bJ,k}$. If  $\,\mathbf{ST}_{\bJ,\tau_{\bJ,k}} = 1$, then the meeting fails and no match occurs.
			\end{assumption}

			We now formulate the decision problem. For a representative unmatched type-$\bI$ agent $(x,t)$, the control variable is the \textit{acceptance threshold policy}. 
			
			\begin{definition}[\bf Acceptance Threshold Policy and Admissible Set]\label{control variable}
				Let $\bI,\bJ \in \{\bA,\bB\}$ with $\bI \neq \bJ$. An admissible strategy of unmatched type-$\bI$ agents is an acceptance threshold function $u_\bI \in C([0,T];L^{\infty}_{loc}(X_\bI;X_\bJ))$. The admissible set $\mathcal{U}_\bI$ of strategies is defined as the subset of $C([0,T];L^{\infty}_{loc}(X_\bI;X_\bJ))$ containing all functions $u_{\bI} :X_\bI \times[0,T] \to X_\bJ $ that are strictly increasing and Borel measurable in $x \in X_\bI$  for each $t \in [0,T]$. 
			\end{definition}\smallskip
			
			For a type-$\bI$ agent $(x,t)$, the acceptance threshold $u_\bI(x,t)$ is the minimum quality level of a type-$\bJ$ agent that the agent $(x,t)$ is willing to accept. More precisely, upon encountering an unmatched type-$\bJ$ agent $(y,t)$ at time $t$, the type-$\bI$ agent $(x,t)$ accepts the match if and only if $y \geq u_\bI(x,t)$. Symmetrically, the type-$\bI$ agent is accepted by the encountered type-$\bJ$ agent if and only if $x \geq v_\bJ(y,t)$, where $v_\bJ \in \mathcal{U}_\bJ$ denotes the acceptance threshold of type-$\bJ$ agents. The strict monotonicity of the threshold is economically motivated: higher-quality agents typically require higher-quality partners, reflecting stricter acceptance standards. Moreover, we assume that agents are matched according to the following \emph{FCFS} matching mechanism \cite{adan2012exact}:

			\begin{assumption}[\bf Matching Mechanism]\label{ass.matching rule}
				Let $\bI, \bJ \in \{\bA,\bB\}$ with $\bI \neq \bJ$. For a representative unmatched type-$\bI$ agent $(x, t)$ with $\mathbf{ql}_{\bI} = x$ and $\mathbf{st}_{\bI,t^-} = 0$, the search and match process on the time interval $[t,T]$ is driven by the triples $\{(\mathbf{QL}_{\bJ,\tau_{\bJ,k}}, \mathbf{ST}_{\bJ,\tau_{\bJ,k}},\tau_{\bJ,k})\}$ of the Poisson random measure $N^{\psi_{\bJ}}$ and proceeds as follows:
				
				\begin{enumerate}
					\item[\bf (1).] \emph{Meeting a type-$\bJ$ agent:} 
					At each arrival time $\tau_{\bJ,k} > t$, the type-$\bI$ agent $(x,t)$ meets a potential partner, characterized by $(\mathbf{QL}_{\bJ,\tau_{\bJ,k}}, \mathbf{ST}_{\bJ,\tau_{\bJ,k}})$; 
					
					\item[\bf (2).] \emph{Mutual assessment:} 
					A match is formed at time $\tau_{\bJ,k}$ if and only if both of the following conditions hold:
					\begin{itemize}
						\item[(i)] \emph{Availability:} the encountered type-$\bJ$ agent is unmatched, that is, $\mathbf{ST}_{\bJ,\tau_{\bJ,k}} = 0$; 
						\item[(ii)] \emph{Mutual acceptance:} both agents satisfy each other's acceptance thresholds at time $\tau_{\bJ,k}$: $\mathbf{QL}_{\bJ,\tau_{\bJ,k}} \geq u_\bI(x, \tau_{\bJ,k})$ and $x \geq v_\bJ(\mathbf{QL}_{\bJ,\tau_{\bJ,k}}, \tau_{\bJ,k})$;
					\end{itemize}
					
					\item[\bf (3).] \emph{Matching outcome:}  If (i) and (ii) of (2) are satisfied, then the status of the representative type-$\bI$ agent 
					jumps immediately: $\mathbf{st}_{\bI, \tau_{\bJ,k}} = 1$.
					The agent ceases to participate in matching, although the agent may still be encountered by others.
					Otherwise, no match is formed, either because the encountered agent is already matched or because mutual acceptance fails. In this case, the representative agent remains unmatched, that is, $\mathbf{st}_{\bI, \tau_{\bJ,k}} = 0$, and waits for the next meeting time $\tau_{\bJ,k+1}$.
				\end{enumerate}
			\end{assumption}

			Under Assumption \ref{ass.matching rule}, the matching status process $\mathbf{st}_{\bI,\bigcdot}$ of a representative type-$\bI$ agent $(x,t)$ evolves  as
			\begin{equation}\label{sde.matching_status}
				d \mathbf{st}_{\bI,s} = \int_{X_{\bJ} \times \{0,1\}} \mathbf{1}_{\{k=0\}} \mathbf{1}_{\{\mathbf{st}_{\bI,s^-} = 0\}} \mathbf{1}_{\{x \geq v_{\bJ}(y,s)\}} \mathbf{1}_{\{y \geq u_{\bI}(x,s)\}} N^{\psi_{\bJ}} (dy,dk,ds),
			\end{equation}
			for any $s\in[t,T]$ with $\mathbf{st}_{\bI,t} = 0$. Equation \eqref{sde.matching_status} states that the matching status of agent $(x,t)$ jumps from $0$ to $1$ (from unmatched to matched) at the arrival time $\tau_{\mathbf{J},k} > t$ at which condition (2) in Assumption \ref{ass.matching rule} is satisfied by the encountered type-$\mathbf{J}$ agent. The process $\mathbf{st}_{\bI,\bigcdot}$ is $\mathbb{F}^{\psi_{\bA},\psi_{\bB}}$-adapted.

			Let the acceptance thresholds of both sides be $u_\bI$ and $v_\bJ$. By Assumption \ref{ass.matching rule}, 
			the matching time of the type-$\bI$ agent $(x,t)$ is defined pathwise as follows: for any given sample path $\omega\in\Omega$,  
			
			\begin{align}\label{def match time of A}
				\tau_{\hspace{0.5pt}\bI}(x,t,\omega) :=\inf\left\{s\in\{\tau_{\bJ,k}(\omega)\}\cap[t,T]: 
				\begin{aligned}
					& \mathbf{ql}_{\bI}(\omega) = x, \,  \mathbf{st}_{\bI,s^-}(\omega) =\mathbf{ST}_{\bJ,s}(\omega)=0, \\
					&x \geq v_\bJ (\mathbf{QL}_{\bJ,s}(\omega),s) \text{ and } \mathbf{QL}_{\bJ,s}(\omega) \geq u_\bI (x,s)
				\end{aligned}\right\}\!,\hspace{-10pt}
			\end{align}
			with the convention that $\inf \emptyset = \infty$.
			By this construction, $\tau_{\hspace{0.5pt}\bI}(x,t)$ is a $\mathbb{F}^{\psi_{\bA},\psi_{\bB}}$-stopping time.

			For any $\bI, \bJ\in \{\bA,\bB\}$ with $\bI \neq \bJ$, we define 
			\begingroup \begin{equation}\label{def.PIxt}
				\mathbb{P}_{\bI,x,t}(\bigcdot) := \mathbb{P}(\,\bigcdot\,|\,\mathbf{ql}_{\bI}=x,\mathbf{st}_{\bI,t^-}=0)
			\end{equation}\endgroup
			as the conditional probability given that the representative type-$\bI$ agent has quality level $x$ and status $0$ at time $t$. Let $\mathbb{E}_{\bI,x,t}$ denote the corresponding conditional expectation. We introduce the running utility $r_\bI: X_\bI \times [0,T] \to \mathbb{R}_{\geq 0}$ and the terminal utility $h_\bI:X_\bI \to \mathbb{R}_{\geq 0}$, both assumed to be strictly increasing in the state variable $x$ for each time $t$. We also fix a discount rate $\rho > 0$. Given the joint distribution $\psi_{\bJ,\bigcdot} \in \mathscr{S}_\bJ$ of the quality level and status of type-$\bJ$ agents, and the acceptance threshold $v_\bJ$, the type-$\bI$ agent $(x,t)$ chooses the acceptance threshold $u_\bI$ to maximize the objective functional:
			\begingroup 
			\begin{align}\label{def.finite.objective function}
				J_\bI (u_\bI;x,t,v_\bJ,\psi_{\bJ,\bigcdot})
				:=\mathbb{E}_{\bI,x,t}\Bigg[& \underbrace{\int_t^{\tau_{\bI}(x,t) \wedge T} e^{-\rho (s-t)} r_\bI(x,s) \, ds}_{\text{part-time/idle Payoff}} \nonumber \\
				&+ \underbrace{ e^{-\rho [\tau_{\bI}(x,t)-t]} \mathbf{QL}_{\bJ, \tau_{\bI}(x,t)} \cdot \mathbf{1}_{\{\tau_{\bI} (x,t)\leq T\}}}_{\text{matching Payoff}} + \underbrace{e^{-\rho (T-t)} h_\bI(x) \cdot \mathbf{1}_{\{\tau_{\bI}(x,t) > T\}}}_{\text{outside Option Payoff}} \Bigg].\hspace{-10pt}
			\end{align}\endgroup 
		}
		The objective functional \eqref{def.finite.objective function} consists of the following components:
		\begin{enumerate}[(1).]
			\item \noindent\textit{Before matching:} While the type-$\bI$ agent $(x,t)$ remains unmatched, the agent receives a running payoff $r_\bI(x,s) \geq 0$ over $s \in [t, \tau_{\hspace{0.5pt}\bI} (x,t) \wedge T]$, representing productivity from part-time engagement or idle resource use. This phase ends at the matching time $\tau_{\hspace{0.5pt}\bI}(x,t)$ or at the terminal horizon $T$.
			
			\item \noindent\textit{Matching at time $\tau_{\hspace{0.5pt}\bI}<T$:} If the type-$\bI$ agent $(x,t)$ is matched with a random type-$\bJ$ agent $(\mathbf{QL}_{\bJ,\tau_{\bJ,k}},\tau_{\bJ,k})$ at time $\tau_{\bI}(x,t) = \tau_{\bJ,k} < T$ , then the type-$\bI$ agent receives a lump sum payoff equal to $\mathbf{QL}_{\bJ,\tau_{\bJ,k}}$.
			
			\item \noindent\textit{No match by the terminal time $T$:} If the type-$\bI$ agent $(x,t)$ is not matched by time $T$, then the agent receives a terminal utility $h_\bI(x) \geq 0$, interpreted as the value of an outside option or payoff from alternative opportunities.
		\end{enumerate} 
		
		The matching decision problem for a type-$\bI$ agent $(x,t)$ is formulated as the optimal control problem:
		\begin{problem}[\bf Optimal Acceptance Threshold for Matching Problem]\label{pblm:type A, threshold}
			Let $\bI,\bJ \in \{\bA,\bB\}$ with $\bI \neq \bJ$. 
			For any fixed acceptance threshold {\color{black}$v_\bJ \in \mathcal{U}_{\bJ}$ of type-$\bJ$ agents and the joint probability distribution flow $\psi_{\bJ,\bigcdot} \in \mathscr{S}_\bJ$ on $X_\bJ \times \{0,1\}$}, the representative type-$\bI$ agent $(x,t)$ chooses an acceptance threshold $u_\bI \in \mathcal{U}_{\bI}$ to maximize the objective functional $J_\bI$ defined in \eqref{def.finite.objective function}. The associated value function is			
			\begingroup 
			\begin{align}\label{def.V_I.threshold}
				\mathcal{V}_\bI (x,t,v_\bJ,\psi_{\bJ,\bigcdot}) & :=\sup\limits_{u_\bI  \,\in \,\mathcal{U}_{\bI} }J_\bI (u_\bI;x,t,v_\bJ,\psi_{\bJ,\bigcdot}). 
			\end{align}\endgroup
		\end{problem} 
		
		Alternatively, this problem admits an equivalent reformulation as an optimal stopping problem, in the spirit of \cite{mccall1970economics}, which more directly highlights the interpretation of the acceptance threshold. Rather than choosing a threshold $u_{\mathbf{I}}$, a type-$\mathbf{I}$ agent $(x,t)$ instead selects an optimal stopping time at which to accept a type-$\bJ$ agent drawn from the flow $\psi_{\mathbf{J},\bigcdot}$. Formally, we define the admissible set of stopping times 
		{\color{black} 
			\fontsize{10pt}{10pt}\begin{align}\label{def.arrival time, at t}
				\hspace{-5pt}\mathcal{T}^{\bJ}_t:=\Big\{\tau \in M\pig(\Omega;[t,\infty]\pig): 
				\begin{aligned}
					\textup{for any $\omega\in \Omega$, either $\exists \, \tau^\bJ \in \mathcal{T}^\bJ$  s.t. }
					\textup{$\tau(\omega) \mathbf{1}_{\{\tau(\omega) \leq T\}}= \tau^\bJ(\omega) $ or $\tau(\omega) = \infty$}
				\end{aligned} 
				\Big\}
			\end{align}\normalsize
			where $M\pig(\Omega;[t,\infty]\pig)$ consists of all $\mathcal{F}^{\psi_{\bA},\psi_{\bB}}_{\bigcdot}$-stopping times. Here, \(\tau(\omega)=\infty\) describes the situation in which the agent rejects all encountered partners and never matches; otherwise, the agent accepts a partner at the meeting time specified by \(\tau(\omega)\leq T\).} The type-$\bI$ agent then chooses an exit time $\s_{\bI} \in \mathcal{T}^{\bJ}_t$ to maximize the objective functional
		{\color{black}
			\begingroup 
			\begin{align}\label{def obj fun of app.asymA}
				\nonumber \widetilde{J}_\bI(\s_\bI;x,t,v_\bJ,\psi_{\bJ,\bigcdot})
				:=\mathbb{E}_{\bI,x,t}\bigg[&\int_t^{\s_\bI  \land T} e^{-\rho (s-t)} r_\bI(x,s) ds\\
				\nonumber &+  e^{-\rho (\s_\bI-t) } \mathbf{QL}_{\bJ,\s_\bI}\mathbf{1}_{\left\{
					\mathbf{ST}_{\bJ,\s_\bI}=0, \, x \geq v_\bJ(\mathbf{QL}_{\bJ,\s_\bI},\s_\bI )\right\}}\mathbf{1}_{\{\s_\bI \leq T\}} \\
				&+ e^{-\rho (T-t)}\mathbf{1}_{\{\s_\bI  > T\}}h_\bI (x)\bigg],
			\end{align}\endgroup 
			where the dynamic of the status the agent is given by
			\begin{equation}\label{sde.matching_status,stopping}
				d \mathbf{st}_{\bI,s} = \int_{X_{\bJ} \times \{0,1\}} \mathbf{1}_{\{k=0\}} \mathbf{1}_{\{\mathbf{st}_{\bI,s^-} = 0\}} \mathbf{1}_{\{x \geq v_{\bJ}(y,s)\}} \mathbf{1}_{\{s = \s_\bI \}} N^{\psi_{\bJ}} (dy,dk,ds), \quad s\in[t,T],
			\end{equation}
			with $\mathbf{st}_{\bI,t} = 0$.
		}
		The optimal exit time is given by $
		\tau^*_\bI  = \arg\max_{\s_\bI \in \mathcal{T}^{\bJ}_t}\widetilde{J}_\bI (\s_\bI;x,t,v_\bJ, \psi_{\bJ,\bigcdot})$ if it exists. If $\tau^*_\bI \leq T$, then a match occurs at time $\tau^*_\bI$ with the partner whose attributes are realized at that time. This optimal stopping problem can be formally stated as follows.

		\begin{problem}[\bf Optimal Stopping Time for Matching Problem]\label{pblm:type A, stopping time}
			Under the same assumptions as in Problem~\ref{pblm:type A, threshold}, a type-$\bI$ agent $(x,t)$ chooses the time $\sigma_\bI \in \mathcal{T}^{\bJ}_t$ to maximize the objective functional $\widetilde{J}_\bI$ given in \eqref{def obj fun of app.asymA}. The associated value function is defined as			\begingroup 
			\begin{align}
				\label{def.V_I.optimal stopping}
				\widetilde{\mathcal{V}}_\bI(x,t,v_\bJ,\psi_{\bJ,\bigcdot}) & :=\sup\limits_{\sigma_\bI  \,\in \, \mathcal{T}^{\bJ}_t}\widetilde{J}_\bI ( \sigma_\bI;x,t,v_\bJ,\psi_{\bJ,\bigcdot} )
			\end{align}\endgroup
			where $\mathcal{T}^{\bJ}_t$ is defined in \eqref{def.arrival time, at t}.
		\end{problem} 
		\smallskip
		It is immediate that $\widetilde{\mathcal{V}}_\bI(x,t,v_\bJ,\psi_{\bJ,\bigcdot}) \geq \mathcal{V}_\bI (x,t,v_\bJ,\psi_{\bJ,\bigcdot})$. The following lemma establishes the reverse inequality, thereby implying the equivalence of the value functions of Problems~\ref{pblm:type A, threshold} and \ref{pblm:type A, stopping time}.

		\begin{lemma}
			{\color{black} Assume that $r_{\bA}(\cdot,t)$, $r_{\bB}(\cdot,t)$, $h_{\bA}(\cdot)$ and $h_{\bB}(\cdot)$ are strictly increasing for each $t\in[0,T]$.} Let $\bI,\bJ \in \{\bA,\bB\}$ with $\bI \neq \bJ$ and $t\in[0,T]$.  Suppose that the acceptance threshold $v_\bJ \in \mathcal{U}_{\bJ}$ of type-$\bJ$ agents and the flow of joint probability distribution $\psi_{\bJ,\bigcdot} \in \mathscr{S}_\bJ$ on $X_\bJ \times \{0,1\}$ are fixed. If Problem \ref{pblm:type A, stopping time} admits the optimal control $\tau^*_\bI \in \mathcal{T}^{\bJ}_t$, then there exists a deterministic function $ u^*_\bI \in \mathcal{U}_{\bI}$, such that
			{\color{black}
				\begin{align}\label{368}
					\hspace{-5pt} \tau_{\hspace{0.5pt}\bI}^*(x,t,\omega) =\inf\left\{s\in\!\{\tau_{\bJ,k}(\omega)\} \cap [ t,T]: 
					\begin{aligned}
						&\mathbf{ql}_{\bI}(\omega) = x, \, \mathbf{ST}_{\bJ,s}(\omega)=
						\mathbf{st}_{\bI,s^-}(\omega) =0, \\
						& x \geq v_\bJ (\mathbf{QL}_{\bJ,s}(\omega),s) \text{ and } \mathbf{QL}_{\bJ,s}(\omega) \geq u^*_\bI (x,s)
					\end{aligned}\!\right\},\hspace{-5pt} 
				\end{align}
				with the convention that $\inf \emptyset = \infty$.} Hence, $\widetilde{\mathcal{V}}_\bI(x,t,v_\bJ,\psi_{\bJ,\bigcdot}) = \mathcal{V}_\bI (x,t,v_\bJ,\psi_{\bJ,\bigcdot})$.\label{lem eq. of P1 P2}
		\end{lemma}

		\begin{remark}
			By the relationship between $u^*_\bI$ and $\tau_{\hspace{0.5pt}\bI}^*$ established in \Cref{lem eq. of P1 P2}, the equivalence of Problems~\ref{pblm:type A, threshold} and \ref{pblm:type A, stopping time} can be interpreted as follows: the optimal acceptance threshold in Problems~\ref{pblm:type A, threshold} ensures that whenever a type-$\bI$ agent $(x,t)$ meets an unmatched type-$\bJ$ with $\mathbf{QL}_{\bJ,\tau_{\bJ,k}} \geq u^*_\bI (x,\tau_{\bJ,k} )$ and $x \geq v_\bJ (\mathbf{QL}_{\bJ,\tau_{\bJ,k}},\tau_{\bJ,k})$, then the agents immediately accept each other and match.
		\end{remark}
		\begin{proof}[Proof of \Cref{lem eq. of P1 P2}]
			In the spirit of the dynamic programming principle, we prove \Cref{lem eq. of P1 P2} by analyzing different scenarios of the first meeting time $\tau_{\bJ,1,t}(\omega):=\inf\big\{s\in \{\tau_{\bJ,1}(\omega) ,\tau_{\bJ,2}(\omega),\cdots\} 
			\cap [t,\infty)\big\}$ and comparing the agent's outcomes across the resulting scenarios.
			
			The value function in \eqref{def.V_I.optimal stopping} can be rewritten as			\begingroup 
			\begin{align}\label{value function in thm 1}
				\widetilde{\mathcal{V}}_\bI (x,t,v_\bJ,\psi_{\bJ,\bigcdot})
				=& \sup\limits_{ \sigma_\bI  \in \mathcal{T}^{\bJ}_t}\mathbb{E}_{\bI,x,t} \left[\widetilde{\mathscr{J}}_\bI \left( \sigma_\bI;x,t,v_\bJ,\psi_{\bJ,\bigcdot};\mathbf{QL}_{\bJ,\tau_{\bJ,1,t}},\mathbf{ST}_{\bJ,\tau_{\bJ,1,t}},\tau_{\bJ,1,t}\right)\right],
			\end{align}\endgroup
			\sloppy where $\widetilde{\mathscr{J}}_\bI ( \sigma_\bI;x,t,v_\bJ,\psi_{\bJ,\bigcdot};\mathbf{QL}_{\bJ,\tau_{\bJ,1,t}},\mathbf{ST}_{\bJ,\tau_{\bJ,1,t}},\tau_{\bJ,1,t})$ is the expected payoff of the representative type-$\bI$ agent $(x,t)$ given the first 
			meeting time $\tau_{\bJ,1,t}$ and the randomly sampled type-$\bJ$ agent $(\mathbf{QL}_{\bJ,\tau_{\bJ,1,t}},\mathbf{ST}_{\bJ,\tau_{\bJ,1,t}},\tau_{\bJ,1,t})$ from the conditional distribution $\psi_{\bJ,\tau_{\bJ,1,t}}$ given $\tau_{\bJ,1,t}$. More precisely, 
			\begingroup 
			\begin{align}
				&\widetilde{\mathscr{J}}_\bI \left(\sigma_\bI;x,t,v_\bJ,\psi_{\bJ,\bigcdot};\mathbf{QL}_{\bJ,\tau_{\bJ,1,t}},\mathbf{ST}_{\bJ,\tau_{\bJ,1,t}},\tau_{\bJ,1,t}\right)\nonumber\\
				&:=\mathbb{E}_{\bI,x,t}\bigg(\int_t^{ \sigma_\bI \land T} e^{-\rho (s-t)} r_\bI(x,s) ds 
				+ e^{-\rho ( \sigma_\bI -t)}\mathbf{QL}_{\bJ, \sigma_\bI  }\mathbf{1}_{\{ \mathbf{ST}_{\bJ, \sigma_\bI  }=0, \, x \ge v_\bJ (\mathbf{QL}_{\bJ, \sigma_\bI }, \sigma_\bI  ) \}}\mathbf{1}_{\left\{ \sigma_\bI   \leq T\right\}} 
				\nonumber \\
				&\hspace{80pt}+ e^{-\rho (T-t)}\mathbf{1}_{\{ \sigma_\bI  > T\}}h_\bI (x)\,\bigg|\,\mathbf{QL}_{\bJ,\tau_{\bJ,1,t}},\mathbf{ST}_{\bJ,\tau_{\bJ,1,t}},\tau_{\bJ,1,t}\bigg),\label{592}
			\end{align}\endgroup
			for any $ \sigma_\bI  \in \mathcal{T}^{\bJ}_t$. In this case, \eqref{value function in thm 1} shows that
			\begingroup 
			\begin{equation}\label{value function in thm 1.2}
				\widetilde{\mathcal{V}}_\bI (x,t,v_\bJ,\psi_{\bJ,\bigcdot})\leq \mathbb{E}_{\bI,x,t} \left[\widetilde{\mathscr{V}}_\bI (x,t,v_\bJ,\psi_{\bJ,\bigcdot};\mathbf{QL}_{\bJ,\tau_{\bJ,1,t}},\mathbf{ST}_{\bJ,\tau_{\bJ,1,t}},\tau_{\bJ,1,t})\right],
			\end{equation}\endgroup
			where			\begingroup 
				\begin{equation}\label{value function in thm 1.3}
					\widetilde{\mathscr{V}}_\bI \left(x,t,v_\bJ,\psi_{\bJ,\bigcdot};\mathbf{QL}_{\bJ,\tau_{\bJ,1,t}},\mathbf{ST}_{\bJ,\tau_{\bJ,1,t}},\tau_{\bJ,1,t}\right) := \sup\limits_{ \sigma_\bI  \in \mathcal{T}^{\bJ}_t}
					\widetilde{\mathscr{J}}_\bI \left( \sigma_\bI ;x,t,v_\bJ,\psi_{\bJ,\bigcdot};\mathbf{QL}_{\bJ,\tau_{\bJ,1,t}},\mathbf{ST}_{\bJ,\tau_{\bJ,1,t}},\tau_{\bJ,1,t}\right).
				\end{equation}\endgroup
			According to the values of $\mathbf{QL}_{\bJ,\tau_{\bJ,1,t}}$ and $\mathbf{ST}_{\bJ,\tau_{\bJ,1,t}}$, there are two possible scenarios for time $\tau_{\bJ,1,t}$:

			\begin{enumerate}[(1).]
				
				\item The first scenario corresponds to a failed meeting or matching attempt, after which the representative type-$\bI$ agent $(x,t)$ remains unmatched and waits for subsequent meetings. This scenario happens when the type-$\bI$ agent chooses to reject the match with the sampled type-$\bJ$ agent. For convenience, we denote the value function of the representative type-$\bI$ agent $(x,t)$ in this situation at time $\tau_{\bJ,1,t}$ for $\tau_{\bJ,1,t} \leq T$ by  $\widetilde{\mathscr{V}}_\bI \left(x,\tau_{\bJ,1,t},v_\bJ,\psi_{\bJ,\bigcdot};0,0,\tau_{\bJ,1,t}\right)$, since the agent will certainly reject a partner with a quality level of $0$ due to the presence of the running or terminal utility. Consequently, the value function for this scenario satisfies
				\begingroup 
				\begin{align*}
					&\widetilde{\mathscr{V}}_\bI \left(x,t,v_\bJ,\psi_{\bJ,\bigcdot};\mathbf{QL}_{\bJ,\tau_{\bJ,1,t}},\mathbf{ST}_{\bJ,\tau_{\bJ,1,t}},\tau_{\bJ,1,t}\right) \\
					&\geq \int_t^{\tau_{\bJ,1,t} \wedge T}e^{-\rho (s-t)}r_\bI(x,s)ds + e^{-\rho (\tau_{\bJ,1,t}-t)}\widetilde{\mathscr{V}}_\bI \left(x,\tau_{\bJ,1,t},v_\bJ,\psi_{\bJ,\bigcdot};0,0,\tau_{\bJ,1,t}\right) \mathbf{1}_{\{\tau_{\bJ,1,t} \leq T\}} \\
					&\quad+ e^{-\rho (T-t)}\mathbf{1}_{\{\tau_{\bJ,1,t} > T\}}h_\bI(x).
				\end{align*}\endgroup
				
				\item \sloppy The second scenario is characterized by a successful match: $\mathbf{ST}_{\bJ,\tau_{\bJ,1,t}} = 0$, $x\geq v_{\bJ}(\mathbf{QL}_{\bJ,\tau_{\bJ,1,t}},\tau_{\bJ,1,t})$, and the type-$\bI$ agent chooses to accept the match. In this situation, we have
				\begingroup \begin{align*}
						&\widetilde{\mathscr{V}}_\bI \left(x,t,v_\bJ,\psi_{\bJ,\bigcdot};\mathbf{QL}_{\bJ,\tau_{\bJ,1,t}},\mathbf{ST}_{\bJ,\tau_{\bJ,1,t}},\tau_{\bJ,1,t}\right) \\
						&\geq \int_t^{\tau_{\bJ,1,t} \wedge T}  e^{-\rho (s-t)}r_\bI(x,s)ds 
						+e^{-\rho (\tau_{\bJ,1,t}-t)}\mathbf{QL}_{\bJ,\tau_{\bJ,1,t}} \mathbf{1}_{\big\{\mathbf{ST}_{\bJ,\tau_{\bJ,1,t}} = 0 \big\}} \mathbf{1}_{\big\{x \ge v_\bJ (\mathbf{QL}_{\bJ,\tau_{\bJ,1,t}},\tau_{\bJ,1,t})\big\}} \mathbf{1}_{\left\{\tau_{\bJ,1,t} \leq T\right\}} \\
						&\quad+ e^{-\rho (T-t)}\mathbf{1}_{\{\tau_{\bJ,1,t} > T\}}h_\bI(x).
				\end{align*}\endgroup
			\end{enumerate}
			
			Combining the above cases and using the principle of optimality, we obtain	\begingroup 
			\begin{align*} 
				\nonumber&\widetilde{\mathscr{V}}_\bI \left(x,t,v_\bJ,\psi_{\bJ,\bigcdot};\mathbf{QL}_{\bJ,\tau_{\bJ,1,t}},\mathbf{ST}_{\bJ,\tau_{\bJ,1,t}},\tau_{\bJ,1,t}\right) \\
				&=e^{-\rho (\tau_{\bJ,1,t}-t)}\max\left\{\mathbf{QL}_{\bJ,\tau_{\bJ,1,t}}\mathbf{1}_{\big\{\mathbf{ST}_{\bJ,\tau_{\bJ,1,t}} = 0\big\}}\mathbf{1}_{\big\{x \ge v_\bJ (\mathbf{QL}_{\bJ,\tau_{\bJ,1,t}},\tau_{\bJ,1,t})\big\}}, \widetilde{\mathscr{V}}_\bI \left(x,\tau_{\bJ,1,t},v_\bJ,\psi_{\bJ,\bigcdot};0,0,\tau_{\bJ,1,t}\right)\right\} \\
				&\quad\quad + \int_t^{\tau_{\bJ,1,t} \wedge T}e^{-\rho (s-t)}r_\bI(x,s)ds + e^{-\rho (T-t)}\mathbf{1}_{\{\tau_{\bJ,1,t} > T\}}h_\bI(x).
			\end{align*}\endgroup
			\sloppy The first term inside the maximum term corresponds to the value function if a match occurs at $\tau_{\bJ,1,t}$, while the second corresponds to the value function for no match. Therefore, $\mathbf{QL}_{\bJ,\tau_{\bJ,1,t}}\mathbf{1}_{\big\{\mathbf{ST}_{\bJ,\tau_{\bJ,1,t}} = 0\big\}}\mathbf{1}_{\big\{x \ge v_\bJ (\mathbf{QL}_{\bJ,\tau_{\bJ,1,t}},\tau_{\bJ,1,t})\big\}} \geq \widetilde{\mathscr{V}}_\bI \left(x,\tau_{\bJ,1,t},v_\bJ,\psi_{\bJ,\bigcdot};0,0,\tau_{\bJ,1,t}\right)$ is a necessary condition for a match to occur at $\tau_{\bJ,1,t}$. {\color{black}This implies that the conditions $\mathbf{ST}_{\bJ,s} = 0$, $\mathbf{QL}_{\bJ,s} \ge \widetilde{\mathscr{V}}_\bI \left(x,s,v_\bJ,\psi_{\bJ,\bigcdot};0,0,s\right)$ and $x \ge v_\bJ (\mathbf{QL}_{\bJ,s},s)$ are necessary conditions for a match to occur at any stopping time $s \in \mathcal{T}^\bJ$.
				
				\begin{claim}\label{claim.1}
					The function $\widetilde{\mathscr{V}}_\bI (x,s,v_\bJ,\psi_{\bJ,\bigcdot};0,0,s)$ is strictly increasing in $x$ for any $s\in [0,T]$.\end{claim} 
				To prove the claim, fix an arbitrary initial time $s\in[0,T]$ and two quality levels $0\le x_1<x_2$. Note that $r_\bI(x,t)$ and $h_\bI(x)$ are strictly increasing in $x$, and $\mathbf{1}_{\{x \ge v_\bJ (\mathbf{QL}_{\bJ, \sigma_\bI }, \sigma_\bI  ) \}}$ is non-decreasing in $x$, $\mathbb{P}$ a.s., for any $ \sigma_\bI  \in \mathcal{T}^{\bJ}_s$. Consequently, the definition of $\widetilde{\mathscr{J}}_\bI$ in \eqref{592} and that of $\widetilde{\mathscr{V}}_\bI (x,s,v_\bJ,\psi_{\bJ,\bigcdot};0,0,s)$ in \eqref{value function in thm 1.3} imply
				\begin{align*}
					\widetilde{\mathscr{J}}_\bI \left( \sigma_\bI ;x_1,s,v_\bJ,\psi_{\bJ,\bigcdot};0,0,s\right)
					<\widetilde{\mathscr{J}}_\bI \left( \sigma_\bI ;x_2,s,v_\bJ,\psi_{\bJ,\bigcdot};0,0,s\right)\le \widetilde{\mathscr{V}}_\bI (x_2,s,v_\bJ,\psi_{\bJ,\bigcdot};0,0,s)
				\end{align*}
				for any $ \sigma_\bI  \in \mathcal{T}^{\bJ}_s$, as $\sigma_\bI>s$, $\mathbb{P}$-a.s. Taking the supremum over the space $\mathcal{T}^{\bJ}_s$ on the left-hand side and using \eqref{value function in thm 1.3} again, we obtain $
				\widetilde{\mathscr{V}}_\bI (x_1,s,v_\bJ,\psi_{\bJ,\bigcdot};0,0,s)
				< \widetilde{\mathscr{V}}_\bI (x_2,s,v_\bJ,\psi_{\bJ,\bigcdot};0,0,s)$ which proves the strict monotonicity.
				
				Therefore, it follows that $\widetilde{\mathscr{V}}_\bI (x,s,v_\bJ,\psi_{\bJ,\bigcdot};0,0,s) \in \mathcal{U}_{\bI}$. We thus choose $u^*_\bI (x,s):=\widetilde{\mathscr{V}}_\bI (x,s,v_\bJ,\psi_{\bJ,\bigcdot};0,0,s)$, and the optimal stopping time of \eqref{value function in thm 1.3} is $\tau^*_\bI (x,t)$ taking the form in \eqref{368}. }

			By \eqref{value function in thm 1.2}, we have	\begingroup 
			\begin{align*} 
				\nonumber \widetilde{\mathcal{V}}_\bI(x,t,v_\bJ,\psi_{\bJ,\bigcdot}) \leq\,& \mathbb{E}_{\bI,x,t}\pig[\widetilde{\mathscr{J}}_\bI \left(\tau^*_\bI (x,t);x,t,v_\bJ,\psi_{\bJ,\bigcdot};\mathbf{QL}_{\bJ,\tau_{\bJ,1,t}},\mathbf{ST}_{\bJ,\tau_{\bJ,1,t}},\tau_{\bJ,1,t}\right)\pig] \\
				\nonumber=\,& \mathbb{E}_{\bI,x,t}\!\bigg[\!\mathbb{E}_{\bI,x,t}\!\bigg(\!\int_t^{\tau^*_\bI (x,t)  \land T} e^{-\rho (s-t)} r_\bI(x,s) ds \\
				\nonumber
				&\qquad\qquad\quad\!\!+ \!e^{-\rho  
					[\tau^*_\bI (x,t)-t]}\mathbf{QL}_{\bJ,\tau_{\hspace{0.5pt}\bI}^*(x,t)}\!\mathbf{1}_{\{ \mathbf{ST}_{\bJ,\tau_{\bI}^*(x,t)}=0, \, x \ge v_\bJ (\mathbf{QL}_{\bJ,\tau^*_\bI (x,t)},\tau^*_\bI (x,t)) \}}\!\mathbf{1}_{\{\tau^*_\bI (x,t)  \leq T\}}\hspace{-10pt} \\
				\nonumber
				&\qquad\qquad\quad\!\!+ e^{-\rho (T-t)}\mathbf{1}_{\{\tau^*_\bI (x,t)  > T\}}h_\bI (x)\,\bigg|\,\mathbf{QL}_{\bJ,\tau_{\bJ,1,t}},\mathbf{ST}_{\bJ,\tau_{\bJ,1,t}},\tau_{\bJ,1,t}\bigg)\bigg] \\ \nonumber
				=\,&\widetilde{J}_\bI (\tau_{\hspace{0.5pt}\bI}^*(x,t);x,t,v_\bJ,\psi_{\bJ,\bigcdot} ) \leq\, \mathcal{V}_\bI (x,t,v_\bJ,\psi_{\bJ,\bigcdot}).
				\hspace{-15pt}
			\end{align*}\endgroup
			The second equality is due to the tower property and the definition in \eqref{def obj fun of app.asymA}. The last equality follows from definition \eqref{def.finite.objective function} and the fact that $\tau^*(x,t)$ takes form in \eqref{368}. The last inequality follows from the optimality in Problem \ref{pblm:type A, threshold}. Recall that $\mathcal{V}_\bI (x,t,v_\bJ,\psi_{\bJ,\bigcdot}) \leq \widetilde{\mathcal{V}}_\bI(x,t,v_\bJ,\psi_{\bJ,\bigcdot})$. One obtains that $\tau^*_\bI (x,t)$ is indeed the optimal stopping time for Problem \ref{pblm:type A, stopping time}. 
		\end{proof}
		
		{\color{black}	\subsection{Population Dynamic Evolution and the Mean Field Nash Equilibrium}\label{sec. Mean Field Nash Equilibrium}
			
			In this section, we consider a mean field game based on the formulations in \Cref{sec. Dynamic Matching Problem with Finite Agents}. We begin by introducing the relevant population distributions. Let $\bI,\bJ \in \{\bA,\bB\}$ with $\bI \neq \bJ$. Suppose that the acceptance threshold $v_\bJ \in \mathcal{U}_{\bJ}$ of type-$\bJ$ agents and the joint probability distribution flow $\psi_{\bJ,\bigcdot} \in \mathscr{S}_\bJ$ on $X_\bJ \times \{0,1\}$ are fixed.
			
			\begin{enumerate}[(1).]
				\item \textit{Distribution of agents:} We define the joint distribution of type-$\bI$ agents' quality levels and matching statuses at time $t$ as a measure $\pi_{\bI,t}$ on $X_{\bI} \times \{0,1\}$ by:
				\begin{equation}\label{def.pi}
					\pi_{\bI,t}(E) := \mathbb{P}\pig( (\mathbf{ql}_{\bI},\mathbf{st}_{\bI,t}) \in E \pig),
				\end{equation}
				for every $E \in \mathcal{B} (X_{\bI} \times \{0,1\})$ and $t\in[0,T]$. Here, $\mathbf{ql}_{\bI}$ is the quality level drawn from the distribution $\mu_{\bI,0}$. Based on the matching mechanism in \Cref{ass.matching rule}, the status $\mathbf{st}_{\bI,\bigcdot}$ of the type-$\bI$ agent $(x, 0)$ evolves according to \eqref{sde.matching_status} under the acceptance threshold $u_\bI \in \mathcal{U}_{\bI}$. Specifically, we denote the quality level distribution of unmatched type-$\bI$ agents at time $t$ by 
				\begin{equation}\label{def.muI}
					\mu_{\bI,t}(A) := \pi_{\bI,t}(A \times \{0\}).
				\end{equation}
				Note that $\mu_{\bI,t}$ is a defective probability measure \cite{asmussen2010ruin} on $X_{\bI}$ since its total mass decreases over time as agents become matched. Initially, all agents are unmatched, so $\mu_{\bI,0}(X_{\bI}) = 1$.
				
				\item \textit{Unmatched probability:} 
				The total mass of unmatched type-$\bI$ agents at time $t$ is defined as:
				\begingroup 
				\begin{equation}\label{def. unmatched F_I}
					F_{\bI}(t) := \mu_{\bI,t}(X_{\bI}) = \mathbb{P}\left( \mathbf{st}_{\bI,t} = 0 \right).
				\end{equation}
				\endgroup 
				Clearly, $F_{\bI}(0) = 1$, and $F_{\bI}(t)$ is non-increasing in $t$.
				
				\item \textit{Consistency of quality distribution (first marginal):} 
				Since the quality level $\mathbf{ql}_{\bI}$ of the representative agent is determined at $t=0$ and remains constant throughout the horizon, the first marginal distribution of $\pi_{\bI,t}$ must coincide with the initial distribution $\mu_{\bI,0}$ for all $t\in[0,T]$. That is, for any $t \in [0,T]$ and $A \in \mathcal{B}(X_{\bI})$:
				\begin{equation*}
					\pi_{\bI,t}(A \times \{0,1\}) = \mathbb{P}(\mathbf{ql}_{\bI} \in A) = \mu_{\bI,0}(A).
				\end{equation*}
				The conservation of mass is expressed as
				$\mu_{\bI,t}(A) + \pi_{\bI,t}(A \times \{1\}) = \mu_{\bI,0}(A)$. 
			\end{enumerate}
			
			We now characterize the distribution $\mu_{\bI,t}$ of quality levels among unmatched type-$\bI$ agents at time $t$ using the matching time defined in \eqref{def match time of A}. In particular, Lemma~\ref{prop.fxt3} establishes that $\mu_{\bI,t}$ is absolutely continuous with respect to the initial distribution $\mu_{\bI,0}$.
			\begin{lemma}\label{prop.fxt3} 
				Let $\bI, \bJ\in \{\bA,\bB\}$ with $\bI \neq \bJ$ and $t\in[0,T]$. Fix acceptance thresholds $u_\bI \in \mathcal{U}_{\bI}$, $v_\bJ \in \mathcal{U}_{\bJ}$, and a joint probability distribution flow $\psi_{\bJ,\bigcdot} \in \mathscr{S}_\bJ$ on $X_\bJ \times \{0,1\}$. Then the defective probability distribution $\mu_{\bI ,t}(\bigcdot):= \pi_{\bI,t}(\bigcdot \times \{0\})$ defined in \eqref{def.muI} is absolutely continuous with respect to $\mu_{\bI ,0}$ such that $ 
				\mathbb{P}_{\bI,\bigcdot,0}(\tau_{\hspace{0.5pt}\bI} (\bigcdot,0)> t) = \frac{d\mu_{\bI ,t}}{d\mu_{\bI ,0}}(\bigcdot)$ where $\tau_{\hspace{0.5pt}\bI} (x,0)$ is the matching time of the type-$\bI$ agent $(x,0)$ as defined in \eqref{def match time of A}, corresponding to the acceptance
				thresholds $u_\bI$ and $v_\bJ$ fixed above.
			\end{lemma}
			\begin{proof} \sloppy Consider a bounded measurable test function $\Phi:X_\bI\times\{0,1\} \to \mathbb{R}$ such that $\Phi(x,k) := \phi(x)\mathbf{1}_{\{k=0\}}$ where $\phi$ is a bounded measurable function on $X_\bI$. For any $t \in [0,T]$, we have $
				\mathbb{E}\left[\Phi(\mathbf{ql}_{\bI},\mathbf{st}_{\bI,t})\right] = \int_{X_\bI} \phi(x) d\mu_{\bI,t}(x).$
				On the other hand, by conditioning on $\mathbf{ql}_{\bI}$, we have
				$
				\int_{X_\bI} \!\phi(x) d\mu_{\bI,t}\!(x)
				=  \mathbb{E}\left[\mathbb{E}\left(\Phi(\mathbf{ql}_{\bI},\mathbf{st}_{\bI,t})|\mathbf{ql}_{\bI}\right)\right] 
				= \mathbb{E}\left[\phi(\mathbf{ql}_{\bI})\mathbb{P}(\mathbf{st}_{\bI,t} = 0|\mathbf{ql}_{\bI})\right].$ Note that $\mathbb{P}(\mathbf{st}_{\bI,t} = 0\,|\,\mathbf{ql}_{\bI}) = \mathbb{P}(\tau_{\hspace{0.5pt}\bI}(\mathbf{ql}_{\bI},0) > t\,|\,\mathbf{ql}_\bI)$,
				where $\tau_{\hspace{0.5pt}\bI}(\mathbf{ql}_\bI,0)$ is the matching time of the type-$\bI$ agent $(\mathbf{ql}_\bI,0)$. Since $\mathbf{ql}_\bI$ follows the distribution $\mu_{\bI ,0}$, the expectation can be expressed as:\vspace{-10pt}
				$$
				\int_{X_\bI} \hspace{-3pt}\phi(x) d\mu_{\bI,t}(x)=\mathbb{E}\left[\phi(\mathbf{ql}_{\bI})\mathbb{P}(\tau_{\hspace{0.5pt}\bI}(\mathbf{ql}_{\bI},0) > t\,|\,\mathbf{ql}_\bI)\right] 
				= \int_{X_\bI} \hspace{-3pt} \phi(x) \mathbb{P}_{\bI,x,0}(\tau_{\hspace{0.5pt}\bI}(x,0) > t) d\mu_{\bI ,0}(x).
				$$
				Since this holds for every bounded measurable test function $\phi$, the lemma is established. 
			\end{proof}
			
			\begin{lemma}\label{prop.Ft}
				Under the same setting and assumptions as in \Cref{prop.fxt3}, the unmatched proportion function $F_{\bI}(t)$ defined in \eqref{def. unmatched F_I} equals $\mathbb{P}(\tau_{\hspace{0.5pt}\bI} (\mathbf{ql}_{\bI},0)> t)$ for any $t\in [0,T]$.
			\end{lemma}
			\begin{proof} \sloppy
				By \eqref{def. unmatched F_I}, Lemma \ref{prop.fxt3} and the definition in \eqref{def match time of A}, we have $F_\bI (t)= \mu_{\bI,t}(X_\bI) = \int_{X_\bI}\mathbb{P}_{\bI,x,0}(\tau_{\hspace{0.5pt}\bI} (x,0)> t)d\mu_{\bI ,0}(x) = \mathbb{E}\big[\mathbb{P}(\tau_{\hspace{0.5pt}\bI} (\mathbf{ql}_{\bI},0)> t\,|\,\mathbf{ql}_{\bI})\big] = \mathbb{P}(\tau_{\hspace{0.5pt}\bI} (\mathbf{ql}_{\bI},0)> t)$. 
			\end{proof}
			
			With the characterization and definition of $\mu_{\bI,t}$, the joint distribution $\pi_{\bI,t}$ is uniquely determined via
			\[
			\pi_{\bI,t}(A \times \{0\}) = \mu_{\bI,t}(A), \qquad
			\pi_{\bI,t}(A \times \{1\}) = \mu_{\bI,0}(A) - \mu_{\bI,t}(A),
			\]
			for any Borel set $A \subseteq X_{\bI}$ and $t\in[0,T]$. Given $(v_\bB, \psi_{\bB,\bigcdot})$, one can solve for the optimal acceptance threshold $u_\bA$ of type‑$\bA$ agents, together with the induced population distribution $\pi_{\bA,\bigcdot}$. Similarly, given $(v_\bA, \psi_{\bA,\bigcdot})$, one obtains the optimal acceptance threshold $u_\bB$ of type‑$\bB$ agents and the corresponding distribution $\pi_{\bB,\bigcdot}$. If the optimal controls are well-defined, then the above procedure defines a mapping
			\[
			(v_\bA, v_\bB, \psi_{\bA,\bigcdot}, \psi_{\bB,\bigcdot}) \;\longmapsto\; (u_\bA, u_\bB, \pi_{\bA,\bigcdot}, \pi_{\bB,\bigcdot}).
			\]
			The mean field Nash equilibrium is then defined as a fixed point of the mapping defined above. We denote such a fixed point by $(u_\bA^*, u_\bB^*, \pi_{\bA,\bigcdot}^*, \pi_{\bB,\bigcdot}^*)$. \Cref{fig:structure:Nash_equilibrium} illustrates the interaction between type-$\bA$ and type-$\bB$ agents in the dynamic mean field Nash game.

			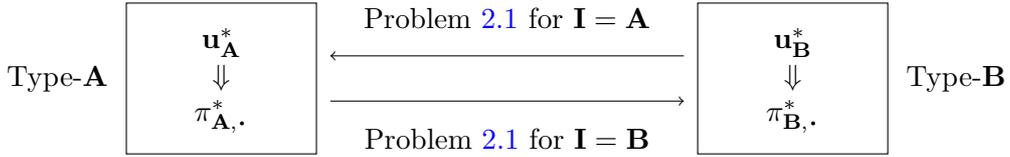
\begin{figure}[h]
				\centering
				\begin{tikzpicture}[
					scale=1,
					transform shape,
					box/.style={
						draw,
						rectangle,
						inner sep=2pt,
						outer sep=0pt,
						align=center
					}
					]
					
					\node[box] (leftbox) at (0,0)
					{$\mathbf{u^*_\bA \;\Longrightarrow\; \pi^*_{\bA,\bigcdot}}$};
					
					\node (psiB) at (0,1.1)
					{$(v_{\bB},\psi_{\bB,\bigcdot})$};
					
					\node[box] (rightbox) at (5,1.1)
					{$\mathbf{u^*_\bB \;\Longrightarrow\; \pi^*_{\bB,\bigcdot}}$};
					
					\node (psiA) at (5,0)
					{$(v_{\bA},\psi_{\bA,\bigcdot})$};
					
					\draw[thick,->]
					([yshift=-2pt]psiB.south) -- ([yshift=2pt]leftbox.north);
					
					\draw[thick,->]
					([yshift=2pt]psiA.north) -- ([yshift=-2pt]rightbox.south);
					
					\draw[thick,<->]
					([xshift=6pt]psiB.east |- rightbox.center) --
					([xshift=-2pt]rightbox.west |- rightbox.center);
					
					\draw[thick,<->]
					([xshift=2pt]leftbox.east |- leftbox.center) --
					([xshift=-5.5pt]psiA.west |- leftbox.center);
					
					\node[below=0.3cm of leftbox, align=center]
					{Problem \ref{pblm:type A, threshold} for $\bI=\bA$};
					
					\node[below=1.3cm of rightbox, align=center]
					{Problem \ref{pblm:type A, threshold} for $\bI=\bB$};
					
				\end{tikzpicture}
				\caption{Dynamic mean field Nash game}
				\label{fig:structure:Nash_equilibrium}
		\end{figure}}
		
		The main goal of this article is to solve the following problem:
		\begin{problem}[\bf Mean Field Nash Equilibrium]\label{def Nash}  
			We aim to find a quadruple $(u^*_\bA ,u^*_\bB ,\pi_{\bA,\bigcdot}^*,\pi_{\bB,\bigcdot}^*)$ such that for any $\bI,\bJ \in \{\bA,\bB\}$ with $\bI \neq \bJ$: 
			\begin{itemize}
				\item The control $u^*_\bI$ solves Problem \ref{pblm:type A, threshold} under the pair $(u^*_\bJ,\pi_{\bJ,\bigcdot}^*)$, that is, $J_\bI (u^*_\bI;x,0,u^*_\bJ,\pi_{\bJ,\bigcdot}^*  ) \geq J_\bI (u_\bI;x,0,u^*_\bJ,\pi_{\bJ,\bigcdot}^*)$ for any $u_\bI \in \mathcal{U}_\bI$ and $x\in X_\bI$;
				\item The distribution $\pi_{\bI,\bigcdot}^*$ is the joint law of the quality levels and statuses of type-$\bI$ agents, where the status evolves according to the dynamics under the optimal acceptance threshold $u^*_\bI$ and the pair $(u^*_\bJ,\pi_{\bJ,\bigcdot}^*)$.
			\end{itemize}
			Such a quadruple is called a mean field Nash equilibrium of the mean field matching game associated with Problem \ref{pblm:type A, threshold} for $\bI \in \{\bA,\bB\}$. 
		\end{problem} \smallskip
		\sloppy {\color{black} For any given distribution flow $(\psi_{\bA,\bigcdot},\psi_{\bB,\bigcdot}) \in  \mathscr{S}_\bA \times\mathscr{S}_\bB$, the sample space $\Omega$ in $(\Omega,\mathcal F^{\psi_{\bA},\psi_{\bB}} ,\mathbb F^{\psi_{\bA},\psi_{\bB}}, \mathbb P^{\psi_{\bA},\psi_{\bB}} )$ does not vary with $(\psi_{\bA,\bigcdot},\psi_{\bB,\bigcdot})$. For a Nash equilibrium $(u^*_\bA ,u^*_\bB ,\pi_{\bA,\bigcdot}^*,\pi_{\bB,\bigcdot}^*)$, the corresponding augmented filtered probability space is $
			(\Omega,\mathcal F^{\pi_{\bA}^*,\pi_{\bB}^*},\mathbb{F}^{\pi_{\bA}^*,\pi_{\bB}^*},\mathbb P^{\pi_{\bA}^*,\pi_{\bB}^*} )$. We continue to denote the probability measure and expectation at equilibrium by $\mathbb P=\mathbb P^{\pi_{\bA}^*,\pi_{\bB}^*}$ and $\mathbb E$, respectively, if the equilibrium exists. }

		{\color{black}\subsection{Discussions}\label{sec. discussions}
			In this section, we compare the framework developed in the previous sections with two related formulations: the graphon game approach and the finite-player matching game. 
		}
		
		\subsubsection{Graphon Games Formulation}\label{rmk:extended_graphon}

		In the formulation of Problem \ref{def Nash}, type-$\bI$ agents do not interact strategically directly with each other within type; their game is solely against type-$\bJ$ agents (and vice versa). Agents of the same type are influenced by each other only indirectly through the opposite type. Viewed this way, the model is an extended block graphon game with a nonstandard structure, so approaches or results in the existing literature on graphon games are not directly applicable.
		
		Specifically, we consider a continuum of nodes labeled by $[0,1]$, with one agent per node. Partition $[0,1]$ into $\bA \cup \bB$ where $\bA=[0,\frac{\lambda_\bA}{\lambda_\bA+\lambda_\bB})$, and $\bB=[\frac{\lambda_\bA}{\lambda_\bA+\lambda_\bB},1]$ for $\lambda_\bA$ and $\lambda_\bB>0$. Let $\pi_{t}$ be the joint label-state-control probability law on $[0,1]\times \mathbb{R}_{\ge0}\times\{0,1\}\times \mathbb{R}_{\ge 0}$ at time $t$, with uniform first marginal on $[0,1]$. Let $\pi_{\bigcdot,\beta}(dy,dk,du)$ be its disintegration given a label $\beta$. Define the block graphon $G(\alpha,\beta):=\mathbf{1}_{\{\alpha \in \bA\}}\mathbf{1}_{\{\beta \in \bB\}}
		+\mathbf{1}_{\{\alpha \in \bB\}}\mathbf{1}_{\{\beta \in \bA\}}$. For an unmatched agent with label $\alpha$, the graphon–weighted opposite–state-control measure at arrival time $t$ is defined by
		\begingroup \begin{equation}\label{eq:Wpi2}
			(G\pi_t)(\alpha)(dy,dk,du):=\int_0^1 G(\alpha,\beta)\,\pi_{t,\beta}(dy,dk,du)\,d\beta.
		\end{equation}\endgroup 
		{\color{black} We construct a suitable space that depends on the given joint label-state-control probability law $\pi_{\bigcdot}$. For any such $\pi_{\bigcdot}$, there exists a complete probability space $(\widetilde{\Omega},\widetilde{\mathcal F},\widetilde{\mathbb P})$ supporting a uniformly distributed random variable $\alpha: \widetilde{\Omega} \to [0,1]$ representing the representative unmatched agent's label; a Poisson process with rate $\lambda_{\bA}+\lambda_{\bB}$ and arrival times $\{\tau_n\}_{n\in\mathbb{N}}$; and  
			a sequence of random triples $(\mathbb{QL}_{\tau_n},\mathbb{ST}_{\tau_n},\mathbbm{u}_{\tau_n})$, sampled at each arrival time $\tau_n$ from the conditional distribution  
			\[
			\mu_{\tau_n}(\alpha)(dy,dk,dv):=\frac{(G\pi_{\tau_n})(\alpha)(dy,dk,dv)}{(G\pi_{\tau_n})(\alpha)\big( \mathbb{R}_{\ge0}\times\{0,1\}\times \mathbb{R}_{\ge 0}\big)},
			\]  
			where $(G\pi_t)(\alpha)$ is defined in \eqref{eq:Wpi2}. By definition of $G(\alpha,\beta)$, this distribution guarantees that the sampled triple at each arrival time corresponds to an agent of the opposite type relative to $\alpha$. Denote by $\widetilde{\mathbb{E}}$ the expectation on this space. For the representative agent $\alpha$, we let the acceptance threshold be $u_{\alpha}:[0,T]\to \mathbb{R}_{\ge 0}$.
		}

		Let $\mathcal{QL}(\alpha)$ be the quality level of the agent $\alpha$. According to the \emph{FCFS} matching policy, the agent $\alpha$ matches with an opposite agent at time 
		\begingroup \begin{equation*} 
			\tau(\alpha,u_{\alpha},\pi_{\bigcdot},\omega):=\inf\left\{s\in\{\tau_1(\omega),\tau_2(\omega),\ldots\} 
			: 
			\begin{aligned}
				&\mathbb{ST}_{\,s}(\omega)=0,\ \mathbb{QL}_{\,s}(\omega)\geq u_{\alpha(\omega)}(s),\\
				&\mathcal{QL}(\alpha(\omega))\ge \mathbbm{u}_{\,s}(\omega)
			\end{aligned}\right\}.
		\end{equation*}\endgroup
		Given the joint label-state-control law $\pi_{\bigcdot}$, the unmatched agent $\alpha$ selects the threshold control $u_{\alpha}$ to maximize the following objective function: $
		\nonumber J(u_{\alpha};\alpha,\pi_{\bigcdot})
		=\widetilde{\mathbb{E}}\left[\int_0^{\tau\wedge T} e^{-\rho s}\,r\big(\alpha,s\big)\,ds
		+e^{-\rho \tau}\,\mathbb{QL}_{\tau}\,\mathbf{1}_{\{\tau\le T\}}
		+e^{-\rho T }h\big(\alpha\big)\,\mathbf{1}_{\{\tau>T\}}\right],$ where $r (\alpha, s):=r_{\bI}(\mathcal{QL}(\alpha),s)$ and $h(\alpha):=h_{\bI}(\mathcal{QL}(\alpha))$ with $\alpha \in \bI=\bA$ or $\bB$. In this cost, $\tau$ is evaluated at $(\alpha,u_{\alpha},\pi_{\bigcdot} )$ and $\mathbb{QL}_{\bigcdot}$ is sampled from the distribution $\int_0^\infty\sum_{k=0,1}\mu_{\bigcdot}(\alpha)(\bigcdot,dk,dv)$.

		Moreover, the discounted lump sum reward of matching $\widetilde{\mathbb{E}}\pig[e^{-\rho \tau}\,\mathbb{QL}_{\tau}\,\mathbf{1}_{\{\tau\le T\}}\pig]$ can be written in the ratio of dual pairings in terms of $(G\pi_t)(\alpha)(\cdot)$. By the  tower property, we have
		$\widetilde{\mathbb{E}}\pig[e^{-\rho \tau}\,\mathbb{QL}_{\tau}\,\mathbf{1}_{\{\tau\le T\}}\pig] =  \widetilde{\mathbb{E}} \pig[ \widetilde{\mathbb{E}} \pig(\,\mathbb{QL}_{\tau}\,\pig|\tau \pig) e^{-\rho \tau} \mathbf{1}_{\{\tau\le T\}} \pig]$. Conditioning on match occurring at time $\tau = s$, the quality level of agent $\alpha$ and the sampled quality level $\mathbb{QL}_{\tau}$ must be greater than or equal to the corresponding acceptance threshold, and the sampled agent must be unmatched. That is,
		$ \widetilde{\mathbb{E}} \pig[\,\mathbb{QL}_{\tau}\,\pig|\tau = s \pig] =  \widetilde{\mathbb{E}} \pig[\,\mathbb{QL}_{s}\,\mathbf{1}_{\{ \mathbb{ST}_{s} =0, \mathbb{QL}_{s} \geq u_{\alpha}(s), \mathcal{QL}(\alpha)\geq \mathbbm{u}_{s}\}}\pig|\tau = s \pig]$. The disintegration measure of $(\mathbb{QL}_{\,s},\mathbb{ST}_{\,s},\mathbbm{u}_{\,s})$ given the matching occurs at time $\tau=s$ is the normalization of \eqref{eq:Wpi2} on the set $\{\omega: \mathbb{ST}_{s}(\omega) =0, \mathbb{QL}_{s}(\omega) \geq u_{\alpha(\omega)}(s), \mathcal{QL}(\alpha(\omega))\geq \mathbbm{u}_{s}(\omega),\,\tau=s\}$. That is	
		\begingroup 
		\begin{align}
			(\overline{G\pi_t})(\alpha)(dy,dk,dv):=\frac{\int_0^1 G(\alpha,\beta)\mathbf{1}_{\{k=0\}}\,
				\mathbf{1}_{\{y\geq u_{\alpha}(t)\}}\,
				\mathbf{1}_{\{\mathcal{QL}(\alpha)\geq v\}}\,\pi_{t,\beta}(dy,dk,dv)\,d\beta}{\int_0^1 \int_0^{\infty} \sum_{k=0}^1 \int_0^{\infty}G(\alpha,\beta)\mathbf{1}_{\{k=0\}}\,
				\mathbf{1}_{\{y\geq u_{\alpha}(t)\}}\,
				\mathbf{1}_{\{\mathcal{QL}(\alpha)\geq v\}}\,\pi_{t,\beta}(dy,dk,dv)\,d\beta}.
			\label{785}
		\end{align}\endgroup
		Then the disintegration expectation is given by $
		\widetilde{\mathbb{E}} \pig[\,\mathbb{QL}_{s}\,\mathbf{1}_{\{ \mathbb{ST}_{s} =0, \mathbb{QL}_{s} \geq u_{\alpha}(s), \mathcal{QL}(\alpha)\geq \mathbbm{u}_{s}\}}\pig|\tau = s \pig] = \left\langle(\overline{G\pi_{s}})(\alpha), \chi \right\rangle =\frac{\pig\langle(G\pi_{s})(\alpha), \chi \cdot \varphi_{u_{\alpha},\alpha,{s}}\pigr\rangle}{\pig\langle(G\pi_{s})(\alpha), \varphi_{u_{\alpha},\alpha,{s}}\pigr\rangle},$
		where $\chi(y) := y$ is the identity function, and $\varphi_{u_{\alpha},\alpha,t}(y,k,v)
		:=\mathbf{1}_{\{k=0\}}\,
		\mathbf{1}_{\{y\geq  u_{\alpha}(t)\}}\,
		\mathbf{1}_{\{\mathcal{QL}(\alpha)\geq v\}}$ for any admissible threshold $u_{\alpha}$, $t \in [0,T]$, and $(y,k,v) \in \mathbb{R}_{\ge0}\times\{0,1\}\times \mathbb{R}_{\ge 0}$. The objective functional can be written as
		\begingroup \begin{align*}
			\nonumber J(u_{\alpha};\alpha,\pi_{\bigcdot})
			&= \widetilde{\mathbb{E}} \Bigg[\hspace{-1pt}\int_0^{\tau\wedge T} \hspace{-5pt}e^{-\rho s}\,r\big(\alpha,s\big)\,ds
			+e^{-\rho \tau}\hspace{-2pt}\frac{\pig\langle(G\pi_{\tau})(\alpha), \chi \cdot \varphi_{u_{\alpha},\alpha,{\tau}}\pigr\rangle}{\pig\langle(G\pi_{\tau})(\alpha), \varphi_{u_{\alpha},\alpha,{\tau}}\pigr\rangle}\hspace{-2pt}\mathbf{1}_{\{\tau\le T\}}
			+e^{-\rho T }h\big(\alpha\big)\,\mathbf{1}_{\{\tau>T\}}\hspace{-1pt}\Bigg].
		\end{align*}\endgroup

		The graphon equilibrium is the pair $(\pi_{\bigcdot}^*,\{ u^*_{\alpha}\}_{\alpha\in[0,1]})$ such that \textup{(i).} $
		J(  u^*_{\alpha};\alpha,\pi_{\bigcdot}^*)\ \ge\ J(  u_{\alpha};\alpha,\pi_{\bigcdot}^*)$ for a.e. $\alpha\in[0,1]$
		and all admissible $u_{\alpha}$; and \textup{(ii).} $\pi_{\bigcdot}^*$ is the joint law of the labels, quality
		levels, statuses and thresholds $(\alpha,\mathcal{QL}(\alpha),\mathcal{ST}^{*}(\alpha,\bigcdot),u^*_{\alpha}(\bigcdot))$ where $\mathcal{ST}^{*}(\alpha,t) := \mathbf{1}_{\{\tau(\alpha,u^*_\alpha,\pi_{\bigcdot}^*) \leq t\}}$ (the presence of $\pi_{\bigcdot}^*$ in $\mathcal{ST}^{*}(\alpha,t)$ implies that the equilibrium is attained at the fixed point). 
		
		This formulation is equivalent to the mean field Nash game in \Cref{def Nash}: disintegrating $\pi_{\bigcdot}^*$ over $\bA$ and $\bB$ recovers the type–wise laws and the same optimal acceptance thresholds. There are, however, two key differences between this formulation and the standard graphon games in the literature. First, our formulation is an extended graphon game in which interaction depends on the joint law of states and controls: the individual control problem of the representative agent depends simultaneously on the states and controls of other agents in a non-separable manner. Second, the presence of the matching stopping time $\tau$ in the objective functional complicates our problem. Due to the dependence between $\tau$ and the state-control sampling of the opposite type, the lump sum reward of matching $\mathbb{QL}_{\,\tau}$ can be expressed only as a ratio of two dual pairings. These features fall outside the standard graphon-game framework, so existing results do not apply.

		\subsubsection{Finite‑player Games Formulation}\label{rmk:finite‑player}

		{\color{black} Let $n_\bA,n_\bB\in \mathbb{N}$, and consider a market with $n_\bA$ type‑$\bA$ agents and $n_\bB$ type‑$\bB$ agents with index sets $\mathbf{A}$ and $\mathbf{B}$, respectively. Let $(\widehat \Omega,\widehat{\mathcal{F}},\widehat{\mathbb{F}},\widehat{\mathbb{P}})$ be a complete filtered probability space on which we work, with $\widehat{\mathbb{F}}=\{\widehat{\mathcal{F}}_{t}\}_{t\ge 0}$ and corresponding expectation $\widehat{\mathbb{E}}$. We assume that each agent meets any other agent (not necessarily on the opposite side) according to a Poisson process with intensity $\lambda>0$. Then the intensities that any type-$\bA$ agent meets any other agent of type-$\bA$ and type-$\bB$ are given by $\frac{\lambda (n_\bA-1)}{n_\bA + n_\bB -1}$ and  $\frac{\lambda n_\bB}{n_\bA + n_\bB -1}$, respectively. For a type-$\bB$ agent, the corresponding intensities are $\frac{\lambda n_\bA}{n_\bA + n_\bB -1}$ and $\frac{\lambda (n_\bB-1)}{n_\bA + n_\bB -1}$. Let $N^{\text{mkt}}_{\bigcdot}$ be the Poisson process counting all cross-type meetings, with intensity $\frac{\lambda n_\bA n_\bB}{n_\bA + n_\bB -1}$, and let $\{T^{\text{mkt}}_n\}$ be its arrival times. For any agent $i \in \bA \cup \bB$, let $N_t(i)$ be the Poisson process counting the number of meetings of agent $i$ with agents of the opposite type in the time interval $[0,T]$, and denote its arrival times by $\{T_n(i)\}$. It is clear that $\{T_n(i)\} \subseteq \{T^{\text{mkt}}_n\}$ and $\cup_{a \in \bA}\{T_n(a)\} = \cup_{b \in \bB}\{T_n(b)\} = \{T^{\text{mkt}}_n\}$, $\widehat{\mathbb{P}}$-a.s. Note that for any $a\in \bA$ and meeting time $T_n(a)$, there exists $k \in \mathbb{N}$ and $b \in \bB$ such that $T_n(a) = T_k(b)$. This shows that the meetings are bilaterally coherent.
			
			For any given stopping time $s \in \{T^{\text{mkt}}_n\}$, let $a_s$ and $b_s$ be the uniformly distributed indices random variables taking values in $\bA$ and $\bB$, respectively, representing the indices of the agents involved in the meeting at time $s$. For $\bI \in\{ \bA, \bB\}$, we define two maps $\mathbf{ql}_{\bI}: \bI \to X_{\bI}$ and $\mathbf{st}_{\bI,\bigcdot}: \bI\times[0,T] \to \{0,1\}$ such that $\mathbf{ql}_{\bI}(i)$ and $\mathbf{st}_{\bI,t}(i)$ respectively denote the quality level and matching status of the agent $i$ at time $t\in[0,T]$.
			
			For given threshold strategies of all agents, i.e., $u_{\bB}(b,\bigcdot)$ for every $b\in\bB$ and $u_{\bA}(a,\bigcdot)$ for every $a\in\bA$, the matching status of all agents can be characterized by the following coupled dynamics
			\begin{equation}\label{sde.matching_statusb}
				\mathbf{st}_{\bB,t}(b) = \sum_{i=1}^{N_t(b)} \mathbf{1}_{\{\mathbf{st}_{\bA,T_i(b)^-}(a_{T_i(b)})=0\}} \mathbf{1}_{\{\mathbf{st}_{\bB,T_i(b)^-}(b) = 0\}} \mathbf{1}_{\{\mathbf{ql}_{\bB}(b) \geq u_{\bA}(a_{T_i(b)},T_i(b))\}} \mathbf{1}_{\{\mathbf{ql}_{\bA}(a_{T_i(b)}) \geq u_{\bB}(b,T_i(b))\}},
				\hspace{-10pt}
			\end{equation}
			\begin{equation}\label{sde.matching_statusa}
				\mathbf{st}_{\bA,t}(a) = \sum_{i=1}^{N_t(a)} \mathbf{1}_{\{\mathbf{st}_{\bB,T_i(a)^-}(b_{T_i(a)})=0\}} \mathbf{1}_{\{\mathbf{st}_{\bA,T_i(a)^-}(a) = 0\}} \mathbf{1}_{\{\mathbf{ql}_{\bA}(a) \geq u_{\bB}(b_{T_i(a)},T_i(a))\}} \mathbf{1}_{\{\mathbf{ql}_{\bB}(b_{T_i(a)}) \geq u_{\bA}(a,T_i(a))\}},
				\hspace{-10pt}
			\end{equation}
			with $\mathbf{st}_{\bB,0}(b) = \mathbf{st}_{\bA,0}(a) = 0$, for all $a \in \bA$, $b \in \bB$. The dynamics indicate that any two agents {\color{black}with different types become matched} if they are both unmatched and each agent’s quality level exceeds the other’s acceptance threshold.

			For a given type‑$\bA$ agent $a\in \bA$, taking as given the threshold strategies of all other agents, i.e., $u_{\bB}(b,\bigcdot)$ for every $b\in\bB$ and $u_{\bA}(a',\bigcdot)$ for every $a'\in\bA\setminus\{a\}$, the agent $a$ chooses a threshold function $u_\bA(a,\cdot):[0,T]\rightarrow [0,\infty)$ to maximize the objective functional
			\begingroup \begin{align}\label{prob:finite_player}
				&\hspace{-5pt}J_{\bA}\pig(u_{\bA}(a,\bigcdot);a,\{u_{\bA}(a',\bigcdot)\}_{a'\neq a},u_{\bB}\pig)\nonumber\\ 
				&\hspace{-5pt}:= \widehat{\mathbb{E}}\bigg[\!\!\int_0^{\tau_{\bA}(a) \land T} \hspace{-8pt}e^{-\rho s} r_{\bA}(\mathbf{ql}_{\bA}(a),s) \, ds 
				+ e^{-\rho \tau_{\bA}(a)} \mathbf{ql}_{\bB}(b_{\tau_{\bA}(a)}) \mathbf{1}_{\{\tau_{\bA}(a) \leq T\}} + e^{-\rho T} \mathbf{1}_{\{\tau_{\bA}(a) > T\}} h_{\bA}(\mathbf{ql}_{\bA}(a))\bigg],
				\hspace{-5pt}
			\end{align}\endgroup
			where $\tau_{\bA}(a) := \inf\Big\{s\in \{T_n(a)\}: \mathbf{st}_{\bB,s^-}(b_{s})=0,\; \mathbf{ql}_{\bA}(a) \geq u_{\bB}(b_{s},s),\; \mathbf{ql}_{\bB}(b_{s}) \geq u_\bA(a,s)\Big\}.$ The status dynamics of the agent $a$ is given by \eqref{sde.matching_statusa}. A similar objective functional can be defined for agents on the opposite side.

			As noted above, the evolutions of the status processes $\mathbf{st}_{\bI,\bigcdot}$ of agents are coupled and must be determined jointly. One can verify that $\mathbf{st}_{\bA,\bigcdot}(a)$ and $\mathbf{st}_{\bB,\bigcdot}(b)$ are adapted to the filtration $\widehat{\mathbb{F}}$. A pair of strategies $(u^*_{\bA},u^*_{\bB})$ constitutes a Nash equilibrium of this finite‑player matching game if, for every agent $a\in\bA$ (resp. $b\in\bB$), given that all other agents on both sides adopt $(u^*_{\bA},u^*_{\bB})$, the strategy $u^*_{\bA}(a,\bigcdot)$ (resp. $u^*_{\bB}(b,\bigcdot)$) is optimal for problem \eqref{prob:finite_player}.
			
			When the number of agents is large but finite, analyzing Nash equilibria in the above setting becomes challenging for two main reasons. First, each agent’s decision and state have a non-negligible effect on the pool of available partners, making the system fully coupled. Second, without the law of large numbers, the empirical joint distribution of quality level and matching status of agents in each type is itself a random measure that evolves stochastically. Its evolution is governed by stochastic integro-differential equations.
			
			To overcome these difficulties, we instead analyze a mean‑field setting, obtained as the limit $n_\bA,n_\bB\to\infty$ while keeping the relative population sizes fixed. This approach is widely used to analyze large‑population matching problems \cite{perthame2018career,li2025repositioning,KanoriaSaban2021}. In this limit, the law of large numbers applies, and the impact of any individual agent on the aggregate system becomes negligible. As a result, three key differences emerge relative to the finite-player setting:
			
			\begin{enumerate}[(1).]
				\item The evolution of the joint distribution of quality and status is governed by a controlled deterministic forward equation.
				\item In the continuum limit, the probability that two specific individuals meet is zero; instead, meetings are described probabilistically through the current distribution of quality levels and status on the opposite side of the market \cite{duffie2007existence,duffie2025continuous,duffie2018dynamic}.  
				\item Because each agent has a negligible effect on the population distribution, we can study the decision problem of a representative agent given the distribution of agents on the opposite side, as in Section \ref{sec. Dynamic Matching Problem with Finite Agents}. 
			\end{enumerate}
			These features make the analysis of large‑population two‑sided matching problems tractable. In the mean-field limit, each type-$\bA$ agent experiences his own Poisson stream of meeting opportunities with rate $\lambda_{\bB}$, and each type-$\bB$ agent faces one with rate $\lambda_{\bA}$. Heuristically, this corresponds to $\frac{\lambda n_\bB}{n_\bA+n_\bB -1} \to \lambda_{\bB}$ and $\frac{\lambda n_\bA}{n_\bA+n_\bB -1} \to \lambda_{\bA}$, as $n_\bA, n_\bB \to \infty$ with $\frac{n_\bA}{n_\bB}\to \Lambda$ for some $0<\Lambda<\infty$. These intensities satisfy the coherence condition $\frac{\lambda_{\bA}}{\lambda_{\bB}} = \Lambda$.
			
			Finally, we note that the connection between the finite‑player system and its mean‑field limit can be rigorously established via propagation‑of‑chaos arguments in many related contexts, such as \cite{andreis2018mckean,locherbach2025strong}. Extending such results to our matching model would be an interesting direction for future research. In the present paper, however, we focus exclusively on the formulation, well‑posedness, and economic insights of the mean‑field equilibrium.
			
		}
		
		\subsection{Applications}\label{sec. applications}
		
		Our model applies to a variety of real-world markets, including labor markets, over-the-counter (OTC) markets for second-hand houses, and initial public offering (IPO) markets.\vspace{5pt}
		
		\paragraph{Labor markets:} The labor market consists of job seekers (group $\bA$) and hiring managers of firms (group $\bB$). A job seeker is characterized by his/her ability level $x \in \mathbb{R}_{\geq 0}$, and a hiring manager by the level or ranking $y \in \mathbb{R}_{\geq 0}$ of the job vacancy offered. The market operates repeatedly over successive periods of horizon $[0,T]$ (e.g., $T=1$ year). At the start of each period, both job seekers and hiring managers enter the market to search for matches.
		
		During the search period, a job seeker receives running utility $r_\bA(x,t)$, such as income from part-time work, job-seeking assistance, or other temporary benefits. The term $r_\bB (y,t)$ corresponds to the opportunity cost or return the company gains from idle capital while waiting to hire, such as from investments or reallocating internal resources.
		
		If a hiring manager successfully hires (matches) a job seeker with ability $x$ at time $\tau_\bB \leq T$, the manager receives a lump sum utility $x$, interpreted as the present discounted value of the total profits generated by the job seeker over their tenure (e.g., over a 30-year contract). Conversely, if a job seeker is hired (matched) by a firm with ability $y$ at time $\tau_\bA \leq T$, the job seeker obtains a lump sum utility $y$, interpreted as the present discounted value of the total benefits from the job (e.g., lifetime earnings discounted to the present).
		
		Agents who remain unmatched at the terminal time $T$ exit the current market and receive terminal utilities that reflect their expected outcomes in the subsequent period. For a job seeker, the terminal utility $h_\bA$ represents the average utility from participating in the next period's labor market or from self-employment alternatives. For a hiring manager, the terminal utility $h_\bB$ represents the average value derived from finding an employee in the next period's market.
		
		The dynamic asymmetric selection problem applies to this repeated labor market as follows: Hiring managers (Type-$\bB$ agents) post job vacancies with a predetermined hiring threshold $u_\bB$, sequentially interview job seekers according to the exogenous meeting process $\{N_{\bA,t}\}_{t\ge0}$, and follow a first-acceptance policy, hiring the first job seeker whose ability $x$ satisfies $x\ge u_\bB$. Job seekers (Type-$\bA$ agents) are sequentially interviewed by firms according to an exogenous meeting process $\{N_{\bB,t}\}_{t\ge0}$ and adopt an optimal stopping strategy, accepting the most recent job offer from a firm offering a position of level $y$ whenever the offer exceeds the job seeker’s reservation utility (dynamically determined based on the value of continued search), after which the job search terminates.
		
		\paragraph{OTC market for second-hand houses:} The OTC market of second-hand houses involves buyers (group $\bA$) and sellers (group $\bB$). Each buyer is identified by his/her budget or purchasing power $x \in \mathbb{R}_{\geq 0}$; while each seller is characterized by the quality level $y \in \mathbb{R}_{\geq 0}$ of his/her house. The term $r_\bA (x,t)$ corresponds to the risk-free interest (or opportunity cost) on the buyer's idle capital while searching for a house to purchase. The term $r_\bB (y,t)$ represents the rental income the seller receives while the property is still unsold. If the buyer fails to purchase a suitable property by the terminal time, they exit the market and receive a terminal utility represented by the function $h_\bA$, which reflects the buyer’s fallback options, such as investing their unused funds elsewhere or continuing to rent. It quantifies the opportunity value retained by the buyer in the absence of a transaction. If the seller fails to sell their property by the terminal time, then they exit the market and receive a terminal utility given by the function $h_\bB $ which represents the value of alternative outcomes—such as continuing to rent out the property.\vspace{5pt}
		
		\section{Main Results}\label{sec. main results}
		In this section, we present the well-posedness results of Problems \ref{pblm:type A, threshold} and \ref{def Nash}. They are established by studying the corresponding HJB–FP system. 
		
		\subsection{Derivation of HJB and FP Equations}
		
		In this section, we outline the formal derivation of the HJB and FP equations associated with the control problems presented in Problem~\ref{pblm:type A, threshold}, which does not follow directly from existing methods, and no immediate or classical HJB formulation applies. By governing the systems corresponding to Problem~\ref{pblm:type A, threshold} of type-$\bA$ and type-$\bB$ agents, we obtain the fully coupled HJB–FP system \eqref{HJBx}-\eqref{boundary} that characterizes the equilibrium of the mean field game formulated in Problem~\ref{def Nash}. The global well-posedness of the system is rigorously established in Appendices~\ref{sec:existence}–\ref{sec. unique}, with Theorems~\ref{maintheorem} and \ref{thm unique} providing the main results. Furthermore, Theorem~\ref{thm.verification} verifies that the obtained solution indeed constitutes a mean field Nash equilibrium of Problem~\ref{def Nash}. {\color{black}As we work at equilibrium throughout, the exogenous thresholds and distributions $(v_{\mathbf{A}}, v_{\mathbf{B}}, \psi_{\mathbf{A},\bigcdot}, \psi_{\mathbf{B},\bigcdot})$ (as introduced in \Cref{sec Problem Settings}) coincide with the optimal ones 
			$(u_{\mathbf{A}}, u_{\mathbf{B}}, \pi_{\mathbf{A},\bigcdot}, \pi_{\mathbf{B},\bigcdot})$ at equilibrium. Therefore, we use only the latter throughout this section.}

		\subsubsection{Evolution of Defective Probability Density Functions}\label{sec.markov} 
		
		We first characterize the matching event. For $\bI, \bJ\in \{\bA,\bB\}$ with $\bI \neq \bJ$, we suppose that type-$\bI$ and type-$\bJ$ agents adopt threshold strategies $u_{\bI}\in \mathcal{U}_{\bI}$ and $u_{\bJ} \in \mathcal{U}_{\bJ}$ respectively. According to the matching mechanism in \Cref{ass.matching rule}, the time-dependent matching region of the representative type-$\bI$ agent $(x,t)$ meeting a random type-$\bJ$ agent is defined by\begingroup 
		\begin{equation}\label{def: matching region}
			\mathcal{O}_{\bI}(x,t):=\big\{y\in X_\bJ:u_{\bI} (x,t) \leq y \quad\text{and}\quad x\geq u_{\bJ}(y,t)\big\}.
		\end{equation}\endgroup
		This set has the following interpretation: if the representative type-$\bI$ agent $(x,t)$ meets an unmatched type-$\bJ$ agent $(y,t)$, then a match occurs if and only if $y \in \mathcal{O}_{\bI}(x,t)$.  With a proper flow of defective probability distribution $\mu_{\bJ,\bigcdot}$ of quality level of unmatched type-$\bJ$ agents, the conditional probability that the representative type-$\bI$ agent $(x,t)$ meets and matches with a randomly sampled type-$\bJ$ agent $(\mathbf{QL}_{\bJ,\bigcdot},\mathbf{ST}_{\bJ,\bigcdot})$ at time $t$ is given by
		{\color{black}\begingroup \begin{align}\label{645}
				\mathbb{P}_{\bI,x,t}\pig(\mathbf{QL}_{\bJ,t} \in \mathcal{O}_{\bI}(x,t) \text{ and } \mathbf{ST}_{\bJ,t} = 0 \,\pig|\, \text{$\exists \, \tau^\bJ \in \mathcal{T}^\bJ$ s.t. $\tau^\bJ = t$}\pig) 
				= \mu_{\bJ,t} \pig(\mathcal{O}_{\bI}(x,t)\pig)
			\end{align}\endgroup}
		for $(x,t) \in X_\bI \times [0,T]$. For $(x,t) \in X_\bI \times [0,T)$, we take $h > 0$ small enough such that $t+h<T$. We have\begingroup 
		\begin{equation*}
			\mathbb{P}_{\bI,x,0}(\tau_{\hspace{0.5pt}\bI}(x,0) >t) - \mathbb{P}_{\bI,x,0}(\tau_{\hspace{0.5pt}\bI}(x,0) >t+h)
			= \, \mathbb{P}_{\bI,x,0}(\tau_{\hspace{0.5pt}\bI}(x,0) >t) \mathbb{P}_{\bI,x,0}\pig(\tau_{\hspace{0.5pt}\bI}(x,0) \leq t+h \,\pig| \,\tau_{\hspace{0.5pt}\bI}(x,0) > t \pig).
		\end{equation*}\endgroup
		By definitions in \eqref{def.PIxt} and \eqref{def match time of A}, we have, for any $(x,t) \in X_\bI \times [0,T]$,
		\begingroup \begin{equation}\label{property.memoryless}
			\mathbb{P}_{\bI,x,0}\pig(\tau_{\hspace{0.5pt}\bI}(x,0) \in \bigcdot \,\pig|\, \tau_{\hspace{0.5pt}\bI}(x,0) > t \pig) = \mathbb{P}_{\bI,x,t}\pig(\tau_{\hspace{0.5pt}\bI}(x,t) \in \bigcdot\pig).
		\end{equation}\endgroup
		In addition, by the matching mechanism in \Cref{ass.matching rule} and \cite[Definition 2.1.2]{ross1995stochastic}, and by the law of total probability, we have
		{\color{black}\begingroup \begin{align*}
				&\mathbb{P}_{\bI,x,t}\pig(\tau_{\hspace{0.5pt}\bI}(x,t) \leq t+h \pig) \\
				&=\, \pig[\!\lambda_\bJ h + o(h)\!\pig]\!\pig[
				\mathbb{P}_{\bI,x,t}\pig(\mathbf{QL}_{\bJ,t} \in \mathcal{O}_{\bI}(x,t)\, \text{ and }\, \mathbf{ST}_{\bJ,t} = 0 \,\pig|\, \text{$\exists \, \tau^\bJ \in \mathcal{T}^\bJ$ s.t. $\tau^\bJ = t$}\pig) + o(1)\pig]\hspace{-20pt} \nonumber\\
				&=\,  \lambda_\bJ h \mathbb{P}_{\bI,x,t}\pig(\mathbf{QL}_{\bJ,t} \in \mathcal{O}_{\bI}(x,t)\, \text{ and }\, \mathbf{ST}_{\bJ,t} = 0  \,\pig|\, \text{$\exists \, \tau^\bJ \in \mathcal{T}^\bJ$ s.t. $\tau^\bJ = t$}\pig) + o(h).
			\end{align*}\endgroup}%
		Therefore, using \eqref{645} gives $\mathbb{P}_{\bI,x,0}(\tau_{\hspace{0.5pt}\bI}(x,0) >t) - \mathbb{P}_{\bI,x,0}(\tau_{\hspace{0.5pt}\bI}(x,0) >t+h)
		= \mathbb{P}_{\bI,x,0}(\tau_{\hspace{0.5pt}\bI}(x,0) >t)\lambda_\bJ \mu_{\bJ,t} \pig(\mathcal{O}_{\bI}(x,t)\pig) \hspace{0.5pt} h + o(h).$ When $\tau_\bI(x,t)\le T$, it is $\mathcal{T}^\bJ$-valued by \eqref{def match time of A}, so its distribution has no atom on $[0,T]$. Suppose, for contradiction, that we have $\mathbb{P}_{\bI,x,t}(\tau_\bI(x,t) = s ) > 0$  for some $s\in[t,T]$. By the tower property, $0 < \mathbb{P}_{\bI,x,t}(\tau_\bI(x,t) = s ) = \sum_{k=1}^\infty \mathbb{P}_{\bI,x,t}(\tau_\bI(x,t) = s\,|\,\tau_\bI(x,t) = \tau_{\bJ,k}) \mathbb{P}_{\bI,x,t}(\tau_\bI(x,t) = \tau_{\bJ,k}) = \sum_{k=1}^\infty \mathbb{P}_{\bI,x,t}(\tau_{\bJ,k} = s) \mathbb{P}_{\bI,x,t}(\tau_\bI(x,t) = \tau_{\bJ,k})$. It would imply that at least one of the arrival times has mass in its distribution, which is obviously not true. Thus, by $\mathbb{P}_{\bI,x,0}(\tau_{\hspace{0.5pt}\bI}(x,0) > t) = \mathbb{P}_{\bI,x,0}(\tau_{\hspace{0.5pt}\bI}(x,0) \geq t) = \mathbb{P}_{\bI,x,0}(\tau_{\hspace{0.5pt}\bI}(x,0) > t^-)$, we obtain that $\mathbb{P}_{\bI,x,0}(\tau_{\hspace{0.5pt}\bI}(x,0) >t)$ is continuous with respect to $t$. Hence, we have
		\begingroup \begin{equation}\label{eq.Ptau}
			-\frac{\p \mathbb{P}_{\bI,x,0}(\tau_{\hspace{0.5pt}\bI}(x,0) >t)}{\p t} = \mathbb{P}_{\bI,x,0}(\tau_{\hspace{0.5pt}\bI}(x,0) >t)\lambda_\bJ \mu_{\bJ,t} \pig(\mathcal{O}_{\bI}(x,t)\pig), \quad \text{for $t\in[0,T)$},
		\end{equation}\endgroup
		with the initial condition $\mathbb{P}_{\bI,x,0}(\tau_{\hspace{0.5pt}\bI}(x,0) >0)=1$.
		By \Cref{prop.fxt3}, for any bounded measurable test function $\phi$ on $X_\bI$, we have
		\begingroup \begin{equation}\label{def.fxt.2}
			\int_0^\infty \phi(x) d\mu_{\bI,t}(x) = \int_0^\infty \phi(x) \mathbb{P}_{\bI,x,0}(\tau_{\hspace{0.5pt}\bI}(x,0) > t) d\mu_{\bI ,0}(x).
		\end{equation}\endgroup
		Differentiating with respect to  $t$ on both sides of \eqref{def.fxt.2}, together with \eqref{eq.Ptau}, we have 
		\begingroup \begin{align}\label{def.fxt.3}
			\frac{\partial }{\partial t}
			\int_0^\infty \hspace{-5pt} \phi(x) d \mu_{\bI,t}(x) 
			=  \hspace{-1pt}-\hspace{-3pt}\int_0^\infty \hspace{-5pt}\phi(x) \lambda_\bJ \mu_{\bJ,t} \pig(\mathcal{O}_{\bI}(x,t)\pig) d\mu_{\bI ,t}(x).
		\end{align} \endgroup
		As $\phi$ is arbitrary, equation \eqref{def.fxt.3} can be interpreted as $
		\frac{d \frac{\partial \mu_{\bI,t}}{\partial t}}{d \mu_{\bI ,t}} (x)=- \lambda_\bJ \mu_{\bJ,t}\pig(\mathcal{O}_{\bI}(x,t)\pig),$ with the initial condition $\lim_{t \downarrow 0}\mu_{\bI,t} (\bigcdot)=\mu_{\bI,0} (\bigcdot)$, provided that $\mu_{\bI,t} $ is regular enough.
		
		At the end of this subsection, we establish a lemma showing that $\mu_{\bI,t} (X_\bI)=F_\bI (t)$ defined in \eqref{def. unmatched F_I} is continuous and strictly positive for any $t\in[0,T]$. This result implies that, whenever the market closes, there remains a strictly positive proportion of agents who are unmatched. 
		\begin{lemma}\label{lem:FAB>0}
			For any $\bI \in \{\bA,\bB\}$, $F_\bI (t)$ is continuous with respect to $t$ and satisfies $F_\bI (t) >0$ for all $t \in [0,T]$.
		\end{lemma}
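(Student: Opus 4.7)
The plan is to leverage the ODE \eqref{eq.Ptau} derived in the preceding paragraphs, which yields an explicit exponential formula for the survival function $\mathbb{P}(\tau_{\h{0.5pt}\bI}(x,0) > t)$, and then combine it with Lemma \ref{prop.Ft} to conclude. The central observation is that $\mu_{\bJ,s}$ is a \emph{defective} (sub-)probability measure on $X_\bJ$, so its total mass and hence the measure of any interval is at most $1$; this will produce a uniform-in-$x$ lower bound.

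First, I would integrate \eqref{eq.Ptau} with the initial condition $\mathbb{P}(\tau_{\h{0.5pt}\bI}(x,0) > 0) = 1$ to obtain, for all $(x,t) \in X_\bI \times [0,T]$,
\begin{equation*}
\mathbb{P}(\tau_{\h{0.5pt}\bI}(x,0) > t) = \exp\!\Bigl(-\lambda_\bJ \int_0^t \mathds{1}_{\{u_\bI(x,s)\le u_\bJ^{-1}(x,s)\}}\, \mu_{\bJ,s}\bigl(\bigl[u_\bI(x,s),\, u_\bJ^{-1}(x,s)\bigr]\bigr)\, ds\Bigr).
\end{equation*}
Since $\mu_{\bJ,s}(X_\bJ) = F_\bJ(s) \le 1$ (by item \textbf{(6)--(7)} of Section \ref{sec. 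Probabilistic Setting and Meeting Mechanism}), the integrand in the exponent is bounded above by $\lambda_\bJ$, which yields the uniform lower bound $\mathbb{P}(\tau_{\h{0.5pt}\bI}(x,0) > t) \ge e^{-\lambda_\bJ T}$ for every $x \in X_\bI$ and $t \in [0,T]$.

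Next, I would invoke Lemma \ref{prop.Ft} together with $\mu_{\bI,0}(X_\bI)=1$ to deduce
\begin{equation*}
F_\bI(t) \;=\; \int_{X_\bI} \mathbb{P}(\tau_{\h{0.5pt}\bI}(x,0) > t)\, d\mu_{\bI,0}(x) \;\ge\; e^{-\lambda_\bJ T} \;>\; 0,
\end{equation*}
which gives the strict positivity. For continuity, I would observe that the exponential formula above makes $t \mapsto \mathbb{P}(\tau_{\h{0.5pt}\bI}(x,0) > t)$ Lipschitz in $t$ with constant $\lambda_\bJ$, uniformly in $x$ (since the integrand in the exponent is bounded by $\lambda_\bJ$ and the exponential is $1$-Lipschitz on $(-\infty,0]$). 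Integrating against $\mu_{\bI,0}$ then gives $|F_\bI(t)-F_\bI(s)| \le \lambda_\bJ |t-s|$, so $F_\bI$ is Lipschitz (in particular continuous) on $[0,T]$.

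The only non-routine step is the validity of the exponential representation of $\mathbb{P}(\tau_{\h{0.5pt}\bI}(x,0) > t)$, which rests on the continuity in $t$ established via the no-mass argument preceding \eqref{eq.Ptau} and on the measurability of $s \mapsto \mu_{\bJ,s}([u_\bI(x,s),u_\bJ^{-1}(x,s)])$; the latter follows from $u_\bI,u_\bJ \in \mathcal{U}_{\bI},\mathcal{U}_{\bJ}$ (continuous in $t$, strictly increasing in $x$, so $u_\bJ^{-1}(\cdot,t)$ exists and is measurable). Once that is granted, the rest of the argument is a direct Grönwall-type estimate exploiting the sub-probabilistic nature of $\mu_{\bJ,\bigcdot}$.
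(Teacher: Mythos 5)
Your proof follows essentially the same route as the paper's: integrate \eqref{eq.Ptau} to the exponential survival formula, appeal to Lemma \ref{prop.Ft}, and read off continuity and positivity. The only cosmetic difference is that you make the uniform lower bound $e^{-\lambda_\bJ T}$ and the Lipschitz modulus explicit, whereas the paper simply observes that the exponential expression is positive for each $x$; both are valid.
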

		\begin{proof} 
			Let $\bI, \bJ\in \{\bA,\bB\}$ with $\bI \neq \bJ$. By the continuity of $\mathbb{P}(\tau_{\hspace{0.5pt}\bI}(x,0) >t)$ in $t$ given by \eqref{eq.Ptau}, \Cref{prop.Ft} implies that $F_\bI (t)=\mathbb{P}(\tau_{\hspace{0.5pt}\bI} (\mathbf{ql}_{\bI},0)> t)$ is continuous. By \eqref{eq.Ptau}, we have $
			\mathbb{P}(\tau_{\hspace{0.5pt}\bI}(x,0) >t)= \exp\left[-\lambda_\bJ  \int_0^t\mu_{\bJ,s} \pig(\mathcal{O}_{\bI}(x,s)\pig)ds\right] >0,$ for $t\in[0,T)$ and $x \geq 0$.   
		\end{proof}

		\subsubsection{Derivation of HJB Equations}\label{sec.HJB}

		In this section, we formally derive the HJB equation associated with Problem~\ref{pblm:type A, threshold}, aiming to provide intuition for how the equation arises. Let $\bI, \bJ\in \{\bA,\bB\}$ with $\bI \neq \bJ$. Given a fixed threshold function $u_{\bJ} \in \mathcal{U}_{\bJ}$ and a joint distribution $\pi_{\bJ,\bigcdot} \in \mathscr{S}_\bJ$ for type-$\bJ$ agents with $\mu_{\bJ,\bigcdot}(\bigcdot)=\pi_{\bJ,\bigcdot}(\bigcdot,0)$. Suppose that they are regular enough. For convenience, we denote the corresponding value function $\mathcal{V}_\bI(x,t,u_\bJ,\pi_{\bJ,\bigcdot})$ in \eqref{def.V_I.threshold} simply by $\mathcal{V}_\bI(x,t)$. We apply the first jump analysis, a standard argument widely used in the study of renewal processes, to derive the HJB equation. Specifically, fix $(x,t) \in \mathbb{R}_{\geq 0} \times [0,T)$ and choose $h > 0$ small enough such that $t+h < T$. There are three possible outcomes within $[t, t+h]$: (i) a meeting occurs and results in a match; (ii) a meeting occurs but no match is formed; or (iii) no meeting takes place. Using the tower property by conditioning on the events (i)-(iii) within $[t, t+h]$, and letting $h \to 0^+$, we formally derive the HJB equation:
		\begingroup \begin{equation}\label{HJB_maximize.VA}\begin{aligned}
				\rho \mathcal{V}_\bI (x,t)-\frac{\partial \mathcal{V}_\bI (x,t) }{\partial t}
				=\lambda_\bJ \sup_{z\geq 0}\left( 
				\int_{z}^{\infty}\mathbf{1}_{\{u_\bJ(y,t)\leq x\}}
				\big[y-\mathcal{V}_\bI (x,t)\big]
				d\mu_{\bJ,t} (y)\right)+r_\bI (x,t)  
			\end{aligned}
		\end{equation}\endgroup
		on $\mathbb{R}_{\geq 0} \times [0,T)$, with the terminal condition $\mathcal{V}_\bI (x,T)=h_\bI (x)$ on $\mathbb{R}_{\geq 0}.$ We further reduce the above equation. Fix $(x,t) \in \mathbb{R}_{\geq 0} \times [0,T]$ and define
		\begingroup \begin{align}		\label{1204} 
			H(z):=
			\int_{z}^{\infty} \mathbf{1}_{\{u_\bJ(y,t)\leq x\}}
			\big[y-\mathcal{V}_\bI (x,t)\big] d\mu_{\bJ,t} (y).
		\end{align}\endgroup 
		Since for any $z\ge 0$, we have $\int_{z}^{\mathcal{V}_\bI (x,t)} \mathbf{1}_{\{u_\bJ(y,t)\leq x\}}\big[y-\mathcal{V}_\bI (x,t)\big] d\mu_{\bJ,t} (y)\le 0.$ It follows that
		\begingroup \begin{align*}
			H(z)=\,& \int_{z}^{\mathcal{V}_\bI (x,t)} \mathbf{1}_{\{u_\bJ(y,t)\leq x\}}\big[y-\mathcal{V}_\bI (x,t)\big] d\mu_{\bJ,t} (y)+H(\mathcal{V}_\bI (x,t))\le H(\mathcal{V}_\bI (x,t)).
		\end{align*}\endgroup
		Hence, a maximizer (may not unique) of $H$ is attained at
		\begingroup \begin{equation}\label{optimalcontrol.A}
			z^* = \mathcal{V}_\bI (x,t)
		\end{equation}\endgroup
		which simplifies the HJB equation \eqref{HJB_maximize.VA} to
		\begingroup 
		\begin{equation}\label{eq HJB VA}
			\left\{\begin{aligned}
				&\rho \mathcal{V}_\bI (x,t)-\frac{\partial \mathcal{V}_\bI (x,t) }{\partial t}=\lambda_\bJ 
				\int_{\mathcal{V}_\bI (x,t)}^{\infty}
				\mathbf{1}_{\{u_\bJ(y,t)\leq x\}} 
				\big[y-\mathcal{V}_\bI (x,t)\big] d\mu_{\bJ,t} (y)  +r_\bI (x,t), && \text{on $\mathbb{R}_{\geq 0} \times [0,T)$}, \\[5pt]
				&\mathcal{V}_\bI (x,T)=h_\bI (x)
				&& \text{on $\mathbb{R}_{\geq 0}$.} 
			\end{aligned}
			\right.
		\end{equation}\endgroup
		Note that the value of the Hamiltonian in \eqref{HJB_maximize.VA} is independent of the particular choice of maximizer $z^*$, as long as $z^*$ attains the maximum of $H$. In other words, \eqref{HJB_maximize.VA} and \eqref{eq HJB VA} are equivalent.

		\subsection{Existence and Uniqueness of Mean Field Nash Equilibrium of Problem \ref{def Nash}}\label{sec.equilibrium}
		Now we state the fully-coupled forward backward HJB-FP system associated with the equilibrium in \Cref{def Nash} and the global well-posedness result. To ensure solvability, we assume that the initial distributions are absolutely continuous:
		\begin{assumption}[\bf Initial Distributions]\label{condition_initial} 
			For any $\bI \in \{\bA,\bB\}$, the initial distribution $\mu_{\bI,0}$ given in \Cref{sec. Probabilistic Setting and Meeting Mechanism} is absolutely continuous with respect to the Lebesgue measure with density $f_{\bI,0} \in L^1(\mathbb{R}_{\geq 0}; \mathbb{R}_{\geq 0})$. Moreover, there exist positive constants $\nu>0$ and $ C_\bA, C_\bB >1$ such that   $f_{\bI ,0}(z) \le \frac{ C_\bI }{1+z^{2+\nu}}$ for all $z \in \mathbb{R}_{\geq 0}$.  
		\end{assumption} 
		Therefore, by \Cref{prop.fxt3}, we have $\mu_{\bI,t} \ll \mu_{\bI,0}$ and $f_\bI(x,t) := \mathbb{P}(\tau_{\hspace{0.5pt}\bI}(x,0) > t) f_{\bI ,0}(x)$ is the density function of $\mu_{\bI,t}$. Hence, formally, equation \eqref{def.fxt.3} can be expressed as
		{\color{black}\begingroup \begin{align}
				\frac{\partial f_{\bI}(x,t)}{\partial t}=- \lambda_\bJ f_{\bI}(x,t)  
				\int_{u_{\bI} (x,t)}^{\infty}
				\mathbf{1}_{\{u_\bJ(y,t)\leq x\}} 
				f_{\bJ}(y,t)dy.
				\label{fp.fA0}
			\end{align}\endgroup}%
		Putting $u_\bI=u^*_\bI=\mathcal{V}_\bI$ in \eqref{fp.fA0}, we see that the joint distribution $\pi^*_{\bI,\bigcdot}$ constructed from the solution of this equation is independent of the choice of optimal control, once the objective functional is maximized. To enhance the presentation of the system of equations, we denote by \(\mathcal{V}^{-1}_\bI(\bigcdot,t)\) the inverse function of \(\mathcal{V}_\bI(\bigcdot,t)\) for each \(t\in[0,T]\), whenever it exists. 
		The existence of this inverse will be established in Theorem \ref{maintheorem}. If we couple \eqref{eq HJB VA} and \eqref{fp.fA0} with $u_\bI=u^*_\bI=\mathcal{V}_\bI$ for $\bI \in \{\bA,\bB\}$, then we aim to solve $(V_\bA,V_\bB,f_\bA,f_\bB)$ for the following fully coupled HJB-FP system over $\mathbb{R}_{\geq 0} \times [0,T]$:
		\begin{align}
			\label{HJBx}
			\rho V_\bA (x,t) - \frac{\partial V_\bA (x,t)}{\partial t}
			=  \mathbf{1}_{\{V_\bA (x,t) \leq V_\bB ^{-1}(x,t)\}}
			\lambda_\bB
			\int_{V_\bA (x,t)}^{V_\bB ^{-1}(x,t)} \big[y - V_\bA (x,t)\big] f_\bB (y,t) \, dy 
			+ r_\bA (x,t); \end{align}
		\begin{align}\label{HJBy}
			\rho V_\bB (y,t) - \frac{\partial V_\bB (y,t)}{\partial t}
			=  \mathbf{1}_{\{V_\bB (y,t) \leq V_\bA^{-1}(y,t)\}}
			\lambda_\bA
			\int_{V_\bB (y,t)}^{V_\bA ^{-1}(y,t)} \big[x - V_\bB (y,t)\big] f_\bA (x,t) \, dx 
			+ r_\bB (y,t); \end{align}
		\begin{align}\label{FPx}
			\frac{\partial f_\bA (x,t)}{\partial t}
			= 
			\left(-\lambda_\bB   f_\bA (x,t) \int_{V_\bA (x,t)}^{V_\bB ^{-1}(x,t)} f_\bB (y,t) \, dy \right)\wedge 0;\end{align}
		\begin{align}\label{FPy}
			\frac{\partial f_\bB (y,t)}{\partial t}
			= 
			\left( -\lambda_\bA    f_\bB (y,t) \int_{V_\bB (y,t)}^{V_\bA ^{-1}(y,t)} f_\bA (x,t) \, dx \right)\wedge 0; \end{align}
		\begin{align}\label{boundary}
			V_\bA (x,T) = h_\bA (x),\quad
			V_\bB (y,T) = h_\bB (y),\quad
			f_\bA (x,0) = f_{\bA ,0}(x),\quad
			f_\bB (y,0) = f_{\bB ,0}(y).\vspace{-40pt}
		\end{align} \normalsize 
		Consider the following mild assumptions on coefficient functions: 
		\begin{assumption}[\bf Running Costs]\label{condition_running} 
			For each $\bI \in \{\bA, \bB\}$, the running cost $r_{\bI}:\mathbb{R}_{\geq 0}\times[0,T] \to \mathbb{R}_{\geq 0}$ satisfies $r_{\bI}(0,t) = 0$ for all $t \in [0,T]$ and is bi-Lipschitz in the following sense: there exist positive constants $\gamma_\bI$ and $\Gamma_\bI$ such that for all $0\le z_2\leq z_1$ and $t\in[0,T]$, it holds that $
			\gamma_\bI  ( z_1-z_2) \le   r_{\hspace{1pt}\bI} (z_1,t)-r_{\hspace{1pt}\bI} (z_2,t)  \le \Gamma_\bI  ( z_1-z_2)$.
		\end{assumption}
		
		\begin{assumption}[\bf Terminal Costs]\label{condition_terminal} 
			For each $\bI \in \{\bA, \bB\}$, the terminal cost $h_\bI:\mathbb{R}_{\geq 0} \to \mathbb{R}_{\geq 0}$ satisfies $h_\bI(0) = 0$ and is bi-Lipschitz in the following sense: there exist positive constants $l_\bI$ and $L_\bI$ such that for all $0\le z_2\leq z_1$, it holds that $l_\bI  (z_1-z_2) \le   h_\bI (z_1)-h_\bI (z_2)  \le L_\bI  ( z_1-z_2)
			$.
		\end{assumption}
		\noindent These conditions serve technical purposes. In particular, Assumption~\ref{condition_initial} ensures compactness of the space for solving the FP equations, while Assumptions~\ref{condition_running}–\ref{condition_terminal} guarantee that the solutions to the HJB equations are strictly increasing, ensuring the existence of their inverses. For $\bI \in\{\bA,\bB\}$ with $\bJ\neq\bI$, we define the sets 
		\begingroup \begin{equation}\label{def.HI}
			\mathcal{H}_{\bI} :=		\left\{
			V \in C\big([0, T]; C(\mathbb{R}_{\geq 0}; \mathbb{R}_{\geq 0})\big) \;\middle|\;
			\begin{aligned}
				&k_{\bI} (z_1 - z_2) \leq V(z_1,t) - V(z_2,t) \leq K_{\bI} (z_1 - z_2),\,\, V(0,t)=0\\
				& \text{for all }t \in [0, T ] \text{ and } 0 \le z_2\le z_1; 
				\text{and $\sup_{t\in[0, T]}\sup_{z\ge 0}\frac{|V(z,t)|}{1+z} \le \Pi_{\bI}$}
			\end{aligned}
			\right\}
		\end{equation}\endgroup
		and 
		\begingroup \begin{equation}\label{def.AI}
			\scalebox{0.93}{$\mathcal{A}_{\bI} := \left\{
				f  \in C\big([0, T]; L^1(\mathbb{R}_{\geq 0};  \mathbb{R}_{\geq 0})\big) \,\middle|\,
				\displaystyle\sup_{t\in[0, T]}\int_0^{\infty}(1+z)f(z,t)dz \leq  \frac{2^{1+\nu}C_\bI}{\nu}
				\hspace{5pt} \text{and} 
				\sup\limits_{t\in [0, T]}\displaystyle\int^\infty_0f(z,t)dz\leq 1
				\right\}$},
		\end{equation}\endgroup
		where $
		k_\bI  := \frac{\gamma_\bI }{\rho+\lambda_\bJ } \wedge l_\bI$, $K_\bI=\left(\frac{\lambda_{\bJ}C_\bJ }{\rho k_\bJ }+ \frac{\Gamma_\bI }{\rho}\right) \vee L_\bI$ and $\Pi_\bI :=\left[\frac{\lambda_\bJ }{\rho}\left(\frac{2^{1+\nu}C_\bJ }{\nu} + 
		\Gamma_\bI\right)\right]\vee L_\bI.$ With the above definitions, we state the following result:
		\begin{theorem}[\bf Global Existence of the Fully Coupled System]\label{maintheorem}
			Suppose that Assumptions  \ref{condition_initial}-\ref{condition_terminal} hold, then the fully coupled HJB-FP system \eqref{HJBx}-\eqref{boundary} admits a solution in $ \mathcal{H}_{\bA}\times\mathcal{H}_{\bB}\times\mathcal{A}_{\bA}\times\mathcal{A}_{\bB}$.
		\end{theorem}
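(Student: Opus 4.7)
I would prove Theorem~\ref{maintheorem} by applying Schauder's fixed point theorem to the solution map
$\Phi:(V_\bA,V_\bB,f_\bA,f_\bB)\mapsto(\widetilde V_\bA,\widetilde V_\bB,\widetilde f_\bA,\widetilde f_\bB)$
defined on a closed convex subset of $\mathcal H_\bA\times\mathcal H_\bB\times\mathcal A_\bA\times\mathcal A_\bB$, where each $\widetilde V_\bI$ solves the scalar terminal-value ODE obtained from \eqref{HJBx} or \eqref{HJBy} by substituting the inputs $V_\bJ^{-1}$ and $f_\bJ$ into the right-hand side, and each $\widetilde f_\bI$ solves the scalar initial-value ODE obtained from \eqref{FPx} or \eqref{FPy} analogously. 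This frozen-coefficient choice simultaneously decouples the forward--backward coupling inside each population and the cross-population nonlocal coupling, reducing the system for each fixed $x$ to Lipschitz scalar ODEs that can be solved by standard methods (an explicit exponential representation in the FP case).

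\textbf{Invariance of the set.} The main a priori work is to show that $\Phi$ maps the product set into itself. For the HJB side, differencing the equation at $z_1>z_2$ and using the bi-Lipschitz hypotheses on $r_\bI,h_\bI$ (Assumptions~\ref{condition_running}--\ref{condition_terminal}) yields the lower bound $k_\bI(z_1-z_2)$ via a Gr\"onwall-type comparison backwards in time, while the upper bound $K_\bI(z_1-z_2)$ uses the uniform first-moment bound built into $\mathcal A_\bJ$ to control $\int_{V_\bI}^{V_\bJ^{-1}} y\,f_\bJ\,dy$, combined with the Lipschitz constant $1/k_\bJ$ of $V_\bJ^{-1}$. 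The sub-linear growth bound $|\widetilde V_\bI(z,t)|\le \Pi_\bI(1+z)$ follows by the same estimate specialized to $z_2=0$. On the FP side, the right-hand sides of \eqref{FPx}--\eqref{FPy} are non-positive, so $0\le \widetilde f_\bI(x,t)\le f_{\bI,0}(x)$ pointwise; Assumption~\ref{condition_initial} then delivers the $L^1$ bound and the moment bound defining $\mathcal A_\bI$, together with the monotonicity in $t$ that plays a role later.

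\textbf{Compactness.} This is the crux of the argument. Since \eqref{FPx}--\eqref{FPy} contain no spatial derivative, there is no direct spatial smoothing and the $L^1$ topology is too strong for Schauder. Following the hint in \Cref{sec. Our Model and Main Contributions}, I would complete each defective density into a probability measure on the one-point compactification $[0,\infty]$ by setting
\[
\bar\pi_\bI(dx,t):=f_\bI(x,t)\,dx+\bigl(1-F_\bI(t)\bigr)\delta_{\infty}(dx),\qquad F_\bI(t):=\int_0^{\infty}f_\bI(x,t)\,dx,
\]
and work with $\bar\pi_\bI\in C([0,T];\mathcal P([0,\infty]))$ under the weak topology. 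Tightness is automatic on the compact space $[0,\infty]$, and the moment bound in $\mathcal A_\bI$ pins down the mass near $\infty$. Equicontinuity in $t$ follows from the uniform estimate $|\partial_t\langle\phi,\bar\pi_\bI(\cdot,t)\rangle|\le C\lambda_\bJ\|\phi\|_\infty$ for $\phi\in C([0,\infty])$, derived directly from \eqref{FPx}--\eqref{FPy} together with $\|f_\bJ(\cdot,t)\|_{L^1}\le 1$. Arzel\`a--Ascoli then furnishes sequential compactness. Compactness of the $V_\bI$ component is comparatively easy: bi-Lipschitz continuity in $x$ and the HJB-based bound $|\partial_t V_\bI|\le C(1+z)$ give equicontinuity in both variables on compacta, and the sub-linear growth bound together with a diagonal extraction over $[0,R]$, $R\to\infty$, handles the unbounded quality range.

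\textbf{Continuity and conclusion.} Continuity of $\Phi$ in this product topology is the last ingredient: uniform convergence of $V_\bI$ and of $V_\bI^{-1}$ (the latter using the strict monotonicity $k_\bI>0$ built into $\mathcal H_\bI$) combined with weak convergence of $\bar\pi_\bJ$ allows passing the nonlocal integrals $\int_{V_\bI(x,t)}^{V_\bJ^{-1}(x,t)}(y-V_\bI(x,t))\,f_\bJ(y,t)\,dy$ to the limit by dominated convergence, using that in the limit $\bar\pi_\bJ$ is absolutely continuous on $[0,\infty)$ so the moving endpoints are almost surely continuity points; stability of the ODEs, via Gr\"onwall for HJB and the explicit exponential for FP, then upgrades this to uniform convergence of the outputs. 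The principal obstacle, as anticipated, is aligning the very weak topology required for compactness of the densities with the somewhat stronger topology needed to pass limits under the moving integration bounds $V_\bI,V_\bJ^{-1}$ in the HJB, which is precisely where the strict bi-Lipschitz structure of $\mathcal H_\bI$ and the absence of atoms in $\bar\pi_\bJ$ away from $\infty$ are used. Once these are in hand, Schauder's theorem produces a fixed point $(V_\bA^*,V_\bB^*,f_\bA^*,f_\bB^*)\in\mathcal H_\bA\times\mathcal H_\bB\times\mathcal A_\bA\times\mathcal A_\bB$, which by construction solves \eqref{HJBx}--\eqref{boundary}.
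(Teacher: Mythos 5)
Your proposal is essentially correct but takes a genuinely different route from the paper, and the differences are worth spelling out.

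\textbf{Where you differ.} The paper does \emph{not} apply Schauder to the full quadruple $(V_\bA,V_\bB,f_\bA,f_\bB)$. Instead it iterates only over a pair of time-dependent probability measures $(\mu_\bA,\mu_\bB)\in\mathcal D_\bA\times\mathcal D_\bB$ (see \eqref{DA}): for each fixed $(\mu_\bA,\mu_\bB)$ it solves the decoupled system \eqref{NHJBx}--\eqref{Nboundary} \emph{exactly} by a two-layer Banach fixed point on short time intervals (first a map $\Psi_i$ coupling \eqref{NNHJBx}--\eqref{NNHJBy} through $V_\bB$, then $\Upsilon_i$ for the FP pair), and only then runs Schauder on the resulting map $(\mu_\bA,\mu_\bB)\mapsto(\mu^*_\bA,\mu^*_\bB)$. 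This sidesteps any need for compactness of the value-function component: the $V$'s are not fixed-point variables. Your construction instead freezes the opposite-type coefficients, takes a single Picard step, and applies Schauder to everything at once, which forces you to face compactness of $\mathcal H_\bI$ (via Arzel\`a--Ascoli on compacta plus diagonalization, then continuity of $\Phi$ in a locally-uniform rather than weighted-sup topology). That is doable, but it is an extra layer of care the paper deliberately avoids.

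\textbf{The compactification is also different.} The paper replaces the defective density $f_\bI(\cdot,t)$ by the genuine probability measure $q_\bI(t)\delta_{\textup{D}}+f_\bI(\cdot,t)\,d\mathcal L^1$ with the missing mass placed at the \emph{origin}, and works in $C([0,T];\mathcal P_1(\mathbb R_{\ge 0}))$ with the $W_1$ metric; the uniform first-moment bound (Assumption~\ref{condition_initial} via Lemma~\ref{lemma.closureofK}) and a time-Lipschitz-in-$W_1$ estimate give compactness of $\mathcal D_\bI$. You place the missing mass at $\infty$ on the one-point compactification $[0,\infty]$ and use weak convergence. Both work, for the same underlying reason: the pointwise decay $f_\bI(\cdot,t)\le f_{\bI,0}\le C_\bI/(1+x^{2+\nu})$ prevents mass from spreading and guarantees the limit is atom-free on $(0,\infty)$, which is exactly what you need to pass the moving integration bounds to the limit. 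The paper's choice plays nicely with the Kantorovich--Rubinstein norm ($\delta_0$ contributes nothing to a test function vanishing at $0$); yours plays nicely with Prokhorov.

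\textbf{One small fix needed.} In your invariance argument you say the upper bi-Lipschitz bound $K_\bI$ follows from ``the uniform first-moment bound built into $\mathcal A_\bJ$.'' That is not quite enough: the estimate
\[
\int_{V_\bJ^{-1}(x,t)}^{V_\bJ^{-1}(z,t)} y\,f_\bJ(y,t)\,dy \;\le\; C_\bJ\bigl[V_\bJ^{-1}(z,t)-V_\bJ^{-1}(x,t)\bigr]
\]
requires the \emph{pointwise} bound $(1+y^{2+\nu})f_\bJ(y,t)\le C_\bJ$ (cf.\ the proof of Lemma~\ref{BilipschitzVA} and item~\ref{lemma.closureofK.integrability} of Lemma~\ref{lemma.closureofK}), not the integrated moment. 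The pointwise bound is available to you because $\widetilde f_\bJ\le f_{\bJ,0}$, but you should include it in the definition of the set on which $\Phi$ acts; as written, $\mathcal A_\bI$ in \eqref{def.AI} does not carry it, and the paper instead encodes it into $\mathcal K_\bI$ in \eqref{KA}. With that amendment the a priori estimates close and the rest of your argument goes through.
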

		The proof is presented in Appendix \ref{sec:existence}, with the main ideas outlined in Appendix \ref{sec. Main result and Sketch of Proof}. Although the proof is based on fixed-point arguments, it departs from standard settings due to the system’s two-layered coupling: a forward–backward structure and dynamic interactions between two distinct agent types. Additionally, the presence of non-local terms further complicates the analysis, necessitating the development of novel estimates to control the coupled dynamics.

		Due to the coupled forward–backward structure of the system, additional conditions on the coefficient functions are necessary to guarantee uniqueness. Note that these conditions are not required for existence. We present two sufficient conditions: (i) a bounded time horizon condition, under which $T$ cannot be too large relative to model parameters; and (ii) a structural condition on the coefficients that is independent of $T$.
		
		\begin{theorem}[\bf Uniqueness of the Fully Coupled System]\label{thm unique}
			Suppose that Assumptions \ref{condition_initial}-\ref{condition_terminal} hold. In addition, we assume that one of the following conditions is satisfied:
			\begin{enumerate}[label=\textup{\textbf{(C.\Roman*)}}, leftmargin=*]
				\item\label{condition.uniqueness.loc}
				\hspace*{\fill}
				$		M_3(\lambda_\bA  \vee \lambda_\bB )	
				e^{M_2 T}
				\left( \dfrac{1-e^{-\rho T}}{\rho} \right) 
				\left( \dfrac{e^{\left(\lambda_\bA M_\bA +\lambda_\bB M_\bB \right)T}-1 }{\lambda_\bA M_\bA +\lambda_\bB M_\bB} \right)<1$,
				\hspace*{\fill}
				
				\item\label{condition.uniqueness.global}
				\hspace*{\fill}
				$	2\pig[(\lambda_\bA  \vee \lambda_\bB )M_3                  \pigr]^{1/2}
				<\rho- M_2  -(\lambda_\bA M_\bA +\lambda_\bB M_\bB )$.
				\hspace*{\fill}
			\end{enumerate}
			\sloppy Then the system \eqref{HJBx}–\eqref{boundary} admits a unique solution in the space $\mathcal{H}_{\bA}\times\mathcal{H}_{\bB}\times\mathcal{A}_{\bA}\times\mathcal{A}_{\bB}$. Here $M_\bI := 2 \vee \frac{(1+C_\bI )(1+k_\bI )}{(k_\bI )^2}$, $M_2:=	\left(\lambda_\bA + \lambda_\bB \tfrac{2^{1+\nu} C_\bA}{\nu}\right)
			\vee
			\left(\lambda_\bB + \lambda_\bA \tfrac{2^{1+\nu} C_\bB}{\nu}\right)$ and 
			$
			M_3:=  \frac{2^{3+\nu}C_\bA C_\bB }{\nu}
			\left[\frac{\lambda_\bA+\lambda_\bB(1+k_\bA)}{k_\bA}
			\vee  \frac{\lambda_\bB+\lambda_\bA(1+k_\bB)}{k_\bB}\right]$ where $\bI \in\{\bA,\bB\}$.  
			
		\end{theorem}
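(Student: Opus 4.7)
The plan is to argue by contraction: fix two solutions $(V_\bA^j, V_\bB^j, f_\bA^j, f_\bB^j)_{j\in\{1,2\}}$ in $\mathcal{H}_\bA \times \mathcal{H}_\bB \times \mathcal{A}_\bA \times \mathcal{A}_\bB$, set $\delta V_\bI := V_\bI^1 - V_\bI^2$ and $\delta f_\bI := f_\bI^1 - f_\bI^2$, and show both pairs of differences must vanish. Subtracting the backward HJB equations and using the zero terminal data $\delta V_\bI(\cdot, T) = 0$ yields, for $\bI \neq \bJ \in \{\bA, \bB\}$,
$$\delta V_\bI(x,t) = \int_t^T e^{-\rho(s-t)} \lambda_\bJ\, \mathcal{E}_\bI(x,s)\, ds,$$
where $\mathcal{E}_\bI$ is the difference of the two nonlocal right-hand sides. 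Subtracting the forward FP equations and using $\delta f_\bI(\cdot, 0) = 0$ gives an analogous Volterra representation for $\delta f_\bI$.

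The first step is a telescoping add-and-subtract decomposition of $\mathcal{E}_\bI$ into three contributions: (i) the change in the upper endpoint, controlled via the bi-Lipschitzness of $V_\bJ \in \mathcal{H}_\bJ$, giving $|(V_\bJ^1)^{-1} - (V_\bJ^2)^{-1}|(x,t) \leq \|\delta V_\bJ(\cdot,t)\|_\infty / k_\bJ$; (ii) the change in the lower endpoint and in the integrand factor $y - V_\bI$, controlled by $|\delta V_\bI(x,t)|$ together with the $L^1$ and first-moment bounds defining $\mathcal{A}_\bJ$; and (iii) the change in the density factor $f_\bJ$, controlled by $\int_0^\infty (1+y)|\delta f_\bJ(y,t)|\, dy$, since the integration window and the integrand grow at most linearly in $y$. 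The constants $M_\bI$, $M_2$, and $M_3$ emerge as the bookkeeping quantities produced by tracking the bounds of $\mathcal{H}$ and $\mathcal{A}$ through this decomposition, with the parallel computation carried out on the FP right-hand side.

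The second step is to convert these pointwise estimates into coupled scalar integral inequalities for the backward quantity $\alpha(t) := \|\delta V_\bA(\cdot,t)\|_\infty + \|\delta V_\bB(\cdot,t)\|_\infty$ and the forward quantity $\beta(t) := \int_0^\infty (1+y)\bigl(|\delta f_\bA(y,t)| + |\delta f_\bB(y,t)|\bigr)\, dy$. Gronwall applied to the forward inequality for $\beta$ produces the factor $\bigl(e^{(\lambda_\bA M_\bA + \lambda_\bB M_\bB) T} - 1\bigr)/(\lambda_\bA M_\bA + \lambda_\bB M_\bB)$; the backward inequality for $\alpha$ combined with the discount weight $e^{-\rho(s-t)}$ produces $e^{M_2 T}(1 - e^{-\rho T})/\rho$; chaining one estimate into the other yields $\alpha(t) + \beta(t) \leq \Theta \cdot \sup_{s \in [0,T]}(\alpha(s) + \beta(s))$, where $\Theta$ is precisely the left-hand side of \ref{condition.uniqueness.loc}. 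Thus \ref{condition.uniqueness.loc} forces $\alpha \equiv \beta \equiv 0$.

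For \ref{condition.uniqueness.global}, the plan is to bypass the time-integrated Gronwall (whose constant blows up in $T$) and work instead with the pointwise-in-$t$ differential inequalities, schematically $-\alpha'(t) + (\rho - M_2)\alpha(t) \leq (\lambda_\bA \vee \lambda_\bB) M_3\, \beta(t)$ and $\beta'(t) \leq (\lambda_\bA M_\bA + \lambda_\bB M_\bB)\beta(t) + \alpha(t)$. Introducing an exponential weight $e^{\mu t}$ and optimizing $\mu$, an AM--GM-type argument transforms the requirement that the coupled system be strictly contractive into the quadratic discriminant condition $2\sqrt{(\lambda_\bA \vee \lambda_\bB) M_3} < \rho - M_2 - (\lambda_\bA M_\bA + \lambda_\bB M_\bB)$, which is precisely \ref{condition.uniqueness.global}. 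The main obstacle throughout is disentangling the two-layered coupling, forward-backward in time together with bidirectional cross-type interaction, in the absence of any Lasry--Lions or displacement monotonicity; this is why some smallness condition either on $T$ or on the balance between $\rho$ and the nonlinearity constants is unavoidable, and why careful tracking of the inverse $V_\bJ^{-1}$ and the first-moment weights is essential to producing sharp constants $M_\bI, M_2, M_3$.
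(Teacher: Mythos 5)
Your proposal follows essentially the same strategy as the paper: subtract the two candidate solutions, telescope the nonlocal HJB and FP right-hand sides into endpoint-change and density-change contributions controlled via the bi-Lipschitz structure of $\mathcal{H}_\bI$ and the weighted $L^1$ bounds of $\mathcal{A}_\bI$ (replacing, as one must, the Wasserstein-based square-root estimate from the continuity argument by a direct $\|\cdot\|_{1,w}$ bound so that a genuine contraction can emerge), then chain the backward HJB and forward FP Gronwall bounds to obtain \ref{condition.uniqueness.loc}, and optimize an exponential time-weight to obtain \ref{condition.uniqueness.global}. One bookkeeping slip worth flagging: in your schematic differential inequalities the self-coupling constants $M_2$ (which enters the forward FP estimate) and $\lambda_\bA M_\bA + \lambda_\bB M_\bB$ (which enters the backward HJB estimate) are swapped, and the cross-coefficients should be $\lambda_\bA \vee \lambda_\bB$ on $\beta$ in the $\alpha$-equation and $M_3$ on $\alpha$ in the $\beta$-equation rather than lumped as $(\lambda_\bA\vee\lambda_\bB)M_3$ and $1$ — because the quadratic optimization over the weight is symmetric in these two slots and depends only on the product of cross-coefficients, your final discriminant is unaffected, but the intermediate inequalities as written are not directly derivable from the equations and would need to be corrected in a full write-up.
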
  \noindent The proof of this theorem can be found in Appendix \ref{sec. unique}.

		\sloppy In the following, we present the verification theorem for Problems \ref{pblm:type A, threshold} and \ref{def Nash}, whose proof can be found in Appendix \ref{sec. thm.verification}. 
		
		\begin{theorem}[\bf Verification Theorem of Control Problems]\label{thm.verification}
			Suppose that Assumptions \ref{condition_initial}-\ref{condition_terminal} hold. Let $(V_\bA^*,V_\bB^*,f_\bA^*,f_\bB^*) \in \mathcal{H}_{\bA}\times\mathcal{H}_{\bB}\times\mathcal{A}_{\bA}\times\mathcal{A}_{\bB}$ be the solution to the fully coupled HJB-FP system \eqref{HJBx}-\eqref{boundary}, obtained by \Cref{maintheorem}. For each $\bI,\bJ \in \{\bA,\bB\}$ with $\bI \neq \bJ$, we define the joint probability measure $\pi_{\bI,\bigcdot}^*$ on $X_\bI \times \{0,1\}$ by 
			\begingroup\begin{equation}
				\pi_{\bI,t}^*(\bigcdot \times \{0\})
				:= f_{\bI}^{*}(\bigcdot,t)\,d \mathcal{L}^1, 
				\hspace{15pt} \textup{and} \hspace{15pt}
				\pi_{\bI,t}^*(\bigcdot\times \{1\})
				:= \mu_{\bI,0}(\bigcdot)\;-f_{\bI}^{*}(\bigcdot,t)\,d \mathcal{L}^1.
				\label{3343} 
			\end{equation}\endgroup
			Then, the following statements are true:

			\begin{enumerate}[(1).]
				\item For Problem \ref{pblm:type A, threshold} of type-$\bI$ agents, suppose the threshold function $u_\bJ=V_\bJ^*$ and the joint distribution $\pi_{\bJ,\bigcdot}^*$ of quality levels and statuses  for type-$\bJ$ agents are given. Then the value function defined in \eqref{def.V_I.threshold} is given by $\mathcal{V}_\bI (x,t,V_\bJ^*,\pi_{\bJ,\bigcdot}^*)=V_\bI^*(x,t)$. Upon maximization of the objective functional, the resulting joint distribution of quality levels and statuses for type-$\bI$ agents is uniquely given by $\pi_{\bI,\bigcdot}^*$. Moreover, the optimal control (threshold function) is given by $u_\bI=V_\bI^*$;
				
				\item The quadruple $(V_\bA^*, V_\bB^*, \pi_{\bA,\bigcdot}^*, \pi_{\bB,\bigcdot}^*)$ constitutes the mean field Nash equilibrium solving Problem \ref{def Nash}. 
			\end{enumerate}

		\end{theorem}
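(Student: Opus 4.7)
The strategy is a standard verification argument for part~(1) followed by a uniqueness-of-ODE argument for part~(2). First I would note that $V_\bI^*\in\mathcal{H}_\bI$ is strictly increasing in $x$ (with modulus $k_\bI>0$), so $V_\bI^*\in\mathcal{U}_\bI$ is an admissible threshold and admits a Lipschitz inverse $V_\bI^{*,-1}$ in $x$. Fix $\bI,\bJ\in\{\bA,\bB\}$ with $\bI\neq\bJ$, $(x,t)\in\mathbb{R}_{\geq 0}\times[0,T]$, and an arbitrary $u_\bI\in\mathcal{U}_\bI$; set $u_\bJ=V_\bJ^*$, $\pi_{\bJ,\bigcdot}=\pi_{\bJ,\bigcdot}^*$. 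Under the meeting and matching mechanisms in \Cref{ass.meeting rule0}--\Cref{ass.matching rule}, the matching time $\tau_{\h{0.5pt}\bI}$ in \eqref{def match time of A} is the first jump of a non-homogeneous Poisson process with intensity
$\alpha_{u_\bI}(s):=\lambda_\bJ \mathds{1}_{\{u_\bI(x,s)\le V_\bJ^{*,-1}(x,s)\}}\int_{u_\bI(x,s)}^{V_\bJ^{*,-1}(x,s)} f_\bJ^*(y,s)\,dy$,
and disintegrating \eqref{def.finite.objective function} over the law of $\tau_{\h{0.5pt}\bI}$ (with the conditional first moment of the lump sum handled by $\beta_{u_\bI}(s):=\lambda_\bJ\mathds{1}_{\{u_\bI(x,s)\le V_\bJ^{*,-1}(x,s)\}}\int_{u_\bI(x,s)}^{V_\bJ^{*,-1}(x,s)} y\, f_\bJ^*(y,s)\,dy$) yields
\[
J_\bI(u_\bI;x,t,V_\bJ^*,\pi_{\bJ,\bigcdot}^*)=\int_t^T e^{-\rho(s-t)}\bigl[r_\bI(x,s)+\beta_{u_\bI}(s)\bigr]\, e^{-\int_t^s\alpha_{u_\bI}(r)dr}\,ds+e^{-\rho(T-t)}h_\bI(x)\, e^{-\int_t^T \alpha_{u_\bI}(r)dr}.
\]

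Next I would differentiate $\varphi(s):=e^{-\rho(s-t)}V_\bI^*(x,s)\, e^{-\int_t^s\alpha_{u_\bI}(r)dr}$ in $s$ to obtain $\varphi'(s)=-e^{-\rho(s-t)}e^{-\int_t^s\alpha_{u_\bI}(r)dr}\bigl[\rho V_\bI^*-\partial_s V_\bI^*+\alpha_{u_\bI}(s) V_\bI^*\bigr]$. Combining the HJB equation \eqref{HJBx} with the pointwise maximization property discussed around \eqref{1204}--\eqref{optimalcontrol.A},
\[
\lambda_\bJ \mathds{1}_{\{V_\bI^*\le V_\bJ^{*,-1}\}}\int_{V_\bI^*}^{V_\bJ^{*,-1}}(y-V_\bI^*)f_\bJ^*(y,s)\,dy\;\ge\; \lambda_\bJ \mathds{1}_{\{u_\bI\le V_\bJ^{*,-1}\}}\int_{u_\bI}^{V_\bJ^{*,-1}}(y-V_\bI^*)f_\bJ^*(y,s)\,dy,
\]
with equality when $u_\bI=V_\bI^*$, rearranges into $\rho V_\bI^*-\partial_s V_\bI^*+\alpha_{u_\bI}V_\bI^*\ge r_\bI+\beta_{u_\bI}$, so $\varphi$ is non-increasing (and constant when $u_\bI=V_\bI^*$). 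Integrating $\varphi$ from $t$ to $T$ and invoking the terminal condition $V_\bI^*(x,T)=h_\bI(x)$ gives $V_\bI^*(x,t)\ge J_\bI(u_\bI;x,t,V_\bJ^*,\pi_{\bJ,\bigcdot}^*)$ with equality at $u_\bI=V_\bI^*$, establishing $\mathcal{V}_\bI(x,t,V_\bJ^*,\pi_{\bJ,\bigcdot}^*)=V_\bI^*(x,t)$ and identifying $V_\bI^*$ as an optimal threshold in Problem~\ref{pblm:type A, threshold}.

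For part~(2), once $u_\bI=V_\bI^*$ is adopted, \Cref{prop.fxt3} combined with the derivation in \Cref{sec.markov} shows the unmatched density $f_\bI(x,t):=\mathbb{P}(\tau_{\h{0.5pt}\bI}(x,0)>t)f_{\bI,0}(x)$ satisfies \eqref{fp.fA0}, which under $u_\bI=V_\bI^*$ coincides with \eqref{FPx} (the $\wedge 0$ being redundant since the right-hand side is already non-positive). For each fixed $x$ this is a scalar linear ODE in $t$ with initial datum $f_{\bI,0}(x)$, so Picard–Lindel\"of uniqueness forces $f_\bI=f_\bI^*$, and the measures defined in \eqref{3343} are precisely those produced by the optimal controls. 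Applying this for both $\bI=\bA$ and $\bI=\bB$ together with the optimality established in part~(1) verifies the Nash conditions of \Cref{def Nash}. The hard part will be justifying the differentiation of $\varphi$, which requires $V_\bI^*$ to be pointwise $C^1$ in $t$; since $V_\bI^*\in\mathcal{H}_\bI$ and $f_\bJ^*\in\mathcal{A}_\bJ$ render the right-hand side of \eqref{HJBx} continuous in $(x,t)$, the HJB itself supplies a continuous representation of $\partial_t V_\bI^*$, which makes the argument rigorous. A secondary subtlety is the possible non-uniqueness of pointwise maximizers on the set $\{V_\bI^*>V_\bJ^{*,-1}\}$; however, any such maximizer makes the matching indicator vanish, so neither the value function nor the FP dynamics depend on the specific choice, reconciling with the uniqueness of $\pi_{\bI,\bigcdot}^*$ claimed in part~(1).
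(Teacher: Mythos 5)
Your part (1) argument reorganizes the paper's own verification computation through the auxiliary function $\varphi$; the underlying ingredients (pointwise maximality of the HJB nonlinearity, integrated against the survival weight $e^{-\rho(s-t)}e^{-\int_t^s\alpha_{u_\bI}(r)\,dr}$) are the same, so this is not a genuinely different route. Two points need attention. First, the parenthetical ``(and constant when $u_\bI=V_\bI^*$)'' is false: even at $u_\bI=V_\bI^*$ one has $\varphi'(s)=-e^{-\rho(s-t)}e^{-\int_t^s\alpha\,dr}\,[r_\bI+\beta_{V_\bI^*}]\le 0$, so $\varphi$ is generically strictly decreasing. What the argument actually uses is that the differential \emph{inequality} $\varphi'(s)\le -e^{-\rho(s-t)}e^{-\int_t^s\alpha}[r_\bI+\beta_{u_\bI}]$ becomes an \emph{equality} when $u_\bI=V_\bI^*$, so that integrating from $t$ to $T$ yields the exact identity $V_\bI^*(x,t)=J_\bI(V_\bI^*;\cdot)$; your conclusion is right, but the parenthetical misstates why.

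Second and more substantively, the displayed formula for $J_\bI(u_\bI;\cdot)$ is precisely the content of the paper's \eqref{3398}--\eqref{3284} and is not a one-line disintegration. The running-payoff and terminal terms follow from Fubini and $\mathbb{P}(\tau_\bI>s)=e^{-\int_t^s\alpha_{u_\bI}}$, but the lump-sum term additionally requires (i) the memoryless property of $\tau_\bI$ in \eqref{property.memoryless} and (ii) the conditional independence of $\{\tau_\bI(x,t)=s\}$ from $\mathbf{QL}_{\bJ,s^-}$ given the mutual-acceptance event $\{\mathbf{ST}_{\bJ,s^-}=0,\ u_\bI(x,s)\le\mathbf{QL}_{\bJ,s^-}\le V_\bJ^{*,-1}(x,s)\}$, which is what licenses replacing $\mathbb{E}[\mathbf{QL}_{\bJ,\tau_\bI^-}\,|\,\tau_\bI=s]$ by $\beta_{u_\bI}(s)/\alpha_{u_\bI}(s)$. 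Without making (ii) explicit the rest of part (1) is resting on an unproven claim. Your treatment of part (2) via a per-$x$ scalar ODE and Picard--Lindel\"of uniqueness matches the paper's and is fine; note only that the $\wedge 0$ in \eqref{FPx} plays exactly the role of the indicator $\mathds{1}_{\{V_\bI^*\le V_\bJ^{*,-1}\}}$ in \eqref{fp.fA0}, so the two forms coincide under the optimal threshold rather than the $\wedge 0$ being redundant.
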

		\begin{remark}
			The optimal control in statement (1) may not be unique. One of the choices is $u_\bI=V_\bI^*$. However, once the objective functional is maximized, the resulting joint distribution is uniquely determined. 
		\end{remark}

		\subsection{Properties of Matching Equilibrium and Interpretations }

		Let $(V_\bA^*, V_\bB^*, f_\bA^*, f_\bB^*)\in  \mathcal{H}_{\bA}\times\mathcal{H}_{\bB}\times\mathcal{A}_{\bA}\times\mathcal{A}_{\bB}$ be the solution to the fully coupled HJB-FP system \eqref{HJBx}–\eqref{boundary} obtained in \Cref{maintheorem}, from which we can construct the mean field Nash  equilibrium of Problem \ref{def Nash} by \Cref{thm.verification}. We describe qualitative features of agent behavior in the market under equilibrium, derived from primal analysis. These findings also admit natural economic interpretations consistent with real-world matching markets.

		First, for $\bI \in\{\bA,\bB\}$, we would like to highlight from our construction of the solution that the value function $V_\bI^* \in \mathcal{H}_{\bI}$, which also represents the agents' thresholds, is strictly increasing in the quality level $x$ (or $y$). It reflects the natural economic intuition that higher-quality agents derive higher value from participation in the market. This monotonicity is also consistent with the economic intuition: higher-quality agents demand higher-quality partners. They have stronger fallback options (outside options), so the opportunity cost of matching with a low-quality partner is higher. Moreover, the definition of $\mathcal{H}_{\bI}$ in \eqref{def.HI} implies that agents with zero quality level derive no value from participating in the market. That is, their expected utility is zero at all times, and they are effectively excluded from the matching process. 
		
		Second, in \Cref{prop.kk>1}, we find that when the running utility during the search period and the terminal reward are sufficiently high, agents have no incentive to match.

		\begin{proposition}[\bf Condition for No Match to Occur]\label{prop.kk>1}
			\sloppy	Suppose that Assumptions \ref{condition_initial}-\ref{condition_terminal} hold. For each $\bI,\bJ \in \{\bA,\bB\}$ with $\bI \neq \bJ$, if 
			\begingroup\begin{align}\label{ass. no match}
				\left[ e^{-\rho(T-t)} l_\bA 
				+ \left( 1-e^{-\rho(T-t)} \right) \dfrac{\gamma_\bA}{\rho}\right]\left[e^{-\rho(T-t)}l_\bB
				+ \left( 1-e^{-\rho(T-t)} \right)\dfrac{\gamma_\bB}{\rho} \right]  > 1,
			\end{align}\endgroup for all $t\in[0,T]$, then $V_\bI^* (x,t) > (V_\bJ^*) ^{-1}(x,t)$ for any $(x,t) \in \mathbb{R}_{> 0} \times [0,T]$. Hence the optimal matching region of the type-$\bI$ agent $(x,s)$ meeting a type-$\bJ$ agent, that is, $\mathcal{O}_{\bI}^*(x,s):=\big\{y\in X_\bJ:V_{\bI}^* (x,s) \leq y \leq (V_{\bJ}^*) ^{-1}(x,s)\big\}, $ is empty and no successful match occurs, for any $(x,t) \in \mathbb{R}_{> 0} \times [0,T]$.
		\end{proposition}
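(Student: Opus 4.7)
The plan is to establish a pointwise linear lower bound $V_{\bI}^{*}(x,t)\ge \alpha_{\bI}(t)\,x$ with $\alpha_{\bI}(t):=e^{-\rho(T-t)}l_{\bI}+\bigl(1-e^{-\rho(T-t)}\bigr)\gamma_{\bI}/\rho$ for $\bI\in\{\bA,\bB\}$, and then iterate this bound through the composition $V_{\bJ}^{*}\circ V_{\bI}^{*}$ to conclude. The crucial structural observation is that the matching term on the right-hand side of \eqref{HJBx}--\eqref{HJBy} is always non-negative: on the set $\{V_{\bI}^{*}(x,t)\le (V_{\bJ}^{*})^{-1}(x,t)\}$ the range of integration forces $y-V_{\bI}^{*}(x,t)\ge 0$ and $f_{\bJ}\ge 0$, and off that set the indicator kills the term. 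Hence the HJB can be compared against the linear backward ODE $\rho V-\partial_t V=r_{\bI}$ without first deciding which regime the equilibrium sits in; the ``no-matching'' conclusion will then emerge automatically.

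\textbf{Step 1 (universal lower bound).} Rewrite \eqref{HJBx} in its equivalent integral form using the terminal condition in \eqref{boundary}:
\[
V_{\bI}^{*}(x,t)= e^{-\rho(T-t)}h_{\bI}(x)+\int_{t}^{T}e^{-\rho(s-t)}\bigl(\text{match}_{\bI}(x,s)+r_{\bI}(x,s)\bigr)\,ds,
\]
where $\text{match}_{\bI}\ge 0$ by the above remark. Dropping this non-negative contribution and invoking Assumptions \ref{condition_running}--\ref{condition_terminal} together with $r_{\bI}(0,s)=h_{\bI}(0)=0$ (which yield $h_{\bI}(x)\ge l_{\bI}x$ and $r_{\bI}(x,s)\ge \gamma_{\bI}x$ for $x\ge 0$), the elementary evaluation $\int_t^T e^{-\rho(s-t)}\,ds=(1-e^{-\rho(T-t)})/\rho$ yields $V_{\bI}^{*}(x,t)\ge \alpha_{\bI}(t)\,x$ for every $(x,t)\in\mathbb{R}_{\ge 0}\times[0,T]$.

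\textbf{Step 2 (composition and conclusion).} Since $V_{\bJ}^{*}\in\mathcal{H}_{\bJ}$ is strictly increasing in its first argument by \eqref{def.HI}, $(V_{\bJ}^{*})^{-1}(\cdot,t)$ is well defined on the range of $V_{\bJ}^{*}(\cdot,t)$, and the target inequality is equivalent to $V_{\bJ}^{*}\bigl(V_{\bI}^{*}(x,t),t\bigr)>x$ whenever this inverse is defined. Applying the Step~1 bound first to $V_{\bJ}^{*}$ at the non-negative point $V_{\bI}^{*}(x,t)$ and then to $V_{\bI}^{*}$ at $x$,
\[
V_{\bJ}^{*}\bigl(V_{\bI}^{*}(x,t),t\bigr)\;\ge\;\alpha_{\bJ}(t)\,V_{\bI}^{*}(x,t)\;\ge\;\alpha_{\bA}(t)\,\alpha_{\bB}(t)\,x.
\]
Hypothesis \eqref{ass. no match} is precisely $\alpha_{\bA}(t)\alpha_{\bB}(t)>1$ on $[0,T]$, so for every $x>0$ we obtain $V_{\bJ}^{*}(V_{\bI}^{*}(x,t),t)>x$, equivalently $V_{\bI}^{*}(x,t)>(V_{\bJ}^{*})^{-1}(x,t)$, and therefore $\mathcal{O}_{\bI}^{*}(x,t)=\varnothing$. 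The boundary value $x=0$ is handled separately: strict monotonicity of $V_{\bJ}^{*}$ together with $V_{\bJ}^{*}(0,t)\ge 0$ forces either $(V_{\bJ}^{*})^{-1}(0,t)\notin\mathbb{R}_{\ge 0}$ (so the matching region is vacuously empty) or $V_{\bJ}^{*}(0,t)=0$, in which case the region degenerates to the Lebesgue-null singleton $\{0\}$ and no matching can occur with positive probability.

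The only real obstacle is not computational but structural: recognising that one need not solve the coupled system to extract a one-sided bound on $V_{\bI}^{*}$. Once the non-negativity of the matching integrand is identified as a statement that is true \emph{regardless} of the equilibrium regime, the HJB collapses to a linear scalar differential inequality and the comparison argument is immediate; the two-line composition in Step~2 then closes the proof.
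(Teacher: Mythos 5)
Your proposal is correct and follows essentially the same route as the paper: the key step in both is the pointwise lower bound $V_{\bI}^{*}(x,t)\ge\alpha_{\bI}(t)\,x$, obtained by observing that the matching term in the HJB integral representation is non-negative and dropping it, after which Assumptions~\ref{condition_running}--\ref{condition_terminal} and the hypothesis $\alpha_{\bA}(t)\alpha_{\bB}(t)>1$ do the rest. The only packaging difference is that you argue directly via the composition $V_{\bJ}^{*}\circ V_{\bI}^{*}>\mathrm{id}$, whereas the paper argues by contradiction through the chain $\alpha_{\bA}(t)x\le V_{\bA}^{*}(x,t)\le(V_{\bB}^{*})^{-1}(x,t)\le\alpha_{\bB}(t)^{-1}x$; these are the same computation. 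One thing you noticed that the paper's proof glosses over is the boundary $x=0$: by Lemma~\ref{lemma.V0=0} one has $V_{\bI}^{*}(0,t)=(V_{\bJ}^{*})^{-1}(0,t)=0$, so the strict inequality asserted in the proposition actually fails there (and the paper's contradiction argument is vacuous at $x=0$ because every term in the chain is zero); your remark that the matching region degenerates to the null set $\{0\}$ is the correct repair, and a small improvement on the stated proof.
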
        
		In \Cref{prop.kk>1}, the time-weighted average of the least reservation utility:
		\begingroup	\begin{align}\label{def time-weighted average of reservation utility}
			e^{-\rho(T-t)}l_\bI 
			+ \left( 1-e^{-\rho(T-t)}\right) \dfrac{\gamma_\bI}{\rho}
		\end{align}\endgroup
		represents the lower bound of the discounted average of terminal and running utilities, or equivalently, the expected payoff if unmatched. If \eqref{ass. no match} holds, then the lump sum reward from matching is less attractive than the combined benefits \eqref{def time-weighted average of reservation utility} of staying unmatched, thus agents in both types may rationally prefer to remain unmatched. More importantly, for $\bI,\bJ \in \{\bA,\bB\}$ with $\bI \neq \bJ$, if type-$\bI$ agents receive a relatively low time-weighted average of reservation utility, type-$\bJ$ agents must receive a relatively high one in order for no match to occur at all. This compensates for not gaining the lump sum reward. In other words, if one group has a strong incentive to match due to poor outside options, the absence of matching must come from the other side having particularly attractive alternatives, underscoring the mutual, reciprocal nature of the matching dynamics.

		Referring to the labor market context in \Cref{sec. applications}, suppose job seekers face a low time-weighted average of reservation utility. Under normal circumstances, this would push them to accept job offers quickly with lower threshold functions. However, if hiring firms experience high time-weighted average of reservation utility from leaving positions temporarily unfilled, then these firms may raise their hiring thresholds and delay matches. As a result, no hiring occurs. Thus, equilibrium with no matching can arise from a mismatch in incentives, where one side’s willingness is offset by the other’s disinterest. 
		
		If the assumptions of \Cref{prop.kk>1} hold, then the integral terms in the HJB equations \eqref{HJBx}–\eqref{HJBy} vanish. In this case, on $\mathbb{R}_{\ge 0} \times [0,T]$, we have $
		V_\bI^* (x,t) = e^{-\rho(T-t)}h_\bI (x) + 
		\int^T_t e^{-\rho(s-t)} r_\bI( x,s)ds.$
		
		\begin{proof}[Proof of \Cref{prop.kk>1}]
			We prove by contradiction. Suppose there exists $(x,t) \in \mathbb{R}_{\ge 0} \times [0,T]$ such that $V_\bA^* (x,t) \leq (V_\bB^*)^{-1}(x,t)$. By the optimality of value function, Assumptions \ref{condition_running} and \ref{condition_terminal}, we have $
			V_\bI^* (x,t) 
			\geq e^{-\rho(T-t)}h_\bI (x) + \int^T_t e^{-\rho(s-t)} r_\bI( x,s)ds 
			\geq \left[e^{-\rho(T-t)}l_\bI 
			+ \left( 1-e^{-\rho(T-t)}\right) \dfrac{\gamma_\bI}{\rho}\right]x,$
			for $\bI\in\{\bA ,\bB \}$. Hence, for example, we have 
			\begingroup
			$$
			\left[e^{-\rho(T-t)}l_\bA 
			+ \left( 1-e^{-\rho(T-t)}\right) \dfrac{\gamma_\bA}{\rho}\right] x \leq V_\bA^* (x,t)\leq (V_\bB^*) ^{-1}(x,t) 
			\leq 	\left[e^{-\rho(T-t)}l_\bB 
			+ \left( 1-e^{-\rho(T-t)}\right) \dfrac{\gamma_\bB}{\rho}\right]^{-1}x. 
			$$\endgroup
			This contradicts \eqref{ass. no match}. Therefore, the proposition is proved as a similar result for $\bI=\bB$ also holds. 
		\end{proof}

		Third, in contrast to the degenerate case in \Cref{prop.kk>1}, \Cref{prop matching happens} provides a sufficient condition for the existence of non-empty matching regions: it requires that agents' thresholds are not excessively high.

		\begin{proposition}[\bf Condition for Nonempty Matching Region]\label{prop matching happens}
			Suppose that Assumptions \ref{condition_initial}-\ref{condition_terminal} hold. Recall the constants defined below \eqref{def.AI}. For each $\bI,\bJ \in \{\bA,\bB\}$ with $\bI \neq \bJ$, if $K_\bA K_\bB  \leq 1$, then $V_\bI^* (x,t) \leq (V_\bJ^*) ^{-1}(x,t)$ for any $(x,t) \in \mathbb{R}_{\ge 0} \times [0,T]$. Hence, the optimal matching region of the type-$\bI$ agent $(x,t)$ meeting a type-$\bJ$ agent, that is, $\mathcal{O}_{\bI}^*(x,t):=\big\{y\in X_\bJ:V_{\bI}^* (x,t) \leq y \leq (V_{\bJ}^*) ^{-1}(x,t)\big\}, $ is not empty, for any $(x,t) \in \mathbb{R}_{\ge 0} \times [0,T]$.
		\end{proposition}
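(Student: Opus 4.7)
The plan is to combine the Lipschitz upper bound built into the definition of the solution space $\mathcal{H}_{\bI}$ with the boundary value obtained in \Cref{lemma.V0=0}, and then chain the resulting linear majorization for both types. Since $V_\bI^* \in \mathcal{H}_{\bI}$, the bi-Lipschitz condition in \eqref{def.HI} gives $V_\bI^*(z_1,t) - V_\bI^*(z_2,t) \leq K_{\bI}(z_1 - z_2)$ for all $0 \leq z_2 \leq z_1$, and \Cref{lemma.V0=0} supplies $V_\bI^*(0,t) = 0$. Taking $z_2 = 0$, $z_1 = x \geq 0$, I would first conclude the pointwise linear bound $V_\bI^*(x,t) \leq K_{\bI} x$ for all $(x,t) \in \mathbb{R}_{\geq 0} \times [0,T]$, and likewise $V_\bJ^*(y,t) \leq K_{\bJ} y$.

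Next, I would translate the desired inequality $V_\bI^*(x,t) \leq (V_\bJ^*)^{-1}(x,t)$ into an equivalent form using the monotonicity of $V_\bJ^*(\bigcdot,t)$. Since $V_\bJ^* \in \mathcal{H}_{\bJ}$ is strictly increasing in the first variable (with lower Lipschitz constant $k_{\bJ} > 0$), the inverse $(V_\bJ^*)^{-1}(\bigcdot,t)$ is well-defined and also strictly increasing, so the inequality is equivalent to
\begin{equation*}
V_\bJ^*\bigl(V_\bI^*(x,t),\,t\bigr) \leq x.
\end{equation*}
Applying the linear bound to the outer function and then to the inner one yields
\begin{equation*}
V_\bJ^*\bigl(V_\bI^*(x,t),\,t\bigr) \leq K_{\bJ}\, V_\bI^*(x,t) \leq K_{\bJ} K_{\bI}\, x = K_\bA K_\bB \, x \leq x,
\end{equation*}
where the last inequality uses the standing hypothesis $K_\bA K_\bB \leq 1$. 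This establishes the claim pointwise, and consequently $\mathcal{O}_{\bI}^*(x,t) = [V_\bI^*(x,t), (V_\bJ^*)^{-1}(x,t)] \neq \varnothing$ for every $(x,t) \in \mathbb{R}_{\geq 0} \times [0,T]$.

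There is essentially no obstacle here: the argument is a short consequence of the a priori Lipschitz upper bound provided by the solution class $\mathcal{H}_{\bI}$ together with the zero-boundary condition at $x=0$. The only point worth double-checking is that $(V_\bJ^*)^{-1}(x,t)$ is defined on all of $\mathbb{R}_{\geq 0}$ for each $t$; this follows from $V_\bJ^*(0,t)=0$, the lower Lipschitz bound $V_\bJ^*(z,t) \geq k_{\bJ} z$ (so $V_\bJ^*(\bigcdot,t)$ is surjective onto $\mathbb{R}_{\geq 0}$), and the strict monotonicity recorded in \eqref{def.HI}, all of which are already in place from the existence theorem. No further PDE analysis or fixed-point argument is needed.
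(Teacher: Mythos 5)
Your argument is correct and follows essentially the same route as the paper's: both reduce to the linear bounds $V_\bI^*(x,t)\le K_\bI x$ and $(V_\bJ^*)^{-1}(x,t)\ge x/K_\bJ$, which together with $K_\bA K_\bB\le 1$ force $V_\bI^*(x,t)\le (V_\bJ^*)^{-1}(x,t)$. The only difference is presentational — the paper argues by contradiction and cites only Lemma~\ref{BilipschitzVA}, whereas you argue directly and also explicitly cite Lemma~\ref{lemma.V0=0} for $V_\bI^*(0,t)=0$, a fact the paper's argument likewise relies on (to turn the Lipschitz estimate into a linear majorization) but leaves implicit.
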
	
		\begin{proof} 
			We suppose that there is $(x,t) \in \mathbb{R}_{\ge 0} \times [0,T]$ such that $V_\bA^* (x,t) > (V_\bB^*) ^{-1}(x,t)$. Lemma \ref{BilipschitzVA} implies that $
			\frac{1}{K_\bB } x \leq  (V_\bB^*) ^{-1}(x,t) < V_\bA^* (x,t) \leq K_\bA x. $
			This contradicts $K_\bA K_\bB  \leq 1$. Thus, the proof is completed.
		\end{proof}
		
		In the proof of \Cref{prop matching happens}, we have used $V_\bI^* (x,t) \leq K_\bI x$, where the constant $K_\bI$ represents the upper bound of the reservation value per unit of quality of type-$\bI$ agents. It also corresponds to the optimal threshold per unit of quality due to \Cref{thm.verification}. Roughly speaking, the condition $K_\bA K_\bB \leq 1$ means that when agents’ threshold functions per unit of quality are not excessively high, matches can occur.

		For example, in the case of the labor market, the constants $K_\bA$ and $K_\bB$ represent the \emph{selectivity indices} (threshold per unit of quality) for job seekers (type-$\bA$) and firms (type-$\bB$), respectively. A higher value of $K_\bI$ indicates that type-$\bI$ agents are more selective and impose more stringent acceptance criteria. To examine how parameters influence the values of $K_\bA$ and $K_\bB$, consider the case where quality levels are normalized ($x=y=1$). For instance, $K_\bA=\left(\frac{\lambda_{\bB}C_\bB }{\rho k_\bB }+\frac{\Gamma_\bA }{\rho}\right) \vee L_\bA$ depends on three key elements:
		\begin{enumerate}[(1).]
			\item the best-scenario expected gain (running utility) when waiting for a match, measured by $\frac{\Gamma_\bA}{\rho}$, i.e., highest possible income $\Gamma_\bA$ during job searching (e.g. income from part-time
			work or job-seeking support) discounted by the impatience level $\rho$;
			\item the best-scenario fallback option (terminal utility) if unmatched, measured by $L_\bA$ (e.g., self-employment, unemployment insurance);  
			\item the \emph{leverage ratio} of job seekers from market conditions, measured by $\frac{\lambda_\bB C_\bB}{\rho k_\bB}$. A higher value of this ratio implies that the job seekers expect high matching payoffs in the remaining time of the job market.
			
			\begin{enumerate}[(a).]
				\item To understand this, we unpack the expression $k_\bB  = \frac{\gamma_\bB }{\rho+\lambda_\bA } \wedge l_\bB $ as follows: (I). $\frac{\gamma_\bB}{\rho + \lambda_\bA}$ represents the present value of lowest vacancy utility (leaving positions temporarily unfilled, e.g., internal labor reallocation, delayed onboarding, or short-term capital
				gains), discounted by the impatience level $\rho$ and adjusted by the arrival frequency of job seekers $\lambda_\bA$
				; and (II). $l_\bB$ measures the gain from expected outcomes in the subsequent period (participating in the next job market or starting a business). The expression $k_\bB = \frac{\gamma_\bB}{\rho + \lambda_\bA} \wedge l_\bB$ can be interpreted as a measure of the differentiation level of the value functions (or hiring thresholds) across firms. A higher value of $\lambda_\bA$ implies a greater meeting rate. Due to the presence of overlap in the matching region (referring to the discussion in \Cref{mfr.fig:gA_x}), lower-quality firms gain increased opportunities to match with job seekers whose ability exceeds that of those hired by higher-quality firms. As a result, the degree of differentiation among firms of different tiers diminishes.
				
				From the job seeker’s perspective, a reduction in the differentiation power of hiring thresholds across firm tiers implies that a job seeker has a greater chance of securing a position that exceeds their intrinsic ability level. This, in turn, raises their expectation of finding a satisfactory job in the remaining time of the job market.

				\item  Moreover, $\lambda_\bB$ reflects the amount of job positions in the market, or equivalently, the arrival frequency of job opportunities for job seekers. The constant $C_\bB$ captures the concentration of job quality, where a high $C_\bB$ indicates that a large proportion of jobs possess similar quality levels, leading to intense competition among firms offering positions at similar levels. 
			\end{enumerate}
			The leverage ratio $\frac{\lambda_\bB C_\bB}{\rho k_\bB}$ measures the abundance and concentration of job opportunities relative to firms' hiring thresholds. A higher value of this ratio implies that the job seekers have a higher likelihood of being employed and higher matching payoffs in the remaining time of the labor market.
		\end{enumerate}
		A similar interpretation holds for $K_\bB$.

		Therefore, the condition $K_\bA K_\bB \leq 1$ serves as a safeguard against market stagnation, ensuring that the job market remains fluid and active. Economically, it guarantees that the selectivity indices of job seekers and firms are sufficiently moderate so that their matching regions are not empty.
		
		Crucially, the condition $K_\bA K_\bB \leq 1$ functions as a market balancing mechanism. If one side becomes more selective, say, job seekers raise their threshold and $K_\bA$ increases, then the condition can still be satisfied if firms become relatively less selective ($K_\bB$ decreases), and vice versa. This is a mutual and reciprocal constraint: each side’s selectivity directly influences, and is influenced by, the other.

		For any $\bI,\bJ \in \{\bA,\bB\}$ with $\bI \neq \bJ$, the following proposition derives the probability and expectation of the quality level of the type-$\bJ$ agent matched with the representative type-$\bI$ agent $(x,0)$, conditional on the type-$\bI$ agent $(x,0)$ got matched before closing time $T$.

		\begin{proposition}

			Let $\bI, \bJ \in \{\bA, \bB\}$ with $\bI \neq \bJ$. For any $x\in X_{\bI}$ such that $f_{\bI,0}(x)>0$ and $\mathbb{P}_{\bI,x,0}(\tau_\bI(x,0) \leq T)>0$, and any Borel set $B \in \mathcal{B}(X_{\bJ})$, we have
			\begingroup
			\begin{align}\label{prop.Qtau in B}
				&\nonumber\mathbb{P}_{\bI,x,0}\left(\mathbf{QL}_{\bJ,\tau_\bI(x,0)} \in B \,\pig|\, \tau_\bI(x,0) \leq T\right) \\
				&=\lambda_\bJ \int_B \frac{\int_0^T  \mathbb{P}_{\bI,x,0}(\tau_\bI(x,0) >s)\mathbb{P}_{\bJ,y,0}(\tau_\bJ(y,0) >s) \mathbf{1}_{\{V_{\bI}^* (x,s) \leq y \leq (V_{\bJ}^*) ^{-1}(x,s)\}} ds}{\mathbb{P}_{\bI,x,0}(\tau_\bI(x,0) \leq T)} d\mu_{\bJ,0}(y),\hspace{15pt} \text{and}
			\end{align}
			\begin{equation}\label{prop.exp.Qtau in B}
				\hspace{-5pt}\mathbb{E}_{\bI,x,0}\!\left(\mathbf{QL}_{\bJ,\tau_\bI(x,0)} \pig| \tau_\bI(x,0) \leq T\!\right)
				\!=  \!\lambda_\bJ\frac{\int_0^T \! \mathbb{P}_{\bI,x,0}(\tau_\bI(x,0) >s) \!
					\int_{V_{\bI}^* (x,s)}^{(V_{\bJ}^*) ^{-1}(x,s)}\! y \mathbb{P}_{\bJ,y,0}(\tau_\bJ(y,0) \!>\!s)  d\mu_{\bJ,0}(y)ds}{\mathbb{P}_{\bI,x,0}(\tau_\bI(x,0) \leq T)},\hspace{-10pt}
			\end{equation}\endgroup
			where
			\begingroup \begin{equation}\label{prop.taux0>s}
				\hspace{-10pt}	\mathbb{P}_{\bI,x,0}(\tau_\bI(x,0) >r) = \exp\left[-\lambda_\bJ \int_0^r  \mathbf{1}_{\{V_\bI^* (x,s) \leq (V_\bJ^*) ^{-1}(x,s)\}} \int_{V_\bI^* (x,s)}^{(V_\bJ^* )^{-1}(x,s)} \mathbb{P}_{\bJ,y,0}(\tau_\bJ(y,0) >s) d\mu_{\bJ,0} (y) ds\right].\hspace{-10pt}
			\end{equation}  \endgroup
		\end{proposition}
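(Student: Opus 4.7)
The plan is to derive the survival formula \eqref{prop.taux0>s} directly from the differential identity \eqref{eq.Ptau}, then obtain the joint law of $\bigl(\tau_\bI(x,0),\mathbf{QL}_{\bJ,\tau_\bI(x,0)}\bigr)$ by an infinitesimal argument in the same spirit as Section~\ref{sec.markov}; both \eqref{prop.Qtau in B} and \eqref{prop.exp.Qtau in B} then drop out after an application of Fubini.

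For \eqref{prop.taux0>s}, I would first use Lemma~\ref{prop.fxt3} to rewrite the measure appearing on the right-hand side of \eqref{eq.Ptau} in terms of the initial distribution: since $d\mu_{\bJ,s}/d\mu_{\bJ,0}(y)=\mathbb{P}(\tau_\bJ(y,0)>s)$,
\begin{equation*}
\mu_{\bJ,s}\bigl([V_\bI^*(x,s),(V_\bJ^*)^{-1}(x,s)]\bigr) = \int_{V_\bI^*(x,s)}^{(V_\bJ^*)^{-1}(x,s)} \mathbb{P}(\tau_\bJ(y,0)>s)\,d\mu_{\bJ,0}(y).
\end{equation*}
Plugging this into \eqref{eq.Ptau} with the optimal thresholds $u_\bI=V_\bI^*$ and $u_\bJ=V_\bJ^*$ renders the equation $-\partial_s\log\mathbb{P}(\tau_\bI(x,0)>s)$ equal to the integrand of the exponent in \eqref{prop.taux0>s}; integrating from $0$ to $r$ and using $\mathbb{P}(\tau_\bI(x,0)>0)=1$ yields \eqref{prop.taux0>s}.

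For \eqref{prop.Qtau in B}, I would refine the infinitesimal analysis leading to \eqref{eq.Ptau} so as to track also the quality of the matched partner. Fix $s\in[0,T)$, $B\in\mathcal{B}_{X_\bJ}$, and $h>0$ with $s+h\le T$. Combining the Poisson expansion (Definition~2.1.2 of \cite{ross1995stochastic}), the memoryless identity \eqref{property.memoryless}, and the independent sampling in Assumption~\ref{ass.meeting rule0}, one obtains
\begin{equation*}
\mathbb{P}\bigl(\tau_\bI(x,0)\in(s,s+h],\ \mathbf{QL}_{\bJ,\tau_\bI(x,0)}\in B\bigr) = \mathbb{P}(\tau_\bI(x,0)>s)\,\lambda_\bJ h\,\mu_{\bJ,s}\bigl(B\cap\mathcal{O}_\bI^*(x,s)\bigr) + o(h),
\end{equation*}
where $\mathcal{O}_\bI^*(x,s):=\{y\in X_\bJ:V_\bI^*(x,s)\le y\le (V_\bJ^*)^{-1}(x,s)\}$ is the equilibrium matching region \eqref{def: matching region}. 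Dividing by $h$, sending $h\downarrow 0$, integrating $s$ over $[0,T]$, invoking Lemma~\ref{prop.fxt3} once more to replace $\mu_{\bJ,s}$ by $\mathbb{P}(\tau_\bJ(y,0)>s)\,d\mu_{\bJ,0}(y)$, and swapping the order of integration via Fubini yields
\begin{equation*}
\mathbb{P}\bigl(\mathbf{QL}_{\bJ,\tau_\bI(x,0)}\in B,\,\tau_\bI(x,0)\le T\bigr) = \lambda_\bJ\int_B\!\int_0^T \mathbb{P}(\tau_\bI(x,0)>s)\,\mathbb{P}(\tau_\bJ(y,0)>s)\,\mathds{1}_{\mathcal{O}_\bI^*(x,s)}(y)\,ds\,d\mu_{\bJ,0}(y).
\end{equation*}
The normalizer $\mathbb{P}(\tau_\bI(x,0)\le T)$ is strictly positive because the hypothesis $V_\bI^*(x,t)<(V_\bJ^*)^{-1}(x,t)$ at some $(x,t)$, together with the continuity of $V_\bI^*,V_\bJ^*$ inherited from $\mathcal{H}_\bI,\mathcal{H}_\bJ$ and Assumption~\ref{condition_initial}, forces the integrand in the exponent of \eqref{prop.taux0>s} to be strictly positive on a set of positive Lebesgue measure. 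Dividing gives \eqref{prop.Qtau in B}, and testing the resulting conditional density against $y\mapsto y$ delivers \eqref{prop.exp.Qtau in B}.

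The main technical point is to justify the displayed infinitesimal expansion rigorously on the event $\{\tau_\bI(x,0)>s\}$: conditional on no successful match up to time $s$, the probability of the first match occurring in $(s,s+h]$ with partner quality in $B$ must factor into the Poisson rate $\lambda_\bJ h$ times the probability that a freshly sampled opposite-side pair $(\mathbf{QL}_{\bJ,s^-},\mathbf{ST}_{\bJ,s^-})\sim\pi_{\bJ,s^-}$ simultaneously satisfies $\mathbf{ST}_{\bJ,s^-}=0$ and $\mathbf{QL}_{\bJ,s^-}\in B\cap\mathcal{O}_\bI^*(x,s)$, up to $o(h)$. This is precisely the calculation already performed before \eqref{eq.Ptau}: it relies on the at-most-one-arrival property of the Poisson process on $(s,s+h]$, the memoryless identity \eqref{property.memoryless}, and the fact that the sampled opposite-side pair is drawn from $\pi_{\bJ,s^-}$ independently of the Poisson arrival times. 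Hence no new estimates beyond those of Section~\ref{sec.markov} are required; one merely carries the extra indicator $\mathbf{QL}_{\bJ,s^-}\in B$ through the same argument.
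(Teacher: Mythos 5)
Your proposal is correct, and the core content is the same as the paper's: both identify the joint density of $(\tau_\bI(x,0),\mathbf{QL}_{\bJ,\tau_\bI(x,0)^-})$ as $\mathbb{P}(\tau_\bI(x,0)>s)\,\lambda_\bJ\,\mathds{1}_{\mathcal{O}_\bI^*(x,s)}(y)\,\mu_{\bJ,s}(dy)\,ds$, invoke Lemma~\ref{prop.fxt3} to replace $\mu_{\bJ,s}(dy)$ by $\mathbb{P}(\tau_\bJ(y,0)>s)\,d\mu_{\bJ,0}(y)$, and then integrate and normalize. The route you take to the joint density differs in presentation: the paper goes backward via a test-function disintegration (it introduces the measure $\eta_{\bA,x,0}$, conditions on $\tau_\bA(x,0)=t$, and invokes a conditional-independence claim — that $\{\tau_\bA(x,0)=t\}$ is conditionally independent of $\mathbf{QL}_{\bB,t^-}$ given the mutual-acceptance event — to show the partner quality given a match at $t$ follows the truncated and normalized $\mu_{\bB,t}$, then reads off $\mathbb{P}(\tau_\bA(x,0)\in dt)$ from \eqref{eq.Ptau}). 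You go forward, directly computing $\mathbb{P}(\tau_\bI\in(s,s+h],\,\mathbf{QL}_\bJ\in B)$ by the same Poisson/memoryless/independent-sampling expansion already carried out before \eqref{eq.Ptau}, now carrying the extra indicator $\mathds{1}_B$. Your version is a bit more elementary: the rate $\times$ acceptance $\times$ quality-in-$B$ factorization falls out of the Poisson expansion without having to state and justify a separate conditional-independence lemma. A minor bonus of your proposal is that you explicitly argue that the normalizer $\mathbb{P}(\tau_\bI(x,0)\le T)$ is positive, which the paper leaves implicit (it only remarks that the matching set is assumed nonempty). Both are valid, and yours is a clean forward-in-time variant of the same argument.
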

		\begin{proof}
			Without loss of generality, take $\mathbf I = \mathbf A$. We assume $f_{\mathbf A,0}(x)>0$ to exclude quality levels absent from the market. Then $\mathbb{P}_{\mathbf A,x,0}\bigl(\tau_{\mathbf A}(x,0)\le T\bigr)>0$ implies that the set $\bigl\{(y,t)\in X_{\bB} \times [0,T]: V_{\mathbf A}^*(x,t)\le y \le (V_{\mathbf B}^*)^{-1}(x,t)\bigr\}$ has positive measure due to Lemma \ref{lem eq. of P1 P2}. Let $\eta_{\bA,x,0}(\bigcdot,\bigcdot)$ denote the joint distribution of $\mathbf{QL}_{\bB,\tau_\bA(x,0)}$ and the matching time $\tau_\bA(x,0)$ of type-$\bA$ agent $(x,0)$. For any bounded measurable test functions $\phi(y,t)$ and $\Phi(y,t):=\phi(y,t)\mathbf{1}_{\{t \leq T\}}$, we have
			\begingroup \begin{align}\label{E phi eta.1}
				\mathbb{E}_{\bA,x,0}\pig[\Phi(\mathbf{QL}_{\bB,\tau_\bA(x,0)},\tau_\bA(x,0))\pig]
				= \int_0^T \int_0^\infty \phi(y,t) \eta_{\bA,x,0}(dy,dt). 
			\end{align}\endgroup
			Since $\mathbb{P}\pig(V_\bA^*(x,t) \leq \mathbf{QL}_{\bB,t} \leq (V_\bB^*) ^{-1}(x,t),\,
			\mathbf{ST}_{\bB,t} =0 \,\big|\, \tau_\bA(x,0) = t \pig) = 1$, we have
			\begingroup\begin{align*}
			&\mathbb{E}_{\bA,x,0}\pig[\Phi(\mathbf{QL}_{\bB,t},t) \mathbf{1}_{\{V_\bA^*(x,t) \leq \mathbf{QL}_{\bB,t} \leq (V_\bB^*) ^{-1}(x,t),\,
					\mathbf{ST}_{\bB,t} =0\}}\,\big|\,\tau_\bA(x,0) = t\pig] \\
				& = \mathbb{E}_{\bA,x,0}\pig[\Phi(\mathbf{QL}_{\bB,t},t)\,\big|\,\tau_\bA(x,0) = t, \, V_\bA^*(x,t) \leq \mathbf{QL}_{\bB,t} \leq (V_\bB^*) ^{-1}(x,t),\,
				\mathbf{ST}_{\bB,t} =0\pig] \\
				& = \int_{V_\bA^* (x,t)}^{(V_\bB^*) ^{-1}(x,t)}  \frac{\Phi(y,t)}{\mu_{\bB,t}\pig(\big[V_\bA^* (x,t), (V_\bB^*) ^{-1}(x,t)\big]\pig)}
				d\mu_{\bB,t}(y),
			\end{align*}\endgroup
			where the last equality follows from the fact that $\{\tau_\bA(x,0) = t\}$ is conditionally independent of $\mathbf{QL}_{\bB,t}$ given $\{ V_\bA^*(x,t) \leq \mathbf{QL}_{\bB,t} \leq (V_\bB^*) ^{-1}(x,t),\,
			\mathbf{ST}_{\bB,t} =0{\color{black}, \text{$\exists \, \tau^\bB \in \mathcal{T}^\bB$ s.t. $\tau^\bB = t$}} \}$.
			Hence, by the continuity of $\mu_{\bB,\bigcdot}$, $V_\bA^*$ and $V_\bB^*$ and \eqref{eq.Ptau}, we obtain
			\begingroup \begin{align*}
				&\hspace{-10pt}\mathbb{E}_{\bA,x,0}\pig[\Phi(\mathbf{QL}_{\bB,\tau_\bA(x,0)},\tau_\bA(x,0))\pig] \\
				=&\,  \mathbb{E}_{\bA,x,0}\pig[\Phi(\mathbf{QL}_{\bB,\tau_\bA(x,0)},\tau_\bA(x,0)) \mathbf{1}_{\{V_\bA^*(x,\tau_\bA(x,0)) \leq \mathbf{QL}_{\bB,\tau_\bA(x,0)} \leq (V_\bB^*) ^{-1}(x,\tau_\bA(x,0)),\,
					\mathbf{ST}_{\bB,\tau_\bA(x,0)} =0\}}\pig] \\ 
				=&\,  \int_0^T \int_{V_\bA^* (x,t)}^{(V_\bB^*) ^{-1}(x,t)} 
				\frac{ \phi(y,t)}{\mu_{\bB,t}\pig(\big[V_\bA^* (x,t), (V_\bB^*) ^{-1}(x,t)\big]\pig)}
				\left(-\frac{\partial \mathbb{P}_{\bA,x,0}(\tau_\bA(x,0) >t)}{\partial t}\right) d\mu_{\bB,t}(y)dt\\
				=&\,  \int_0^T \int_0^\infty \phi(y,t) \mathbb{P}_{\bA,x,0}(\tau_\bA(x,0) >t)\lambda_\bB \mathbf{1}_{\{V_{\bA}^* (x,t) \leq y \leq (V_{\bB}^*) ^{-1}(x,t)\}} d\mu_{\bB,t}(y) dt.
			\end{align*}\endgroup
			By \Cref{prop.fxt3}, it follows that
			\begingroup \begin{align}\label{E phi eta.2}
				\nonumber 	&\hspace{-10pt}\mathbb{E}_{\bA,x,0}\pig[\Phi(\mathbf{QL}_{\bB,\tau_\bA(x,0)},\tau_\bA(x,0))\pig] \\
				=&\,  \int_0^T \int_0^\infty \phi(y,t) \mathbb{P}_{\bA,x,0}(\tau_\bA(x,0) >t)
				\lambda_\bB \mathbf{1}_{\{V_{\bA}^* (x,t) \leq y \leq (V_{\bB}^*) ^{-1}(x,t)\}} 
				\mathbb{P}_{\bB,y,0}(\tau_\bB(y,0) >t)
				d\mu_{\bB,0}(y) dt.
			\end{align}\endgroup
			Equating \eqref{E phi eta.1} and \eqref{E phi eta.2}, we have $
			\eta_{\bA,x,0}(dy,dt) 
			= \mathbb{P}_{\bA,x,0}(\tau_\bA(x,0) >t)
			\lambda_\bB 
			\mathbf{1}_{\{V_{\bA}^* (x,t) \leq y \leq (V_{\bB}^*) ^{-1}(x,t)\}} 
			\mathbb{P}_{\bB,y,0}(\tau_\bB(y,0) >t)
			d\mu_{\bB,0}(y) dt$ as $\phi$ is arbitrary. We can check that, again by \eqref{eq.Ptau}, $
			\mathbb{P}_{\bA,x,0}(\tau_\bA(x,0) \leq T) = \int_0^T \int_0^\infty \eta_{\bA,x,0}(dy,dt).$ Therefore, the distribution of $\mathbf{QL}_{\bB,\tau_\bA(x,0)}$ under the condition that $\tau_\bA(x,0) \leq T$ is given by
			\begingroup\begin{equation*}
				\hspace{-5pt}\frac{\int_0^T \eta_{\bA,x,0}(dy,dt)}{\int_0^T \!\!\int_0^\infty\!\! \eta_{\bA,x,0}(dy,dt)} 
				\!=\! \frac{\int_0^T\! \mathbb{P}_{\bA,x,0}(\tau_\bA(x,0) >s)\mathbb{P}_{\bB,y,0}(\tau_\bB(y,0) >s)\lambda_\bB 
					\mathbf{1}_{\{V_{\bA}^* (x,s) \leq y \leq (V_{\bB}^*) ^{-1}(x,s)\}}ds}{\mathbb{P}_{\bA,x,0}(\tau_\bA(x,0) \leq T)}  d\mu_{\bB,0}(y)\hspace{-10pt}
			\end{equation*}\endgroup
			which concludes \eqref{prop.Qtau in B} and \eqref{prop.exp.Qtau in B}. Moreover, by \eqref{eq.Ptau} again, we have $
			\mathbb{P}_{\bA,x,0}(\tau_\bA(x,0) >r) = \exp\pig[-\lambda_\bB \int_0^r  \mathbf{1}_{\{V_\bA^* (x,s) \leq (V_\bB^*) ^{-1}(x,s)\}} \mu_{\bB,s} ([V_\bA^* (x,s),(V_\bB^* )^{-1}(x,s)]) ds\pig]$ for $r\in [0,T]$. By \Cref{prop.fxt3}, we obtain \eqref{prop.taux0>s}.
		\end{proof}

		By \Cref{prop.fxt3} and \Cref{condition_initial}, we have $f_\bI(z,t) := \mathbb{P}_{\bI,z,0}(\tau_{\hspace{0.5pt}\bI}(z,0) > t) f_{\bI ,0}(z)$. Thus, the density of the probability distribution \eqref{prop.Qtau in B} is
		\begingroup \begin{align}\label{prop.Qtau in B.2}
			g_\bI(x,y) := \lambda_\bJ \frac{\int_0^T  f_\bI(x,s)f_\bJ(y,s) \mathbf{1}_{\{V_{\bI}^* (x,s) \leq y \leq (V_{\bJ}^*) ^{-1}(x,s)\}} ds}{f_{\bI ,0}(x)-f_\bI(x,T)}.
		\end{align}\endgroup
		We will show some numerical simulations for the quantities in \eqref{prop.Qtau in B.2} in the next section.
		
		\section{Numerical Results of Matching Model}\label{sec. Numerical Results of the Matching Model}
		This section presents numerical experiments for solutions to the fully coupled HJB–FP system \eqref{HJBx}–\eqref{boundary}. We also provide economic interpretations of the results in the context of the labor market introduced in \Cref{sec. applications}, illustrating how the model captures and explains observed matching behaviors. These findings demonstrate the model's applicability and its ability to accurately predict empirical phenomena.

		\subsection{Numerical Parameters}\label{sec. Numerical Parameters}
		We apply our model to the frictionless labor market and present numerical simulations. A detailed discussion of the model’s application and the economic interpretation of its parameters is provided in \Cref{sec. applications}. The market consists of job seekers (type-$\bA$) and hiring managers of firms (type-$\bB$). Below, we summarize the parameter configuration used in the simulations:
		\begin{enumerate}[(1).]
			\item The time horizon is set to $T=1$ year;
			
			\item The intensities of the Poisson processes are $\lambda_\bA =20$ and $\lambda_\bB =26$, representing the meeting rates of hiring managers and job seekers, respectively;
			
			\item The discount rate is $\rho=0.04$. The $30$-year continuous-time annuity factor is $\frac{1-e^{-30 \rho}}{\rho} \approx 17.47$;
			
			\item By the human capital approach, we identify the quality levels of job seekers with the present value of $30$ years of market wages (in thousands of USD). We take $[0,x_\bA]=[0,7000]$, so the lump sum payoff of a match corresponds to the present value of $30$ years of earnings. We posit that the profit a job seeker generates for a firm is positively correlated with their income. Given the practical difficulty in obtaining direct data on job seekers' contribution to corporate profits, we use their income as a proxy measure. Furthermore, this approach ensures that the measures of $x$ and $y$ are on a comparable scale;
			
			\item Firms’ job offers provide a lump sum payoff of 30 years' salaries in the range $[\mu,x_\bB]=[\mu, 7000]$ thousand USD, where $\mu > 0$ is a parameter determined later (e.g., a weighted average of minimum wages across U.S. states);
			
			\item The running utility of job seekers is $r_\bA(x,t) =0.23(x/17.47)= 0.013x$, where $ 0.23$ is the net replacement rate in unemployment \cite{oecd2025} in the U.S. So $r_\bA$ reflects the flow value of unemployment benefits relative to the present value of lifetime earnings. For firms, we take $r_\bB(y,t)=0.05y$, capturing the return on idle capital associated with unfilled vacancies (slightly above the discount rate);

			\item The terminal utilities are $h_\bA (x)=0.6x$ and $h_\bB (y)=1.1y$, respectively. We can justify the choice of the terminal functions based on the following rationale. For a job seeker who fails to secure employment within the current annual labor market cycle and proceeds to the next cycle, their competitiveness in the labor market tends to diminish due to factors such as increasing age. Consequently, the expected value of the job seeker who participates in the next period of match generally declines. This value can be represented by $h_\bA (x)$. Conversely, from the firm's perspective, technological advancements are expected to enhance the productivity of workers, enabling them to generate greater profits for the company, leading to a corresponding increase in the expected value captured by $h_\bB (y)$.
		\end{enumerate}
		
		To model the initial distribution of quality levels of job seekers, we adopt the right-handed Pareto log-normal distribution, widely used to represent income distributions (see \cite{hosseini2019retirement,reed2004double}). The corresponding probability density is: 
		\begingroup \begin{equation}\label{pdf.pareto_lognormal}
			f_{\mathbf{A},0}(x) = 
			\alpha x^{-\alpha-1} 
			\exp\!\left(\alpha \nu + \frac{\alpha^2 \tau^2}{2}\right) 
			\Phi\!\left(\frac{\log(x)-\nu-\alpha \tau^2}{\tau}\right)
			\mathbf{1}_{\{x>0\}},
		\end{equation}\endgroup
		where $\alpha$, $\nu$, $\tau$ are some real parameters and $\Phi$ denotes the standard normal cumulative distribution function.

		An alternative choice to model the distribution of incomes is the generalized Pareto distribution:
		\begingroup \begin{equation}\label{pdf.generalized_pareto}
			f_{\mathbf{B},0}(y) =
			\frac{1}{\sigma}
			\left[ 1 + \frac{1}{\beta} \left( \frac{y-\mu}{\sigma} \right) \right]^{-\beta-1}
			\mathbf{1}_{\{y\geq \mu\}} .
		\end{equation}\endgroup

		In the presence of a government-mandated minimum wage, the generalized Pareto distribution provides a more realistic representation of the income distribution. We assume that the salaries offered by employers in the U.S. follow this distribution. The starting point is the published quantile values of weekly earnings for full-time wage and salary workers in the third quarter of 2024 (see \cite[Table 5]{USBureau2024Table5}). These raw figures correspond to the 10th, 25th, 50th, 75th, and 90th percentiles of weekly earnings. To compute the present value of total earnings over a 30-year working horizon, we assume that nominal wage growth exactly offsets inflation, so real salaries remain constant over time. Total earnings are then obtained by converting the weekly earnings to annual earnings and then multiplying them by the present value annuity factor for a 30-year horizon:
		
		\begin{table}[htbp]
\centering
\caption{Present value of 30-year earnings for full-time wage and salary workers, derived from 2024 Q3 weekly earnings quantiles in the U.S.}\medskip
\label{tab:quantiles}

\begin{tabular}{@{}l@{\quad}ccccc@{}} 
\hline\smallskip
Probability & 0.10 & 0.25 & 0.50 & 0.75 & 0.90 \\
\hline\smallskip
Quantile (USD in thousands)
& $\le 551.43$ & $\le 717.67$ & $\le 1058.34$ & $\le 1687.90$ & $\le 2627.23$ \\
\hline 
\end{tabular}

\vspace{2pt}
{\footnotesize Source: \cite[Table~5]{USBureau2024Table5}.}
\end{table}

		To calibrate the distributions, we match model-implied quantiles to the five empirical quantiles in Table~\ref{tab:quantiles}. Parameters for the Pareto–lognormal and generalized Pareto specifications are chosen by minimizing the relative root mean squared error (RRMSE; see \cite{jadon2024comprehensive}) between model and data across these five points, i.e., the RMSE of quantile gaps normalized by the scale of the model quantiles. The calibration yields the following estimates. For the Pareto--lognormal model~\eqref{pdf.pareto_lognormal}, the tail parameter is $\alpha=1.8644$, the location parameter is $\nu=6.5492$, and the variance parameter is $\tau=0.44209$. For the generalized Pareto model~\eqref{pdf.generalized_pareto}, the tail parameter is $\beta=8.6348$, the location parameter is $\mu=459.4388$, and the scale parameter is $\sigma=835.2216$.
		
		The RRMSEs for the fitted distributions $f_{\bA,0}$ and $f_{\bB,0}$ are $0.510\%$ and $0.326\%$ respectively. The fitted location parameter for the generalized Pareto distribution \eqref{pdf.generalized_pareto} is $\mu = 459.44$. This corresponds to an hourly minimum wage of USD $\frac{459.44 \times 1000}{17.47 \times 52 \times 40} \approx 12.64$, where the denominator converts the annual amount to an hourly rate ($17.47$ is the annuity factor, $52$ is the number of weeks per year, and $40$ is the standard weekly working hours). This value is broadly consistent with the current U.S. data, where state minimum wages typically range from USD 7 to 16 per hour.

		\subsection{Interpretation of Results on Labor Market Dynamics}\label{sec. Interpretation of Results}
		In this section, we approximate the fully coupled HJB–FP system \eqref{HJBx}–\eqref{boundary} on a bounded domain via a fixed-point iteration with a standard finite-difference discretization, using the parameters in \Cref{sec. Numerical Parameters}. The scheme ran for over $600$ iterations until the total relative error of value functions and density functions between successive iterates fell below $10^{-4}$. 
		
		\Cref{mfr.fig:VAVB_xy} shows the value functions (which also represent the agents' optimal thresholds) $V_\bA$ and $V_\bB$ respectively, plotted against their quality levels $x$ and $y$ at selected times. The monotonicity in $x$ (or $y$) is consistent with our theoretical results.  
		
		However, the relationship between $V_\bA$ (or $V_\bB$) and time $t$ exhibits distinct patterns across agent types. For job seekers (see the left panel of \Cref{mfr.fig:VAVB_xy}), the value functions (optimal thresholds) $V_\bA$ for low-ability job seekers show negligible variation over time. This is because their probability of securing a job is very low, causing their value functions to be predominantly determined by the running utility. In contrast, job seekers of medium to high ability exhibit a discernible declining trend in their value functions (and optimal thresholds) as time progresses. This reflects the effect of time discounting and reduced future opportunities. The longer one waits, the fewer opportunities remain, so the thresholds for accepting a match drop accordingly. These findings are consistent with those in \cite[Section II]{mccall1970economics}.  
		
		For hiring managers in firms (see the right panel of \Cref{mfr.fig:VAVB_xy}), the value functions (and optimal thresholds) for low to mid-level firms also demonstrate a decreasing trend over time, which is similarly driven by time discounting and the contraction of future opportunities. However, high-level firms exhibit a notably different pattern: their value function $V_\bB$ remains largely constant over time. This occurs because these high-tier firms anticipate the ability to recruit superior job seekers in the next period's labor market, and the prestige or acceptance criteria associated with high-level positions do not diminish with the passage of time within a single market cycle.

		\begin{figure}[h]\centering
			\begin{minipage}[t]{0.4\linewidth}
				\centering
				\includegraphics[width=\linewidth]{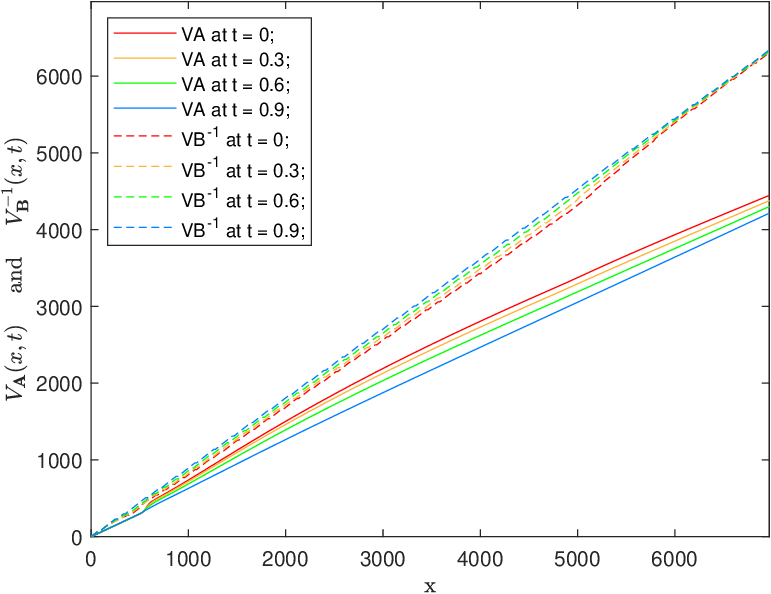}
			\end{minipage}
			\begin{minipage}[t]{0.4\linewidth}
				\centering
				\includegraphics[width=\linewidth]{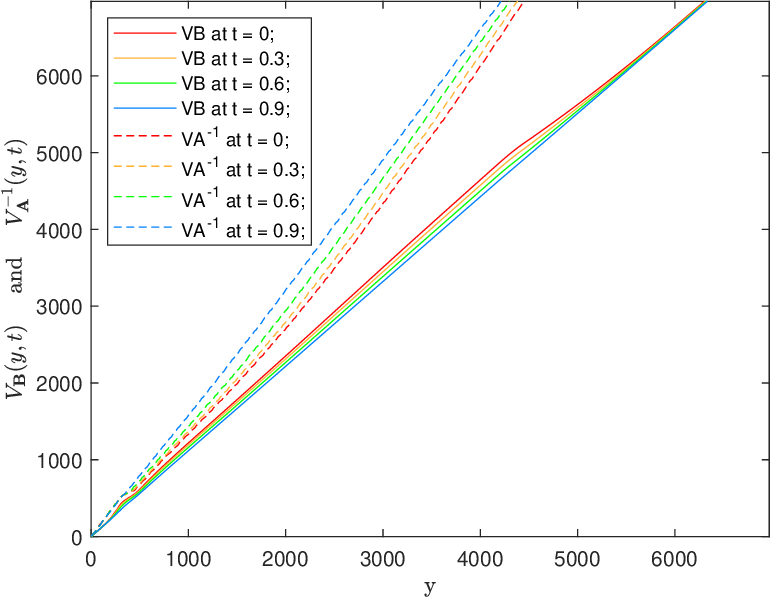}
			\end{minipage}
			\caption{Value functions (optimal thresholds) $V_\bA $, $V_\bB $ and their inverses against quality levels $x$ and $y$, for selected time $t$}
			\label{mfr.fig:VAVB_xy}
		\end{figure}
		
		\Cref{mfr.fig:fAfB_xy} indicates that the remaining unmatched agents become increasingly concentrated in the lower-quality region, particularly as $t \to 1^-$. This is because high-quality agents tend to match earlier due to their higher attractiveness and greater chances of satisfying both sides' acceptance thresholds. Consequently, the remaining pool of unmatched agents becomes increasingly skewed toward lower quality. We also note that the densities have higher values for small $x$ or $y$. Low-quality agents struggle to meet acceptance thresholds. They are more likely to remain unmatched until the very end of the market period. Lower-quality agents face a higher risk of exclusion.
		
		\begin{figure}[h]\centering
			\begin{minipage}[t]{0.4\linewidth}
				\centering
				\includegraphics[width=\linewidth]{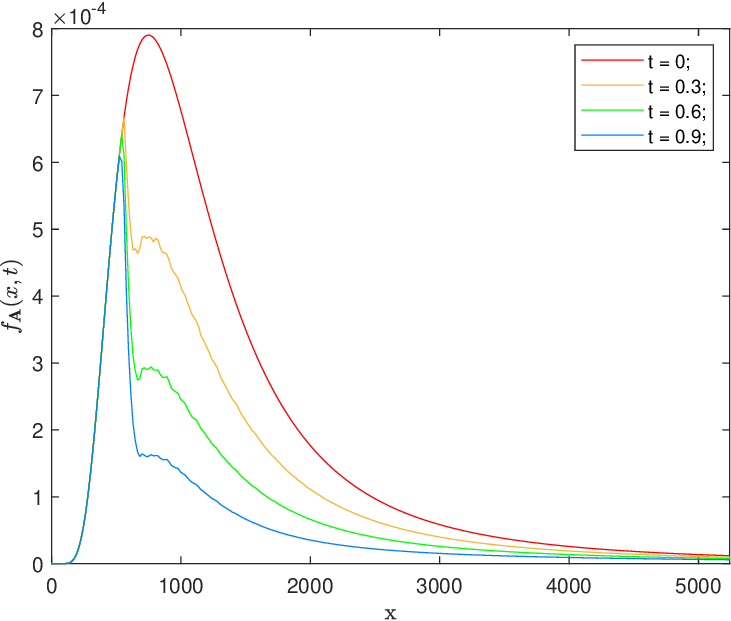}
			\end{minipage}
			\begin{minipage}[t]{0.4\linewidth}
				\centering
				\includegraphics[width=\linewidth]{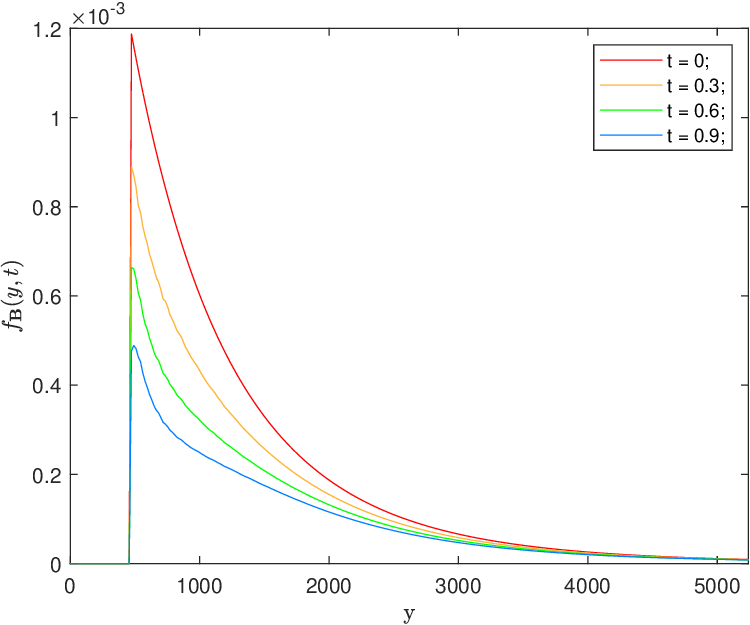}
			\end{minipage}
			\caption{Defective probability densities $f_\bA $ and $f_\bB $ against quality levels $x$ and $y$, for selected times}
			\label{mfr.fig:fAfB_xy}
		\end{figure}

		Figures \ref{mfr.fig:FAFB_t} and \ref{mfr.fig:fAfB-fAfB} illustrate the dynamics of the unmatched rate with respect to agent quality levels $x$ and $y$, as well as time $t$. For job seekers, the left panel of Figure \ref{mfr.fig:fAfB-fAfB} highlights a non-monotonic relationship between quality and matching success. Mid-tier job seekers tend to match more quickly, achieving the highest employment rates. In contrast, both low- and high-quality job seekers experience delayed matching due to distinct frictions: lower-quality job seekers face limited appeal to employers, while higher-quality job seekers are hindered by their greater selectiveness and higher reservation thresholds. These mismatches in expectations contribute to persistent underemployment or unemployment among job seekers at both ends of the ability distribution. Job seekers who fail to match within the current period enter the following year’s labor market, and a subset of these individuals may turn to entrepreneurship. This observation aligns with empirical findings by \cite{poschke2013becomes}, who showed that individuals at both extremes of the ability spectrum are more likely to become entrepreneurs.
		
		The pattern is markedly different for firms, as shown in the right panel of Figure \ref{mfr.fig:fAfB-fAfB}. From the firm’s perspective, low-tier job positions are filled more quickly, while job positions are filled more slowly as firms' rank increases. This indicates that low-tier positions prioritize speed of hiring—for example, retail salespersons—whereas high-tier positions, such as university professors, emphasize finding high-quality candidates and are less concerned with filling the vacancy within the immediate hiring cycle.

		\begin{figure}[h]\centering
			\begin{minipage}[t]{0.4\linewidth}
				\centering
				\includegraphics[width=\linewidth]{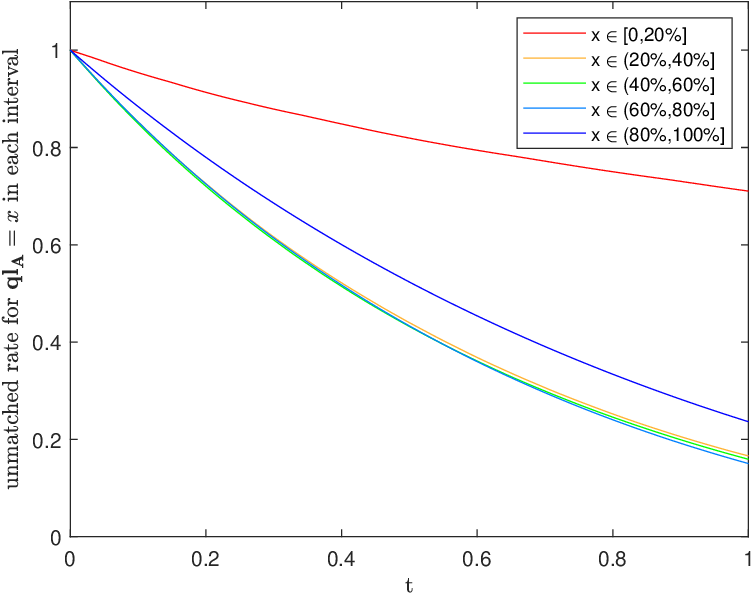}
			\end{minipage}
			\begin{minipage}[t]{0.4\linewidth}
				\centering
				\includegraphics[width=\linewidth]{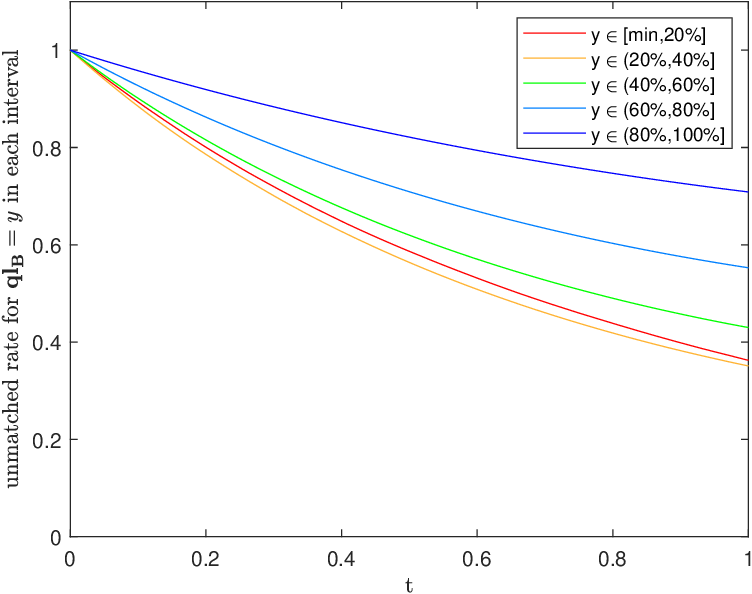}
			\end{minipage}
			\caption{Unmatched rates $F_\bA $ and $F_\bB $ against time $t$, for selected percentile bands of quality levels}
			\label{mfr.fig:FAFB_t}
		\end{figure}
		
		\begin{figure}[h]\centering
			\begin{minipage}[t]{0.4\linewidth}
				\centering
				\includegraphics[width=\linewidth]{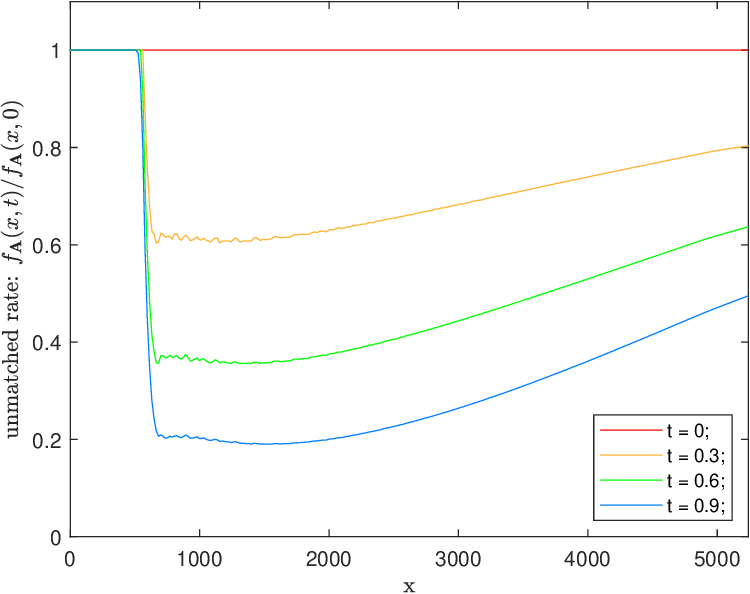}
			\end{minipage}
			\begin{minipage}[t]{0.4\linewidth}
				\centering
				\includegraphics[width=\linewidth]{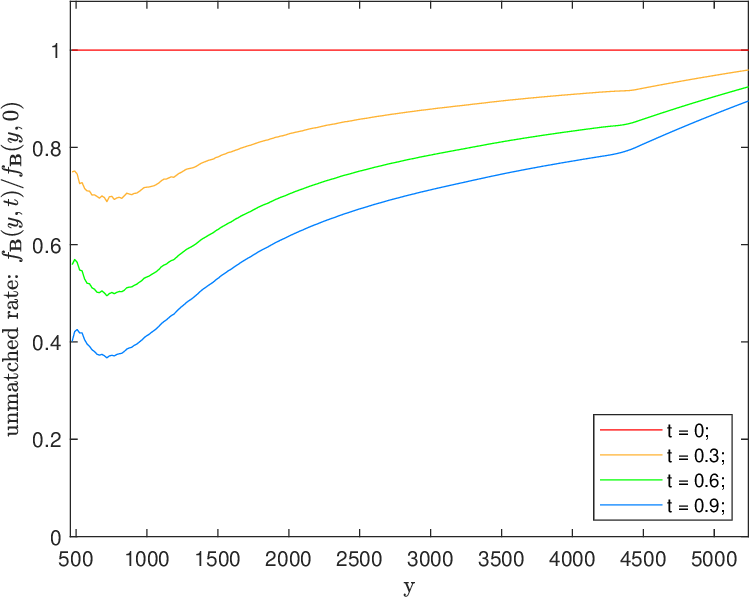}
			\end{minipage}
			\caption{Ratios of defective probability densities $f_\bA $ and $f_\bB$ to their initial values against quality levels $x$ and $y$, for selected times}
			\label{mfr.fig:fAfB-fAfB}
		\end{figure}

		For any $\bI,\bJ \in \{\bA,\bB\}$ with $\bI \neq \bJ$, we use \eqref{prop.Qtau in B.2} to plot the graphs of probability densities of the quality level of the matched partner of the representative type-$\bI$ agent $(x,0)$, given the condition that the type-$\bI$ agent $(x,0)$ is matched before closing time $T$. In \Cref{mfr.fig:gA_x}, as the percentile band increases, the mode (peak) of the probability density shifts toward higher quality values of matched partners. Agents with higher quality tend to match with partners who also have higher quality. For higher quality percentile bands (toward the right of both panels), the densities become broader and flatter, which reflects greater variability in the quality of matched partners for high-quality agents. This suggests that the matching outcomes are less predictable and more dispersed for top-quality agents. It is because there are fewer agents at the upper end, making exact matches harder and the distribution more spread out. Higher-quality agents may need to compromise more or wait longer for suitable matches.

		Moreover, the probability densities for adjacent percentile bands of agent quality are not strictly separated; instead, the curves intersect, meaning agents in one band may match with partners who possibly match with the next band. Markets do not perfectly or totally sort agents by quality—there’s always some mixing at the ``boundaries'' where agents of intermediate quality can plausibly match with partners in adjacent bands. This reflects realistic dynamics where not all agents can wait indefinitely or be perfectly selective due to market pressures or time constraints. Agents may adjust their standards based on the market situation (e.g., impatience, the scarcity of partners), allowing for some matches outside their ``ideal'' band. This overlap is a hallmark of realistic matching models, unlike traditional models with perfectly sorted pairings (e.g., \cite{gale1962college,mccall1970economics}). Compared to the strictly monotonic nature of both running utility and terminal utility, the overlap in matching outcomes provides an opportunity for ``upward mobility," meaning that a job seeker may achieve a better outcome than another with higher inherent ability. This phenomenon is a natural result of the labor market as an imperfect information market and represents one of its notable characteristics.

		\begin{figure}[h]\centering
			\begin{minipage}[t]{0.4\linewidth}
				\centering
				\includegraphics[width=\linewidth]{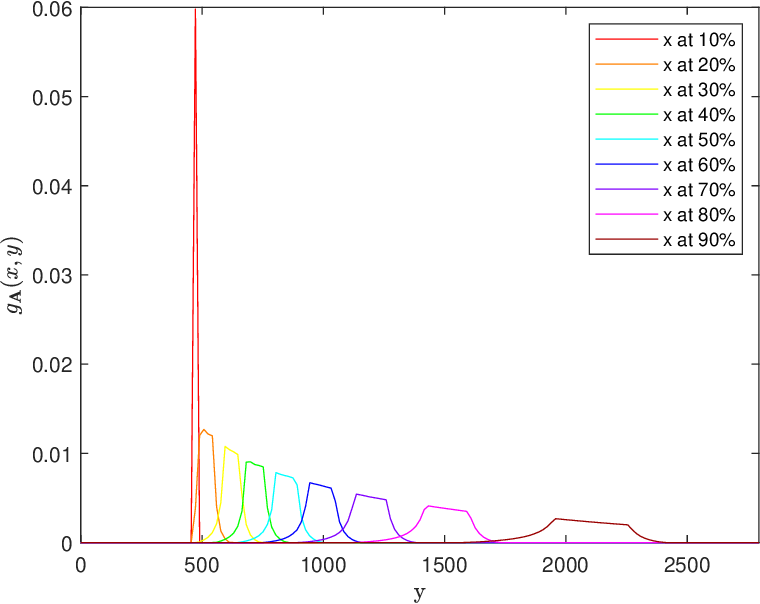}
			\end{minipage}
			\begin{minipage}[t]{0.4\linewidth}
				\centering
				\includegraphics[width=\linewidth]{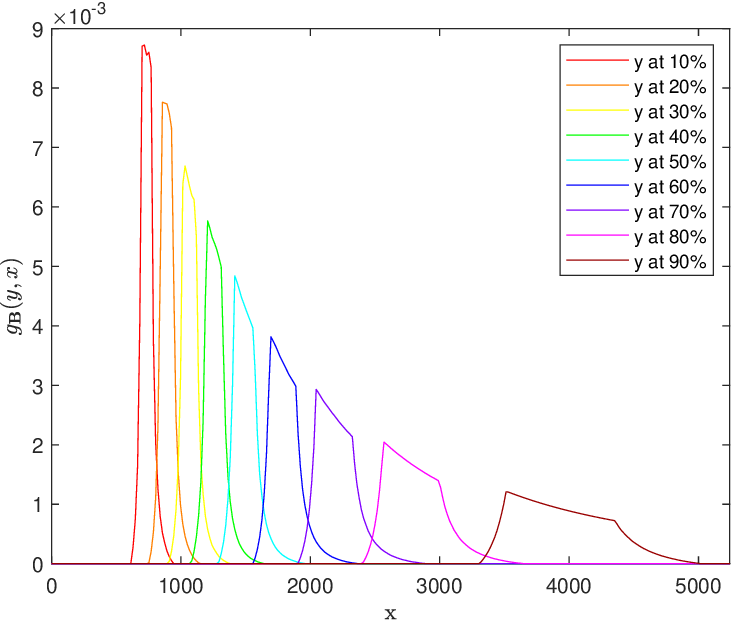}
			\end{minipage}
			\caption{Probability densities of the quality level of the matched partners of type-$\bA$ and type-$\bB$ agents, for selected percentile bands of their own quality level, see \eqref{prop.Qtau in B.2}}
			\label{mfr.fig:gA_x}
		\end{figure}

		\begin{figure}[h]\centering
			\begin{minipage}[t]{0.4\linewidth}
				\centering
				\includegraphics[width=\linewidth]{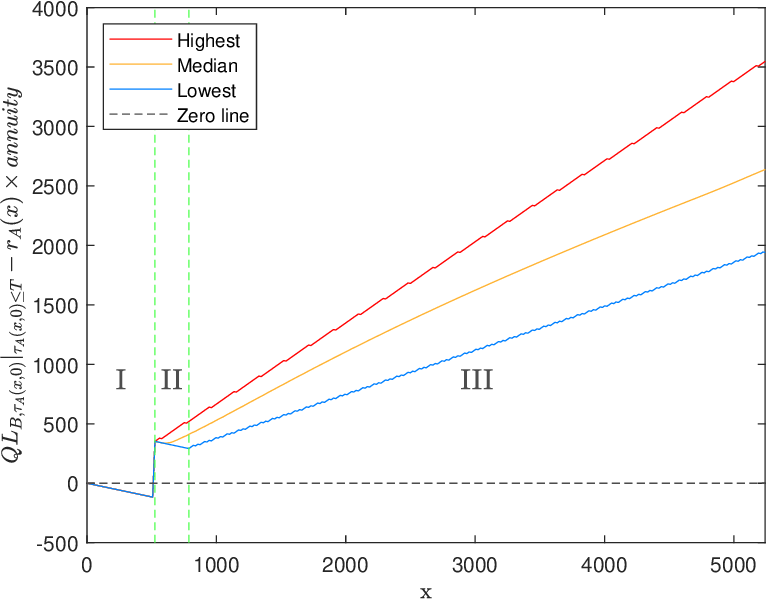}
			\end{minipage}
			\begin{minipage}[t]{0.4\linewidth}
				\centering
				\includegraphics[width=\linewidth]{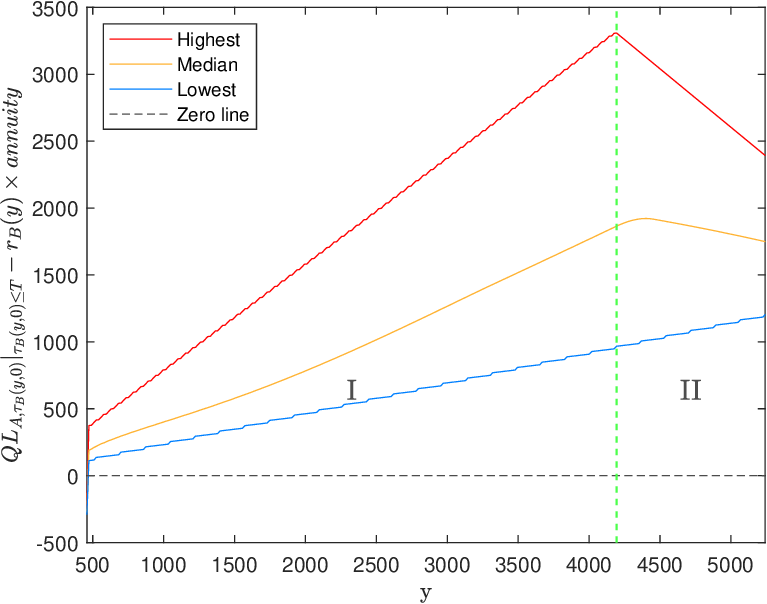}
			\end{minipage}
			\caption{Three different scenarios of net gains from matching of type-$\bA$ and type-$\bB$ agents against their own quality levels.
			}
			\label{mfr.fig:expgA_minus_kappaA_x}
		\end{figure}

		{\color{black} We now examine the net gain from matching relative to remaining unmatched, and how it varies across agents’ quality levels. For each party, it is measured by the lump‑sum payoff of 30 years' salaries or profit delivered by the matched partner, minus the present value of 30 years' income {\color{black}(from running payoff)} while remaining unmatched. For a job seeker with quality $x \ge 0$, the present value of 30 years' unmatched income (such as unemployment benefits) is $r_\bA(x,t) \times 17.47 = 0.23x$, see items (3) and (6) in \Cref{sec. Numerical Parameters}. If the job seeker instead finds a job offering a lump‑sum payoff of 30 years' salaries $y$, then the net gain of matching is $y - 0.23x$. For a firm of quality $y$, leaving a position vacant for 30 years yields a return on idle capital of $r_\mathbf{B}(y,t) \times 17.47 \approx 0.87y$. Hiring a worker of quality $x$ instead generates total profit $x$, so the net gain from filling the position is $x - 0.87y$, equivalently, the opportunity loss of leaving it vacant.
			
			Figure \ref{mfr.fig:expgA_minus_kappaA_x} reports the lowest, median, and highest net gain over the distribution of matched partners, plotted against quality level $x$. If \(\mathbb{P}_{\bI,x,0}(\tau_\bI(x,0) \leq T)=0\), then no match occurs and we set the matched outcome to \(y_{\max}(x)=y_{\text{med}}(x)=y_{\min}(x)=0\). Otherwise, let \(y_{\max}(x)\), \(y_{\text{med}}(x)\) and \(y_{\min}(x)\) denote, respectively, the highest, median, and lowest matched firm qualities in the conditional distribution \(\mathbb{P}\bigl(\mathbf{QL}_{\bB,\tau_\bA(x,0)} \in \bigcdot \;|\; \tau_\bA(x,0) \leq T=1\bigr)\) given by \eqref{prop.Qtau in B}. These correspond to the best, median and worst matches available to a job seeker of quality \(x\). For job seekers (left panel), the red, orange, and blue curves correspond to the best, median, and worst net gains for job seekers, namely, \(y_{\max}(x) - 0.23x\), \(y_{\text{med}}(x) - 0.23x\), and \(y_{\min}(x) - 0.23x\). The right panel has the analogous interpretation for firms, with the roles of $x$ and $y$ reversed.
			
			For job seekers (left panel of Figure \ref{mfr.fig:expgA_minus_kappaA_x}), we identify three distinct regions based on the behavior of the curves. In region I, the net gain is negative and decreases with $x$ since job seekers' ability is too low to meet the hiring thresholds of any firm, so they do not match. The unmatched income $0.23x$ grows with $x$ while the payoff from matching remains zero. At a critical threshold, the curves jump upward: above this point, feasible matches begin to appear, and their payoff from matching exceeds the unemployment benefit. 
			
			Beyond this threshold (region II), the lowest and median net gains decline slightly as $x$ increases, while the highest net gain continues to rise. This decline reflects the presence of a government-mandated minimum wage. For job seekers with relatively low ability in this region, many feasible matches still offer wages close to the minimum wage{\color{black}, observed from \Cref{mfr.fig:gA_x}}. As $x$ rises slightly, unmatched income grows, but typical salaries are still concentrated near the minimum wage. This causes the median and lowest net gains to edge downward. In contrast, the best possible matched income rises with ability $x$ at a faster rate than the unmatched income, so the highest net gain keeps increasing. 
			
			In region III, all three curves increase with $x$, indicating that higher‑ability job seekers enjoy progressively larger gains from matching. The opportunity loss of remaining unmatched rises with ability, and they tend to secure better matches.
			
			The pattern for firms in the right panel of Figure \ref{mfr.fig:expgA_minus_kappaA_x} differs from that for job seekers. From $y=\mu= 459.4388$ up to $y \approx 4200$ (region I), all three net gains are positive and increasing {\color{black} because the presence of the minimum quality $\mu$ removes the very-low-quality region that appears for job seekers. Moreover, } higher-rank firms attract better job seekers on average, so the lump sum payoff from hiring rises with firm quality. Beyond $y \approx 4200$ (region II), the lowest net gain continues to rise, whereas the median and highest decline. These very high-quality firms recruit only high-ability job seekers, but there is an upper bound ($x_\bA=7000$) on job-seeker quality. As the return on idle capital $0.87y$ keeps increasing as $y$, yet the quality of the best attainable job seekers no longer increases, causing the median and highest net gains to fall. By contrast, the worst attainable match still has room to improve as $y$ increases, so the lowest net gain continues to rise.
			
			Overall, the loss from remaining unmatched relative to matching generally increases with quality. The main exceptions arise for very low-quality job seekers, who are often unable to match at all, and for very high-quality firms, for which the upper bound on job-seeker quality limits further gains from matching.}
		
		\section{Conclusion}\label{sec con}
		
		This article advances the theory of dynamic, two-sided matching markets by integrating micro-level individual decisions with macro-level equilibrium outcomes through a novel mean field game framework. Unlike classical matching models that primarily characterize static stable matchings or focus on unilateral agent behavior, our approach captures the fully coupled strategic interactions between two agent populations whose acceptance criteria and distributions evolve jointly over time. We rigorously establish the existence and uniqueness of mean field Nash equilibrium by analyzing a coupled forward-backward HJB-FP system with nonlocal and dual-population features, overcoming significant technical challenges in decoupling and compactness.
		
		Our model provides rich insights into how individual acceptance thresholds adapt dynamically, reflecting temporal pressures and strategic considerations in realistic labor markets and other matching environments. Numerical results illustrate patterns such as declining value functions and thresholds over time, concentration of unmatched agents among lower-quality individuals, non-monotonic matching success by agent quality, and imperfect sorting that enables upward mobility. These findings deepen the understanding of dynamic matching phenomena by linking micro-level strategy to emergent aggregate patterns.
		
		Overall, this work offers a comprehensive theoretical and computational framework for studying dynamic two-sided markets with heterogeneous agents, bridging classical matching theory, stochastic control, and mean field game approaches. It opens promising avenues for future research in diverse applications, including labor markets, marriage markets, platform economies, and beyond.

\paragraph{Acknowledgments}
E. Bayraktar is supported in part by the NSF Grant  DMS-2507940 and in part by the Susan M. Smith Professorship. Bohan Li is supported by the National Natural Science Foundation of China
			under grant No. 12501661.  H. M. Tai is partially supported by Australian Research Council Discovery Project DP240100781.

\paragraph{Author Contributions}
The author names are listed in an alphabetical order of their surnames. All authors have contributed equally to the paper.
		
		\setlength{\bibsep}{0ex}
		\addcontentsline{toc}{section}{References}
		\bibliographystyle{abbrv}
		\bibliography{ref}

		\begin{appendix}
	 		\section{Proof of \Cref{maintheorem}: Global Existence of System \eqref{HJBx}-\eqref{boundary}}\label{sec:existence}
			\normalsize
			
			In this section, we establish the global-in-time existence of solution to the system \eqref{HJBx}-\eqref{boundary} under Assumptions \ref{condition_initial}, \ref{condition_running} and \ref{condition_terminal}. First, using inequality $(1+z)^{2+\nu} \leq 2^{1+\nu}(1+z^{2+\nu})$, together with Assumption~\ref{condition_initial}, we have the following lemma
			
			\begin{lemma}\label{lemma.integrablepdf}
				Suppose that \Cref{condition_initial} holds. Then  $
				\int_0^{\infty} (1+z)f_{\bI ,0}(z) dz\leq  \frac{2^{1+\nu}C_\bI }{\nu}$ for $\bI\in\{\bA ,\bB \}$.
			\end{lemma}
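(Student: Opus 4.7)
The plan is direct: use the pointwise tail bound in \Cref{condition_initial} and a single elementary inequality to reduce the claim to a standard integral $\int_0^{\infty}(1+z)^{-1-\nu}dz = 1/\nu$.

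More precisely, I would start from the bound $f_{\bI,0}(z) \leq C_\bI/(1+z^{2+\nu})$ to write
\begin{equation*}
\int_0^{\infty}(1+z) f_{\bI,0}(z)\,dz \;\leq\; C_\bI \int_0^{\infty} \frac{1+z}{1+z^{2+\nu}}\,dz,
\end{equation*}
and then show that the integrand on the right is pointwise dominated by $2^{1+\nu}/(1+z)^{1+\nu}$. The key step is the observation that, by the power mean inequality applied to $a = 1$ and $b = z$ with exponent $p = 2+\nu \geq 1$,
\begin{equation*}
\left(\frac{1+z^{2+\nu}}{2}\right)^{1/(2+\nu)} \;\geq\; \frac{1+z}{2},
\end{equation*}
which after raising to the $(2+\nu)$-th power yields $1+z^{2+\nu} \geq 2^{-(1+\nu)}(1+z)^{2+\nu}$. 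Substituting this into the integrand and canceling one factor of $(1+z)$ gives
\begin{equation*}
\int_0^{\infty} \frac{1+z}{1+z^{2+\nu}}\,dz \;\leq\; 2^{1+\nu} \int_0^{\infty} \frac{dz}{(1+z)^{1+\nu}} \;=\; \frac{2^{1+\nu}}{\nu},
\end{equation*}
which is exactly the bound claimed, after multiplication by $C_\bI$.

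There is essentially no obstacle here; the only thing that requires a moment of thought is choosing the right elementary inequality to produce the constant $2^{1+\nu}$, since a naive split of the integral over $[0,1]$ and $[1,\infty)$ only yields the weaker bound $2C_\bI(1+\nu)/\nu$, which fails to match $2^{1+\nu}C_\bI/\nu$ for $\nu \in (0,1)$. The power mean (equivalently, convexity of $t \mapsto t^{2+\nu}$) is the cleanest way to obtain the sharp constant in one line, and this is the only nontrivial ingredient of the proof.
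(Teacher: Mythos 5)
Your proof is correct and takes essentially the same route as the paper: the paper's one-line proof invokes exactly the inequality $(1+z)^{2+\nu} \leq 2^{1+\nu}(1+z^{2+\nu})$, which is the same as your power-mean bound $1+z^{2+\nu} \geq 2^{-(1+\nu)}(1+z)^{2+\nu}$ rearranged, and then the integral $\int_0^\infty (1+z)^{-1-\nu}\,dz = 1/\nu$ finishes it. You have merely spelled out the "yields the result immediately" step; nothing substantive differs.
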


			Next, we define the space used to construct solutions via the Schauder fixed point theorem. Let $\mathcal{M}^1_f(\mathbb{R}_{\ge 0})$ be the set of finite signed Borel measures on $\mathbb{R}_{\ge 0}$, endowed with the Kantorovich-Rubinstein norm:
			\begingroup $$
			\| \mu \|_{\textup{KR}}\hspace{-2pt}:=\hspace{-1pt}| \mu(\mathbb{R}_{\ge 0})\hspace{-1pt}|
			\hspace{-1pt}+\hspace{-1pt}\sup \left\{ \int_0^\infty \hspace{-7pt} f(x)d\mu(x)\hspace{-1pt}:\hspace{-1pt} f\hspace{-2pt}\in\hspace{-2pt} \text{Lip}_1(\mathbb{R}_{\ge 0}) \text{ and }  
			f(0)=0\right\}\hspace{-2pt},  \text{ for any $\mu \in \mathcal{M}^1_f(\mathbb{R}_{\ge 0})$,}  
			$$\endgroup
			where $\text{Lip}_1(\mathbb{R}_{\ge 0})$ denotes the set of Lipschitz continuous functions from $\mathbb{R}_{\ge 0}$ to $\mathbb{R}$ with Lipschitz constant at most one. Define $C\big([0,T];\mathcal{M}^1_f(\mathbb{R}_{\ge 0})\big)$ as the space of continuous measure-valued functions equipped with the uniform norm $
			\| \mathbf{\mu} \|_{\infty,\textup{KR}}:=\sup_{t\in[0,T]} \| \mu(\bigcdot,t) \|_{\textup{KR}}, $ for any $\mu \in C\big([0,T];\mathcal{M}^1_f(\mathbb{R}_{\ge 0})\big)$. It is standard that $C\big([0,T];\mathcal{M}^1_f(\mathbb{R}_{\ge 0})\big)$ is a Banach space, see \cite{carmona2018probabilistic1,villani2008optimal}.
			
			Let $\mathcal{P}_1(\mathbb{R}_{\ge 0})$ denote the space of probability measures on $\mathbb{R}_{\ge 0}$ with finite first moment, equipped with the 1-Wasserstein metric $W_1$ defined by $
			W_1(\mu,\nu) := \inf_{\pi \in \pi(\mu,\nu)}
			\left[ \int_{\mathbb{R}_{\ge 0} \times \mathbb{R}_{\ge 0}} |x-y|\pi(dx,dy)\right]$ for any $\mu,\nu \in \mathcal{P}_1(\mathbb{R}_{\ge 0})$, where $\pi(\mu, \nu)$ is the set of joint probability measures on $\mathbb{R}_{\ge 0} \times \mathbb{R}_{\ge 0}$ with marginals $\mu$ and $\nu$. Note that $\mathcal{P}_1(\mathbb{R}_{\ge 0}) \subseteq \mathcal{M}^1_f(\mathbb{R}_{\ge 0})$. By the Kantorovich-Rubinstein duality \cite[Proposition 2.6.6]{figalli2021invitation}, the metric induced by $\| \bigcdot\|_{\textup{KR}}$ norm on $\mathcal{P}_1(\mathbb{R}_{\ge 0})$ coincides with the 1-Wasserstein distance $W_1$.

			For $\bI\in\{\bA ,\bB \}$, we define the set
			\begingroup \begin{equation}\label{KA}
				\mathcal{K}_{\bI} :=\left\{
				\mu \in \mathcal{P}_1(\mathbb{R}_{\ge 0})
				\;\middle|\;
				\begin{aligned}
					& \text{there exist } p \in [0,1]
					\text{ and } g \in L^1(\mathbb{R}_{> 0};\mathbb{R}_{\geq 0}) \text{ such that}\\
					& 
					\mu = p \delta_{\textup{D}} 
					+ g\,d\mathcal{L}^1
					\hspace{3pt}\text{and} \hspace{3pt}
					\sup_{x > 0} \big(1 + x^{2+\nu}\big) g(x) \le C_{\bI}.
				\end{aligned}
				\right\},
			\end{equation}\endgroup
			where $\delta_{\textup{D}}$ is the Dirac delta measure centered at zero. Denote by $\overline{\mathcal{K}_{\bI}}$ the closure of $\mathcal{K}_{\bI}$ under the topology generated by the $\| \bigcdot\|_{\textup{KR}}$ norm.  The following lemma summarizes key properties of $\overline{\mathcal{K}_{\bI}}$.
			
			\begin{lemma}\label{lemma.closureofK}
				Let $\bI\in\{\bA ,\bB \}$. For any $\mu \in \overline{\mathcal{K}_{\bI}}$ and $0\leq a\leq b$, the following hold:
				\begin{enumerate}[label=\textbf{(\arabic*)},leftmargin=*]
					\setcounter{enumi}{0}
					
					\item\hspace{-5pt}\textbf{.}\label{lemma.closureofK.compact} $\overline{\mathcal{K}_{\bI}} $ is compact in $\mathcal{P}_1(\mathbb{R}_{\ge 0})$ under the topology induced by the 1-Wasserstein metric $W_1$;
					
					\item\hspace{-5pt}\textbf{.}\label{lemma.absolutelycontinuous} there exist $p\in[0,1]$ and $g\in L^1(\mathbb{R}_{> 0};\mathbb{R}_{\geq 0})$ such that $\textup{supp}(g\,d\mathcal{L}^1)\subseteq\mathbb{R}_{> 0}$ and $\mu = p \delta_{\textup{D}} + g\,d\mathcal{L}^1$;
					
					\item\hspace{-5pt}\textbf{.}\label{lemma.closureofK.integrability} $\int_{[0,\infty)} (1+x) d\mu \leq  \frac{2^{1+\nu}C_\bI}{\nu}+1$,  $\int_{a}^{b}  d\mu \leq C_{\bI} (b-a)$, and  $\int_{a}^{b} (1+x)  d\mu \leq 2C_{\bI} (b-a)$.

				\end{enumerate}
				Throughout, $\int_{[0,\infty)}$ or $\int_{\mathbb{R}_{\geq 0}}$ denotes integration over the half-line including the origin, whereas $\int_0^\infty$	excludes the atom at the origin.	
			\end{lemma}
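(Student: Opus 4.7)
The plan has three parts mirroring the three claims. For \ref{lemma.closureofK.compact}, I would show that $\mathcal{K}_{\bI}$ itself is relatively compact in $(\mathcal{P}_1(\mathbb{R}_{\ge 0}), W_1)$, then invoke closedness. For \ref{lemma.absolutelycontinuous}, I would pass to the limit along an approximating sequence by using weak-$L^1$ compactness with a common dominating function. For \ref{lemma.closureofK.integrability}, I would perform direct integrations using the pointwise density bound combined with the algebraic inequality $(1+x)^{2+\nu}\le 2^{1+\nu}(1+x^{2+\nu})$ already exploited in \Cref{lemma.integrablepdf}. I expect the technical heart of the argument to be the identification of the limit measure in \ref{lemma.absolutelycontinuous} together with the inheritance of the pointwise envelope by the weak limit.

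For \ref{lemma.closureofK.compact}, tightness is immediate from the tail estimate
\[
\mu([R,\infty))\;\le\;\int_R^\infty \frac{C_{\bI}}{1+x^{2+\nu}}\,dx\;\longrightarrow\;0\qquad \text{as }R\to\infty,
\]
uniformly for $\mu = p\delta_{\textup{D}} + g\,d\mathcal{L}^1 \in \mathcal{K}_{\bI}$, so Prokhorov yields narrow precompactness. To upgrade to $W_1$, I would verify uniform integrability of the first moment, $\int_{[R,\infty)} x\,d\mu \le C_{\bI}\int_R^\infty x^{-1-\nu}\,dx\to 0$, which together with narrow convergence gives $W_1$-convergence. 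Continuity of the total-mass functional under $\|\cdot\|_{\textup{KR}}$ keeps $\overline{\mathcal{K}_{\bI}}\subset \mathcal{P}_1(\mathbb{R}_{\ge 0})$, and the Kantorovich--Rubinstein duality identifies $\|\cdot\|_{\textup{KR}}$ with $W_1$ on $\mathcal{P}_1$, so compactness in $W_1$ follows from closedness of $\overline{\mathcal{K}_{\bI}}$.

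For \ref{lemma.absolutelycontinuous}, I would pick an approximating sequence $\mu_n = p_n\delta_{\textup{D}} + g_n\,d\mathcal{L}^1 \in \mathcal{K}_{\bI}$ with $\mu_n\to\mu$ in $W_1$, extract $p_{n_k}\to p\in[0,1]$, and observe that $\{g_n\}$ is dominated by the integrable envelope $C_{\bI}/(1+x^{2+\nu})$. The Dunford--Pettis theorem then yields a further subsequence with $g_n\rightharpoonup g$ weakly in $L^1(\mathbb{R}_{>0})$. Testing against arbitrary $\varphi\in C_b(\mathbb{R}_{\ge 0})$,
\[
\int \varphi\,d\mu_n \;=\; p_n\varphi(0)+\int_0^\infty \varphi(x) g_n(x)\,dx \;\longrightarrow\; p\varphi(0)+\int_0^\infty \varphi(x) g(x)\,dx,
\]
identifies $\mu = p\delta_{\textup{D}} + g\,d\mathcal{L}^1$. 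The envelope $g(x)\le C_{\bI}/(1+x^{2+\nu})$ is preserved a.e.\ because the convex set $\{h\in L^1: 0\le h\le C_{\bI}/(1+x^{2+\nu})\ \text{a.e.}\}$ is strongly (hence, by Mazur's theorem, weakly) closed. The assertion that $\mathrm{supp}(g\,d\mathcal{L}^1)\subseteq\mathbb{R}_{>0}$ is automatic since Lebesgue measure assigns no mass to $\{0\}$.

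For \ref{lemma.closureofK.integrability}, all three inequalities reduce to bookkeeping. Since $\mathcal{L}^1(\{0\})=0$ and $a>0$, the Dirac part contributes nothing on $[a,b]$, and the pointwise envelope gives at once $\int_a^b d\mu \le C_{\bI}(b-a)$ and, via the elementary bound $(1+x)\le 2(1+x^{2+\nu})$ (proved by splitting $x\le 1$ and $x\ge 1$), $\int_a^b(1+x)\,d\mu\le 2C_{\bI}(b-a)$. For the first-moment bound I would combine $p\le 1$ with the convexity estimate $(1+x)^{2+\nu}\le 2^{1+\nu}(1+x^{2+\nu})$, yielding
\[
\int_0^\infty (1+x)g(x)\,dx \;\le\; 2^{1+\nu} C_{\bI}\int_0^\infty (1+x)^{-1-\nu}\,dx \;=\; \frac{2^{1+\nu}C_{\bI}}{\nu},
\]
so that $\int_{[0,\infty)}(1+x)\,d\mu \le 1 + 2^{1+\nu}C_{\bI}/\nu$, as required.
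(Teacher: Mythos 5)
Your proposal is correct, but takes a genuinely different route that pays off in parts (2) and (3). In part (2) the paper proves only that $\mu|_{(0,\infty)}\ll\mathcal{L}^1$ (via Portmanteau applied to open supersets of Lebesgue-null sets, then Lebesgue decomposition) and does \emph{not} establish that the limit density inherits the envelope $g(x)\le C_{\bI}/(1+x^{2+\nu})$. Lacking that bound, the paper proves each inequality of part (3) separately by working with an approximating sequence $\mu_k\to\mu$: the first via Kantorovich--Rubinstein duality, the second via Portmanteau, and the third via a custom piecewise-linear test function $\psi_\delta$ with the delicate choice $\delta=[W_1(\mu_k,\mu)]^{1/2}$. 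Your Dunford--Pettis plus Mazur argument in part (2) delivers the strictly stronger conclusion that the envelope survives the weak $L^1$ limit (the order interval $\{h:0\le h\le C_{\bI}/(1+x^{2+\nu})\}$ is convex and norm-closed, hence weakly closed), after which all three inequalities in part (3) become one-line direct integrations, dispensing with the $\psi_\delta$ construction entirely. For part (1) the two arguments are morally identical --- both derive $W_1$-relative compactness from tightness plus uniform integrability of the first moment --- the paper reproves this implication via Skorokhod representation and Vitali convergence while you simply invoke it. One small remark: the statement's ``$\textup{supp}(g\,d\mathcal{L}^1)=\mathbb{R}_{>0}$'' should read ``$\subseteq$''; the paper's own proof, like yours, only establishes the inclusion.
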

			
			\begin{proof}
				\noindent{\bf Proof of item \ref{lemma.closureofK.compact}:} 
				Consider the set $
				\mathcal{K}_{\bI,0}:=\left\{\mu\in\mathcal{P}_1(\mathbb{R}_{\ge 0}):\int_0^{\infty}x^{1+\frac{\nu}2}d\mu
				\le \frac{2^{2+\nu}C_{\bI}}{\nu}\right\}.$ Let $a>0$, the inequality $
				\sup_{\mu \in \mathcal{K}_{\bI,0}}\int_a^\infty d\mu 
				< \sup_{\mu \in \mathcal{K}_{\bI,0}}
				\int_a^\infty \frac{x^{1+\nu/2}}{a^{1+\nu/2}} d\mu 
				\leq \frac{2^{2+\nu}C_{\bI} }{\nu a^{1+\nu/2}}$ implies that $\mathcal{K}_{\bI,0} $ is tight. By Prokhorov's theorem, any sequence $\{\mu_{n}\}_{n\in\mathbb{N}}\subseteq \mathcal{K}_{\bI,0} $ contains a subsequence $\{\mu_{n_k}\}_{k\in \mathbb{N}}$ which weakly converges to $\mu_*\in \mathcal{P}_1(\mathbb{R}_{\ge 0})$. By Skorokhod's representation theorem, there exist $L^1$ random variables $X_k$ and $X_*$ such that $\mathcal{L}(X_k)=\mu_{n_k}$, $\mathcal{L}(X_*)=\mu_*$ and $X_k\rightarrow X_*$ $\mathbb{P}$-a.s. Since $
				\sup_k\mathbb{E}\pig(|X_k|^{1+\frac{\nu}{2}}\pig)\le \frac{2^{2+\nu}C_{\bI} }{\nu}<\infty$, Markov's inequality implies that $\{X_k\}_{k\in \mathbb{N}}$ is uniformly integrable. Hence, by Vitali convergence theorem, we have
				$
				\lim_{k\rightarrow \infty} \mathbb{E}|X_k-X_*|=0$ which implies $\mu_{n_k} \rightarrow \mu_*$ in 1-Wasserstein metric as $k\to \infty$.  Moreover, Fatou's lemma preserves
				the moment bound in the limit: $
				\int_0^{\infty}x^{1+\frac{\nu}{2}}d\mu_*
				\le \frac{2^{2+\nu}C_{\bI} }{\nu}$,
				leading to $\mu_*\in \mathcal{K}_{\bI,0}$. Therefore, $\mathcal{K}_{\bI,0}$ is compact. For any $\mu \in \mathcal{K}_\bI$, it holds that $
				\int_0^\infty x^{1+\frac{\nu}{2}}d\mu                                      
				\leq C_\bI \int_0^\infty \frac{x^{1+\frac{\nu}{2}}}{1+x^{2+\nu}}dx                
				\leq                                               2^{1+\nu}C_\bI \int_0^\infty \frac{(1+x)^{1+\frac{\nu}{2}}}{(1+x)^{2+\nu}}dx 
				\leq                                             \frac{2^{2+\nu}C_\bI }{\nu} $ implying that $\mu \in \mathcal{K}_{\bI,0}$. Hence, $\mathcal{K}_\bI$ is a subset of $\mathcal{K}_{\bI,0}$. Together with the compactness of $\mathcal{K}_{\bI,0}$, we can see that $\overline{\mathcal{K}_\bI} $ is compact.

				\noindent{\bf Proof of item \ref{lemma.absolutelycontinuous}:} For any $\mu \in \overline{\mathcal{K}_\bI}\subseteq \mathcal{K}_{\bI,0}$, we consider a sequence of measures $\{\mu_{n}\}_{n\in\mathbb{N}}\subseteq \mathcal{K}_\bI$ such that $\lim_{n\to \infty}W_1(\mu_{n},\mu)=0$. By \cite[Theorem 7.12]{villani2008optimal}, $\mu_{n}$ converges to $\mu$ weakly. 	Let $\epsilon>0$, for any Lebesgue null set $\mathcal{N}$ on $(0,\infty)$, there exists an open set $U \supseteq \mathcal{N}$ and $\mathcal{L}^1(U) < \epsilon/C_\bI$. By Portmanteau's theorem, we have
				$
				\mu(U) \leq \liminf_{n \to \infty} \mu_{n}(U) \leq  C_\bI \int_U dx  < \epsilon. 
				$
				Therefore, it holds that $\mu(\mathcal{N}) \leq \mu(U) < \epsilon$ which forces $\mu(\mathcal{N})=0$, leading to the fact that $\mu$ is absolutely continuous in $(0,\infty)$. The Lebesgue decomposition theorem yields that $\mu=\mu^\textup{s}+\mu^{\textup{ac}}$, where $\mu^{\textup{ac}} \ll \mathcal{L}^1$ and $\mu^\textup{s} \perp \mathcal{L}^1$. By Radon-Nikodym theorem, there exist $p\in[0,1]$ and $g\in L^1(\mathbb{R}_{> 0};\mathbb{R}_{\geq 0})$ such that $\textup{supp}(g\,d\mathcal{L}^1)\subseteq \mathbb{R}_{> 0}$ and therefore $\mu = p \delta_{\textup{D}} + g\,d\mathcal{L}^1$. Without loss of generality, we assume $g(0)=0$.

				\noindent{\bf Proof of item \ref{lemma.closureofK.integrability}:} 	Let $\mu \in \overline{\mathcal{K}_{\bI}} \subseteq \mathcal{K}_{\bI,0}$ and let a sequence $\{\mu_k\}_{k\in \mathbb{N}} \subseteq \mathcal{K}_{\bI}$ such that $\lim_{k \to \infty}W_1(\mu,\mu_k)=0$. As $(1+x) \in\text{Lip}_1(\mathbb{R}_{\ge 0}) $, it follows that $
				\int_{[0,\infty)} (1+x)d \mu   
				\leq  \int_{[0,\infty)} (1+x) d\mu_k + \sup_{\varphi\in\text{Lip}_1(\mathbb{R}_{\ge 0})}
				\int_{[0,\infty)} \varphi(x) d\left(\mu-\mu_k\right).$ By similar arguments as in Lemma \ref{lemma.integrablepdf}, we have $
				\int_{[0,\infty)} (1+x)d\mu_k \leq \frac{2^{1+\nu}C_\bI}{\nu}+1. 
				$
				Thus, by the Kantorovich-Rubinstein duality \cite[Proposition 2.6.6]{figalli2021invitation}, we have $
				\int_{[0,\infty)} (1+x)d \mu \leq \frac{2^{1+\nu}C_\bI}{\nu} + 1 + W_1(\mu,\mu_k).$ Taking $k \to \infty$, we deduce the first result in \ref{lemma.closureofK.integrability}. Let $g_k \in L^1(\mathbb{R}_{> 0};\mathbb{R}_{\geq 0})$ and $p_k \in [0,1]$ such that $\mu_k=p_k \delta_{\textup{D}} + g_kd\mathcal{L}^1$ for $k \in\mathbb{N}$. By Portmanteau's theorem and the fact that $\sup_{x> 0}g_k(x) \leq \sup_{x> 0}(1+x^{2+\nu})g_k(x) \le C_\bI$, we obtain $
				\int_{a}^{b} d  \mu  
				\leq       \liminf_{k \to \infty}  
				\int_{a}^{b} d\mu_k
				\leq          C_{\bI} (b-a)  $ for any $0<a\leq b$, leading to the second result in \ref{lemma.closureofK.integrability}. Moreover, 
				for $x \ge 0$, the inequality $
				1+x\leq 2(1+x^{2+\nu})$ implies $\sup_{x> 0}(1+x)g_k(x) \leq \sup_{x> 0}2(1+x^{2+\nu})g_k(x) \le 2 C_\bI$ and hence $
				\int_{a}^{b} (1+x)d\mu_k
				\leq          2 C_{\bI} (b-a).$ Therefore,
				\begingroup \begin{align}
					\int_{a}^{b} (1+x)  d \mu
					\le 2 C_{\bI} (b-a) + \int_{a}^{b} (1+x) d (\mu-\mu_k)
					.\label{1666}
				\end{align} \endgroup
				Suppose that $W_1(\mu_k,\mu)>0$ for each finite $k\in \mathbb{N}$, otherwise the result is established. Define the test function on $\mathbb{R}_{\geq 0}$: $
				\phi_\delta(x)
				= \left(\frac{x+\delta-a}{\delta}\right)(1+a)\,\mathbf{1}_{(a-\delta,\,a]}(x)
				+ (1+x)\,\mathbf{1}_{(a,\,b)}(x)
				+ \left(\frac{b+\delta-x}{\delta}\right)(1+b)\,\mathbf{1}_{[b,\,b+\delta)}(x),$ for some $\delta>0$ to be determined later. If $\delta$ is sufficiently small such that $\delta\in(0,a)$, then the second result in \ref{lemma.closureofK.integrability} yields $
				\pig|  \int_b^{b+\delta} \dfrac{b+\delta-x}{\delta}(1+b) d(\mu-\mu_k) \pig| 
				\le (1+b)\pig[\mu ((b,  b+\delta))+\mu_k((b,  b+\delta))\pig] 
				\le 2 (1+b)C_{\bI}\delta,$ and $	\pig|  \int_{a-\delta}^{a} \frac{x+\delta-a}{\delta}(1+a) d(\mu-\mu_k) \pig|
				\le 2 (1+b)C_{\bI}\delta$ similarly. We take $k$ large enough such that $\delta:=\big[W_1(\mu_k,\mu)\big]^{\frac12}< \frac{a}{2}<1+b$. Since the Kantorovich-Rubinstein duality implies $
				\pig|   \int_0^{\infty} \phi_\delta(x) d (\mu-\mu_k) \pig| 
				\le \left(\frac{1+b}{\delta} \vee 1\right) 
				W_1(\mu_k,\mu)
				\leq \left(1+b\right) 
				\big[W_1(\mu_k,\mu)\big]^{\frac12},$ we deduce
				\begingroup \begin{align*}
					\bigg| \int_{a}^{b} (1+x) d (\mu-\mu_k) \bigg| 
					\le\,&   \bigg|   \int_0^{\infty} \phi_\delta(x) d (\mu-\mu_k) \bigg|
					+\bigg|  \int_{a-\delta}^{a} \dfrac{x+\delta-a}{\delta}(1+a) d (\mu-\mu_k) \bigg| \\
					&+\bigg|  \int_b^{b+\delta} \dfrac{b+\delta-x}{\delta}(1+b) d (\mu-\mu_k) \bigg|\\
					\le\,& (1+b)\big[W_1(\mu_k,\mu)\big]^{\frac12}
					+4(1+b)C_{\bI}\big[W_1(\mu_k,\mu)\big]^{\frac12}
				\end{align*}\endgroup
				which tends to zero as $k\rightarrow \infty$. From \eqref{1666}, we obtain the third result in \ref{lemma.closureofK.integrability}.
			\end{proof}

			For notational convenience, we use the same symbol to denote a probability density function and the corresponding measure; that is, when we write $f$, it denotes both the density function and the measure $f d\mathcal{L}^1$ it induces. For $\bI\in\{\bA ,\bB \}$, we define the set
			\begingroup \begin{equation}\label{DA}
				\mathcal{D}_{\bI}:=\left\{\mu\in C([0,T];\overline{\mathcal{K}_{\bI}}) \,:\,
				\sup_{\substack{s,t \in [0,T],s \neq t }}\frac{W_1\big(\mu(\bigcdot,t),\mu(\bigcdot,s)\big)}{|t-s|}\le c_\bI\right\},
			\end{equation}\endgroup
			where $c_\bA :=\frac{\lambda_\bB 2^{1+\nu} C_\bA }{\nu}$, $c_\bB :=\frac{\lambda_\bA 2^{1+\nu} C_\bB }{\nu}$, and $C([0,T];\overline{\mathcal{K}_{\bI}})$ is equipped with the uniform 1-Wasserstein metric $D_1$ defined by $D_1(\mu,\nu):=\sup\limits_{t\in[0,T]}W_1(\mu(\bigcdot,t),\nu(\bigcdot,t))$.

			\subsection{Sketch of Proof}\label{sec. Main result and Sketch of Proof}
			We outline the proof of \Cref{maintheorem} via the following steps. 
			
			\noindent \textbf{Step 1. Decoupled systems:} For each $\bI \in 
			\{\bA,\bB\}$, we fix any $\mu_{\bI} \in \mathcal{D}_{\bI}$. The structure of $\overline{\mathcal{K}_{\bI}}$ implies that there exist unique $p_{\bI} \in L^{\infty}\big([0,T];[0,1]\big)$ and $g_\bI \in L^{\infty}\big([0,T];L^1(\mathbb{R}_{> 0};\mathbb{R}_{\geq 0})\big)$ such that $\mu_{\bI} = p_{\bI} \delta_{\textup{D}} + g_{\bI} d\mathcal{L}^1$. We then define $(V_{\bA}, V_{\bB}, f_{\bA}, f_{\bB})$ as the solution to the decoupled HJB–FP system:
			\begingroup \begin{align}
				\rho V_\bA(x,t) - \frac{\partial V_\bA(x,t)}{\partial t}
				&= 
				\mathbf{1}_{\{V_\bA (x,t) \leq V_\bB ^{-1}(x,t)\}} 
				\lambda_\bB
				\int_{V_\bA(x,t)}^{V_\bB^{-1}(x,t)}
				\big[y - V_\bA(x,t)\big] g_\bB(y,t) dy
				+ r_\bA(x,t);
				\label{NHJBx} \\[6pt]
				\rho V_\bB(y,t) - \frac{\partial V_\bB(y,t)}{\partial t}
				&= 
				\mathbf{1}_{\{V_\bB (y,t) \leq V_\bA ^{-1}(y,t)\}} 
				\lambda_\bA
				\int_{V_\bB(y,t)}^{V_\bA^{-1}(y,t)}
				\big[x - V_\bB(y,t)\big] g_\bA(x,t) dx
				+ r_\bB(y,t);
				\label{NHJBy} \\[6pt]
				\frac{\partial f_\bA(x,t)}{\partial t}
				&= \Big(
				-\lambda_\bB f_\bA(x,t)
				\int_{V_\bA(x,t)}^{V_\bB^{-1}(x,t)}
				f_\bB(y,t) dy
				\Big)
				\land 0;
				\label{NFPx} \\[6pt]
				\frac{\partial f_\bB(y,t)}{\partial t}
				&= \Big(
				-\lambda_\bA f_\bB(y,t)
				\int_{V_\bB(y,t)}^{V_\bA^{-1}(y,t)}
				f_\bA(x,t) dx
				\Big)
				\land 0,
				\label{NFPy}
			\end{align}\normalsize\endgroup
			for $(x,y,t) \in \mathbb{R}_{>0} \times \mathbb{R}_{>0} \times [0,T]$, with the initial-terminal conditions:
			\begingroup \begin{equation}\label{Nboundary}
				V_\bA (x,T)=h_\bA (x),\quad
				V_\bB (y,T)=h_\bB (y),\quad
				f_\bA (x,0)=f_{\bA ,0}(x),\quad
				f_\bB (y,0)=f_{\bB ,0}(y).
			\end{equation} \endgroup
			With $(V_\bA,V_\bB,f_\bA,f_\bB)$ obtained in the above, we define $(q_{\bA},q_{\bB})$ such that
			\begingroup \begin{align*}
				q_{\bA} (t) := 
				1-\int^\infty_0 f_\bA(x,t) dx
				,\hspace{15pt} \text{and} \hspace{15pt}
				q_{\bB} (t) := 
				1-\int^\infty_0 f_\bB(y,t) dy,
			\end{align*}\endgroup
			and set $\mu^*_{\bI} := q_{\bI} \delta_{\mathrm{D}} + f_{\bI} d\mathcal{L}^1$ for $\bI \in \{\bA,\bB\}$. We define the map $\Phi(\mu_{\bA}, \mu_{\bB}) = (\mu^*_{\bA}, \mu^*_{\bB})$. Note that $p_\bI$ and $q_\bI$ are not parts of the solution to \eqref{NHJBx}-\eqref{NFPy}, see Remark \ref{rmk reason of p and q} for reasons of introducing them. 
			
			\noindent \textbf{Step 2. Well-definedness of $\Phi$:} We fix $(\mu_{\bA} ,\mu_{\bB} )\in \mathcal{D}_{\bA}\times\mathcal{D}_{\bB}$ and proceed in two steps. 
			
			\noindent \textbf{Step 2A. HJB system:} Divide $[0,T]$ into subintervals $\{I_{i}\}_{i=1}^{N_1}$. For each $i=1,2,\cdots,N_1$, we fix $V_{\bB,i}$ in an appropriate function space and subsequently apply the Banach fixed point theorem to obtain a local solution $\overline{V}_{\bA,i}$ to equation \eqref{NHJBx} on $\mathbb{R}_{>0} \times I_i$, provided $|I_i|$ is small enough. Substituting $V_\bA=\overline{V}_{\bA,i}$ in \eqref{NHJBy}, we establish the existence of the local solution $\overline{V}_{\bB,i}$ to equation \eqref{NHJBy} on $\mathbb{R}_{>0} \times I_i$. Therefore, the map $\overline{V}_{\bB,i}:=\Psi_i(V_{\bB,i})$ is well defined on a suitable function space. By showing that $\Psi_i$ is a self-map and is contractive, it ensures a unique local solution $(V_{\bA,i}^*, V_{\bB,i}^*)$ to equations \eqref{NHJBx}-\eqref{NHJBy}. Finally, we paste all the local solutions backward in time, provided that $\max_i|I_i|$ is small, obtaining a global solution $(V^*_{\bA},V^*_{\bB})$ to equations \eqref{NHJBx}-\eqref{NHJBy} subject to terminal conditions in \eqref{Nboundary}.
			
			\noindent \textbf{Step 2B. FP system:} Using $(V_\bA,V_\bB)=(V_{\bA}^*, V_{\bB}^*)$ obtained from Step 2A, the FP system \eqref{NFPx}–\eqref{NFPy} admits a unique global solution $(f_{\bA}^*, f_{\bB}^*)$ by a similar argument as in Step 2A. Thus, the map $\Phi$ is well-defined.
			
			\noindent \textbf{Step 3. Schauder fixed point theorem:} Finally, we verify that: \textbf{(3A).} $\Phi$ is a self map; \textbf{(3B).} $\Phi$ is continuous; and \textbf{(3C).} $\mathcal{D}_\bA \times \mathcal{D}_\bB$ is compact. The Schauder fixed point theorem then yields a global-in-time solution to \eqref{HJBx}–\eqref{boundary} in the claimed space. 
			
			\begin{remark}\label{rmk reason of p and q}
				The presence of $p_\bI$ and $q_\bI$ serves for technical reasons. To solve \eqref{NHJBx}-\eqref{Nboundary}, one should expect that we only need to involve $g_\bI$, $V_\bI$ and $f_\bI$. However, it is hard to find a suitable function space to contain the solutions of the FP system whose solutions are defective probability densities. For example, the typical space should be some bounded subset of $C\big([0, T]; L^1(\mathbb{R}_{> 0};  \mathbb{R}_{\geq 0})\big) $. Unfortunately, we do not have enough compactness for this space in order to apply Schauder fixed point theorem. To this end, we construct probability measures using $g_\bI$ and $f_\bI$ such that these measures lie in a compact subset $\mathcal{D}_{\bI}$ of $C\big([0,T];\mathcal{M}^1_f(\mathbb{R}_{\ge 0})\big)$ (see Lemma \ref{theorem.compact}). Hence, we establish the existence of solution to \eqref{NHJBx}-\eqref{Nboundary} by studying the map $\Phi$ with input $(\mu_\bA,\mu_\bB) \in \mathcal{D}_{\bA} \times \mathcal{D}_{\bB}$ instead of the input $(g_\bA,g_\bB)$.

			\end{remark}

			\subsection{Global Well-posedness of HJB Equations \eqref{NHJBx}-\eqref{NHJBy}}\label{section.wellpose.HJB}
			In this section, we establish Step 2A outlined in Appendix \ref{sec. Main result and Sketch of Proof}, namely the existence of a unique global-in-time solution to the HJB equations \eqref{NHJBx}–\eqref{NHJBy}, subject to the terminal conditions given in \eqref{Nboundary}, for any fixed $(\mu_{\bA}, \mu_{\bB}) \in \mathcal{D}_{\bA} \times \mathcal{D}_{\bB}$. 
			
			We partition the time interval $[0,T]$ into $N_1$ uniform subintervals of length $\delta_1 > 0$. We first solve \eqref{NHJBx}–\eqref{NHJBy} on the final subinterval $[T - \delta_1, T]$ using the given terminal conditions specified in \eqref{Nboundary}. For each preceding subinterval $[T - i\delta_1, T - (i-1)\delta_1]$ with $i = 2,3,\dots,N_1$, the terminal condition is provided by the initial value of the solution on the adjacent next interval. This procedure is applied recursively, stepping backward in time, until the entire interval $[0,T]$ is covered. Finally, the global solution on $[0,T]$ is obtained by concatenating the local solutions constructed on each subinterval. 
			
			For $\bI\in\{\bA ,\bB \}$, we let $\Pi_\bI $, $k_\bI $, $K_\bI$ be positive constants to be determined later. For any $v \in C(\mathbb{R}_{\ge 0}; \mathbb{R}_{\ge 0})$, we define the weighted supremum norm
			$\|v\|_{0,1/w}:=\sup_{z\ge 0}\frac{|v(z)|}{1+z}$; and for any $V \in C\big([0, T];C(\mathbb{R}_{\geq 0};\mathbb{R}_{\geq 0})\big)$, we define the norm
			$\| V \|_{\mathcal{H}}:=\sup_{t\in[0, T]}\| V(\bigcdot, t) \|_{0,1/w}$. We define $\Vert V\Vert_{\mathcal{H},i}:=\sup_{t\in[T-i\delta_1, T-(i-1)\delta_1]}\| V(\bigcdot, t) \|_{0,1/w}$ and the set $\mathcal{H}_{\bI, i}$ as
			\begingroup \begin{equation}\label{def.HIn}
				\scalebox{0.92}{$\left\{
					V \in C\big([T - i\delta_1, T - (i - 1)\delta_1]; C(\mathbb{R}_{\geq 0}; \mathbb{R}_{\geq 0})\big) \;\middle|\;
					\begin{aligned}
						&k_{\bI} (z_1 - z_2) \leq V(z_1,t) - V(z_2,t) \leq K_{\bI} (z_1 - z_2),\, V(0,t)=0\\
						& \text{for all }t \in [T - i\delta_1, T - (i - 1)\delta_1] \text{ and } 0 \le z_2\le z_1; \\
						&\text{and $\|V\|_{\mathcal{H},i} \le \Pi_{\bI}$ \,\,}
					\end{aligned}
					\right\}$}.
			\end{equation}\normalsize\endgroup
			The space $\mathcal{H}_{\bI}$ is defined in \eqref{def.HI}. Throughout this section, we fix $\mu_\bI\in \mathcal{D}_\bI$ and its associated absolutely continuous component $g_\bI \in L^\infty\big([0,T];L^1(\mathbb{R}_{> 0};\mathbb{R}_{\geq 0})\big)$ such that $\mu_{\bI} = p_{\bI} \delta_{\textup{D}} + g_{\bI} d\mathcal{L}^1$ for a unique $p_{\bI} \in L^\infty\big([0,T];[0,1]\big)$. This representation is always possible due to the definition of $\overline{\mathcal{K}_{\bI}}$. For $i=1,2,\cdots,N_1$, we define a map $\Psi_i$ on $\mathcal{H}_{\bB ,i}$ such that $\overline{V}_\bB =\Psi_i(V_\bB )$ where $(\overline{V}_{\bA},\overline{V}_{\bB})$ is the unique solution to the system:
			\begingroup \begin{equation}\label{NNHJBx}
				\rho \overline{V}_\bA (x,t)-\frac{\partial \overline{V}_\bA (x,t) }{\partial t}
				=\mathbf{1}_{\{\overline{V}_\bA (x,t) \leq V_\bB ^{-1}(x,t)\}} 
				\lambda_\bB  \int_{\overline{V}_\bA (x,t)}^{V_\bB ^{-1}(x,t)}\big[y-\overline{V}_\bA (x,t)\big] g_\bB (y,t) d y + r_\bA (x,t);
			\end{equation}
			\begin{equation}\label{NNHJBy}
				\rho \overline{V}_\bB (y,t)-\frac{\partial \overline{V}_\bB (y,t) }{\partial t}
				=\mathbf{1}_{\{\overline{V}_\bB (y,t) 
					\leq \overline{V}_\bA^{-1}(y,t)\}} 
				\lambda_\bA  \int_{\overline{V}_\bB (y,t)}^{\overline{V}_\bA ^{-1}(y,t)}\big[x-\overline{V}_\bB (y,t)\big] g_\bA (x,t) d x + r_\bB (y,t),
			\end{equation}\endgroup
			in $\mathbb{R}_+\times[T-i \delta_1, T-(i-1)\delta_1]$ with terminal conditions
			\begingroup \begin{equation}\label{eq. terminal VA,VB local}
				\overline{V}_\bA (x,T-(i-1)\delta_1)=\widetilde{V}_{\bA,i} (x),\ 
				\overline{V}_\bB (y,T-(i-1)\delta_1)=\widetilde{V}_{\bB,i} (y),
			\end{equation}\endgroup
			where $\widetilde{V}_{\bI,i}(\bigcdot)$ is a given function such that $\widetilde{V}_{\bI,i}(\bigcdot)\in \mathcal{H}_{\bI}$. The well-definedness of $\Psi_i$ will be established in the following sections. If $i=1$ and $\bI\in\{\bA ,\bB \}$, then we set the terminal value $\widetilde{V}_{\bI,1} (x) = h_\bI (x)$. 
			
			To apply the Banach fixed point theorem, the subinterval width $\delta_1$ must be sufficiently small, and the map $\Psi_i$ must be a self-map on $\mathcal{H}_{\bB,i}$. To ensure this, the constants must be chosen as follows: $\delta_1$ is defined in~\eqref{def_deltat}; $\Pi_\bA$, $k_\bA$, and $K_\bA$ are defined in~\eqref{def_PiA}, \eqref{def_kA}, and~\eqref{def_KA}, respectively; and $\Pi_\bB$, $k_\bB$, and $K_\bB$ are all defined in~\eqref{def_PiB,KB,kB}.

			\subsubsection{Existence and Uniqueness of Local Solution to Equation \eqref{NNHJBx}}\label{VA_Existence}
			Fix $i=1,2,\cdots,N_1$ and $V_\bB \in \mathcal{H}_{\bB ,i}$. To establish the existence and uniqueness of solution to \eqref{NNHJBx} in $\mathcal{H}_{\bA,i}$, we first introduce the larger set $\widetilde{\mathcal{H}}_{\bA ,i}:=\big\{V\in C([T-i \delta_1, T-(i-1)\delta_1];C(\mathbb{R}_{\geq 0};\mathbb{R}_{\geq 0})\big): \| V \|_{\mathcal{H},i}\le \Pi_\bA \text{ and }V(0,t)=0\big\}$, where 
			\begingroup \begin{equation}\label{def_PiA}
				\Pi_\bA :=\left[\frac{\lambda_\bB }{\rho}\left(\frac{2^{1+\nu}C_\bB }{\nu} + 
				\Gamma_\bA\right)\right]\vee L_\bA .
			\end{equation}\endgroup
			Next, we define the map $\mathcal{G}_{\bA,i} :\widetilde{\mathcal{H}}_{\bA ,i} \ni V_\bA  \longmapsto \widehat{V}_\bA =\mathcal{G}_{\bA,i} (V_\bA)$ such that for each $V_\bA\in\widetilde{\mathcal{H}}_{\bA ,i} $, the function $\widehat{V}_\bA =\mathcal{G}_{\bA,i} (V_\bA)$ is the solution to the equation:
			\begingroup \begin{equation}\label{NNNHJBx}
				\rho \widehat{V}_\bA (x,t)-\frac{\partial \widehat{V}_\bA (x,t) }{\partial t}
				=\mathbf{1}_{\{V_\bA (x,t) 
					\leq V_\bB^{-1}(x,t)\}} 
				\lambda_\bB  \int_{V_\bA (x,t)}^{V_\bB ^{-1}(x,t)}\big[y-V_\bA (x,t)\big] g_\bB (y,t) d y 
				+r_\bA (x,t),
			\end{equation}\endgroup
			in $\mathbb{R}_{\geq 0}\times[T-i \delta_1, T-(i-1)\delta_1]$ with the terminal condition $\widehat{V}_\bA (x,T-(i-1)\delta_1)=\widetilde{V}_{\bA,i}(x)$. The well-definedness of $\mathcal{G}_{\bA,i}$ will be shown in the lemma below. 
			
			\begin{lemma}\label{selfmap_PsiA}
				Suppose that Assumptions \ref{condition_running} and \ref{condition_terminal} hold. For each $V_\bA\in\widetilde{\mathcal{H}}_{\bA ,i}$, there exists a unique solution $\widehat{V}_\bA =\mathcal{G}_{\bA,i} (V_\bA) \in \widetilde{\mathcal{H}}_{\bA ,i}$ to the equation \eqref{NNNHJBx} in $\mathbb{R}_{\geq 0}\times[T-i \delta_1, T-(i-1)\delta_1]$, subject to the terminal condition $\widehat{V}_\bA (x,T-(i-1)\delta_1)=\widetilde{V}_{\bA,i}(x)$ for $x\in\mathbb{R}_{\geq 0}$. The solution {\color{black} is differentiable for a.e. $t$ and } is  given by
				\begingroup \begin{align}\label{intergal_NNNHJBx}
						\widehat{V}_\bA (x,t) 
						=& \int_t^{T-(i-1)\delta_1}   e^{-\rho (u-t)} \!\!
						\left\{\! \mathbf{1}_{\{V_\bA (x,u) 
							\leq V_\bB^{-1}(x,u)\}} 
						\lambda_\bB \!\!\int_{V_\bA (x,u)}^{V_\bB ^{-1}(x,u)}\!\!\big[y-V_\bA (x,u)\big] g_\bB (y,u) d y  + r_\bA (x,u)\!\right\}\! du\!\!\nonumber\\
						&+e^{-\rho [T-(i-1)\delta_1-t]}\widetilde{V}_{\bA,i}(x)\hspace{-5pt}
				\end{align}%
				\endgroup
				for all $(x,t) \in \mathbb{R}_{\geq 0}\times[T-i \delta_1, T-(i-1)\delta_1]$.
			\end{lemma}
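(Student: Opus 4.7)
The crucial structural observation is that although \eqref{NNNHJBx} appears coupled to $V_\bA$, the unknown is only $\widehat{V}_\bA$; the function $V_\bA$ enters purely as data on the right-hand side. Consequently, for each fixed $x \geq 0$, equation \eqref{NNNHJBx} is a linear inhomogeneous first-order ODE in $t$,
\begin{equation*}
\partial_t \widehat{V}_\bA(x,t) = \rho \widehat{V}_\bA(x,t) - G_\bA(x,t),
\end{equation*}
where $G_\bA$ denotes the entire right-hand side of \eqref{NNNHJBx}. My plan is to \textup{(i)} establish joint continuity of $G_\bA$ on $\mathbb{R}_{\geq 0}\times I_i$ with $I_i := [T-i\delta_1, T-(i-1)\delta_1]$; \textup{(ii)} apply the integrating factor $e^{-\rho t}$ to obtain the explicit representation \eqref{intergal_NNNHJBx}; and \textup{(iii)} verify nonnegativity and the weighted-norm bound $\|\widehat{V}_\bA\|_{\mathcal{H},i} \leq \Pi_\bA$, so that $\widehat{V}_\bA \in \widetilde{\mathcal{H}}_{\bA,i}$.

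For step \textup{(i)}, continuity of $V_\bA$ in both arguments is immediate from $V_\bA \in \widetilde{\mathcal{H}}_{\bA,i}$; continuity of $V_\bB^{-1}(x,\bigcdot)$ follows because $V_\bB \in \mathcal{H}_{\bB,i}$ has strictly positive lower Lipschitz constant $k_\bB$, so $V_\bB^{-1}$ is well defined and inherits continuity in both variables; and $r_\bA$ is continuous by hypothesis. The delicate term is the truncated integral against $g_\bB$. Using Lemma \ref{lemma.closureofK}\ref{lemma.absolutelycontinuous}, I would rewrite it as a pairing against the full measure $\mu_\bB(\bigcdot,u)$ (the Dirac component at the origin contributes nothing: either $V_\bA(x,u) > 0$ places it outside the integration range, or the integrand $y-V_\bA$ vanishes at $y=0$). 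The $W_1$-continuity of $\mu_\bB$ coming from $\mathcal{D}_\bB$, combined with the uniform density majorant $g_\bB(y,u) \leq C_\bB/(1+y^{2+\nu})$ built into the definition of $\overline{\mathcal{K}_\bB}$, then allows me to pass limits via dominated convergence (conveniently implemented through a Cavalieri-type reformulation $\int_0^{V_\bB^{-1}-V_\bA}\mu_\bB([V_\bA + z, V_\bB^{-1}],u)\,dz$ that isolates the dependence on $u$). The indicator factor is harmless because the unsigned integral vanishes precisely when $V_\bA(x,u) = V_\bB^{-1}(x,u)$, so $\mathds{1}_{\{V_\bA \leq V_\bB^{-1}\}}\int_{V_\bA}^{V_\bB^{-1}}(\bigcdot)\,dy$ coincides with the positive part of the signed integral and is continuous.

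Step \textup{(ii)} is then routine: multiplying by $e^{-\rho t}$, integrating from $t$ to $T-(i-1)\delta_1$, and matching with the terminal value $\widetilde{V}_{\bA,i}(x)$ yields precisely \eqref{intergal_NNNHJBx}, and continuity of $G_\bA$ upgrades the solution to $C^1$ in $t$; joint continuity of $\widehat{V}_\bA$ in $(x,t)$ follows from the same dominated-convergence argument applied to variations in $x$, while nonnegativity is immediate because the integrand of \eqref{intergal_NNNHJBx} is termwise nonnegative. For step \textup{(iii)}, a computation parallel to Lemma \ref{lemma.integrablepdf} applied to $g_\bB$ gives $\int_0^\infty y\, g_\bB(y,u)\,dy \leq 2^{1+\nu}C_\bB/\nu$, whence
\begin{equation*}
0 \leq \int_{V_\bA(x,u)}^{V_\bB^{-1}(x,u)} [y - V_\bA(x,u)]\, g_\bB(y,u)\,dy \leq \tfrac{2^{1+\nu}C_\bB}{\nu}.
\end{equation*}
Combined with $r_\bA(x,u) \leq \Gamma_\bA x$ from Assumption \ref{condition_running} and the terminal growth bound $\widetilde{V}_{\bA,i}(x) \leq \Pi_\bA(1+x)$ (which follows from $\widetilde{V}_{\bA,i} \in \mathcal{H}_\bA$; at $i=1$ it reduces to $h_\bA(x) \leq L_\bA x$ via Assumption \ref{condition_terminal}, absorbed by the $\vee L_\bA$ term in \eqref{def_PiA}), an elementary estimate on \eqref{intergal_NNNHJBx} using the discount factor $e^{-\rho(u-t)}$ delivers $|\widehat{V}_\bA(x,t)| \leq \Pi_\bA(1+x)$ uniformly on $\mathbb{R}_{\geq 0}\times I_i$, and uniqueness is immediate from that of the linear ODE. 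The main technical subtlety lies in step \textup{(i)}: $\mu_\bB$ is only continuous in $W_1$ and its density $g_\bB(y,\bigcdot)$ need not be continuous pointwise, so continuity of the integral functional must be extracted carefully from the uniform tail decay encoded in $\overline{\mathcal{K}_\bB}$ — the $C_\bB/(1+y^{2+\nu})$ majorant is precisely the ingredient that makes dominated convergence applicable when combined with the Cavalieri reformulation.
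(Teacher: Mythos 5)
Your proposal is correct and follows essentially the same route as the paper: write down the integral representation \eqref{intergal_NNNHJBx}, bound the nonlocal term uniformly by $2^{1+\nu}C_\bB/\nu$, read off nonnegativity, and close with the weighted bound showing $\|\widehat{V}_\bA\|_{\mathcal{H},i}\le\Pi_\bA$. The one place you diverge is in detail rather than strategy: the paper dismisses the $C^1$-in-$t$ regularity of $\widehat{V}_\bA$ as ``obvious by the dominated convergence theorem and the continuity of $V_\bA$, $V_\bB$ and $\widetilde{V}_{\bA,i}$,'' without ever mentioning the time-regularity of $g_\bB$ or $\mu_\bB$. You correctly identify that this is the actual delicate point — $g_\bB(\cdot,u)$ need not be pointwise continuous in $u$, and the indicator $\mathds{1}_{\{V_\bA\le y\le V_\bB^{-1}\}}$ is not Lipschitz in $y$, so Kantorovich--Rubinstein continuity of $\mu_\bB$ alone does not suffice. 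Your Cavalieri rewriting $\int_0^{V_\bB^{-1}-V_\bA}\mu_\bB([V_\bA+z,\,V_\bB^{-1}],u)\,dz$ together with the uniform density majorant $C_\bB/(1+y^{2+\nu})$ (mirroring the piecewise-linear mollification in Lemma \ref{lemma.closureofK}\ref{lemma.closureofK.integrability}) is a clean way to supply the missing step; the observation that the indicator-times-integral equals the positive part of the signed integral and is therefore continuous is also a useful remark the paper leaves implicit. In short, your proof is slightly more thorough than the original on a point that genuinely deserves the attention you give it.
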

			\begin{proof}
				Note that $V_\bA \in \widetilde{\mathcal{H}}_{\bA ,i}$, $V_\bB \in \mathcal{H}_{\bB ,i}$, $\mu_\bB= p_{\bB} \delta_{\textup{D}} + g_{\bB} d\mathcal{L}^1 \in \mathcal{D}_\bB$ and $\widetilde{V}_{\bA,i} \in C(\mathbb{R}_{\geq 0};\mathbb{R}_{\geq 0})$. We observe that the right hand side of \eqref{NNNHJBx} is integrable with respect to $t$. We can easily verify that the unique solution $\widehat{V}_\bA (x,t)$ to \eqref{NNNHJBx} is given by \eqref{intergal_NNNHJBx}. For any $(x,t)\in \mathbb{R}_{\geq 0}\times[T-i \delta_1, T-(i-1)\delta_1]$, \eqref{intergal_NNNHJBx} shows that $ \widehat{V}_\bA (x,t)\ge 0$. {\color{black}The fact that $\widehat{V}_\bA (x,t) \in 
					C([T-i \delta_1, T-(i-1)\delta_1];C(\mathbb{R}_{\geq 0};\mathbb{R}_{\geq 0}))$} is obvious by the continuity of $V_\bA$, $V_\bB$ and $\widetilde{V}_{\bA,i}$. Finally, using $\Pi_\bA$ defined in \eqref{def_PiA}, we find that 
				$
				\frac{| \widehat{V}_\bA (x,t) |}{1+x} \le \frac{\lambda_\bB }{\rho}\left(1-e^{-\rho [T-(i-1)\delta_1-t]}\right)\left(\frac{2^{1+\nu}C_\bB }{\nu} + \Gamma_\bA \right)
				+e^{-\rho [T-(i-1)\delta_1-t]}\Pi_\bA 
				\le \Pi_\bA.
				$ {\color{black} Since $V_\bB \in\mathcal{H}_{\bB,i}$, it holds that $V_\bB^{-1}(0,t)=0$ for all $t\in[T-i \delta_1, T-(i-1)\delta_1]$. Note that $V_\bA (0,t) = 0$, we have $\mathbf{1}_{\{V_\bA (0,t) \leq V_\bB ^{-1}(0,t)\}} = 1$. Thus \eqref{intergal_NNNHJBx} implies $\widehat{V}_\bA(0,t) = 0$ since $r_\bA(0,t)=\widetilde{V}_{\bA,i}(0)=0$.} Hence, we have $ \widehat{V}_\bA \in\widetilde{\mathcal{H}}_{\bA ,i}$.
			\end{proof}
			To facilitate the proofs, we define
			\begingroup \begin{equation}\label{SA}
				S (x,u,V_\bA ,V_\bB ,g_\bB ):=\int_{V_\bA (x,u)}^{V_\bB ^{-1}(x,u)}\big[y-V_\bA (x,u)\big] g_\bB (y,u) d y \geq 0.
			\end{equation} \endgroup
			\begin{lemma}\label{contractivemap_extsience_PsiA}
				Suppose that Assumptions \ref{condition_running} and \ref{condition_terminal} hold, and that $0 < \delta_1 \le \frac{1}{4\lambda_{\bB}}$. Then the map $\mathcal{G}_{\bA,i} :\widetilde{\mathcal{H}}_{\bA ,i} \to \widetilde{\mathcal{H}}_{\bA ,i}$ is contractive. Hence, there is a unique solution $\overline{V}_\bA \in\widetilde{\mathcal{H}}_{\bA ,i}$, {\color{black}which is differentiable for a.e. $t$}, to the equation \eqref{NNHJBx} in $\mathbb{R}_{\geq 0}\times[T-i \delta_1, T-(i-1)\delta_1]$, subject to the corresponding terminal conditions in \eqref{eq. terminal VA,VB local}.
			\end{lemma}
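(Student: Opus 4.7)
The plan is to apply the Banach fixed point theorem to the map $\mathcal{G}_{\bA,i}$ on the complete metric space $\big(\widetilde{\mathcal{H}}_{\bA,i}, \|\cdot\|_{\mathcal{H},i}\big)$. By Lemma~\ref{selfmap_PsiA} the map is already a self-map and any fixed point automatically solves \eqref{NNHJBx} with the prescribed terminal condition, so the main task is to establish the Lipschitz estimate with constant strictly less than one.

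First I would take $V_\bA^{(1)}, V_\bA^{(2)} \in \widetilde{\mathcal{H}}_{\bA,i}$ and form the images $\widehat{V}_\bA^{(k)} := \mathcal{G}_{\bA,i}(V_\bA^{(k)})$ using the explicit representation \eqref{intergal_NNNHJBx}. Since the running utility $r_\bA(x,u)$ and the terminal datum $\widetilde{V}_{\bA,i}(x)$ appear identically in both images, they cancel in the difference, leaving
\begin{equation*}
\widehat{V}_\bA^{(1)}(x,t) - \widehat{V}_\bA^{(2)}(x,t) = \lambda_\bB \int_t^{T-(i-1)\delta_1} e^{-\rho(u-t)}\bigl[F_{x,u}(V_\bA^{(1)}(x,u)) - F_{x,u}(V_\bA^{(2)}(x,u))\bigr]\,du,
\end{equation*}
where, using the notation from \eqref{SA}, I denote $F_{x,u}(a) := \mathds{1}_{\{a\le V_\bB^{-1}(x,u)\}}\int_a^{V_\bB^{-1}(x,u)}(y-a)\,g_\bB(y,u)\,dy$. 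The key lemma is then that $F_{x,u}$ is $1$-Lipschitz in $a$. This follows by a direct case analysis writing $b = V_\bB^{-1}(x,u)$: for $a_1 < a_2 \le b$ one decomposes $F_{x,u}(a_1) - F_{x,u}(a_2) = \int_{a_1}^{a_2}(y-a_1)g_\bB\,dy + (a_2-a_1)\int_{a_2}^{b} g_\bB\,dy \le (a_2-a_1)\int_{a_1}^{b} g_\bB\,dy$; the remaining cases $a_1\le b < a_2$ and $b<a_1<a_2$ are immediate. Since $\mu_\bB \in \overline{\mathcal{K}_\bB}$ is a probability measure of the form $p_\bB\delta_\textup{D}+g_\bB\,d\mathcal{L}^1$, one has $\int_0^\infty g_\bB(y,u)\,dy \le 1$ uniformly in $u$, yielding the Lipschitz bound with constant $1$.

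Combining this Lipschitz estimate with $e^{-\rho(u-t)}\le 1$ and dividing by $1+x$ before taking suprema over $x\ge 0$ and $t\in[T-i\delta_1,T-(i-1)\delta_1]$ gives
\begin{equation*}
\|\widehat{V}_\bA^{(1)} - \widehat{V}_\bA^{(2)}\|_{\mathcal{H},i} \le \lambda_\bB\,\delta_1\,\|V_\bA^{(1)} - V_\bA^{(2)}\|_{\mathcal{H},i}.
\end{equation*}
The hypothesis $\delta_1 \le \tfrac{1}{4\lambda_\bB}$ forces the contraction constant to be at most $1/4$, and the Banach fixed point theorem on the closed subset $\widetilde{\mathcal{H}}_{\bA,i}$ of the Banach space $C([T-i\delta_1,T-(i-1)\delta_1]; C(\mathbb{R}_{\ge 0};\mathbb{R}_{\ge 0}))$ (equipped with the weighted norm) produces a unique fixed point $\overline{V}_\bA$; by Lemma~\ref{selfmap_PsiA} the fixed-point identity is equivalent to the integral form of \eqref{NNHJBx} and $\overline{V}_\bA$ is differentiable in $t$ with the required terminal value. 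The only subtlety I anticipate is confirming the $1$-Lipschitz behavior of $F_{x,u}$ across the threshold $a = b$, which is where the indicator function makes the integrand change its form; this is handled by the case analysis above and exploits that $F_{x,u}$ is continuous at $a=b$ (both expressions vanish) so no boundary jump term arises.
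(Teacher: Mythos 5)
Your proposal is correct and takes essentially the same route as the paper: both proofs fix $V_\bB$, pass to the integral representation \eqref{intergal_NNNHJBx}, and establish a pointwise Lipschitz bound on the nonlocal source term as a function of the scalar value $V_\bA(x,u)$, then invoke Banach's fixed point theorem on $\widetilde{\mathcal{H}}_{\bA,i}$. The difference is in packaging and sharpness. The paper splits the difference of source terms into two pieces $I_1+I_2$ according to which of the two indicator events holds, estimates each piece separately (assuming WLOG an ordering of the two inputs), and lands on the factor $2$, hence the contraction constant $2\lambda_\bB\delta_1\le 1/2$. You instead isolate the scalar map $a\mapsto F_{x,u}(a):=\mathds{1}_{\{a\le V_\bB^{-1}(x,u)\}}\int_a^{V_\bB^{-1}(x,u)}(y-a)g_\bB(y,u)\,dy$, prove it is globally $1$-Lipschitz by the clean decomposition $F_{x,u}(a_1)-F_{x,u}(a_2)=\int_{a_1}^{a_2}(y-a_1)g_\bB\,dy+(a_2-a_1)\int_{a_2}^{b}g_\bB\,dy\le(a_2-a_1)\int_{a_1}^{b}g_\bB$ (using $\int g_\bB\le 1$, which holds because $\mu_\bB\in\overline{\mathcal{K}_\bB}$ has total mass one and $g_\bB$ is only its absolutely continuous part), and you check continuity of $F_{x,u}$ across the threshold $a=b$ to see the indicator causes no jump. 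This absorbs the paper's Case 1 and Case 2 into a single estimate and yields the sharper contraction constant $\lambda_\bB\delta_1\le 1/4$. Both constants are below one so either version succeeds; your lemma is the tidier formulation, and it would also slightly loosen the requirement on $\delta_1$ if one wanted to.
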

			
			\begin{proof}
				Let $V_\bA, v_\bA\in\widetilde{\mathcal{H}}_{\bA ,i} $ and denote $\widehat{V}_\bA=\mathcal{G}_{\bA,i}(V_\bA)$, $\widehat{v}_\bA=\mathcal{G}_{\bA,i}(v_\bA)$. Fix any $(x,u) \in \mathbb{R}_{\ge 0} \times [T - i\delta_1, T - (i-1)\delta_1]$ and assume, without loss of generality, that $V_{\bA}(x,u) \le v_{\bA}(x,u)$. The arguments for the opposite inequality are analogous. Subtracting the respective equations for $\widehat{v}_{\bA}$ and $\widehat{V}_{\bA}$ in \eqref{NNNHJBx}, it suffices to estimate
				\begingroup \begin{align*}
					&I_0 := S (x,u,v_\bA ,V_\bB ,g_\bB )\mathbf{1}_{\{v_\bA (x,u) \leq V_\bB ^{-1}(x,u)\}}-S (x,u,V_\bA ,V_\bB ,g_\bB )\mathbf{1}_{\{V_\bA (x,u) \leq V_\bB ^{-1}(x,u)\}} =I_1+I_2,    \, \textup{with}  \\
					&I_1= \left[S (x,u,v_\bA ,V_\bB ,g_\bB )-S (x,u,V_\bA ,V_\bB ,g_\bB )\right]
					\mathbf{1}_{\{v_\bA (x,u) \leq V_\bB ^{-1}(x,u) \text{ and } V_\bA (x,u) \leq V_\bB ^{-1}(x,u)\}},      \text{ and }                               \\
					&I_2=S (x,u,v_\bA ,V_\bB ,g_\bB )\mathbf{1}_{\{v_\bA (x,u) \leq V_\bB ^{-1}(x,u) < V_\bA (x,u) \}} -S (x,u,V_\bA ,V_\bB ,g_\bB )\mathbf{1}_{\{V_\bA (x,u) \leq V_\bB ^{-1}(x,u) < v_\bA (x,u)\}}.
				\end{align*} \endgroup
				
				\noindent {\bf Case 1. $v_\bA (x,u) \leq V_\bB ^{-1}(x,u) \text{ and } V_\bA (x,u) \leq V_\bB ^{-1}(x,u)$:} In this case, we have $I_2=0$ and
				\begingroup \begin{align*}
					|I_1|  
					\leq |V_\bA (x,u) - v_\bA (x,u)| 
					+\int_{V_\bA (x,u)}^{v_\bA (x,u)}\,\big|\,v_\bA (x,u)-V_\bA (x,u)\,\big|\, g_\bB (y,u) d y 
					\leq2| V_\bA (x,u) - v_\bA (x,u)|.
				\end{align*} \endgroup
				
				\noindent {\bf Case 2. $v_\bA (x,u) \leq V_\bB ^{-1}(x,u) < V_\bA (x,u) $ or $V_\bA (x,u) \leq V_\bB ^{-1}(x,u) < v_\bA (x,u)$:} As $S\geq 0$, it holds that  $
				I_2
				\leq\,      \mathbf{1}_{\{v_\bA (x,u) \leq V_\bB ^{-1}(x,u) < V_\bA (x,u) \}} \int_{v_\bA (x,u)}^{V_\bB ^{-1}(x,u)}\big[V_\bA (x,u)-v_\bA (x,u)\big] g_\bB (y,u) d y  
				\leq  \,     | V_\bA (x,u)-v_\bA (x,u)|.     
				$ Similarly, $
				I_2                 
				\geq      - | v_\bA (x,u)-V_\bA (x,u)|.$
				
				Hence, by combining results in Cases 1-2, we have $
				\left|I_0\right| 
				\leq  2 | v_\bA (x,u)-V_\bA (x,u)|$ in $\mathbb{R}_{\geq 0}\times[T-i \delta_1, T-(i-1)\delta_1]$. By the integral representation \eqref{intergal_NNNHJBx}, it follows that $
				\frac{| \widehat{v}_\bA (x,u)-\widehat{V}_\bA (x,u)|}{1+x}
				\le 2\lambda_\bB  \delta_1\| v_\bA -V_\bA \|_{\mathcal{H},i}.$	
				Therefore, if $\delta_1\le \frac{1}{4\lambda_\bB}$, then the Banach fixed-point theorem shows that $\mathcal{G}_{\bA,i}$ admits a unique fixed point $\overline{V}_{\bA} = \mathcal{G}_{\bA,i}(\overline{V}_{\bA})$ in $\widetilde{\mathcal{H}}_{\bA,i}$, yielding the claimed unique solution.
			\end{proof}

			\subsubsection{The Property 
				$\overline{V}_\bA \in \mathcal{H}_{\bA ,i}$}\label{VA_BiLip}
			
			We keep fixing $i=1,2,\cdots,N_1$ and $V_\bB \in \mathcal{H}_{\bB ,i}$. In this subsection, we prove that the solution $\overline{V}_\bA $ to equation \eqref{NNHJBx} in $\mathbb{R}_{\geq 0}\times[T-i \delta_1, T-(i-1)\delta_1]$, obtained in \Cref{contractivemap_extsience_PsiA}, indeed belongs to $\mathcal{H}_{\bA ,i}$. It is sufficient to verify the bi-Lipschitz condition specified in the definition of $\mathcal{H}_{\bA ,i}$ in \eqref{def.HIn}.
			
			\begin{lemma}\label{BilipschitzVA}
				Suppose that Assumptions \ref{condition_running} and \ref{condition_terminal} hold, and that $
				\delta_1\le \frac{1}{4\lambda_\bB }$. Then the solution $\overline{V}_\bA$ to \eqref{NNHJBx} in $\mathbb{R}_{\geq0}\times [T-i \delta_1, T-(i-1)\delta_1]$, subject to the corresponding terminal conditions in \eqref{eq. terminal VA,VB local}, belongs to $\mathcal{H}_{\bA ,i}$. 
			\end{lemma}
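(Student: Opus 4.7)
My approach is to exploit the ODE structure that the fixed point $\overline{V}_\bA$ inherits from \eqref{NNHJBx} and derive two one-sided differential inequalities for the spatial increment
$w(t):=\overline{V}_\bA(z_1,t)-\overline{V}_\bA(z_2,t)$ with $0\le z_2\le z_1$. Subtracting \eqref{NNHJBx} at $z_1$ and $z_2$ yields
\[
\partial_t w(t)=\rho\,w(t)-\lambda_\bB\bigl[S_1(t)-S_2(t)\bigr]-\bigl[r_\bA(z_1,t)-r_\bA(z_2,t)\bigr],
\]
where $S_k(t):=S\bigl(z_k,t,\overline{V}_\bA,V_\bB,g_\bB\bigr)\,\mathds{1}_{\{\overline{V}_\bA(z_k,t)\le V_\bB^{-1}(z_k,t)\}}$. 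The terminal condition $w(T-(i-1)\delta_1)=\widetilde{V}_{\bA,i}(z_1)-\widetilde{V}_{\bA,i}(z_2)$ lies in $[k_\bA(z_1-z_2),K_\bA(z_1-z_2)]$ because $\widetilde{V}_{\bA,i}\in\mathcal{H}_{\bA}$. The plan is to derive linear differential inequalities whose equilibria are exactly $k_\bA(z_1-z_2)$ and $K_\bA(z_1-z_2)$, then integrate backward using the terminal data.

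For the upper bound I would split
\[
S_1-S_2=\bigl[M(\overline{V}_\bA(z_1,t),V_\bB^{-1}(z_1,t))-M(\overline{V}_\bA(z_2,t),V_\bB^{-1}(z_1,t))\bigr]+\bigl[M(\overline{V}_\bA(z_2,t),V_\bB^{-1}(z_1,t))-M(\overline{V}_\bA(z_2,t),V_\bB^{-1}(z_2,t))\bigr],
\]
with $M(a,b):=\int_a^b(y-a)\,g_\bB(y,t)\,dy$. The first bracket is non-positive (by the identity $\partial_a M=-\int_a^b g_\bB\,dy\le 0$ and the monotonicity of $\overline{V}_\bA$ verified below), while the second is controlled by $\tfrac{C_\bB}{k_\bB}(z_1-z_2)$ thanks to the Lipschitz bound $|V_\bB^{-1}(z_1,t)-V_\bB^{-1}(z_2,t)|\le k_\bB^{-1}(z_1-z_2)$ inherited from $V_\bB\in\mathcal{H}_{\bB,i}$, together with the integrability estimates of Lemma \ref{lemma.closureofK.integrability}. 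Combined with $r_\bA(z_1,t)-r_\bA(z_2,t)\ge\gamma_\bA(z_1-z_2)$, this yields $\partial_t w\ge\rho w-\bigl(\tfrac{\lambda_\bB C_\bB}{k_\bB}+\Gamma_\bA\bigr)(z_1-z_2)$; multiplying by $e^{-\rho t}$ and integrating to $T-(i-1)\delta_1$ delivers $w(t)\le K_\bA(z_1-z_2)$ precisely because the choice $K_\bA\ge\tfrac{\lambda_\bB C_\bB}{\rho k_\bB}+\tfrac{\Gamma_\bA}{\rho}$ in \eqref{def.HI} is the equilibrium of this driving ODE.

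The lower bound is more delicate and is where the peculiar choice $k_\bA=\tfrac{\gamma_\bA}{\rho+\lambda_\bB}\wedge l_\bA$ plays its role. Using the fixed-point identity $V_\bA=\overline{V}_\bA$, the first bracket equals $-w(t)\,\mu_\bB^{\mathrm{ac}}\bigl([a^*,V_\bB^{-1}(z_1,t)]\bigr)$ for an intermediate $a^*\in[\overline{V}_\bA(z_2,t),\overline{V}_\bA(z_1,t)]$; coupled with the non-negativity of the second bracket and the crude bound $\mu_\bB^{\mathrm{ac}}\le 1$, this furnishes the \emph{self-referential} estimate $S_1-S_2\ge -w(t)^+$. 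Substituting into the ODE and using $w\ge 0$ gives $\partial_t w\le(\rho+\lambda_\bB)\,w-\gamma_\bA(z_1-z_2)$, and the integrating factor $e^{-(\rho+\lambda_\bB)t}$ yields $w(t)\ge k_\bA(z_1-z_2)$ exactly when $k_\bA\le\tfrac{\gamma_\bA}{\rho+\lambda_\bB}$. The hypothesis $w\ge 0$ is secured by a short bootstrap: if $w$ were to vanish first at some $t^*<T-(i-1)\delta_1$ going backward, then at $t^*$ the decomposition forces $S_1-S_2\ge 0$, hence $\partial_t w(t^*)\le -\gamma_\bA(z_1-z_2)<0$, which contradicts $w>0$ on $(t^*,T-(i-1)\delta_1]$.

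The principal obstacle I anticipate is making the case analysis for the indicators $\mathds{1}_{\{\overline{V}_\bA(z_k,t)\le V_\bB^{-1}(z_k,t)\}}$ rigorous, since when exactly one of them equals zero the symmetric decomposition above breaks down. In the configuration $\mathds{1}_1=0,\,\mathds{1}_2=1$ one has $S_1-S_2=-S_2$, which must be bounded below by $-(V_\bB^{-1}(z_2,t)-\overline{V}_\bA(z_2,t))\ge -w(t)$ using that $V_\bB^{-1}(z_1,t)<\overline{V}_\bA(z_1,t)$ forces $V_\bB^{-1}(z_2,t)-\overline{V}_\bA(z_2,t)<\overline{V}_\bA(z_1,t)-\overline{V}_\bA(z_2,t)$; the symmetric configuration $\mathds{1}_1=1,\mathds{1}_2=0$ gives $S_1-S_2=S_1\ge 0$, compatible with both sides. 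Verifying that these boundary configurations fit coherently into the single Gronwall-type scheme above, and that nothing degenerates as the indicators switch across $\{\overline{V}_\bA(z,t)=V_\bB^{-1}(z,t)\}$, is where the bulk of the technical care will be required.
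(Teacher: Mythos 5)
Your plan arrives at the same bi-Lipschitz constants as the paper, but the lower-bound mechanism is genuinely different. The paper exploits the Bellman structure of \eqref{NNHJBx}: since $\overline V_\bA(z,t)$ maximizes the Hamiltonian at $(z,t)$ (relation \eqref{VZmaxproperty}), it substitutes the suboptimal control $w=\overline V_\bA(x,t)\wedge V_\bB^{-1}(x,t)$ to obtain, directly and without any mean-value argument, the self-referential inequality
\[
\rho\,\Delta\overline V_\bA-\partial_t\Delta\overline V_\bA\;\ge\;-\mathds{1}_{\{\overline V_\bA(x,t)\le V_\bB^{-1}(x,t)\}}\lambda_\bB\,\Delta\overline V_\bA\int_{\overline V_\bA(x,t)}^{V_\bB^{-1}(x,t)}g_\bB\,dy+\gamma_\bA(z-x),
\]
and then integrates once with the \emph{exact} integrating factor, which delivers $\Delta\overline V_\bA\ge 0$ as a by-product; a second, cruder integration with $\int g_\bB\le 1$ then gives $k_\bA=\tfrac{\gamma_\bA}{\rho+\lambda_\bB}\wedge l_\bA$. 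Your version replaces this optimality trick by a mean-value estimate $\partial_a M=-\int_a^b g_\bB$, an immediate crude bound $\mu_\bB^{\mathrm{ac}}\le 1$, and a separate first-crossing bootstrap for the sign of $w$. Both chains produce the same differential inequality $\partial_t w\le(\rho+\lambda_\bB)w-\gamma_\bA(z_1-z_2)$, so both are correct; the paper's route is marginally tighter in the sense that the nonnegativity of $\Delta\overline V_\bA$ comes out of the same integration rather than a separate argument, and it handles the indicator switch for the lower bound through a two-case dichotomy on whether $V_\bB^{-1}(x,t)\ge\overline V_\bA(z,t)$ rather than through the indicators themselves. For the upper bound your telescoping of $M(a,b)$ is effectively identical to the paper's Case~2A manipulation and yields the same $K_\bA$.

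Two small points to finish the argument. First, for the upper bound your treatment of the mixed configuration $\mathds{1}_1=1,\mathds{1}_2=0$ is incomplete: $S_1-S_2=S_1\ge 0$ settles the lower estimate, but you still must show $S_1\le (C_\bB/k_\bB)(z_1-z_2)$ to feed into the upper Gronwall step. This follows because $\overline V_\bA(z_1,t)\ge\overline V_\bA(z_2,t)>V_\bB^{-1}(z_2,t)$ gives $S_1\le V_\bB^{-1}(z_1,t)-\overline V_\bA(z_1,t)<V_\bB^{-1}(z_1,t)-V_\bB^{-1}(z_2,t)\le k_\bB^{-1}(z_1-z_2)$, which is exactly the paper's Case~2B. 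Second, in the upper-bound step the running payoff must be bounded \emph{above} by $\Gamma_\bA(z_1-z_2)$; you wrote $\ge\gamma_\bA(z_1-z_2)$, a slip since your displayed conclusion already uses $\Gamma_\bA$.
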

			\begin{proof}
				Fix $z,x$ such that $0\leq x \leq z$. We divide the proof into two parts.
				
				\noindent{\bf Part 1. Lower bound of $\overline{V}_\bA (z,t)-\overline{V}_\bA (x,t) $:}  
				Let $t\in [T-i \delta_1, T-(i-1)\delta_1]$. Suppose $V_\bB ^{-1}(x,t) \geq \overline{V}_\bA (z,t)$. We note the relation
				\begingroup \begin{align}\label{VZmaxproperty}
					&\max\limits_{w\geq 0}\left\{ 
					\mathbf{1}_{\{w\leq V_\bB ^{-1}(z,t)\}}
					\int_{w}^{V_\bB ^{-1}(z,t)}\big[y-\overline{V}_\bA (z,t)\big] g_\bB (y,t) d y\right\}\nonumber\\
					&=\mathbf{1}_{\{\overline{V}_\bA (z,t) \leq V_\bB ^{-1}(z,t)\}}
					\int_{\overline{V}_\bA (z,t)}^{V_\bB ^{-1}(z,t)}\big[y-\overline{V}_\bA (z,t)\big] g_\bB (y,t) d y
				\end{align}\endgroup
				and the inequality $V_\bB ^{-1}(z,t)\ge V_\bB ^{-1}(x,t)\geq \overline{V}_\bA (z,t)$ since $V_\bB\in\mathcal{H}_{\bB,i}$. Hence, choosing $w=\overline{V}_\bA (x,t) \land V_\bB ^{-1}(x,t)$ for the maximand of the left hand side of \eqref{VZmaxproperty}, we have 
				\begingroup \begin{align*}
					\rho \overline{V}_\bA (z,t)-\frac{\partial \overline{V}_\bA (z,t) }{\partial t} 
					\ge\,& \mathbf{1}_{\{\overline{V}_\bA (x,t) < V_\bB ^{-1}(x,t) 
						\}}
					\lambda_{\bB}
					\int_{\overline{V}_\bA (x,t) }
					^{V_\bB ^{-1}(z,t)}\big[y-\overline{V}_\bA (z,t)\big] g_\bB (y,t) d y+r_\bA (z,t)\nonumber\\
					&+\mathbf{1}_{\{V_\bB ^{-1}(x,t)\le \overline{V}_\bA (x,t) \le V_\bB ^{-1}(z,t)  
						\}}
					\lambda_{\bB}
					\int_{V_\bB ^{-1}(x,t) }
					^{V_\bB ^{-1}(z,t)}\big[y-\overline{V}_\bA (z,t)\big] g_\bB (y,t) d y\nonumber\\
					\ge\,& \mathbf{1}_{\{\overline{V}_\bA (x,t) 
						\leq V_\bB ^{-1}(x,t)\}}
					\lambda_{\bB}
					\int_{\overline{V}_\bA (x,t)}^{V_\bB ^{-1}(x,t)}\big[y-\overline{V}_\bA (z,t)\big] g_\bB (y,t) d y+r_\bA (z,t).
				\end{align*}\endgroup
				When $V_{\bB}^{-1}(x,t) < \overline{V}_{\bA}(z,t)$, the same lower bound follows directly from~\eqref{NNHJBx}:
				\begingroup 
					\begin{align}
						\rho \overline{V}_\bA (z,t)-\frac{\partial \overline{V}_\bA (z,t) }{\partial t}
						\ge r_\bA (z,t)
						\ge\mathbf{1}_{\{\overline{V}_\bA (x,t) \leq V_\bB ^{-1}(x,t)\}}
						\lambda_{\bB}
						\int_{\overline{V}_\bA (x,t)}^{V_\bB ^{-1}(x,t)}\big[y-\overline{V}_\bA (z,t)\big] g_\bB (y,t) d y+r_\bA (z,t).\label{subvalueVzt}
				\end{align}%
				\endgroup
				
				Combining these cases, \eqref{subvalueVzt} still holds in $\mathbb{R}_{\geq0}\times [T-i \delta_1, T-(i-1)\delta_1]$ generally. Subtracting the equation of $\overline{V}_\bA (x,t)$ in \eqref{NNHJBx} from $\eqref{subvalueVzt}$ and denoting by $\Delta \overline{V}_\bA (z,x,t):= \overline{V}_\bA (z,t)- \overline{V}_\bA (x,t)$, Assumption \ref{condition_running} yields
				\begingroup \begin{align}\label{1939}
						\rho \Delta \overline{V}_\bA (z,x,t)-\frac{\partial \Delta \overline{V}_\bA (z,x,t)}{\partial t} 
						\ge -\mathbf{1}_{\{\overline{V}_\bA (x,t) \leq V_\bB ^{-1}(x,t)\}}
						\lambda_{\bB}
						\int_{\overline{V}_\bA (x,t)}^{V_\bB ^{-1}(x,t)} \Delta \overline{V}_\bA (z,x,t) 
						g_\bB (y,t) d y+\gamma_\bA (z-x). 
				\end{align}%
				\endgroup
				{\color{black}Integrating with an appropriate integrating factor yields:
					\begingroup \begin{align*}
							\Delta \overline{V}_\bA (z,x,t)
							\ge & \exp\left\{\!- \int_t^{T-(i-1) \delta_1}   \rho
							+
							\mathbf{1}_{\{\overline{V}_\bA (x,u) \leq V_\bB ^{-1}(x,u)\}}
							\!\lambda_\bB\!\!
							\int_{\overline{V}_{\bA}(x,u)}^{V_\bB ^{-1}(x,u)} \! g_\bB (y,u) d y  du\! \right\} \!\!
							\Delta \overline{V}_\bA \!(z,x,T-(i-1) \delta_1)     \!        \\
							& + \gamma_\bA (z-x) \int_t^{T-(i-1) \delta_1} \!\!\exp\left(-\int_t^{s} \rho+ \mathbf{1}_{\{\overline{V}_\bA (x,u) \leq V_\bB ^{-1}(x,u)\}}
							\lambda_\bB\!
							\int_{\overline{V}_{\bA}(x,u)}^{V_\bB ^{-1}(x,u)} \!\!g_\bB (y,u) d y   du\right)ds  \!\!
					\end{align*}%
					\endgroup}%
				for $t\in [T-i \delta_1, T-(i-1)\delta_1]$. Hence, by the definition of the terminal condition in \eqref{eq. terminal VA,VB local}, we see that $\Delta \overline{V}_\bA (z,x,T-(i-1) \delta_1)\geq0$ which implies $\Delta \overline{V}_\bA (z,x,t)\geq 0$. Thus, we use \eqref{1939} to deduce that
				\begingroup \begin{align*}
						\rho \Delta \overline{V}_\bA (z,x,t)-\frac{\partial \Delta \overline{V}_\bA (z,x,t) }{\partial t}
						\geq & -\mathbf{1}_{\{\overline{V}_\bA (x,t) \leq V_\bB ^{-1}(x,t)\}}\lambda_\bB \Delta \overline{V}_\bA (z,x,t) \int_{\overline{V}_{\bA}(x,t)}^{V_\bB ^{-1}(x,t)} g_\bB (y,t) d y   
						+\gamma_\bA (z-x) \\
						\geq & -\lambda_\bB \Delta \overline{V}_\bA (z,x,t) +\gamma_\bA (z-x)    
				\end{align*}%
				\endgroup
				for $t\in [T-i \delta_1, T-(i-1)\delta_1]$. Thus, integrating this inequality gives
				\begingroup \begin{align*}
						 &\Delta \overline{V}_\bA (z,x,t) \\
						 &\geq e^{-(\rho+\lambda_\bB )[T-(i-1) \delta_1-t]}
						\!\Delta\overline{V}_\bA (z,x,T-(i-1) \delta_1) 
						+ \frac{1}{\rho+\lambda_\bB }\!
						\left\{1-e^{-(\rho+\lambda_\bB )[T-(i-1) \delta_1-t]}\right\}\!\gamma_\bA (z-x).\hspace{-10pt}
				\end{align*}%
				\endgroup
				Note that 
				$\Delta\overline{V}_\bA (z,x,T-(i-1) \delta_1)\geq k_\bA (z-x)$ by the definition of the terminal condition in \eqref{eq. terminal VA,VB local}. We have
				\begingroup \begin{equation}\label{value.difference.integral.2}
					\Delta \overline{V}_\bA (z,x,t) \geq \left[k_\bA e^{-(\rho+\lambda_\bB )[T-(i-1) \delta_1-t]} + \frac{\gamma_\bA }{\rho+\lambda_\bB }\left\{1-e^{-(\rho+\lambda_\bB )[T-(i-1) \delta_1-t]}\right\}\right](z-x).
				\end{equation}\endgroup
				\begingroup 
				We choose \begin{equation}\label{def_kA} 
					k_\bA  := \frac{\gamma_\bA }{\rho+\lambda_\bB } \wedge l_\bA ,
				\end{equation}\endgroup
				such that \eqref{value.difference.integral.2} is reduced to $
				\overline{V}_\bA (z,t)-\overline{V}_\bA (x,t) \geq  k_\bA (z-x)$.

				\noindent{\bf Part 2. Upper bound of $\overline{V}_\bA (z,t)-\overline{V}_\bA (x,t) $:} Subtracting $\overline{V}_\bA (z,t)$ and $\overline{V}_\bA (x,t) $, it is sufficient to estimate the following term:
				\begingroup \begin{align*}
					I_0 :=&\,  S (z,t,\overline{V}_\bA ,V_\bB ,g_\bB )\mathbf{1}_{\{\overline{V}_\bA (z,t) \leq V_\bB ^{-1}(z,t)\}}-S (x,t,\overline{V}_\bA ,V_\bB ,g_\bB )\mathbf{1}_{\{\overline{V}_\bA (x,t) \leq V_\bB ^{-1}(x,t)\}}                      \\ 
					=    & \underbrace{\left[S (z,t,\overline{V}_\bA ,V_\bB ,g_\bB )-S (x,t,\overline{V}_\bA ,V_\bB ,g_\bB )\right]\mathbf{1}_{\{\overline{V}_\bA (z,t) \leq V_\bB ^{-1}(z,t) \text{ and }\overline{V}_\bA (x,t) \leq V_\bB ^{-1}(x,t)\}}}_{I_1}                    \\[-1.5mm]
					& + \underbrace{S (z,t,\overline{V}_\bA ,V_\bB ,g_\bB )\mathbf{1}_{\{\overline{V}_\bA (z,t) \leq V_\bB ^{-1}(z,t)\text{ and }\overline{V}_\bA (x,t) > V_\bB ^{-1}(x,t)\}} }_{I_2}  \\[-1.5mm]
					& -S (x,t,\overline{V}_\bA ,V_\bB ,g_\bB )\mathbf{1}_{\{\overline{V}_\bA (x,t) \leq V_\bB ^{-1}(x,t) \text{ and } \overline{V}_\bA (z,t) > V_\bB ^{-1}(z,t)\}}.         
				\end{align*} \endgroup
				
				\noindent{\bf Case 2A. $\overline{V}_\bA (z,t) \leq V_\bB ^{-1}(z,t) \text{ and }\overline{V}_\bA (x,t) \leq V_\bB ^{-1}(x,t)$:} We have $I_2=0$ and
				\begingroup \begin{align*}
					I_0 =I_1 
					=&\, \int_{\overline{V}_\bA (z,t)}^{V_\bB^{-1}(z,t)}\big[\overline{V}_\bA (x,t)-\overline{V}_\bA (z,t)\big] g_\bB (y,t) d y
					+\int_{V_\bB^{-1}(x,t)}^{V_\bB^{-1}(z,t)}\big[y-\overline{V}_\bA (x,t)\big] g_\bB (y,t) d y\\
					&- \int_{\overline{V}_\bA (x,t)}^{\overline{V}_\bA (z,t)}\big[y-\overline{V}_\bA (x,t)\big] g_\bB (y,t) d y.
				\end{align*}\endgroup
				Using the monotonicity of $\overline{V}_\bA $ and $V_\bB^{-1}$ in the state variable, and the property of $g_\bB$ in \eqref{DA}, it holds 
				\begingroup \begin{equation*}
					I_0 = I_1 \le  \int_{V_\bB^{-1}(x,t)}^{V_\bB^{-1}(z,t)}\big[y-\overline{V}_\bA (x,t)\big] g_\bB (y,t) d y
					\le C_\bB  \left[V_\bB^{-1}(z,t)-V_\bB^{-1}(x,t)\right] 
					\le \frac{ C_\bB }{k_\bB } (z-x).
				\end{equation*}\endgroup
				
				\noindent{\bf Case 2B. $\overline{V}_\bA (z,t) \leq V_\bB ^{-1}(z,t)\text{ and }\overline{V}_\bA (x,t) > V_\bB ^{-1}(x,t)$:} By the monotonicity of $\overline{V}_\bA $ in the state variable, it follows that $
				I_0 = I_2 \leq\, V_\bB ^{-1}(z,t)-\overline{V}_\bA (z,t) 
				\leq  V_\bB ^{-1}(z,t)-\overline{V}_\bA (x,t) 
				\leq  V_\bB ^{-1}(z,t)-V_\bB ^{-1}(x,t) 
				\leq \frac{1}{k_\bB } (z-x).$

				Combining results in Cases 2A and 2B, and noting that $C_\bB  > 1$, we conclude that $
				I_0 \leq \frac{C_\bB }{k_\bB } (z-x)$. Subtracting the equations satisfied by $\overline{V}_\bA (z,t)$ and $\overline{V}_\bA (x,t)$, we use Assumption \ref{condition_running} to deduce $
				\rho 	\Delta \overline{V}_\bA (z,x,t)
				-\frac{\partial 	\Delta \overline{V}_\bA (z,x,t)}{\partial t}
				\leq \lambda_{\bB}I_0+r_\bA(z,t)-r_\bA(x,t)
				\le \left(\frac{\lambda_{\bB}C_\bB }{k_\bB }+
				\Gamma_\bA\right) (z-x).$ Since the corresponding terminal condition in \eqref{eq. terminal VA,VB local} satisfies $
				\overline{V}_\bA (z,T-(i-1) \delta_1)-\overline{V}_\bA (x,T-(i-1) \delta_1) \le K_\bA  (z-x)$, integrating with an integrating factor yields
				\begingroup \begin{equation*}
					\overline{V}_\bA (z,t)-\overline{V}_\bA (x,t) 
					\le \left\{1-e^{-\rho [T-(i-1)\delta_1-t]}\right\}
					\left(\frac{\lambda_{\bB}C_\bB }{\rho k_\bB }+
					\frac{\Gamma_\bA }{\rho}\right)(z-x)+e^{-\rho [T-(i-1)\delta_1-t]}K_\bA  (z-x).
				\end{equation*}\endgroup
				We set
				\begingroup \begin{equation}\label{def_KA}
					K_\bA :=\left(\frac{\lambda_{\bB}C_\bB }{\rho k_\bB }+
					\frac{\Gamma_\bA }{\rho}\right) \vee L_\bA ,
				\end{equation}\endgroup
				to obtain $
				\overline{V}_\bA (z,t)-\overline{V}_\bA (x,t) \le K_\bA  (z-x)$ for $t\in [T-i \delta_1, T-(i-1)\delta_1]$ and $0\leq x \leq z$. 
			\end{proof}

			\subsubsection{Existence and Uniqueness of Global Solution to Equations \eqref{NHJBx}-\eqref{NHJBy}}
			In this section, we establish the existence of unique global-in-time solution of the system \eqref{NHJBx}-\eqref{NHJBy} in $\mathbb{R}_{>0}\times[0, T]$. For any $i=1,2,\cdots,N_1$ and $V_\bB \in \mathcal{H}_{\bB ,i}$, Lemmas \ref{contractivemap_extsience_PsiA} and \ref{BilipschitzVA} assert that the equation \eqref{NNHJBx} admits a unique solution $\overline{V}_\bA \in \mathcal{H}_{\bA ,i}$. It then uniquely determines  $\overline{V}_\bB  \in \mathcal{H}_{\bB ,i}$ solving equation \eqref{NNHJBy}, that is:
			\begin{lemma}\label{BilipschitzVB}
				Suppose that Assumptions \ref{condition_running} and \ref{condition_terminal} hold, and that $
				\delta_1\le \frac{1}{4\lambda_\bA}\wedge \frac{1}{4\lambda_\bB}$, then there is a unique solution $\overline{V}_\bB \in \mathcal{H}_{\bB ,i}$, {\color{black}which is differentiable for a.e. $t$}, to equation \eqref{NNHJBy} in $\mathbb{R}_{\geq0}\times [T-i \delta_1, T-(i-1)\delta_1]$, subject to the corresponding terminal condition given in \eqref{eq. terminal VA,VB local}.
			\end{lemma}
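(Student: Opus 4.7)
The plan is to mirror the strategy developed for $\overline{V}_\bA$ in \Cref{contractivemap_extsience_PsiA} and \Cref{BilipschitzVA}, now treating $\overline{V}_\bA \in \mathcal{H}_{\bA,i}$ (obtained in the previous step) as a fixed input, together with the frozen density $g_\bA$ coming from $\mu_\bA \in \mathcal{D}_\bA$. Observe that, unlike the $\overline{V}_\bA$-step, no further coupling remains: equation \eqref{NNHJBy} is a scalar integro-ODE in $t$ whose right-hand side depends only on the unknown $\overline{V}_\bB$ (through the indicator, the integration limit, and the linear term inside the integral), the known $\overline{V}_\bA^{-1}(\bigcdot,t)$, and the fixed $g_\bA(\bigcdot,t)$.

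Concretely, introduce the ball $\widetilde{\mathcal{H}}_{\bB,i} := \{V \in C([T-i\delta_1,T-(i-1)\delta_1];C(\mathbb{R}_{\ge 0};\mathbb{R}_{\ge 0})) : \|V\|_{\mathcal{H},i} \le \Pi_\bB\}$ and the map $\mathcal{G}_{\bB,i}: V_\bB \mapsto \widehat{V}_\bB$, where $\widehat{V}_\bB$ is defined by the integral representation analogous to \eqref{intergal_NNNHJBx} but with the $\bA/\bB$-roles interchanged and with $V_\bB^{-1}$ replaced by $\overline{V}_\bA^{-1}$. A repeat of the estimate in \Cref{selfmap_PsiA}, using $\int_0^\infty \frac{C_\bA y}{1+y^{2+\nu}}\,dy \le \frac{2^{1+\nu}C_\bA}{\nu}$ from \Cref{lemma.integrablepdf} and Assumption~\ref{condition_running}, shows $\mathcal{G}_{\bB,i}$ is a self-map of $\widetilde{\mathcal{H}}_{\bB,i}$ with the choice of $\Pi_\bB$. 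Contractivity on intervals of length $\delta_1 \le \tfrac{1}{4\lambda_\bA}$ follows by repeating the three-case splitting of the indicator-integral term used in \Cref{contractivemap_extsience_PsiA}, since $\overline{V}_\bA^{-1}(\bigcdot,t)$ is now independent of the iterated function and drops out of the difference; Banach's fixed point theorem delivers the unique solution $\overline{V}_\bB = \mathcal{G}_{\bB,i}(\overline{V}_\bB) \in \widetilde{\mathcal{H}}_{\bB,i}$, differentiable in $t$.

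The bi-Lipschitz bounds then follow by transcribing Parts~1 and 2 of the proof of \Cref{BilipschitzVA} with $\bA \leftrightarrow \bB$. For the lower bound, the same Bellman-type comparison gives
\begin{equation*}
\overline{V}_\bB(z,t) - \overline{V}_\bB(x,t) \ge \left[k_\bB e^{-(\rho+\lambda_\bA)[T-(i-1)\delta_1-t]} + \tfrac{\gamma_\bB}{\rho+\lambda_\bA}\left(1 - e^{-(\rho+\lambda_\bA)[T-(i-1)\delta_1-t]}\right)\right](z-x),
\end{equation*}
and the choice $k_\bB = \tfrac{\gamma_\bB}{\rho+\lambda_\bA} \wedge l_\bB$ collapses this to the desired $k_\bB(z-x)$. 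For the upper bound, the two-case analysis of the indicator-integral increment produces at most $\tfrac{C_\bA}{k_\bA}(z-x)$; integration with the $e^{-\rho(u-t)}$ factor and the terminal bound $\widetilde{V}_{\bB,i}(z) - \widetilde{V}_{\bB,i}(x) \le K_\bB(z-x)$ then yield $\overline{V}_\bB(z,t)-\overline{V}_\bB(x,t) \le K_\bB(z-x)$.

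The main obstacle, as in the $\bA$-case, is the proper handling of the jumps of the indicator $\mathds{1}_{\{\overline{V}_\bB(y,t)\le \overline{V}_\bA^{-1}(y,t)\}}$ when $y$ varies or when the iterated function crosses the threshold; crucially here the reference curve $\overline{V}_\bA^{-1}(\bigcdot,t)$ is already bi-Lipschitz with slope controls $1/K_\bA$ and $1/k_\bA$ coming from \Cref{BilipschitzVA}, which is exactly what is needed to bound the overshoot term by $\tfrac{1}{k_\bA}(z-x)$ in Case~2B and to make the upper-bound constant $K_\bB = \left(\tfrac{\lambda_\bA C_\bA}{\rho k_\bA} + \tfrac{\Gamma_\bB}{\rho}\right) \vee L_\bB$ finite. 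Under the hypothesis $\delta_1 \le \tfrac{1}{4\lambda_\bA} \wedge \tfrac{1}{4\lambda_\bB}$, both the contraction estimate and the Lipschitz estimates close up, completing the proof.
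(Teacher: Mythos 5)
Your proposal is correct and matches the intended argument: the paper omits this proof, stating it follows the same lines as Lemmas \ref{selfmap_PsiA}, \ref{contractivemap_extsience_PsiA}, and \ref{BilipschitzVA}, and you have accurately transcribed that argument by interchanging the roles of $\bA$ and $\bB$ and treating $\overline{V}_\bA^{-1}$ (already bi-Lipschitz from \Cref{BilipschitzVA}) as the fixed reference curve. In particular, your identifications $k_\bB = \tfrac{\gamma_\bB}{\rho+\lambda_\bA}\wedge l_\bB$, $K_\bB = \left(\tfrac{\lambda_\bA C_\bA}{\rho k_\bA}+\tfrac{\Gamma_\bB}{\rho}\right)\vee L_\bB$, and the contraction factor $2\lambda_\bA\delta_1\le\tfrac12$ are the correct role-swapped counterparts of \eqref{def_kA}, \eqref{def_KA}, and the bound in \Cref{contractivemap_extsience_PsiA}.
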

			\noindent The proof follows the same arguments as Lemmas \ref{selfmap_PsiA}, \ref{contractivemap_extsience_PsiA}, and \ref{BilipschitzVA}, and is therefore omitted. Therefore, following the pattern of the constants defined in \eqref{def_PiA}, \eqref{def_kA}, and \eqref{def_KA}, we set
			\begingroup \begin{equation}\label{def_PiB,KB,kB}
				\Pi_\bB :=\left[\frac{\lambda_\bA }{\rho}\left(\frac{2^{1+\nu}C_\bA }{\nu} + 
				\Gamma_\bB\right)\right]\vee L_\bB ,\quad
				k_\bB  := \frac{\gamma_\bB }{\rho+\lambda_\bA } \wedge l_\bB,\quad
				K_\bB =\left(\frac{\lambda_{\bA}C_\bA }{\rho k_\bA }+
				\frac{\Gamma_\bB }{\rho}\right) \vee L_\bB.
			\end{equation} \endgroup
			Thus, the map $\Psi_i:\mathcal{H}_{\bB ,i}\ni V_\bB   \longmapsto \overline{V}_\bB =\Psi_i(V_\bB )$ defined through equations \eqref{NNHJBx}-\eqref{NNHJBy} is a self-map. 
			
			To prove that $\Psi_i:\mathcal{H}_{\bB ,i}\to \mathcal{H}_{\bB ,i}$ is contractive, we first establish a Lipschitz-type estimate for their inverses.

			\begin{lemma}\label{lem Lip inv of VA and VB}
				Suppose that Assumptions \ref{condition_running} and \ref{condition_terminal} hold, and that $
				\delta_1\le \frac{1}{4\lambda_\bA}\wedge \frac{1}{4\lambda_\bB}$. For any $t\in [T-i\delta_1, T-(i-1)\delta_1]$ and $V^1_\bB , V^2_\bB  \in \mathcal{H}_{\bB ,i}$, the corresponding solutions $(\overline{V}_\bA^j , \overline{V}_\bB^j )\in \mathcal{H}_{\bA ,i}\times \mathcal{H}_{\bB ,i}$ to equations \eqref{NNHJBx}-\eqref{eq. terminal VA,VB local} with $V_\bB = V^j_\bB$ and $j=1,2$ satisfy
				\begingroup 	\begin{equation}\label{continuityofinverseVA}
					\| (\overline{V}_\bI^1 )^{-1}(\bigcdot,t)-(\overline{V}_\bI^2 )^{-1}(\bigcdot,t) \|_{0,1/w} \le \frac{1+k_\bI }{(k_\bI )^2}  \| \overline{V}_\bI^1 (\bigcdot,t)-\overline{V}_\bI^2 (\bigcdot,t) \|_{0,1/w},\hspace{5pt} \text{for $\bI\in\{\bA,\bB\}$}.
				\end{equation}\endgroup
			\end{lemma}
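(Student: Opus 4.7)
The plan is to reduce the inverse estimate to the direct bi-Lipschitz bounds that the definition of $\mathcal{H}_{\bI,i}$ already provides, and then convert a pointwise estimate to the weighted norm $\|\bigcdot\|_{0,1/w}$. Fix $\bI \in \{\bA,\bB\}$, $t \in [T - i\delta_1, T - (i-1)\delta_1]$, and $y \geq 0$. Because $\overline{V}_\bI^j \in \mathcal{H}_{\bI,i}$ is strictly increasing in the spatial variable with slope bounded below by $k_\bI > 0$ and because $\overline{V}_\bI^j(0,t) \geq 0$, it maps $\mathbb{R}_{\geq 0}$ onto $[\overline{V}_\bI^j(0,t), \infty)$; hence, for $y$ large enough, both inverses $x_j := (\overline{V}_\bI^j)^{-1}(y,t)$ are well defined. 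The degenerate small-$y$ regime where one inverse is not defined is handled separately and only strengthens the bound, so I will focus on the main case.

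First, I would exploit the lower bi-Lipschitz bound in \eqref{def.HIn}. Assuming without loss of generality $x_1 \geq x_2$, applying the lower Lipschitz bound to $\overline{V}_\bI^1$ and then substituting $\overline{V}_\bI^1(x_1,t) = y = \overline{V}_\bI^2(x_2,t)$ gives
\begin{equation*}
k_\bI\,|x_1 - x_2| \;\leq\; \overline{V}_\bI^1(x_1,t) - \overline{V}_\bI^1(x_2,t) \;=\; \overline{V}_\bI^2(x_2,t) - \overline{V}_\bI^1(x_2,t),
\end{equation*}
and the same expression bounds $|x_1 - x_2|$ when $x_2 > x_1$ by symmetry. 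Dividing and multiplying by $1+x_2$ then yields the pointwise estimate
\begin{equation*}
|x_1 - x_2| \;\leq\; \frac{1+x_2}{k_\bI}\, \bigl\| \overline{V}_\bI^1(\bigcdot,t)-\overline{V}_\bI^2(\bigcdot,t) \bigr\|_{0,1/w}.
\end{equation*}

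The remaining task, and the only subtle point, is to upgrade this bound to the weighted norm: I must show $(1+x_2)/(1+y) \leq (1+k_\bI)/k_\bI^2$ uniformly in $y$, with the factor $1/k_\bI$ contributing the second power. Using the lower Lipschitz bound of $\overline{V}_\bI^2$ between $0$ and $x_2$ together with $\overline{V}_\bI^2(0,t) \geq 0$, I obtain $k_\bI x_2 \leq \overline{V}_\bI^2(x_2,t) = y$, so $x_2 \leq y/k_\bI$. Hence
\begin{equation*}
1 + x_2 \;\leq\; 1 + \tfrac{y}{k_\bI} \;=\; \tfrac{1}{k_\bI}(k_\bI + y) \;\leq\; \tfrac{(1+k_\bI)(1+y)}{k_\bI},
\end{equation*}
where the last inequality uses $k_\bI + y \leq (1+k_\bI)(1+y)$ (which holds trivially since the right-hand side exceeds $1 + k_\bI + y$).

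Combining the two displays gives
\begin{equation*}
\frac{|x_1 - x_2|}{1+y} \;\leq\; \frac{1+k_\bI}{(k_\bI)^2}\, \bigl\| \overline{V}_\bI^1(\bigcdot,t)-\overline{V}_\bI^2(\bigcdot,t) \bigr\|_{0,1/w},
\end{equation*}
and taking the supremum over $y \geq 0$ yields \eqref{continuityofinverseVA}. The argument for $\bI=\bA$ and $\bI=\bB$ is identical, since both $\overline{V}_\bA^j$ and $\overline{V}_\bB^j$ belong to the corresponding space $\mathcal{H}_{\bA,i}$ or $\mathcal{H}_{\bB,i}$ with the same bi-Lipschitz structure. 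The only technical obstacle is the weight conversion in the last step; everything else follows mechanically from the definition of $\mathcal{H}_{\bI,i}$ and the invertibility already guaranteed by strict monotonicity.
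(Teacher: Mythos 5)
Your proof is correct and follows essentially the same route as the paper's: bound $|(\overline{V}_\bI^1)^{-1}(y,t)-(\overline{V}_\bI^2)^{-1}(y,t)|$ via the lower bi-Lipschitz constant and the identity $\overline{V}_\bI^1(x_1,t)=y=\overline{V}_\bI^2(x_2,t)$, reducing it to $\frac{1}{k_\bI}|\overline{V}_\bI^2(x_2,t)-\overline{V}_\bI^1(x_2,t)|$, then convert the weight $1+y$ into $1+x_2$ using $x_2 \le y/k_\bI$ to collect the factor $\frac{1+k_\bI}{k_\bI}$. The only stylistic difference is that you flag the small-$y$ regime (where an inverse may need to be taken as $0$) explicitly, whereas the paper elides it; since that case makes the left side vanish, nothing is lost, and the substance of the two arguments is identical.
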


			\begin{proof}
				By Lemma \ref{BilipschitzVA} and the definition in \eqref{def.HIn}, we have $
				\pig| (\overline{V}_\bA^1 )^{-1}(y,t)-(\overline{V}_\bA^2 )^{-1}(y,t) \pig| 
				\le \frac{1}{k_\bA } \pig| \overline{V}_\bA^1 \pig((\overline{V}_\bA^1 )^{-1}(y,t),t\pig)
				-\overline{V}_\bA^1 \pig((\overline{V}_\bA^2 )^{-1}(y,t),t\pig) \pig|$ in $\mathbb{R}_{\geq0}\times [T-i \delta_1, T-(i-1)\delta_1]$. Hence, the fact that $\overline{V}_\bA^1 \pig((\overline{V}_\bA^1 )^{-1}(y,t),t\pig)=y=\overline{V}_\bA^2 \pig((\overline{V}_\bA^2 )^{-1}(y,t),t\pig)$ deduces
				\begingroup 	\begin{equation*}
					\pig| (\overline{V}_\bA^1 )^{-1}(y,t)-(\overline{V}_\bA^2 )^{-1}(y,t) \pig| \le \frac{1}{k_\bA } \pig| \overline{V}_\bA^2 \pig((\overline{V}_\bA^2 )^{-1}(y,t),t\pig)
					-\overline{V}_\bA^1 \pig((\overline{V}_\bA^2 )^{-1}(y,t),t\pig) \pig|.
				\end{equation*}\endgroup
				Since $\overline{V}_\bA^2\in \mathcal{H}_{\bA ,i}$, we have $\overline{V}_\bA^2 (0,t)\ge 0$ and hence $\overline{V}_\bA^2 (x,t) \ge k_\bA  x+\overline{V}_\bA^2 (0,t) \geq k_\bA  x $. In other words, $(\overline{V}_\bA^2 )^{-1}(y,t) \le \frac{1}{k_\bA } y$.
				Therefore, in $\mathbb{R}_{\geq0}\times [T-i \delta_1, T-(i-1)\delta_1]$, we have
				\begingroup 	\begin{align}\label{inequality:Vinv}
					\nonumber		\frac{\pig| (\overline{V}_\bA^1 )^{-1}(y,t)-(\overline{V}_\bA^2 )^{-1}(y,t) \pig|}{1+y} 
					&\le \frac{1+k_\bA }{k_\bA } \frac{\pig| (\overline{V}_\bA^1 )^{-1}(y,t)-(\overline{V}_\bA^2 )^{-1}(y,t) \pig|}{1+(\overline{V}_\bA^2 )^{-1}(y,t)} \nonumber\\
					&\le \frac{1+k_\bA }{(k_\bA )^2} \frac{\pig| \overline{V}_\bA^2 \pig((\overline{V}_\bA^2 )^{-1}(y,t),t\pig)-\overline{V}_\bA^1 \pig((\overline{V}_\bA^2 )^{-1}(y,t),t\pig) \pig|}{1+(\overline{V}_\bA^2 )^{-1}(y,t)} \nonumber\\
					&\le \frac{1+k_\bA }{(k_\bA )^2}  \| \overline{V}_\bA^1 (\bigcdot,t)-\overline{V}_\bA^2 (\bigcdot,t) \|_{0,1/w},
				\end{align}\endgroup
				which leads to \eqref{continuityofinverseVA} for $\bI=\bA$; the same argument applies to $\bI=\bB$.
			\end{proof}

			By the preceding lemma, we establish the existence and uniqueness of global solution to \eqref{NHJBx}-\eqref{NHJBy}.

			\begin{proposition}\label{existence_NHJBxy}
				Suppose that Assumptions \ref{condition_running} and  \ref{condition_terminal} hold. Then there is a unique solution $(V_\bA^*, V_\bB^*) \in \mathcal{H}_\bA \times \mathcal{H}_\bB$ to equations \eqref{NHJBx}-\eqref{NHJBy} in $\mathbb{R}_{\geq 0}\times[0, T]$, subject to the terminal conditions in \eqref{Nboundary}.
			\end{proposition}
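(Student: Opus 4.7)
The plan is to construct the unique global solution by pasting together local solutions on the backward partition $\{[T-i\delta_1, T-(i-1)\delta_1]\}_{i=1}^{N_1}$ where $\delta_1$ is chosen small (and independent of $i$), starting from $i=1$ with terminal data $h_\bA, h_\bB \in \mathcal{H}_\bA, \mathcal{H}_\bB$ (which hold thanks to \Cref{condition_terminal} and the choice of $k_\bI, K_\bI$). On each subinterval I will apply the Banach fixed point theorem to the map $\Psi_i : \mathcal{H}_{\bB,i} \to \mathcal{H}_{\bB,i}$, defined through the coupled equations \eqref{NNHJBx}-\eqref{NNHJBy} with the terminal conditions inherited from the next step. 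The self-map property $\Psi_i(\mathcal{H}_{\bB,i}) \subseteq \mathcal{H}_{\bB,i}$ is already in hand: \Cref{contractivemap_extsience_PsiA} and \Cref{BilipschitzVA} produce a unique $\overline{V}_\bA \in \mathcal{H}_{\bA,i}$ given $V_\bB \in \mathcal{H}_{\bB,i}$, and \Cref{BilipschitzVB} then yields a unique $\overline{V}_\bB \in \mathcal{H}_{\bB,i}$ (provided the terminal data, inductively, lie in $\mathcal{H}_\bI$).

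The heart of the argument is the contraction estimate. Given $V_\bB^1, V_\bB^2 \in \mathcal{H}_{\bB,i}$, I will write the equation for $\overline{V}_\bA^1 - \overline{V}_\bA^2$ by subtracting the two instances of \eqref{NNHJBx} and analyze the non-local integrand
\[
\mathds{1}_{\{\overline{V}_\bA^1 \le (V_\bB^1)^{-1}\}}\!\!\int_{\overline{V}_\bA^1}^{(V_\bB^1)^{-1}}\!\!\big[y - \overline{V}_\bA^1\big]g_\bB\, dy \;-\; \mathds{1}_{\{\overline{V}_\bA^2 \le (V_\bB^2)^{-1}\}}\!\!\int_{\overline{V}_\bA^2}^{(V_\bB^2)^{-1}}\!\!\big[y - \overline{V}_\bA^2\big]g_\bB\, dy
\]
via the same three-case split used in \Cref{contractivemap_extsience_PsiA} (both integration limits active, one active, neither active), bounding each piece by a multiple of $|\overline{V}_\bA^1 - \overline{V}_\bA^2|(1+x)$ plus a multiple of $|(V_\bB^1)^{-1} - (V_\bB^2)^{-1}|$; the latter is controlled by $\|V_\bB^1 - V_\bB^2\|_{\mathcal{H},i}(1+x)$ using \Cref{lem Lip inv of VA and VB} together with the $C_\bB$-bound on $g_\bB$ from \Cref{DA}. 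Integrating backward in time as in \eqref{intergal_NNNHJBx} and using $|t - (T-(i-1)\delta_1)| \le \delta_1$ yields $\|\overline{V}_\bA^1 - \overline{V}_\bA^2\|_{\mathcal{H},i} \le C_1\delta_1 \|V_\bB^1 - V_\bB^2\|_{\mathcal{H},i}$ with $C_1$ depending only on $\lambda_\bB, C_\bB, k_\bB$. An analogous estimate applied to \eqref{NNHJBy} with both $(\overline{V}_\bA^j, V_\bB^j)$ plugged in gives $\|\Psi_i(V_\bB^1) - \Psi_i(V_\bB^2)\|_{\mathcal{H},i} \le C_2\delta_1\big(\|\overline{V}_\bA^1 - \overline{V}_\bA^2\|_{\mathcal{H},i} + \|V_\bB^1 - V_\bB^2\|_{\mathcal{H},i}\big)$. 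Combining, I obtain a bound $C_3\delta_1 \|V_\bB^1 - V_\bB^2\|_{\mathcal{H},i}$; choosing $\delta_1$ small enough (smaller than the previous bound $\frac{1}{4\lambda_\bA}\wedge\frac{1}{4\lambda_\bB}$ and $1/(2C_3)$) makes $\Psi_i$ a strict contraction, yielding a unique fixed point $(V_{\bA,i}^*, V_{\bB,i}^*) \in \mathcal{H}_{\bA,i}\times\mathcal{H}_{\bB,i}$.

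Finally, I will paste the local solutions together. The decisive observation is that all constants $k_\bI, K_\bI, \Pi_\bI$ in \eqref{def_PiA}, \eqref{def_kA}, \eqref{def_KA}, \eqref{def_PiB,KB,kB} are set precisely so that \Cref{BilipschitzVA}-\Cref{BilipschitzVB} propagate the $\mathcal{H}_\bI$-bounds through each backward step: the value $V_{\bA,i}^*(\cdot, T-(i-1)\delta_1)$ at the right endpoint inherits bi-Lipschitz constants bounded by $k_\bA, K_\bA$ and weighted sup norm bounded by $\Pi_\bA$, so it can serve as admissible terminal data for the next subinterval $[T-(i+1)\delta_1, T-i\delta_1]$. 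Iterating $N_1 = \lceil T/\delta_1\rceil$ times produces $(V_\bA^*, V_\bB^*)$ on all of $[0,T]$, continuous across subinterval boundaries by construction, with $V_\bI^* \in \mathcal{H}_\bI$. Uniqueness on $[0,T]$ follows since any two global solutions in $\mathcal{H}_\bA\times\mathcal{H}_\bB$ restrict to fixed points of each $\Psi_i$, which is unique by the contraction argument.

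The main obstacle will be the non-local, indicator-valued integrand in the contraction estimate: one must simultaneously control perturbations in the integrand ($y - \overline{V}_\bA$), perturbations in the upper limit of integration ($(V_\bB)^{-1}$), and the switching on and off of the matching-region indicator. The three-case decomposition used in \Cref{contractivemap_extsience_PsiA}, combined with the inverse Lipschitz bound \eqref{continuityofinverseVA}, the uniform sup bound $\sup_y g_\bB(y,t) \le C_\bB$ guaranteed by $\mu_\bB \in \mathcal{D}_\bB$, and the weighted-norm structure, will be exactly the right tools to handle all three contributions within a single $\|\cdot\|_{\mathcal{H},i}$-estimate of order $\delta_1$.
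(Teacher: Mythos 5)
Your plan follows the paper's proof essentially step for step: local existence via Banach fixed point on $\Psi_i$ using the three-case decomposition, contraction controlled through \Cref{lem Lip inv of VA and VB} and the $C_\bB$-bound on $g_\bB$, backward pasting enabled by the $i$-independence of the constants $k_\bI, K_\bI, \Pi_\bI, \delta_1$, and global uniqueness by interval-by-interval restriction. One small slip in the pasting step: the terminal datum passed to the $(i+1)$-th subinterval should be the value at the \emph{left} endpoint of the $i$-th one, $V^*_{\bA,i}(\cdot,T-i\delta_1)$, not $V^*_{\bA,i}(\cdot,T-(i-1)\delta_1)$; otherwise the argument is sound.
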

			\begin{proof}
				
				\noindent\textbf{Part 1. Estimate of integral terms for fixed $i=1,2,\cdots,N_1$:} \sloppy Fix $i=1,2,\cdots,N_1$, $x\ge 0$ and $u\in [T-i\delta_1, T-(i-1)\delta_1]$. Let $V^1_\bB , V^2_\bB  \in \mathcal{H}_{\bB ,i}$ be two inputs for $\Psi_i$ and denote the corresponding solutions to \eqref{NNHJBx}-\eqref{eq. terminal VA,VB local} with $V_\bB=V^j_\bB$ by $(\overline{V}_\bA^j , \overline{V}_\bB^j)\in\mathcal{H}_{\bA ,i}\times \mathcal{H}_{\bB ,i}$ for $j=1,2$. Without loss of generality, we assume $\overline{V}_\bA^2 (x,u) \le \overline{V}_\bA^1(x,u)$; the reverse case is analogous. To estimate $\overline{V}_\bA^1 (x,u)-\overline{V}_\bA^2 (x,u)$, we consider
				\begingroup \begin{align*}
					I_0:=&\, S (x,u,\overline{V}_\bA^1 ,V^1_\bB ,g_\bB )\mathbf{1}_{\{\overline{V}_\bA^1 (x,u) \leq (V^1_\bB )^{-1}(x,u)\}}-S (x,u,\overline{V}_\bA^2 ,V^2_\bB ,g_\bB )\mathbf{1}_{\{\overline{V}_\bA^2 (x,u) \leq (V^2_\bB )^{-1}(x,u)\}}  
				\end{align*}\endgroup    
				We split
				\begingroup \begin{align*}
					I_0 = &\, \underbrace{\left[S (x,u,\overline{V}_\bA^1 ,V^1_\bB ,g_\bB )-S (x,u,\overline{V}_\bA^2 ,V^2_\bB ,g_\bB )\right]\mathbf{1}_{\{\overline{V}_\bA^1 (x,u) \leq (V^1_\bB )^{-1}(x,u) \text{ and }\overline{V}_\bA^2 (x,u) \leq (V^2_\bB )^{-1}(x,u)\}}}_{I_1} \\[-2.5mm]
					& +\underbrace{S (x,u,\overline{V}_\bA^1 ,V^1_\bB ,g_\bB )\mathbf{1}_{\{\overline{V}_\bA^1 (x,u) \leq (V^1_\bB )^{-1}(x,u)\text{ and }\overline{V}_\bA^2 (x,u) > (V^2_\bB )^{-1}(x,u)\}}}_{I_2}                                                                                    \\[-2.5mm]
					& -\underbrace{S (x,u,\overline{V}_\bA^2 ,V^2_\bB ,g_\bB )\mathbf{1}_{\{\overline{V}_\bA^1 (x,u) > (V^1_\bB )^{-1}(x,u)\text{ and }\overline{V}_\bA^2 (x,u) \leq (V^2_\bB )^{-1}(x,u)\}}}_{I_3}.                                                                                   
				\end{align*}\endgroup    
				
				\noindent {\bf Case 1A. $\overline{V}_\bA^1 (x,u) \leq (V^1_\bB )^{-1}(x,u) \text{ and }\overline{V}_\bA^2 (x,u) \leq (V^2_\bB )^{-1}(x,u)$:} In this regime,
				\begingroup \begin{align*}
					I_1 
					=    & \underbrace{\int_{\overline{V}_\bA^1 (x,u)}^{(V^1_\bB )^{-1}(x,u)}\big[\overline{V}_\bA^2 (x,u)-\overline{V}_\bA^1 (x,u)\big] g_\bB (y,u) d y}_{I_{1,1}}+\underbrace{\int_{(V^2_\bB )^{-1}(x,u)}^{(V^1_\bB )^{-1}(x,u)}\big[y-\overline{V}_\bA^2 (x,u)\big] g_\bB (y,u) d y}_{I_{1,2}} \\[-3mm]
					& -\underbrace{\int_{\overline{V}_\bA^2 (x,u)}^{\overline{V}_\bA^1 (x,u)}\big[y-\overline{V}_\bA^2 (x,u)\big]g_\bB (y,u) d y}_{I_{1,3}}.         
				\end{align*}\endgroup
				It is evident that both $| I_{1,1} |$ and $| I_{1,3} |$ are bounded by $\pig| \overline{V}_\bA^2 (x,u)-\overline{V}_\bA^1 (x,u) \pig|$. For $| I_{1,2} |$, suppose $(V^2_\bB )^{-1}(x,u)\leq (V^1_\bB )^{-1}(x,u)$, then the boundedness of $g_\bB$ implies $0\leq\int_{(V^2_\bB )^{-1}(x,u)}^{(V^1_\bB )^{-1}(x,u)}\big[y-\overline{V}_\bA^2 (x,u)\big] g_\bB (y,u) d y \leq C_\bB \left| (V^2_\bB )^{-1}(x,u)-(V^1_\bB )^{-1}(x,u) \right|.$
				Otherwise, there are two cases to study. One of them is $  (V^1_\bB)^{-1}(x,u)
				<
				\overline{V}_\bA^2(x,u)
				\le
				(V^2_\bB)^{-1}(x,u),$ but this contradicts the assumption of this case and that $\overline{V}_\bA^2 (x,u) \le \overline{V}_\bA^1(x,u)$. The remaining possibility is $\overline{V}_\bA^2(x,u)
				\le
				(V^1_\bB)^{-1}(x,u)
				<
				(V^2_\bB)^{-1}(x,u)$ which implies $0\leq\int^{(V^2_\bB )^{-1}(x,u)}_{(V^1_\bB )^{-1}(x,u)}\big[y-\overline{V}_\bA^2 (x,u)\big] g_\bB (y,u) d y \leq C_\bB \left| (V^2_\bB )^{-1}(x,u)-(V^1_\bB )^{-1}(x,u) \right|.$ Consequently, we have $| I_{1,2} | 
				\le C_\bB \left| (V^2_\bB )^{-1}(x,u)-(V^1_\bB )^{-1}(x,u) \right|$. Combining these estimates, we conclude: $|I_0| = |I_1| \leq 2\pig| \overline{V}_\bA^2 (x,u)-\overline{V}_\bA^1 (x,u) \pig| + C_\bB \left| (V^2_\bB )^{-1}(x,u)-(V^1_\bB )^{-1}(x,u) \right|$.
				
				\noindent {\bf Case 1B. $\overline{V}_\bA^1 (x,u) \leq (V^1_\bB )^{-1}(x,u)\text{ and }\overline{V}_\bA^2 (x,u) > (V^2_\bB )^{-1}(x,u)$:} By the assumption that $\overline{V}_\bA^2 (x,u) \le \overline{V}_\bA^1(x,u)$, we have $
				0\leq I_0=I_2 \leq  (V^1_\bB )^{-1}(x,u)-\overline{V}_\bA^1(x,u)
				\leq | (V^2_\bB )^{-1}(x,u)-(V^1_\bB )^{-1}(x,u) |.$
				
				\noindent {\bf Case 1C. $\overline{V}_\bA^1 (x,u) > (V^1_\bB )^{-1}(x,u)\text{ and }\overline{V}_\bA^2 (x,u) \leq (V^2_\bB )^{-1}(x,u)$:} In this case, we decompose $0\leq I_3 =I_{3,1}+I_{3,2}+I_{3,3}$, where 
				\begingroup \begin{align*}
					I_{3,1}= &\,  \mathbf{1}_{\{\overline{V}_\bA^2 (x,u) > (V^1_\bB )^{-1}(x,u)\}}
					\int_{\overline{V}_\bA^2 (x,u)}^{(V^2_\bB )^{-1}(x,u)}\big[y-\overline{V}_\bA ^2(x,u)\big] g_\bB (y,u) d y;                                                                         \\[-1mm]
					I_{3,2}= &\, \mathbf{1}_{\{\overline{V}_\bA^2 (x,u) \leq (V^1_\bB )^{-1}(x,u)\text{ and }(V^2_\bB )^{-1}(x,u) \leq \overline{V}_\bA^1 (x,u)\}}\int_{\overline{V}_\bA^2 (x,u)}^{(V^2_\bB )^{-1}(x,u)}\big[y-\overline{V}_\bA ^2(x,u)\big] g_\bB (y,u) d y; \\
					I_{3,3}= &\, \mathbf{1}_{\{\overline{V}_\bA^2 (x,u) \leq (V^1_\bB )^{-1}(x,u)\text{ and }(V^2_\bB )^{-1}(x,u) > \overline{V}_\bA^1 (x,u)\}}\int_{\overline{V}_\bA^2 (x,u)}^{(V^2_\bB )^{-1}(x,u)}\big[y-\overline{V}_\bA ^2(x,u)\big] g_\bB (y,u) d y.  
				\end{align*}\endgroup
				By estimating the integrands, it holds that $
				0\leq I_{3,1} 
				\leq | (V^2_\bB )^{-1}(x,u)-(V^1_\bB )^{-1}(x,u) |$ and $0\leq I_{3,2} \leq  | \overline{V}_\bA^2 (x,u)-\overline{V}_\bA^1 (x,u) |$. Next, we split
				\begingroup 	\begin{align*}
					0\leq I_{3,3} =\, & \Bigg\{\int_{(V^1_\bB )^{-1}(x,u)}^{(V^2_\bB )^{-1}(x,u)}\big[y-\overline{V}_\bA^2(x,u)\big] g_\bB (y,u) d y 
					-\int_{(V^1_\bB )^{-1}(x,u)}^{\overline{V}_\bA^1 (x,u)}\big[y-\overline{V}_\bA^2(x,u)\big] g_\bB (y,u) d y \\
					& \,\,\,+\int_{\overline{V}_\bA^2 (x,u)}^{\overline{V}_\bA^1 (x,u)}\big[y-\overline{V}_\bA^2(x,u)\big] g_\bB (y,u) d y\Bigg\}  \mathbf{1}_{\{ \overline{V}_\bA^2 (x,u) \leq (V^1_\bB )^{-1}(x,u) < \overline{V}_\bA^1 (x,u) < (V^2_\bB )^{-1}(x,u)\}}  .
				\end{align*}\endgroup
				It then follows
				\begingroup 	\begin{align*}
					0\leq I_{3,3}  
					\leq          \,  & \Bigg\{\int_{(V^1_\bB )^{-1}(x,u)}^{(V^2_\bB )^{-1}(x,u)}y g_\bB(y,u) d y +\int_{\overline{V}_\bA^2 (x,u)}^{\overline{V}_\bA^1 (x,u)}\big[\overline{V}_\bA^1 (x,u)-\overline{V}_\bA^2(x,u)\big] g_\bB (y,u) d y\Bigg\}    \\
					& \,\cdot \mathbf{1}_{\{ \overline{V}_\bA^2 (x,u) \leq (V^1_\bB )^{-1}(x,u) < \overline{V}_\bA^1 (x,u) < (V^2_\bB )^{-1}(x,u)\}}                                                                                                               \\
					\leq    \,        & C_\bB | (V^2_\bB )^{-1}(x,u)-(V^1_\bB )^{-1}(x,u) |+| \overline{V}_\bA^2 (x,u)-\overline{V}_\bA^1 (x,u)|.                    
				\end{align*}\endgroup
				Therefore
				$|I_0| = |I_3| \leq (1+C_\bB )| (V^2_\bB )^{-1}(x,u)-(V^1_\bB )^{-1}(x,u) |+2| \overline{V}_\bA^2 (x,u)-\overline{V}_\bA^1 (x,u)|$.
				
				Combining the results in Cases 1A-1C, Lemma \ref{lem Lip inv of VA and VB} implies $ \frac{\left| I_0 \right|}{1+x} \le 2 \| \overline{V}_\bA^2 (\bigcdot,u)-\overline{V}_\bA^1 (\bigcdot,u) \|_{0,1/w} 
				+ \frac{(1+C_\bB )(1+k_\bB )}{(k_\bB )^2}  \| V^1_\bB (\bigcdot,u)-V^2_\bB (\bigcdot,u) \|_{0,1/w}.$

				\noindent\textbf{Part 2. Construction of global solution:}
				Consider $i=1$. In $\mathbb{R}_{\geq0}\times [T- \delta_1, T]$, subtracting the integral forms of solutions $\overline{V}_\bA^2 (x,t)$ and $\overline{V}_\bA^1 (x,t)$ to \eqref{NNHJBx} in $\mathbb{R}_{\geq0}\times [T- \delta_1, T]$ gives
				\begingroup  \begin{align*}
						\| \overline{V}_\bA^2\! -\overline{V}_\bA^1  \|_{\mathcal{H},1} 
						&\le  \!\sup_{\mathbb{R}_{\geq0}\times [T- \delta_1, T]} \!\int_t^{T}  \lambda_\bB e^{-\rho (u-t)} \!\frac{\left|I_0 \right|}{1+x} du \\
						&\leq \lambda_\bB \delta_1 \!\!\left[\!2 \| \overline{V}_\bA^2 -\overline{V}_\bA^1  \|_{\mathcal{H},1}+\frac{(1+C_\bB )(1+k_\bB )}{(k_\bB )^2}  \| V^1_\bB -V^2_\bB  \|_{\mathcal{H},1} \!\right], \!
				\end{align*}%
				\endgroup
				which yields
				\begingroup \begin{equation}\label{2250a}
					\| \overline{V}_\bA^2 -\overline{V}_\bA^1  \|_{\mathcal{H},1} \le  \frac{\lambda_\bB \delta_1(1+C_\bB )
						(1+k_\bB )}{(1-2\lambda_\bB\delta_1)(k_\bB )^2}  
					\| V^1_\bB -V^2_\bB  \|_{\mathcal{H},1}.
				\end{equation}\endgroup
				Similarly, subtracting the integral forms of solutions $\overline{V}_\bB^2 (x,t)$ and $\overline{V}_\bB^1 (x,t)$ to \eqref{NNHJBy} in $\mathbb{R}_{\geq0}\times [T- \delta_1, T]$ gives 
				\begingroup \begin{equation}\label{2250}
					\| \overline{V}_\bB^2 -\overline{V}_\bB^1  \|_{\mathcal{H},1} \le  \frac{\lambda_\bA \delta_1(1+C_\bA )(1+k_\bA )}{(1-2\lambda_\bA\delta_1)(k_\bA )^2}  
					\| \overline{V}_\bA^2 -\overline{V}_\bA^1  \|_{\mathcal{H},1}.
				\end{equation}\endgroup
				Choosing
				\begingroup \begin{equation}\label{def_deltat}
					\delta_1 := \left(\frac{1}{4\lambda_\bB } \right) \land \left(\frac{1}{4\lambda_\bA } \right) \land \left(\frac{k_\bA k_\bB }{2\sqrt{2\lambda_\bA (1+C_\bA )(1+k_\bA )\lambda_\bB (1+C_\bB )(1+k_\bB )}} \right)
				\end{equation}\endgroup
				ensures that  $
				\| \overline{V}_\bB^2 -\overline{V}_\bB^1  \|_{\mathcal{H},1} \le \frac12 \| V^1_\bB -V^2_\bB  \|_{\mathcal{H},1}.$
				By the Banach fixed point theorem, there exists a unique fixed point $V_{\bB,1}=\Psi_1(V_{\bB,1}) \in \mathcal{H}_{\bB ,1}$. Combining this with the definition of $\Psi_1$ and Lemmas \ref{contractivemap_extsience_PsiA}, \ref{BilipschitzVA} shows that there is a unique pair $(V_{\bA,1},V_{\bB,1}) \in \mathcal{H}_{\bA ,1} \times \mathcal{H}_{\bB ,1}$ solving equations \eqref{NHJBx}-\eqref{NHJBy} in $\mathbb{R}_{\geq0}\times [T- \delta_1, T]$ with terminal conditions $h_\bA(x)$ and $h_\bB(y)$. 
				
				Proceeding inductively, suppose that for $i=2,3,\cdots,N_1-1$, the solution $(V_{\bA,i},V_{\bB,i}) \in \mathcal{H}_{\bA ,i} \times \mathcal{H}_{\bB ,i}$ to \eqref{NHJBx}-\eqref{NHJBy} in $\mathbb{R}_{\geq0}\times [T- i\delta_1, T-(i-1)\delta_1]$ exists with the terminal conditions $\widetilde{V}_{\bI,i}=V_{\bI,i-1}(\bigcdot,T-(i-1)\delta_1)\in \mathcal{H}_{\bI}$ for $\bI\in\{\bA,\bB\}$. We set $\widetilde{V}_{\bI,1}(z)=V_{\bI,0}(z,T)=h_\bI(z)$. Note that $\delta_1$, $k_\bA$, $K_\bA$, $\Pi_\bA$, $k_\bB$, $K_\bB$ and $\Pi_\bB$ are independent of $i=1,2,\cdots,N_1$. Thus, with the terminal conditions $\widetilde{V}_{\bI,i+1}=V_{\bI,i}(\bigcdot,T-i\delta_1)\in \mathcal{H}_{\bI}$ for $\bI\in\{\bA,\bB\}$, we can repeat the estimates for \eqref{2250a}-\eqref{2250} to deduce that $\Psi_{i+1}$ remains contractive, yielding the unique solution $(V_{\bA,i+1},V_{\bB,i+1}) \in \mathcal{H}_{\bA ,i+1} \times \mathcal{H}_{\bB ,i+1}$ to \eqref{NHJBx}-\eqref{NHJBy} in $\mathbb{R}_{\geq0}\times [T-(i+1) \delta_1, T-i\delta_1]$. 
				
				Hence, we define $V_\bI^*(z,t)
				:=h_\bI(z)\mathbf{1}_{\{t=T\}}
				+\sum^{N_1}_{i=1}V_{\bI,i}(z,t)\mathbf{1}_{\{t\in[T- i\delta_1, T-(i-1)\delta_1)\}} $ in $ \mathbb{R}_{\geq 0}\times[0, T]$ for $\bI\in\{\bA,\bB\}$. Using the integral representation of $V_{\bA,i }$ iteratively, we obtain
				\begingroup	\begin{align}\label{eq. integral rep. of V_A} 
						V_\bA^*(x,t)
						=\,& \int_t^{T}\!\! e^{-\rho (u-t)} \!\!\left\{ \mathbf{1}_{\{V_{\bA}^* (x,u) \leq (V_{\bB}^*) ^{-1}(x,u)\}}\lambda_\bB \int_{V_{\bA}^* (x,u)}^{(V_{\bB}^*) ^{-1}(x,u)}\!\!\big[y-V_{\bA}^* (x,u)\big] g_\bB (y,u) d y +r_\bA (x,u)\right\} du\nonumber\\
						&+ e^{-\rho (T-t)}h_\bA(x)\hspace{-10pt}
				\end{align}%
				\endgroup
				for any $(x,t) \in \mathbb{R}_{\geq 0}\times[0, T]$; and an analogous expression holds for $V_\bB^*$. These representations, together with Lemmas \ref{contractivemap_extsience_PsiA}-\ref{BilipschitzVB}, clearly show that $(V_\bA^*, V_\bB^*)\in \mathcal{H}_\bA \times \mathcal{H}_\bB$ solves \eqref{NHJBx}-\eqref{NHJBy} in $\mathbb{R}_{\geq 0}\times[0, T]$.
				
				\noindent\textbf{Part 3. Uniqueness of global solution:} Suppose there exist two solutions $(V_\bA^*, V_\bB^*),(V_\bA^\ddagger, V_\bB^\ddagger)\in \mathcal{H}_\bA \times \mathcal{H}_\bB$ satisfying \eqref{NHJBx}-\eqref{NHJBy} in $\mathbb{R}_{\geq 0}\times[0, T]$ with the terminal conditions in \eqref{Nboundary}. As both $V_\bB^*$ and $V_\bB^\ddagger$ are fixed points of $\Psi_1$, we may substitute $V_\bB^1=\overline{V}_\bB^1=V_\bB^*$, $V_\bB^2=\overline{V}_\bB^2=V_\bB^\ddagger$, $\overline{V}_\bA^1=V_\bA^*$ and $\overline{V}_\bA^2=V_\bA^\ddagger$ in Parts 1 and 2 in this proof. Inequalities \eqref{2250} and \eqref{2250a} imply that $\| (V_\bA^* -V_\bA^\ddagger)\mathbf{1}_{\{t\in[T- \delta_1, T]\}}   \|_{\mathcal{H}} \le \frac12 \| (V_\bA^* -V_\bA^\ddagger)\mathbf{1}_{\{t\in[T- \delta_1, T]\}}  \|_{\mathcal{H}},$ which implies $V_\bA^* =V_\bA^\ddagger$ in $\mathbb{R}_{\geq 0}\times[T-\delta_1, T]$. Repeating this step for each interval $[T-i\delta_1, T-(i-1)\delta_1]$ for $i=2,3,\cdots,N_1$, we conclude $V_\bA^* =V_\bA^\ddagger$ on $\mathbb{R}_{\geq 0}\times[0, T]$ and, similarly, $V_\bB^* =V_\bB^\ddagger$ in $\mathbb{R}_{\geq 0}\times[0, T]$, as $\delta_1$ is independent of $i$.
			\end{proof}

			\subsection{Global Well-posedness of Equations \eqref{NFPx}-\eqref{NFPy}}\label{section.wellpose.FP}

			In this section, we fix $(\mu_\bA ,\mu_\bB )\in \mathcal{D}_\bA \times \mathcal{D}_\bB $ and recall the corresponding solution $(V_\bA^* ,V_\bB^* )\in \mathcal{H}_\bA \times \mathcal{H}_\bB $ obtained in \Cref{existence_NHJBxy}. We now complete Step 2B outlined in Appendix \ref{sec. Main result and Sketch of Proof}, showing the existence of a unique global-in-time solution to \eqref{NFPx}–\eqref{NFPy} with $(V_\bA,V_\bB)=(V_\bA^* ,V_\bB^* )$, subject to initial conditions in \eqref{Nboundary}. For any $f \in L^1(\mathbb{R}_{> 0};\mathbb{R}_{\geq 0})$, we define the norm $\|f\|_{1,w}:=\int_0^{\infty}(1+z)|f(z)|dz$; and for any $F \in C([0, T];L^1(\mathbb{R}_{> 0};\mathbb{R}_{\geq 0}))$, we define the norm
			$\| F \|_{\mathcal{A}}:=\sup_{t\in[0, T]}\| F(\bigcdot, t) \|_{1,w}$. Following the approach of \Cref{section.wellpose.HJB}, we partition the time interval $[0,T]$ into $N_2$ uniform subintervals of width $\delta_2 > 0$. In contrast to the backward construction for the HJB system in \Cref{section.wellpose.HJB}, the forward system \eqref{NFPx}–\eqref{NFPy} is solved and pasted forward in time. Fix $i=1,2,\cdots,N_2$ and $\bI\in\{\bA ,\bB \}$. We define the norm $\Vert F \Vert_{\mathcal{A},i}=\sup\limits_{t\in [(i-1)\delta_2, i\delta_2]}\|F (\bigcdot,t)\|_{1,w}$ and the set
			\begingroup\begin{equation*}
				\scalebox{0.95}{$\mathcal{A}_{\bI,i} := \left\{
					f  \in C\big([(i-1)\delta_2, i\delta_2]; L^1(\mathbb{R}_{> 0};  \mathbb{R}_{\geq 0})\big) \,\middle|\, \Vert f\Vert_{\mathcal{A},i} \leq  \dfrac{2^{1+\nu}C_\bI}{\nu}
					\hspace{5pt} \text{and} 
					\sup\limits_{t\in [(i-1)\delta_2, i\delta_2]}\displaystyle\int^\infty_0f(x,t)dx\leq 1
					\right\}$}.
			\end{equation*} \endgroup
			The set $\mathcal{A}_{\bI}$ is defined in \eqref{def.AI}. We define a map $\Upsilon_i$ on $\mathcal{A}_{\bB ,i}$, by setting $\overline{f}_\bB =\Upsilon_i(f_\bB)$, where $(\overline{f}_\bA,\overline{f}_\bB)$ is the unique solution to the following system: 
			\begingroup\begin{equation}\label{NNFPx}
				\frac{\partial \overline{f}_\bA (x,t) }{\partial t}= \left(-\lambda_\bB  \overline{f}_\bA (x,t) \int_{V_\bA^* (x,t)}^{(V_\bB^*) ^{-1}(x,t)} f_\bB (y,t) d y\right)\land 0;
			\end{equation}
			\begin{equation}\label{NNFPy}
				\frac{\partial \overline{f}_\bB (y,t) }{\partial t}= \left(-\lambda_\bA  \overline{f}_\bB (y,t) \int_{V_\bB^* (y,t)}^{(V_\bA^*) ^{-1}(y,t)}\overline{f}_\bA (x,t) d x\right)\land 0,
			\end{equation}\endgroup
			in $\mathbb{R}_{> 0}\times[(i-1) \delta_2, i\delta_2]$ with initial conditions
			\begingroup\begin{equation}\label{eq. ter for fA, fB local}
				\overline{f}_{\bA}(x,(i-1) \delta_2)=\widetilde{f}_{\bA,i-1} (x)\hspace{5pt} \text{and} \hspace{5pt} 
				\overline{f}_{\bB}(y,(i-1) \delta_2)=\widetilde{f}_{\bB,i-1} (y),
			\end{equation}\endgroup
			where $\widetilde{f}_{\bI,i-1}\in\mathcal{A}_{\bI} $ is a given function for $i=1,2,\cdots,N_2$ and $\bI\in\{\bA ,\bB \}$. If $i=1$, then we set $\widetilde{f}_{\bI,0} = f_{\bI,0}$ which lies in $\mathcal{A}_{\bI}$ by Lemma \ref{lemma.integrablepdf} and \Cref{condition_initial}. The well-definedness of $\Upsilon_i$ is established by the following lemma. 
			\begin{lemma}\label{selfmap_Upsilon}
				Suppose that Assumptions \ref{condition_initial}-\ref{condition_terminal} hold. For $f_\bB \in \mathcal{A}_{\bB ,i}$ and $i=1,2,\cdots,N_2$, there is a unique solution $(\overline{f}_\bA,\overline{f}_\bB) \in \mathcal{A}_{\bA ,i} \times \mathcal{A}_{\bB ,i}$ to \eqref{NNFPx}-\eqref{eq. ter for fA, fB local} in $\mathbb{R}_{\geq 0}\times[(i-1) \delta_2, i\delta_2]$. Moreover, $\Upsilon_i$ is a self-map on $\mathcal{A}_{\bB ,i}$.
			\end{lemma}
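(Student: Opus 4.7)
The plan is to exploit the sequential decoupling built into the system \eqref{NNFPx}–\eqref{NNFPy}: the evolution equation for $\overline{f}_\bA$ depends only on the given input $f_\bB$ (not on $\overline{f}_\bB$), so I would solve \eqref{NNFPx} first in closed form and then substitute the resulting $\overline{f}_\bA$ into \eqref{NNFPy} to recover $\overline{f}_\bB$. Both equations are linear first-order ODEs in $t$ of the form $\partial_t h = -h \cdot g$ with $g \geq 0$, which admit the explicit integrating-factor solution $h(t) = h_0 \exp\!\bigl(-\int_0^t g(s)\,ds\bigr)$.

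Concretely, since $f_\bB \geq 0$, the truncation $(\,\cdot\,)\wedge 0$ in \eqref{NNFPx} is active precisely when $V_\bA^*(x,t) \leq (V_\bB^*)^{-1}(x,t)$, so the equation becomes
\[
\partial_t \overline{f}_\bA(x,t) = -\lambda_\bB\, \overline{f}_\bA(x,t)\, K_\bA(x,t),
\quad K_\bA(x,t) := \mathds{1}_{\{V_\bA^*(x,t)\le (V_\bB^*)^{-1}(x,t)\}} \int_{V_\bA^*(x,t)}^{(V_\bB^*)^{-1}(x,t)} f_\bB(y,t)\,dy,
\]
with explicit solution $\overline{f}_\bA(x,t) = \widetilde{f}_{\bA,i-1}(x)\exp\!\bigl(-\lambda_\bB\int_{(i-1)\delta_2}^{t} K_\bA(x,s)\,ds\bigr)$. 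Since $\|f_\bB(\cdot,s)\|_{L^1}\leq 1$ gives $0\le K_\bA(x,s)\le 1$, the exponential factor lies in $(0,1]$, so $0 \leq \overline{f}_\bA(x,t) \leq \widetilde{f}_{\bA,i-1}(x)$ pointwise. Both constraints defining $\mathcal{A}_{\bA,i}$ then transfer immediately from the initial datum: $\|\overline{f}_\bA(\cdot,t)\|_{1,w} \leq \|\widetilde{f}_{\bA,i-1}\|_{1,w}\leq \tfrac{2^{1+\nu}C_\bA}{\nu}$ (using that $\widetilde{f}_{\bA,i-1}\in\mathcal{A}_\bA$, which for $i=1$ follows from \Cref{lemma.integrablepdf}) and $\int_0^\infty \overline{f}_\bA(x,t)\,dx \leq 1$.

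For $L^1$-continuity in $t$, the mean value theorem applied to the exponential factor yields $|\overline{f}_\bA(x,t) - \overline{f}_\bA(x,s)| \leq \lambda_\bB\,\widetilde{f}_{\bA,i-1}(x)\,|t-s|$, and dominated convergence gives $\|\overline{f}_\bA(\cdot,t) - \overline{f}_\bA(\cdot,s)\|_{L^1}\to 0$ as $|t-s|\to 0$; hence $\overline{f}_\bA\in\mathcal{A}_{\bA,i}$. I would then apply the identical argument to \eqref{NNFPy}, now treating $\overline{f}_\bA\in\mathcal{A}_{\bA,i}$ as a known input (with $\int_0^\infty \overline{f}_\bA(x,s)\,dx \leq 1$ playing the role previously played by $\|f_\bB(\cdot,s)\|_{L^1}\leq 1$), to produce a closed-form $\overline{f}_\bB$ lying in $\mathcal{A}_{\bB,i}$. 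Uniqueness is encoded in the explicit formula, or alternatively can be read off from a one-line Gronwall estimate on the difference of two candidate solutions.

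There is no substantive obstacle: the triangular decoupling reduces the problem to two scalar linear ODEs with an explicit exponential solution, and every bound required by $\mathcal{A}_{\bI,i}$ is inherited from the initial datum via a pointwise comparison. The only items warranting care are measurability and uniform boundedness of the kernel $K_\bA(x,s)$ (which follow from $(V_\bA^*,V_\bB^*)\in\mathcal{H}_\bA\times\mathcal{H}_\bB$ being jointly continuous, giving measurability of the indicator set and continuity of the interval endpoints, together with $\|f_\bB(\cdot,s)\|_{L^1}\leq 1$), and checking that $\widetilde{f}_{\bI,i-1}\in\mathcal{A}_\bI$ so the norm bounds close — this is built into the inductive time-stepping procedure from \Cref{section.wellpose.HJB}. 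The self-map property $\Upsilon_i(\mathcal{A}_{\bB,i})\subseteq\mathcal{A}_{\bB,i}$ is then exactly the statement $\overline{f}_\bB\in\mathcal{A}_{\bB,i}$ just established.
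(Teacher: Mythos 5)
Your proof is essentially the same as the paper's: both exploit the triangular structure to write down the explicit exponential solution via an integrating factor, deduce the pointwise domination $\overline{f}_\bI(z,t)\le \widetilde{f}_{\bI,i-1}(z)$, and conclude membership in $\mathcal{A}_{\bI,i}$ directly from the corresponding property of the initial datum. You spell out a few more routine details (the $L^1$-continuity in $t$ and the measurability of the kernel), but the argument is the same.
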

			\begin{proof}
				Fix $i= 1,2,\cdots,N_2$. The unique explicit solution is $\overline{f}_\bA (x,t)=\widetilde{f}_{\bA,i-1}(x)\exp\left[ \int_{(i-1)\delta_2}^t\left(-\lambda_\bB   \int_{V_\bA^* (x,u)}^{(V_\bB^*) ^{-1}(x,u)} f_\bB (y,u) d y\right)\land 0du\right]$ in $\mathbb{R}_{\geq 0}\times[(i-1)\delta_2, i\delta_2]$. Hence, $\overline{f}_\bA (z,t)\le \widetilde{f}_{\bA,i-1}(z)$ in $\mathbb{R}_{\geq 0}\times[(i-1)\delta_2, i\delta_2]$, 
				implying 
				$\overline{f}_{\bA} \in \mathcal{A}_{\bA,i}$ as $\widetilde{f}_{\bA,i-1}  \in \mathcal{A}_{\bA}$. The proof for $\overline{f}_{\bB}$ is similar. 
			\end{proof}
			
			\begin{proposition}\label{existence_NFPxy}
				Suppose that Assumptions \ref{condition_initial}-\ref{condition_terminal} hold. Then, for any $i=1,2,\cdots,N_2$ and
				\begingroup	\begin{equation}\label{def_FP_delta}
					\delta_2 \le 1 \land\left(
					\dfrac{\nu^2 e^{-(\lambda_\bA+\lambda_\bB)}}{2^{3+2\nu}\lambda_\bA\lambda_\bB C_\bA C_\bB }\right)^{1/2},
				\end{equation}\endgroup
				the map	$\Upsilon_i$ is contractive on $\mathcal{A}_{\bB ,i}$. Hence, there is a unique solution $(f_\bA^*,f_\bB^*) \in \mathcal{A}_{\bA} \times \mathcal{A}_{\bB}$ to \eqref{NFPx}-\eqref{NFPy} in $\mathbb{R}_{\geq 0}\times[0, T]$, with $(V_\bA,V_\bB)=(V_\bA^* ,V_\bB^*)$ and the initial conditions in \eqref{Nboundary}.
			\end{proposition}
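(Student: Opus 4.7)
The plan is to follow the same template as Proposition \ref{existence_NHJBxy}: prove each $\Upsilon_i$ is a strict contraction on $\mathcal{A}_{\bB,i}$ via the Banach fixed point theorem, producing a unique local pair $(f_{\bA,i}, f_{\bB,i})$, then paste forward in time over $[0,T]$.

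For contractivity, I fix $i$, take two inputs $f_\bB^1, f_\bB^2 \in \mathcal{A}_{\bB,i}$, and exploit the explicit exponential representation of $\overline{f}_\bA^j, \overline{f}_\bB^j$ from the proof of Lemma \ref{selfmap_Upsilon}. Since the integrands in the exponents are non-negative once the $\wedge 0$ is absorbed into an indicator depending only on $(V_\bA^*, V_\bB^*)$ (not on $f_\bB^j$), the elementary bound $|e^{-a}-e^{-b}|\le|a-b|$ reduces everything to controlling the difference of non-local integrals. After multiplying by $(1+x)$ and applying Fubini, the weight $(1+x)$ is carried by the fixed initial density $\widetilde{f}_{\bA,i-1}$ (whose $\|\cdot\|_{1,w}$-norm is bounded by $2^{1+\nu}C_\bA/\nu$ since $\widetilde{f}_{\bA,i-1} \in \mathcal{A}_\bA$), while the remaining $y$-integral of $|f_\bB^1 - f_\bB^2|$ is controlled by $\|f_\bB^1 - f_\bB^2\|_{\mathcal{A},i}$. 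Concretely I expect the two bounds
\begin{align*}
\|\overline{f}_\bA^1 - \overline{f}_\bA^2\|_{\mathcal{A},i} &\leq \frac{\lambda_\bB 2^{1+\nu}C_\bA}{\nu}\,\delta_2\, \|f_\bB^1 - f_\bB^2\|_{\mathcal{A},i}, \\
\|\overline{f}_\bB^1 - \overline{f}_\bB^2\|_{\mathcal{A},i} &\leq \frac{\lambda_\bA 2^{1+\nu}C_\bB}{\nu}\,\delta_2\, \|\overline{f}_\bA^1 - \overline{f}_\bA^2\|_{\mathcal{A},i},
\end{align*}
whose composition yields the contraction constant $\lambda_\bA\lambda_\bB 2^{2+2\nu}C_\bA C_\bB \delta_2^2/\nu^2$; the constraint \eqref{def_FP_delta} guarantees this is at most $e^{-(\lambda_\bA+\lambda_\bB)}/2 < 1$. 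The Banach fixed point theorem then delivers the unique $f_{\bB,i} = \Upsilon_i(f_{\bB,i})$, and substituting back into \eqref{NNFPx} produces the unique $(f_{\bA,i}, f_{\bB,i}) \in \mathcal{A}_{\bA,i}\times\mathcal{A}_{\bB,i}$ on $[(i-1)\delta_2, i\delta_2]$.

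For the global construction I seed the iteration with $\widetilde{f}_{\bI,0} = f_{\bI,0}$, which lies in $\mathcal{A}_\bI$ by Lemma \ref{lemma.integrablepdf} and Assumption \ref{condition_initial}. The explicit formula yields the pointwise monotonicity $f_{\bI,i}(z,t) \leq \widetilde{f}_{\bI,i-1}(z)$, hence inductively $f_{\bI,i}(z,t) \leq f_{\bI,0}(z) \leq C_\bI/(1+z^{2+\nu})$ throughout, which suffices to verify that $\widetilde{f}_{\bI,i} := f_{\bI,i}(\cdot, i\delta_2)$ lies in $\mathcal{A}_\bI$ at every step. Since $\delta_2$ in \eqref{def_FP_delta} depends only on structural constants, the construction iterates $N_2 = \lceil T/\delta_2\rceil$ times and the concatenated field $f_\bI^*$ furnishes a global solution in $\mathcal{A}_\bA \times \mathcal{A}_\bB$; uniqueness propagates interval by interval from the Banach uniqueness of each $\Upsilon_i$.

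The only subtlety relative to the HJB analysis in Proposition \ref{existence_NHJBxy} is the $(V_\bA^*, V_\bB^*)$-dependent limits of integration in the non-local terms. After Fubini this dependence vanishes: the $y$-limits become the integration variable of the outer integral and the $x$-range is swallowed into an unconditional $L^1$ norm, which keeps the estimate fully decoupled from the inverse functions $(V_\bI^*)^{-1}$ that required dedicated work (via Lemma \ref{lem Lip inv of VA and VB}) in the HJB case. I therefore do not anticipate any obstacle beyond bookkeeping the constants in the $\delta_2$ bound.
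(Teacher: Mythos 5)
Your argument is correct, and it arrives at the same contraction structure and the same global pasting as the paper, but by a cleaner local estimate. The paper works with the \emph{integral} form of \eqref{NNFPx}--\eqref{NNFPy}: it sets $R(x,u,V_\bA,V_\bB,f_\bA,f_\bB):=f_\bA(x,u)\int_{V_\bA(x,u)}^{V_\bB^{-1}(x,u)}f_\bB(y,u)\,dy$, bounds the difference $J_0$ of the nonlinear terms by $|\overline f_\bA^2-\overline f_\bA^1|+|\overline f_\bA^1|\,\|f_\bB^2-f_\bB^1\|_{1,w}$, and — because the unknown $\overline f_\bA$ then appears on both sides of the resulting integral inequality — must invoke Gr\"onwall, which produces the factor $e^{\lambda_\bB\delta_2}$ in
\[
\|\overline f_\bA^2-\overline f_\bA^1\|_{\mathcal A,i}\le e^{\lambda_\bB\delta_2}\,\delta_2\,\lambda_\bB\,\tfrac{2^{1+\nu}C_\bA}{\nu}\,\|f_\bB^1-f_\bB^2\|_{\mathcal A,i},
\]
and similarly for $\overline f_\bB$. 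You instead use the explicit exponential representation from Lemma~\ref{selfmap_Upsilon} together with $|e^{-a}-e^{-b}|\le|a-b|$ on $a,b\ge0$ (after absorbing the $\wedge\,0$ into a $V$-indicator that does not depend on $f_\bB^j$), which makes the map $f_\bB^j\mapsto\overline f_\bA^j$ explicitly Lipschitz and eliminates the self-referential term, hence the Gr\"onwall step and the $e^{\lambda_\bI\delta_2}$ factors. Your contraction constant $\lambda_\bA\lambda_\bB 2^{2+2\nu}C_\bA C_\bB\delta_2^2/\nu^2$ is therefore strictly sharper than the paper's $e^{(\lambda_\bA+\lambda_\bB)\delta_2}\lambda_\bA\lambda_\bB 2^{2+2\nu}C_\bA C_\bB\delta_2^2/\nu^2$; under \eqref{def_FP_delta} both are $<1$ (yours gives $\le\tfrac12 e^{-(\lambda_\bA+\lambda_\bB)}$, the paper's gives $\le\tfrac12$ using $\delta_2\le1$), so both close the argument, but your route exposes that the $e^{-(\lambda_\bA+\lambda_\bB)}$ factor in \eqref{def_FP_delta} is an artifact of the Gr\"onwall constant rather than a structural necessity. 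The rest — monotone decay $f_{\bI,i}(z,t)\le\widetilde f_{\bI,i-1}(z)\le f_{\bI,0}(z)$ keeping each terminal slice in $\mathcal A_\bI$, the interval-by-interval pasting, and uniqueness by Banach on each subinterval — matches the paper's Part~2/Part~3 pattern exactly. One minor caveat: the paper's $R$-decomposition is more robust (it does not rely on the availability of a closed-form exponential solution), but for this specific log-linear FP structure your shortcut is entirely legitimate.
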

			\begin{proof}
				Let $f_\bB \in \mathcal{A}_{\bB ,i}$. The system \eqref{NNFPx}–\eqref{NNFPy} can be written in integral form:
				\begingroup\begin{equation*}
					\overline{f}_\bA (x,t)
					=\widetilde{f}_{\bA,i-1}(x)-\int_{(i-1)\delta_2}^t
					\mathbf{1}_{\{V_\bA^* (x,u) \leq (V_\bB^* )^{-1}(x,u)\}}
					\left(\lambda_\bB  \overline{f}_\bA (x,u) \int_{V_\bA^* (x,u)}^{(V_\bB^*) ^{-1}(x,u)} f_\bB (y,u) d y  \right)du;
				\end{equation*}
				\begin{equation*}
					\overline{f}_\bB (y,t)
					=\widetilde{f}_{\bB,i-1}(y)-\int_{(i-1)\delta_2}^t
					\mathbf{1}_{\{V_\bB^* (y,u) \leq (V_\bA^* )^{-1}(y,u)\}}
					\left(\lambda_\bA  \overline{f}_\bB (y,u) \int_{V_\bB^* (y,u)}^{(V_\bA^*) ^{-1}(y,u)} \overline{f}_\bA (x,u) d x \right)du.
				\end{equation*}\endgroup
				\sloppy Let $f^j_\bB \in \mathcal{A}_{\bB ,i} $, and denote by $(\overline{f}_\bA^j,\overline{f}_\bB^j)$ the corresponding solutions to \eqref{NNFPx}-\eqref{NNFPy} with $f_\bB=f^j_\bB$, for $j=1,2$. Let $R (x,u,V_\bA ,V_\bB ,f_\bA ,f_\bB ):=f_\bA (x,u) \int_{V_\bA (x,u)}^{V_\bB^{-1}(x,u)} f_\bB (y,u) d y$ and $
				I_0:=\big[R (x,u,V_\bA^* ,V_\bB^* ,\overline{f}^2_\bA ,f^2_\bB ) 
				- R (x,u,V_\bA^* ,V_\bB^* ,\overline{f}^1_\bA ,f^1_\bB )\big]\mathbf{1}_{\{V_\bA^* (x,u) \leq (V_\bB^* )^{-1}(x,u)\}}.$ On the set $\pig\{(x,u):V_\bA^* (x,u) \leq (V_\bB^* )^{-1}(x,u)\pig\}$, we have
				\begingroup\begin{align*}
						|I_0| 
						\le\,      \pig| \overline{f}^2_\bA (x,u)-\overline{f}^1_\bA (x,u) \pig|
						+ \pig|\overline{f}^1_\bA (x,u)\pig| \cdot
						\pig\Vert f^2_\bB (\bigcdot,u)-f^1_\bB (\bigcdot,u)\pigr\Vert_{1,w}.                           
				\end{align*}%
				\endgroup
				Integrating and applying \Cref{selfmap_Upsilon} gives $
				\int_0^\infty (1+x)|I_0|dx 
				\leq     \pig\| \overline{f}^1_\bA (\bigcdot,u)
				- \overline{f}^2_\bA (\bigcdot,u)\pigr\|_{1,w}
				+ \frac{2^{1+\nu}C_\bA }{\nu}
				\pig\|f^1_\bB (\bigcdot,u)-f^2_\bB (\bigcdot,u)\pigr\|_{1,w}.$
				Hence, subtracting the integral forms of $ \overline{f}^2_\bA$ and $ \overline{f}^1_\bA$, we obtain
				\begingroup\begin{align*}
						\pig\| \overline{f}^2_\bA (\bigcdot,t)
						-\overline{f}^1_\bA (\bigcdot,t) \pigr\|_{1,w} 
						\leq  \int_{(i-1)\delta_2}^t \lambda_\bB
						\bigg(  \pig\| \overline{f}^1_\bA (\bigcdot,u)- \overline{f}^2_\bA (\bigcdot,u)\pigr\|_{1,w}
						+ \frac{2^{1+\nu}C_\bA }{\nu}\pig\|f^1_\bB (\bigcdot,u)-f^2_\bB (\bigcdot,u)\pigr\|_{1,w}\bigg) du,
				\end{align*}%
				\endgroup
				for any $t\in [(i-1)\delta_2, i\delta_2]$. Applying Gr\"{o}nwall's inequality yields $
				\pig\| \overline{f}^2_\bA 
				-\overline{f}^1_\bA  \pigr\|_{\mathcal{A},i}
				\le e^{\lambda_\bB \delta_2} \delta_2 \lambda_\bB \frac{2^{1+\nu}C_\bA }{\nu}\pig\|f^1_\bB - f^2_\bB \pigr\|_{\mathcal{A},i}.$
				Similarly, we have $\pig\| \overline{f}^2_\bB 
				-\overline{f}^1_\bB\pigr\|_{\mathcal{A},i}
				\le e^{\lambda_\bA \delta_2} \delta_2 \lambda_\bA \frac{2^{1+\nu}C_\bB }{\nu}	\pig\| \overline{f}^2_\bA 
				-\overline{f}^1_\bA  \pigr\|_{\mathcal{A},i}
				\le e^{(\lambda_\bA+\lambda_\bB) \delta_2} \delta_2^2 \lambda_\bA\lambda_\bB \frac{2^{2+2\nu}C_\bA C_\bB }{\nu^2} \pig\|f^1_\bB -f^2_\bB \pigr\|_{\mathcal{A},i}.$ By the choice of $\delta_2$ in \eqref{def_FP_delta}, the map $\Upsilon_i$ is contractive on $\mathcal{A}_{\bB,i}$. The existence and uniqueness of solution $(f_\bA^*,f_\bB^*) \in \mathcal{A}_{\bA} \times \mathcal{A}_{\bB}$ to \eqref{NFPx}-\eqref{NFPy} in $\mathbb{R}_{\geq 0}\times[0, T]$ subject to the initial data in \eqref{Nboundary}, then follows from the same arguments as in Parts 2-3 of the proof of \Cref{existence_NHJBxy}.
			\end{proof}

			\subsection{Existence of Global-in-time Solution of HJB-FP System \eqref{HJBx}-\eqref{boundary}}

			In this section, we complete Step 3 outlined in Appendix \ref{sec. Main result and Sketch of Proof}, establishing the existence of a global-in-time solution to the coupled HJB-FP system \eqref{HJBx}-\eqref{boundary}. In particular, as described in Step 3A, we show that the map $\Phi$ (defined in Step 1 in Appendix \ref{sec. Main result and Sketch of Proof}) is indeed a self-map on $\mathcal{D}_\bA \times \mathcal{D}_\bB$:
			\begin{lemma}\label{theorem.selfmap}
				Suppose that Assumptions \ref{condition_initial}-\ref{condition_terminal} hold. Then $\Phi$ is a self-map on $\mathcal{D}_\bA \times \mathcal{D}_\bB$.
			\end{lemma}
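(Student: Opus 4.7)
The plan is to fix $(\mu_\bA,\mu_\bB)\in\mathcal{D}_\bA\times\mathcal{D}_\bB$, apply Propositions~\ref{existence_NHJBxy}--\ref{existence_NFPxy} to obtain the unique solution $(V_\bA^*,V_\bB^*,f_\bA^*,f_\bB^*)\in\mathcal{H}_\bA\times\mathcal{H}_\bB\times\mathcal{A}_\bA\times\mathcal{A}_\bB$ to the decoupled HJB-FP system \eqref{NHJBx}--\eqref{Nboundary}, and then verify three properties for $\mu_\bI^*:=q_\bI\delta_{\textup{D}}+f_\bI^*d\mathcal{L}^1$: namely, for every $t\in[0,T]$, \textbf{(i)} $\mu_\bI^*(\bigcdot,t)\in\overline{\mathcal{K}_\bI}$, \textbf{(ii)} $t\mapsto\mu_\bI^*(\bigcdot,t)$ is continuous in the 1-Wasserstein metric, and \textbf{(iii)} it satisfies the Lipschitz-in-time bound with constant $c_\bI$ from \eqref{DA}.

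For \textbf{(i)}, the right-hand sides of \eqref{NFPx}--\eqref{NFPy} are non-positive by construction, so $\partial_t f_\bI^*\leq 0$ and therefore $0\leq f_\bI^*(x,t)\leq f_{\bI,0}(x)\leq C_\bI/(1+x^{2+\nu})$ by \Cref{condition_initial}. Integrating yields $\int_0^\infty f_\bI^*(x,t)dx\leq 1$ so $q_\bI(t)\in[0,1]$. With $\sup_{x>0}(1+x^{2+\nu})f_\bI^*(x,t)\leq C_\bI$ and $q_\bI(t)\in[0,1]$, the measure $\mu_\bI^*(\bigcdot,t)$ belongs to the set $\mathcal{K}_\bI$ defined in \eqref{KA}, hence to $\overline{\mathcal{K}_\bI}$.

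For \textbf{(iii)}, fix $0\leq s<t\leq T$. Since $\mu_\bI^*(\bigcdot,\tau)\in\mathcal{P}_1(\mathbb{R}_{\geq 0})$ for each $\tau$, the Kantorovich-Rubinstein duality equates $W_1$ with the $\|\bigcdot\|_{\textup{KR}}$ norm. For any $\varphi\in\mathrm{Lip}_1(\mathbb{R}_{\geq 0})$ with $\varphi(0)=0$, the Dirac component is annihilated and $|\varphi(x)|\leq x$, so
\begin{equation*}
W_1\pigl(\mu_\bI^*(\bigcdot,t),\mu_\bI^*(\bigcdot,s)\pigr)\,\leq\,\int_0^\infty x\,\pigl|f_\bI^*(x,t)-f_\bI^*(x,s)\pigr|\,dx.
\end{equation*}
Writing the difference as the time integral of the FP right-hand side, using $\int_{V_\bI^*(x,u)}^{(V_\bJ^*)^{-1}(x,u)}f_\bJ^*(y,u)\,dy\leq\int_0^\infty f_\bJ^*(y,u)\,dy\leq 1$ (since $f_\bJ^*\in\mathcal{A}_\bJ$) and the first moment bound $\sup_u\int_0^\infty x f_\bI^*(x,u)\,dx\leq 2^{1+\nu}C_\bI/\nu$ from the definition of $\mathcal{A}_\bI$, I obtain
\begin{equation*}
\int_0^\infty x\,\pigl|f_\bI^*(x,t)-f_\bI^*(x,s)\pigr|\,dx\,\leq\,\lambda_\bJ\int_s^t\int_0^\infty x f_\bI^*(x,u)\,dx\,du\,\leq\,\frac{\lambda_\bJ\,2^{1+\nu}C_\bI}{\nu}\,(t-s)\,=\,c_\bI(t-s),
\end{equation*}
which establishes \textbf{(iii)} and hence \textbf{(ii)}. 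Combining \textbf{(i)}--\textbf{(iii)} gives $\mu_\bI^*\in\mathcal{D}_\bI$ for $\bI\in\{\bA,\bB\}$, so $\Phi$ maps $\mathcal{D}_\bA\times\mathcal{D}_\bB$ to itself.

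No substantial obstacle is expected: the argument rides entirely on the sign $\partial_t f_\bI^*\leq 0$, the pointwise control from \Cref{condition_initial}, and the moment bound built into $\mathcal{A}_\bI$, which together are engineered to deliver exactly the constant $c_\bI$ in \eqref{DA}. The only mild subtlety is handling the Dirac atom; but because KR test functions can be normalized to vanish at the origin, the Dirac contribution drops out of $\mu_\bI^*(\bigcdot,t)-\mu_\bI^*(\bigcdot,s)$ and the whole estimate reduces cleanly to one on the absolutely continuous component $f_\bI^*$.
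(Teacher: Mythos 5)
Your proof is correct and follows essentially the same line as the paper's: both rely on the sign structure $\partial_t f_\bI^* \le 0$ to propagate the pointwise bound from Assumption~\ref{condition_initial}, and both derive the time-Lipschitz Wasserstein estimate by combining the integral form of the FP equation with the Kantorovich--Rubinstein duality over $\varphi \in \text{Lip}_1(\mathbb{R}_{\ge 0})$ vanishing at the origin (which removes the Dirac atom) and the first-moment bound $\int_0^\infty x f_\bI^*(x,u)\,dx \le 2^{1+\nu}C_\bI/\nu$. The only cosmetic difference is that you insert the intermediate bound $W_1 \le \int_0^\infty x|f_\bI^*(x,t)-f_\bI^*(x,s)|\,dx$ while the paper substitutes the FP integrand directly before estimating; the constant $c_\bI$ appears identically in both.
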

			\begin{proof}
				For $\bI\in\{\bA ,\bB \}$ and any $\mu_\bI\in \mathcal{D}_\bI$, the measure $\mu_{\bI}$ admits the unique decomposition $\mu_{\bI} = p_{\bI} \delta_{\textup{D}} + g_{\bI} d\mathcal{L}^1$ where $g_\bI \in L^\infty\big([0,T];L^1(\mathbb{R}_{> 0};\mathbb{R}_{\geq 0})\big)$ and $p_{\bI} \in L^\infty\big([0,T];[0,1]\big)$. By Propositions~\ref{existence_NHJBxy} and~\ref{existence_NFPxy}, we obtain the corresponding unique solution $(V_\bA^*, V_\bB^*,f_\bA^*,f_\bB^*) \in \mathcal{H}_\bA \times \mathcal{H}_\bB\times \mathcal{A}_{\bA} \times \mathcal{A}_{\bB}$ to equations \eqref{NHJBx}-\eqref{Nboundary}. By the definition of $\Phi(\mu_\bA ,\mu_\bB )=(\mu_\bA^*,\mu_\bB^*)$ in Step 1 in Appendix \ref{sec. Main result and Sketch of Proof}, we have $\mu_\bI^*= p_{\bI}^* \delta_{\textup{D}} + f_{\bI}^* d\mathcal{L}^1$, where $p_{\bI}^*(t) =1-\int^\infty_0 f_{\bI}^* (z,t)dz$. We only prove $\mu_\bA^*  \in \mathcal{D}_{\bA}$; the argument for $\mu_\bB^* \in \mathcal{D}_{\bB}$ is analogous. Recall the definitions of $\mathcal{D}_{\bA}$ and $\mathcal{K}_{\bA}$ in \eqref{DA} and \eqref{KA} respectively, it suffices to verify: (1). $\sup_{x > 0,\; t \in [0,T]}
				\big(1 + x^{2+\nu}\big) f_\bA^*(x,t) \le C_\bA$, and (2). $\frac{W_1\big(\mu_\bA^*(\bigcdot,t),\, \mu_\bA^*(\bigcdot,s)\big)}{|t - s|} \;\le\; c_\bA$. 
				For (1), the equation of $f_\bA^* (x,t)$ in \eqref{NFPx} and \Cref{condition_initial} imply $
				(1+x^{2+\nu})f_\bA^* (x,t) \le (1+x^{2+\nu})f_{\bA ,0}(x)\le C_\bA$ in $\mathbb{R}_{\geq 0}\times[0, T]$. For (2), by integrating \eqref{NFPx} with respect to $t$, we see that for any $x\geq 0$ and $0 \leq s \leq t\leq T$,
				\begingroup\begin{equation}
					f_\bA^* (x,t) 
					=f_\bA^* (x,s)+\int_s^t\left(-\lambda_\bB  f_\bA^* (x,u) \int_{V_\bA^* (x,u)}^{(V_\bB^*) ^{-1}(x,u)} 
					f_\bB^* (y,u)d y\right)\land 0\,du.
					\label{eq. integral rep of f_A}
				\end{equation}\endgroup
				Therefore, for any $\varphi\in\text{Lip}_1(\mathbb{R}_{\ge 0})$ such that $\varphi(0)=0$, Lemma \ref{lemma.closureofK} further implies
				\begingroup\begin{align*}
					\int_0^{\infty} \varphi(x)\pig[f_\bA^* (x,t)-f_\bA^* (x,s)\pig] dx
					\le \lambda_\bB  \int_s^t \int_0^{\infty}  x  f_\bA^* (x,u)dxdu
					\le \frac{2^{1+\nu}\lambda_\bB C_\bA }{\nu} |t-s|.
				\end{align*}\endgroup
				Applying the Kantorovich-Rubinstein duality, we conclude the lemma by
				\begingroup
					\begin{align*}
						W_1(\mu_\bA^* (\bigcdot,t),\mu_\bA^* (\bigcdot,s))
						=\!\!\sup_{\varphi\in\{\text{Lip}_1(\mathbb{R}_{\ge 0})\,:\,\varphi(0)=0\}} \!\!
						\left\{\! \int_0^{\infty}  \varphi(x)f_\bA^* (x,t)dx-\!\int_0^{\infty}  \varphi(x)f_\bA^* (x,s)dx\!\right\} 
						\leq \frac{2^{1+\nu}\lambda_\bB C_\bA }{\nu} | t-s |.  \!\!
				\end{align*}%
				\endgroup
			\end{proof} 
			
			In Step 3B in Appendix \ref{sec. Main result and Sketch of Proof}, we show that $\Phi$ is continuous on $\mathcal{D}_\bA \times \mathcal{D}_\bB$ with respect to $D_1 \otimes D_1$, where
			\begingroup\begin{equation}
				\label{def.metric.D1}
				D_1(\mu,\nu):=\sup\limits_{t\in[0,T]}W_1(\mu(\bigcdot,t),\nu(\bigcdot,t)).
			\end{equation}\endgroup

			\begin{lemma}\label{theorem.continuous}
				Suppose that Assumptions \ref{condition_initial}-\ref{condition_terminal} hold. Then $\Phi$ is a continuous map on $\mathcal{D}_\bA \times \mathcal{D}_\bB$ with respect to the topology induced by the metric $D_1 \otimes D_1$.
			\end{lemma}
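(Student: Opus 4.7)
The plan is to use a sequential characterization of continuity. Let $(\mu_\bA^n, \mu_\bB^n) \in \mathcal{D}_\bA \times \mathcal{D}_\bB$ converge to $(\mu_\bA, \mu_\bB)$ in $D_1 \otimes D_1$. Denote by $(V_\bA^n, V_\bB^n, f_\bA^n, f_\bB^n)$ and $(V_\bA, V_\bB, f_\bA, f_\bB)$ the solutions to the decoupled system \eqref{NHJBx}--\eqref{Nboundary} produced by Propositions~\ref{existence_NHJBxy}--\ref{existence_NFPxy}, and write $\Phi(\mu_\bA^n, \mu_\bB^n) = (\mu_\bA^{*,n}, \mu_\bB^{*,n})$ and $\Phi(\mu_\bA, \mu_\bB) = (\mu_\bA^*, \mu_\bB^*)$ as in Step~1 of \Cref{sec. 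Main result and Sketch of Proof}. Since both output measures are probability measures whose possible atoms are at $0$, restricting the Kantorovich--Rubinstein supremum to $1$-Lipschitz test functions vanishing at $0$ (so $|\varphi(x)|\le x$) gives $D_1(\mu_\bI^{*,n}, \mu_\bI^*) \le \|f_\bI^n - f_\bI\|_\mathcal{A}$. Hence it suffices to prove $\|V_\bI^n - V_\bI\|_\mathcal{H} \to 0$ and $\|f_\bI^n - f_\bI\|_\mathcal{A} \to 0$ for $\bI \in \{\bA, \bB\}$.

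The core step is the convergence of the value functions, where the inputs enter the HJB equations only through the densities $g_\bA^n, g_\bB^n$ of their absolutely continuous parts. Starting from the integral representation \eqref{eq. integral rep. of V_A} for both $V_\bA^n$ and $V_\bA$, I would split the integrand difference as
\begin{equation*}
\bigl[S(\bigcdot, u, V_\bA^n, V_\bB^n, g_\bB^n) - S(\bigcdot, u, V_\bA, V_\bB, g_\bB^n)\bigr] + \bigl[S(\bigcdot, u, V_\bA, V_\bB, g_\bB^n) - S(\bigcdot, u, V_\bA, V_\bB, g_\bB)\bigr].
\end{equation*}
The first bracket is a perturbation of the integration endpoints and of the offset $V_\bA$; it is controlled exactly as in Part~1 of the proof of \Cref{existence_NHJBxy}, together with Lemma~\ref{lem Lip inv of VA and VB}, producing a bound of the form $C(\|V_\bA^n - V_\bA\|_{0,1/w} + \|V_\bB^n - V_\bB\|_{0,1/w})(1+x)$. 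The second bracket is the pairing of $\mu_\bB^n - \mu_\bB$ with the test function $\varphi_{x,u}(y) := (y - V_\bA(x,u))\mathds{1}_{[V_\bA(x,u), V_\bB^{-1}(x,u)]}(y)$, which satisfies $\varphi_{x,u}(0) = 0$ but has a jump of size $\kappa := V_\bB^{-1}(x,u) - V_\bA(x,u)$ at its upper endpoint. A linear cut-off $\varphi_{x,u,\epsilon}$ that decays from $\kappa$ to $0$ on $[V_\bB^{-1}(x,u), V_\bB^{-1}(x,u)+\epsilon]$ is $(\kappa/\epsilon)$-Lipschitz; combined with the density-type bound $\mu_\bB^n((V_\bB^{-1}, V_\bB^{-1}+\epsilon)) \le C_\bB \epsilon$ from Lemma~\ref{lemma.closureofK}\ref{lemma.closureofK.integrability}, Kantorovich--Rubinstein duality yields
\begin{equation*}
\left|\int_{[0,\infty)} \varphi_{x,u}\, d(\mu_\bB^n - \mu_\bB)\right| \le \frac{\kappa}{\epsilon}\,W_1\bigl(\mu_\bB^n(\bigcdot, u), \mu_\bB(\bigcdot, u)\bigr) + 2\kappa C_\bB\, \epsilon.
\end{equation*}
Optimizing with $\epsilon = \bigl[D_1(\mu_\bB^n, \mu_\bB)\bigr]^{1/2}$, and using $\kappa \le C(1+x)$ from the bi-Lipschitz property, gives a uniform bound of order $(1+x)\sqrt{D_1(\mu_\bB^n, \mu_\bB)}$. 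Substituting into \eqref{eq. integral rep. of V_A} and running exactly the Gronwall-type estimate on short subintervals used to show the contractivity of $\Psi_1$ in \Cref{existence_NHJBxy}, then pasting backward across $[T-i\delta_1, T-(i-1)\delta_1]$, yields $\|V_\bI^n - V_\bI\|_\mathcal{H} \to 0$.

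Once the $\mathcal{H}$-convergence of the value functions is secured, the FP system \eqref{NFPx}--\eqref{NFPy} depends only on $(V_\bA^n, V_\bB^n)$ and not on $(g_\bA^n, g_\bB^n)$. I would then mirror the argument of \Cref{existence_NFPxy}: starting from the exponential integral form of $f_\bI^n$ and $f_\bI$ obtained in Lemma~\ref{selfmap_Upsilon}, take differences, and absorb the extra error term coming from $V^n \to V$ in $\mathcal{H}$ (using Lemma~\ref{lem Lip inv of VA and VB} to handle the inverses) via a Gronwall inequality on the subintervals $[(i-1)\delta_2, i\delta_2]$ pasted forward in time. This delivers $\|f_\bI^n - f_\bI\|_\mathcal{A} \to 0$ and therefore $D_1(\mu_\bI^{*,n}, \mu_\bI^*) \to 0$. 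The hard part is precisely the smoothing-out of the jump of $\varphi_{x,u}$ described above: the pairing of a measure with a discontinuous test function cannot be dominated directly by $W_1$, and the cost of the regularisation is a rate of order $\sqrt{D_1}$ rather than linear; this is harmless for continuity but is the reason why a uniform modulus of $\Phi$ cannot be read off without further work.
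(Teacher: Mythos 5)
Your proposal is correct and mirrors the paper's own argument: the crucial step—pairing $\mu_\bB^n - \mu_\bB$ against a test function that is linear on the matching window and jumps at $V_\bB^{-1}(x,u)$, smoothing the jump by a width-$\epsilon$ linear ramp, bounding the remainder by the density bound from Lemma~\ref{lemma.closureofK}\ref{lemma.closureofK.integrability}, and optimizing $\epsilon \sim \sqrt{D_1}$—is exactly the estimate the paper carries out for the term $I_{1,2}$ with the test function $\phi_\delta$ in Part~1 of its proof. Your two-bracket split of the integrand (perturbing $(V_\bA, V_\bB)$ first, then the measure) is a slightly more abstracted version of the paper's decomposition of $I_1$ into $I_{1,1},\dots,I_{1,4}$, and the subsequent Gr\"onwall arguments for the HJB and FP systems are the same.
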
 
			\begin{proof}
				For $\bI\in\{\bA ,\bB \}$, we let $\mu_\bI^1,\mu_\bI^2\in \mathcal{D}_\bI$ such that $W_1(\mu^1_\bI (\bigcdot,u),\mu^2_\bI (\bigcdot,u))>0$ for $u\in[0,T]$. For $j=1,2$, each measure $\mu_{\bI}^j$ admits the unique decomposition $ p_{\bI}^j \delta_{\textup{D}} + g_{\bI}^j d\mathcal{L}^1$ with $g_\bI^j \in L^\infty\big([0,T];L^1(\mathbb{R}_{> 0};\mathbb{R}_{\geq 0})\big)$ and $p_{\bI}^j \in L^\infty\big([0,T];[0,1]\big)$. By Propositions \ref{existence_NHJBxy} and \ref{existence_NFPxy}, we let  $(V^j_\bA ,V_\bB^j ,f^j_\bA ,f^j_\bB )$ denote the corresponding unique solutions to \eqref{NHJBx}-\eqref{NFPy} in $\mathbb{R}_{>0}\times[0, T]$, with $(g_\bA,g_\bB)=(g_\bA^j,g_\bB^j)$ and initial-terminal conditions in \eqref{Nboundary}.

				\noindent{\bf Part 1. Estimate of $\| V^1_\bA (\bigcdot,t)-V^2_\bA (\bigcdot,t) \|_{0,1/w}  + \| V^1_\bB (\bigcdot,t)-V^2_\bB (\bigcdot,t) \|_{0,1/w}$:} Fix $x> 0$ and $u\in [0,T]$, we assume $V^1_\bA (x,u) \ge V^2_\bA (x,u)$. When $V^1_\bA (x,u) < V^2_\bA (x,u)$, we obtain the same estimate by analogous arguments. Recalling \eqref{SA}, we consider $
				I_0:=S (x,u,V^1_\bA ,V_\bB^1 ,g^1_\bB )\mathbf{1}_{\{V^1_\bA (x,u) \leq (V_\bB^1 )^{-1}(x,u)\}}-S (x,u,V^2_\bA ,V_\bB^2 ,g^2_\bB )\mathbf{1}_{\{V^2_\bA (x,u) \leq (V^2_\bB )^{-1}(x,u)\}}.$ We decompose $
				I_0 
				= I_1 + I_2 - I_3,$ where
				\begingroup\[
				\begin{aligned}
					I_1 &:= 
					\left[S (x,u,V^1_\bA ,V_\bB^1 ,g^1_\bB )-S (x,u,V^2_\bA ,V^2_\bB ,g^2_\bB )\right]
					\mathbf{1}_{\{V^1_\bA (x,u) \leq (V_\bB^1 )^{-1}(x,u) \text{ and }V^2_\bA (x,u) \leq (V^2_\bB )^{-1}(x,u)\}},\\
					I_2 &:= 
					S (x,u,V^1_\bA ,V_\bB^1 ,g^1_\bB )\mathbf{1}_{\{V^1_\bA (x,u) \leq (V_\bB^1 )^{-1}(x,u)\text{ and }V^2_\bA (x,u) > (V^2_\bB )^{-1}(x,u)\}},\\
					I_3 &:= 
					S (x,u,V^2_\bA ,V^2_\bB ,g^2_\bB )\mathbf{1}_{\{V^1_\bA (x,u) > (V_\bB^1 )^{-1}(x,u)\text{ and }V^2_\bA (x,u) \leq (V^2_\bB )^{-1}(x,u)\}}.
				\end{aligned}
				\]\endgroup

				\noindent{\bf Case 1A. $V^1_\bA (x,u) \leq (V_\bB^1 )^{-1}(x,u) \text{ and }V^2_\bA (x,u) \leq (V^2_\bB )^{-1}(x,u)$:} In this case, we have
				\begingroup\begin{align*}
					I_1
					=   & \underbrace{\int_{V^1_\bA (x,u)}^{(V_\bB^1 )^{-1}(x,u)}\big[V^2_\bA (x,u)-V^1_\bA (x,u)\big] g^2_\bB (y,u) d y}_{I_{1,1}} 
					+\underbrace{\int_{V^1_\bA (x,u)}^{(V_\bB^1 )^{-1}(x,u)}\big[y-V^1_\bA (x,u)\big]\big[g^1_\bB (y,u)-g^2_\bB (y,u)\big]dy}_{I_{1,2}} \\[-1.5mm]
					& +\underbrace{\int_{(V^2_\bB )^{-1}(x,u)}^{(V_\bB^1 )^{-1}(x,u)}\big[y-V^2_\bA (x,u)\big] g^2_\bB (y,u) d y}_{I_{1,3}} 
					-\underbrace{\int_{V^2_\bA (x,u)}^{V^1_\bA (x,u)}\big[y-V^2_\bA (x,u)\big] g^2_\bB (y,u) d y}_{I_{1,4}}.                                            
				\end{align*}\normalsize\endgroup
				Using the definition of $g^j_\bB$, the terms $I_{1,1}$, $I_{1,3}$ and $I_{1,4}$ can be easily estimated as follows:
				\begingroup\begin{align*}
					| I_{1,1} | \le & | V^2_\bA (x,u)-V^1_\bA (x,u) |,      \,\,                                                                                                                                              
					| I_{1,3} | \le  \left| \int_{(V^2_\bB )^{-1}(x,u)}^{(V_\bB^1 )^{-1}(x,u)} y  g^2_\bB (y,u) d y \right| \le C_\bB | (V^2_\bB )^{-1}(x,u)-(V_\bB^1 )^{-1}(x,u) |, \\
					| I_{1,4} | \le & \int_{V^2_\bA (x,u)}^{V^1_\bA (x,u)}\left|y-V^2_\bA (x,u)\right| g^2_\bB (y,u) dy          
					\le | V^2_\bA (x,u)-V^1_\bA (x,u) |.
				\end{align*}\endgroup
				For $I_{1,2}$, we assume $ W_1(\mu^1_\bB (\bigcdot,u),\mu^2_\bB (\bigcdot,u))>0$, otherwise $I_{1,2}=0$. Note that $x$ and $u$ are fixed, we consider the following test function
				\begingroup\begin{align*}
					\phi_\delta(y)
					=\,& \big(y - V^1_\bA(x,u)\big)\,\mathbf{1}_{[\,V^1_\bA(x,u),\, (V_\bB^1)^{-1}(x,u)\,]}(y)\\
					&+ \frac{(V_\bB^1)^{-1}(x,u)+\delta - y}{\delta}\,\big((V_\bB^1)^{-1}(x,u)- V^1_\bA(x,u)\big)\,
					\mathbf{1}_{(\,(V_\bB^1)^{-1}(x,u),\, (V_\bB^1)^{-1}(x,u)+\delta\,]}(y),
				\end{align*} \endgroup
				for some $\delta>0$ to be determined later. 
				Then, we can rewrite $I_{1,2}$ in the following way
				\begingroup\begin{align*}
					I_{1,2} =\,&\, \int_0^{\infty} \phi_{\delta}(y) \pig[g^1_\bB (y,u)-g^2_\bB (y,u)\pig]dy\\
					&-\int_{(V_\bB^1 )^{-1}(x,u)}^{(V_\bB^1 )^{-1}(x,u)+\delta} \frac{(V_\bB^1 )^{-1}(x,u)+\delta-y}{\delta}
					\pig[(V_\bB^1 )^{-1}(x,u)-V^1_\bA (x,u)\pig] \pig[g^1_\bB (y,u)-g^2_\bB (y,u)\pig]dy.
				\end{align*}\endgroup
				Note that, for fixed $x\ge 0$ and $u\in [0,T]$, $\phi_{\delta}$ is a bounded Lipschitz function such that $\phi_{\delta}(0)=0$. Then, by the Kantorovich-Rubinstein duality \cite[Proposition 2.6.6]{figalli2021invitation}, together with $V^2_\bA (x,u) \le V^1_\bA (x,u) \leq (V_\bB^1 )^{-1}(x,u)$, we obtain $\pm \int_0^{\infty} \phi_{\delta}(y) \pig[g^1_\bB (y,u)-g^2_\bB (y,u)\pig]dy   
				\le \left[\frac{(V_\bB^1 )^{-1}(x,u)-V^1_\bA (x,u)}{\delta} \vee 1\right] W_1(\mu^1_\bB (\bigcdot,u),\mu^2_\bB (\bigcdot,u)).$ From the definition of $\mathcal{D}_\bB$ in \eqref{DA} and Lemma \ref{lemma.closureofK}, we have
				\begingroup\begin{align*}
					&\left\vert \int_{(V_\bB^1 )^{-1}(x,u)}^{(V_\bB^1 )^{-1}(x,u)+\delta} \frac{(V_\bB^1 )^{-1}(x,u)+\delta-y}{\delta}
					\pig[(V_\bB^1 )^{-1}(x,u)-V^1_\bA (x,u)\pig] \pig[g^1_\bB (y,u)-g^2_\bB (y,u)\pig]dy \right\vert \\
					&\le 2\pig[(V_\bB^1 )^{-1}(x,u)-V^1_\bA (x,u)\pig] C_\bB \delta.
				\end{align*}\endgroup
				Hence, it leads to the following estimate: $\vert I_{1,2} \vert 
				\le \left[\frac{(V_\bB^1 )^{-1}(x,u)}{\delta} \vee 1\right] 
				W_1(\mu^1_\bB (\bigcdot,u),\mu^2_\bB (\bigcdot,u))+2(V_\bB^1 )^{-1}(x,u) C_\bB \delta.$ Choosing $\delta = (V_\bB^1 )^{-1}(x,u) \land \pig[W_1(\mu^1_\bB (\bigcdot,u),\mu^2_\bB (\bigcdot,u))\pigr]^{\frac12}>0$ yields
				\begingroup
				\begin{align*}
					\vert I_{1,2} \vert 
					\le\,& \left\{(V_\bB^1 )^{-1}(x,u)
					\pig[W_1(\mu^1_\bB (\bigcdot,u),\mu^2_\bB (\bigcdot,u))\pigr]^{\frac12}\right\} \vee W_1(\mu^1_\bB (\bigcdot,u),\mu^2_\bB (\bigcdot,u))\nonumber\\
					&+2C_\bB (V_\bB^1 )^{-1}(x,u) 
					\pig[W_1(\mu^1_\bB (\bigcdot,u),\mu^2_\bB (\bigcdot,u))\pigr]^{\frac12}.
				\end{align*}\endgroup
				Therefore, it holds that
				\begingroup\begin{align*}
					| I_0 | 
					\le       \,& C_\bB | (V^2_\bB )^{-1}(x,u)-(V_\bB^1 )^{-1}(x,u) |+2| V^2_\bA (x,u)-V^1_\bA (x,u)| \\
					& +\left\{(V_\bB^1 )^{-1}(x,u)
					\pig[W_1(\mu^1_\bB (\bigcdot,u),\mu^2_\bB (\bigcdot,u))\pigr]^{\frac12}\right\} \vee W_1(\mu^1_\bB (\bigcdot,u),\mu^2_\bB (\bigcdot,u))\nonumber\\
					&+2C_\bB (V_\bB^1 )^{-1}(x,u) 
					\pig[W_1(\mu^1_\bB (\bigcdot,u),\mu^2_\bB (\bigcdot,u))\pigr]^{\frac12}.
				\end{align*}\endgroup
				
				\noindent{\bf Case 1B. $V^1_\bA (x,u) \leq (V_\bB^1 )^{-1}(x,u)\text{ and }V^2_\bA (x,u) > (V^2_\bB )^{-1}(x,u)$:} By the assumption that $V^1_\bA (x,u) \ge V^2_\bA (x,u)$, we have
				\begingroup$$
					0\leq I_2 = \int_{V^1_\bA (x,u)}^{(V_\bB^1 )^{-1}(x,u)}\big[y-V_\bA^1(x,u)\big] g^1_\bB (y,u) d y                                                     
					\leq\, (V_\bB^1 )^{-1}(x,u)-V_\bA ^1(x,u)
					\leq\,| (V^2_\bB )^{-1}(x,u)-(V_\bB^1 )^{-1}(x,u) |. 
					$$
				\endgroup
				Hence $|I_0| = |I_2| \leq | (V^2_\bB )^{-1}(x,u)-(V_\bB^1 )^{-1}(x,u) |$.
				
				\noindent{\bf Case 1C. $V^1_\bA (x,u) > (V_\bB^1 )^{-1}(x,u)\text{ and }V^2_\bA (x,u) \leq (V^2_\bB )^{-1}(x,u)$:} In this case, we decompose $0\leq I_3= I_{3,1}+I_{3,2}+I_{3,3}$ where
				\begingroup	\begin{align*} 
					I_{3,1}= &\,  \mathbf{1}_{\{V^2_\bA (x,u) > (V_\bB^1 )^{-1}(x,u)\}}
					\int_{V^2_\bA (x,u)}^{(V^2_\bB )^{-1}(x,u)}
					\big[y-V_\bA ^2(x,u)\big] g^2_\bB (y,u) d y ;   \\
					I_{3,2}=&\, \mathbf{1}_{\{V^2_\bA (x,u) \leq (V_\bB^1 )^{-1}(x,u)\text{ and }(V^2_\bB )^{-1}(x,u) \leq V^1_\bA (x,u)\}}
					\int_{V^2_\bA (x,u)}^{(V^2_\bB )^{-1}(x,u)}\big[y-V_\bA ^2(x,u)\big] g^2_\bB (y,u) d y;  \\
					I_{3,3}=&\,  \mathbf{1}_{\{V^2_\bA (x,u) \leq (V_\bB^1 )^{-1}(x,u)\text{ and }(V^2_\bB )^{-1}(x,u) > V^1_\bA (x,u)\}}
					\int_{V^2_\bA (x,u)}^{(V^2_\bB )^{-1}(x,u)}\big[y-V_\bA ^2(x,u)\big] g^2_\bB (y,u) d y.   
				\end{align*}\endgroup
				The definition of $g^j_\bB$ yields 	$0 \leq I_{3,1} \leq\, | (V^2_\bB )^{-1}(x,u)-(V_\bB^1 )^{-1}(x,u) |$ and
				\begingroup\begin{align*}
					0 \leq I_{3,2} \leq\, &  \mathbf{1}_{\{V^2_\bA (x,u) \leq (V_\bB^1 )^{-1}(x,u)\text{ and }(V^2_\bB )^{-1}(x,u) \leq V^1_\bA (x,u)\}}
					\int_{V^2_\bA (x,u)}^{(V^2_\bB )^{-1}(x,u)}
					\big[V^1_\bA (x,u)-V_\bA ^2(x,u)\big] g^2_\bB (y,u) d y \\
					\leq\,& | V^2_\bA (x,u)-V_\bA ^1(x,u) |.                 
				\end{align*}\endgroup
				Next, we decompose\begingroup\begin{align*}
					0 \leq I_{3,3} = &\, \Bigg[\int_{(V_\bB^1 )^{-1}(x,u)}^{(V^2_\bB )^{-1}(x,u)}
					\big[y-V_\bA ^2(x,u)\big] g^2_\bB (y,u) d y 
					- \int_{(V_\bB^1 )^{-1}(x,u)}^{V^1_\bA (x,u)}\big[y-V_\bA ^2(x,u)\big] g^2_\bB (y,u) d y \\
					&\,\, + \int_{V^2_\bA (x,u)}^{V^1_\bA (x,u)}\big[y-V_\bA ^2(x,u)\big] g^2_\bB (y,u) d y\Bigg]  \mathbf{1}_{\{ V^2_\bA (x,u) \leq (V_\bB^1 )^{-1}(x,u) < V^1_\bA (x,u) < (V^2_\bB )^{-1}(x,u)\}}.
				\end{align*}  \endgroup
				Lemma \ref{lemma.closureofK} and assumption of this case  imply
				\begingroup\begin{align*}
					0 \leq I_{3,3} 
					\leq             & \Bigg[\int_{(V_\bB^1 )^{-1}(x,u)}^{(V^2_\bB )^{-1}(x,u)}y g^2_\bB (y,u) d y
					+ \int_{V^2_\bA (x,u)}^{V^1_\bA (x,u)}
					\big[V^1_\bA (x,u)-V_\bA ^2(x,u)\big] g^2_\bB (y,u) d y\Bigg]  \\
					& \cdot \mathbf{1}_{\{ V^2_\bA (x,u) \leq (V_\bB^1 )^{-1}(x,u) < V^1_\bA (x,u) < (V^2_\bB )^{-1}(x,u)\}}                                                                                                              \\
					\leq             & C_\bB | (V^2_\bB )^{-1}(x,u)-(V_\bB^1 )^{-1}(x,u) |+| V^2_\bA (x,u)-V^1_\bA (x,u)|.                      
				\end{align*}  \endgroup
				Hence, we have $
				|I_0|= |I_3|
				\leq   (1+C_\bB )| (V^2_\bB )^{-1}(x,u)-(V_\bB^1 )^{-1}(x,u) |+2| V^2_\bA (x,u)-V^1_\bA (x,u)|. $ 
				
				From the integral representation (see, e.g., \eqref{eq. integral rep. of V_A}) of solution $(V_\bA^1, V_\bB^1)$ satisfying \eqref{NHJBx}-\eqref{NHJBy}, we know that $V_\bI^1 (0,t)\ge 0$ for $\bI \in \{\bA,\bB\}$. Since $V_\bI^1 \in\mathcal{H}_{\bI}$, it holds that $V_\bI^1(y,t)\ge V_\bI^1 (0,t)+k_\bI y\geq k_\bI y$ for any $y \geq 0$ and $t\in [0,T]$. Consequently, the inverse satisfies $(V_\bI^1 )^{-1}(x,u)\le \frac{1}{k_\bI}x$ so that $\frac{(V_\bI^1 )^{-1}(x,u)}{1+x}\le \frac{1}{k_\bI}$. Combining this bound with the estimates from Cases 1A–1C, and using arguments similar to those in Lemma~\ref{lem Lip inv of VA and VB}, we conclude that
				\begingroup$$
				\frac{| I_0 |}{1+x} 
				\le   2 \| V^2_\bA (\bigcdot,u)-V^1_\bA (\bigcdot,u) \|_{0,1/w} + \frac{(1+C_\bB )(1+k_\bB )}{(k_\bB )^2}  \| V_\bB^1 (\bigcdot,u)-V^2_\bB (\bigcdot,u) \|_{0,1/w}+F_\bB(\mu^1_\bB (\bigcdot,u),\mu^2_\bB (\bigcdot,u))                         
				$$\endgroup
				where $
				F_\bB(\mu^1_\bB (\bigcdot,u),\mu^2_\bB (\bigcdot,u))
				:= \left\{ \frac{1}{k_\bB} \left[ W_1\left(\mu^1_\bB (\bigcdot,u),\mu^2_\bB (\bigcdot,u) \right) \right]^{\frac{1}{2}} \right\}
				\vee\; W_1\left(\mu^1_\bB (\bigcdot,u),\mu^2_\bB (\bigcdot,u) \right) 
				+ \frac{2C_\bB}{k_\bB}  \left[ W_1\left(\mu^1_\bB (\bigcdot,u),\mu^2_\bB (\bigcdot,u) \right) \right]^{\frac{1}{2}}.$ Utilizing the integral representations of $V_\bA^1$ and $V_\bA^2$ in \eqref{eq. integral rep. of V_A}, we estimate their difference as follows:
				\begingroup\begin{align}\label{estimate_VAdiff}
						\| V^1_\bA (\bigcdot,t)-V^2_\bA (\bigcdot,t) \|_{0,1/w} 
						\le\,                                                          & \lambda_\bB \!\!\int_t^{T}\!\! e^{-\rho (u-t)} \!  \Big[\! M_\bB  \sum_{\bI=\bA,\bB}\!\!\| V^2_\bI (\bigcdot,u)-V^1_\bI (\bigcdot,u) \|_{0,1/w}
						+F_\bB(\mu^1_\bB (\bigcdot,u),\mu^2_\bB (\bigcdot,u))\! \Big] du                     
				\end{align}%
				\endgroup
				where $M_\bB  := 2 \vee \frac{(1+C_\bB )(1+k_\bB )}{(k_\bB )^2}$. A similar estimate holds for $ \| V^1_\bB (\bigcdot,t)-V^2_\bB (\bigcdot,t) \|_{0,1/w}$ with $M_\bA$ and $	F_\bA(\mu^1_\bA(\bigcdot,u),\mu^2_\bA(\bigcdot,u))$ defined analogously. Summing \eqref{estimate_VAdiff} for $V^1_\bA$ and $V^1_\bB$, and then applying Gr\"{o}nwall's inequality, we have
				\begingroup\begin{align}\label{estimate_VAVB}
					\nonumber     & \| V^1_\bA (\bigcdot,t)-V^2_\bA (\bigcdot,t) \|_{0,1/w}  + \| V^1_\bB (\bigcdot,t)-V^2_\bB (\bigcdot,t) \|_{0,1/w} \\
					\nonumber&\le   e^{\int_t^T \left(\lambda_\bA M_\bA +\lambda_\bB M_\bB \right)  du}\int_t^{T} e^{-\rho (u-t)} \Big[\lambda_\bA F_\bA(\mu^1_\bA(\bigcdot,u),\mu^2_\bA(\bigcdot,u)) + \lambda_\bB F_\bB(\mu^1_\bB (\bigcdot,u),\mu^2_\bB (\bigcdot,u)) \Big] du \\
					&\leq  M_0  H(\mu^1_\bA ,\mu^2_\bA, \mu^1_\bB ,\mu^2_\bB),
				\end{align}\endgroup
				recalling \eqref{def.metric.D1}, we write $M_0 := e^{\left(\lambda_\bA M_\bA +\lambda_\bB M_\bB \right)T} \cdot \frac{1-e^{-\rho T}}{\rho} (\lambda_\bA  \vee \lambda_\bB )$ and $H(\mu^1_\bA ,\mu^2_\bA, \mu^1_\bB ,\mu^2_\bB)
				:=\; \sum_{\bI=\bA,\bB} 
				\left\{ \frac{1}{k_\bI} \left[ D_1(\mu^1_\bI ,\mu^2_\bI ) \right]^{1/2} \right\}
				\vee D_1(\mu^1_\bI ,\mu^2_\bI ) 
				+ \frac{2C_\bI}{k_\bI}  \left[ D_1(\mu^1_\bI ,\mu^2_\bI ) \right]^{1/2}.$

				\noindent{\bf Part 2. Estimate of $D_1(f^1_\bA ,f^2_\bA ) + D_1(f^1_\bB ,f^2_\bB )$:} \sloppy Let $R (x,u,V_\bA ,V_\bB ,f_\bA ,f_\bB ):=  f_\bA (x,u) \int_{V_\bA (x,u)}^{V_\bB^{-1}(x,u)} f_\bB (y,u) d y$, we consider
				\begingroup\begin{align*}
					J_0:=&\, R (x,u,V^2_\bA ,V^2_\bB ,f^2_\bA ,f^2_\bB )\mathbf{1}_{\{V^2_\bA (x,u) \leq (V^2_\bB )^{-1}(x,u)\}} - R (x,u,V^1_\bA ,V^1_\bB ,f^1_\bA ,f^1_\bB )\mathbf{1}_{\{V^1_\bA (x,u) \leq (V^1_\bB )^{-1}(x,u)\}}                       \\
					= &\, \underbrace{\left[R (x,u,V^2_\bA ,V^2_\bB ,f^2_\bA ,f^2_\bB ) - R (x,u,V^1_\bA ,V^1_\bB ,f^1_\bA ,f^1_\bB )\right]\mathbf{1}_{\{V^2_\bA (x,u) \leq (V^2_\bB )^{-1}(x,u)\text{ and }V^1_\bA (x,u) \leq (V^1_\bB )^{-1}(x,u)\}}}_{J_1} \\[-1.5mm]
					& +\underbrace{ R (x,u,V^2_\bA ,V^2_\bB ,f^2_\bA ,f^2_\bB )\mathbf{1}_{\{V^2_\bA (x,u) \leq (V^2_\bB )^{-1}(x,u)\text{ and }V^1_\bA (x,u) > (V^1_\bB )^{-1}(x,u)\}}}_{J_2}                                                                                               \\[-1.5mm]
					& -\underbrace{R (x,u,V^1_\bA ,V^1_\bB ,f^1_\bA ,f^1_\bB )\mathbf{1}_{\{V^2_\bA (x,u) > (V^2_\bB )^{-1}(x,u)\text{ and }V^1_\bA (x,u) \leq (V^1_\bB )^{-1}(x,u)\}}}_{J_3}.                                                                                               
				\end{align*}\endgroup
				\noindent{\bf Case 2A. $V^2_\bA (x,u) \leq (V^2_\bB )^{-1}(x,u) \text{ and } V^1_\bA (x,u) \leq (V^1_\bB )^{-1}(x,u)$:}
				Similar to the arguments in Case 1A, we directly expand
				\begingroup\begin{align*}
					J_1  
					=      & \underbrace{[f^2_\bA (x,u)-f^1_\bA (x,u)]\int_{V^2_\bA (x,u)}^{(V^2_\bB )^{-1}(x,u)} f^2_\bB (y,u) d y}_{J_{1,1}}                                                                                                                      +\underbrace{f^1_\bA (x,u)\int_{V^2_\bA (x,u)}^{(V^2_\bB )^{-1}(x,u)} [f^2_\bB (y,u)-f^1_\bB (y,u)] d y}_{J_{1,2}}                                                                                                                    \\[-1mm]
					& +\underbrace{f^1_\bA (x,u)\int_{(V^1_\bB )^{-1}(x,u)}^{(V^2_\bB )^{-1}(x,u)} f^1_\bB (y,u) d y}_{J_{1,3}}+\underbrace{f^1_\bA (x,u)\int_{V^2_\bA (x,u)}^{V^1_\bA (x,u)} f^1_\bB (y,u) d y}_{J_{1,4}} . 
				\end{align*}\endgroup
				It is obvious that $
				|J_{1,1}| \le | f^2_\bA (x,u)-f^1_\bA (x,u) |$ and $|J_{1,2}| \le  f^1_\bA (x,u) \int_{0}^{\infty} (1+y)\left|f^2_\bB (y,u)-f^1_\bB (y,u)\right| d y \le  f^1_\bA (x,u)  \Vert f^2_\bB (\bigcdot,u)-f^1_\bB (\bigcdot,u)\Vert_{1,w}.$ For $J_{1,3}$, if $(V^1_\bB )^{-1}(x,u) \le (V^2_\bB )^{-1}(x,u)$, then Lemmas \ref{theorem.selfmap} and \ref{lemma.closureofK} yield
				\begingroup\begin{align}
					|J_{1,3}| \le  \frac{f^1_\bA (x,u)}{1+(V^1_\bB )^{-1}(x,u)}\int_{(V^1_\bB )^{-1}(x,u)}^{(V^2_\bB )^{-1}(x,u)} (1+y) f^1_\bB (y,u) d y
					\le  2 C_\bB  f^1_\bA (x,u) \frac{(V^2_\bB )^{-1}(x,u)-(V^1_\bB )^{-1}(x,u)}{1+(V^1_\bB )^{-1}(x,u)}.\nonumber
				\end{align}\endgroup
				Applying \eqref{inequality:Vinv} to the above inequality, we have
				\begingroup\begin{equation}\label{2784}
					|J_{1,3}| \le \frac{2 C_\bB}{k_\bB} f^1_\bA (x,u) 
					\| V^2_\bB (\bigcdot,u)-V^1_\bB (\bigcdot,u) \|_{0,1/w}.
				\end{equation}\endgroup
				If instead $(V^1_\bB )^{-1}(x,u) > (V^2_\bB )^{-1}(x,u)$, then a symmetric argument yields the same bound, by proceeding in a similar manner and interchanging the upper and lower limits of the integral. Similarly, for $J_{1,4}$, if $V^2_\bA (x,u) \le V^1_\bA (x,u)$, we use the fact $
				1+k_\bA x \ge \frac{k_\bA}{1+k_\bA} (1+x)$ for any $x \ge 0$ and Lemma \ref{lemma.closureofK} to deduce
				\begingroup\begin{align}
					|J_{1,4}| \le \frac{ f^1_\bA (x,u)}{1+V^2_\bA (x,u)}\int_{V^2_\bA (x,u)}^{V^1_\bA (x,u)} (1+y)f^1_\bB (y,u) d y
					&\le  2 C_\bB  f^1_\bA (x,u) \frac{V^1_\bA (x,u)-V^2_\bA (x,u)}{1+k_\bA x}\nonumber\\
					&\le  \frac{2 C_\bB (1+k_\bA)}{k_\bA}  f^1_\bA (x,u)
					\| V^2_\bA (\bigcdot,u)-V^1_\bA (\bigcdot,u) \|_{0,1/w}.\label{2801}
				\end{align}\endgroup Otherwise, we can also yield the same estimate by proceeding in a similar manner.

				\noindent{\bf Case 2B. $V^2_\bA (x,u) \leq (V^2_\bB )^{-1}(x,u)\text{ and }V^1_\bA (x,u) > (V^1_\bB )^{-1}(x,u)$:} 
				If $V^2_\bA (x,u) > V^1_\bA (x,u)$, then, by an estimate analogous to \eqref{2784}, we have:
				\begingroup\begin{align*}
					0\leq J_2 
					\leq \,    f^2_\bA (x,u) \int_{(V^1_\bB )^{-1}(x,u)}^{(V^2_\bB )^{-1}(x,u)} f^2_\bB (y,u) d y  
					\le\,  \frac{2 C_\bB}{k_\bB} f^2_\bA (x,u) \| V^2_\bB (\bigcdot,u)-V^1_\bB (\bigcdot,u) \|_{0,1/w}. \!
				\end{align*}\endgroup
				If $V^2_\bA (x,u) \leq V^1_\bA (x,u)$, then estimates similar to \eqref{2784} and \eqref{2801} give
				\begingroup\begin{align*}
					0\leq J_2 \leq\, & f^2_\bA (x,u) \int_{(V^1_\bB )^{-1}(x,u)}^{(V^2_\bB )^{-1}(x,u)} f^2_\bB (y,u) d y + f^2_\bA (x,u) \int_{V^2_\bA (x,u)}^{V^1_\bA (x,u)} f^2_\bB (y,u) d y  \\
					\leq \,        &   \frac{2 C_\bB}{k_\bB} f^2_\bA (x,u) \| V^2_\bB (\bigcdot,u)-V^1_\bB (\bigcdot,u) \|_{0,1/w}
					+\frac{2 C_\bB (1+k_\bA)}{k_\bA}  f^2_\bA (x,u)  \| V^2_\bA (\bigcdot,u)-V^1_\bA (\bigcdot,u) \|_{0,1/w}.
				\end{align*}\endgroup
				Hence, combining these two cases, the above inequality is also valid for $J_2$ in general.
				
				\noindent{\bf Case 2C. $V^2_\bA (x,u) > (V^2_\bB )^{-1}(x,u)\text{ and }V^1_\bA (x,u) \leq (V^1_\bB )^{-1}(x,u)$:} We can interchange $(V^1_\bA ,V^1_\bB ,f^1_\bA ,f^1_\bB)$ and  $(V^2_\bA ,V^2_\bB ,f^2_\bA ,f^2_\bB)$, and then use the result in Case 2B to conclude $0\leq J_3 
				\leq \,          \frac{2 C_\bB}{k_\bB} f^1_\bA (x,u) \| V^2_\bB (\bigcdot,u)-V^1_\bB (\bigcdot,u) \|_{0,1/w}
				+\frac{2 C_\bB (1+k_\bA)}{k_\bA}  
				f^1_\bA (x,u)  \| V^2_\bA (\bigcdot,u)-V^1_\bA (\bigcdot,u) \|_{0,1/w}.$
				
				Combining the results in Cases 2A–2C, we find that
				\begingroup\begin{align*}
						|J_0|                       
						\leq   & \, | f^2_\bA (x,u)-f^1_\bA (x,u) |
						+ f^1_\bA (x,u)\Vert f^2_\bB (\bigcdot,u)-f^1_\bB (\bigcdot,u)\Vert_{1,w}\\
						& +  2C_\bB[f^1_\bA (x,u)\vee f^2_\bA (x,u)]
						\bigg( \frac{1}{k_\bB}
						\|V^1_\bB (\bigcdot,u)-V^2_\bB (\bigcdot,u) \|_{0,1/w} 
						+ \frac{1+k_\bA}{k_\bA} 
						\| V^2_\bA (\bigcdot,u)-V^1_\bA (\bigcdot,u) \|_{0,1/w}\bigg). 
				\end{align*}%
				\endgroup
				Therefore, subtracting the integral forms of  $f^1_\bA$ and $f^2_\bA$ in \eqref{eq. integral rep of f_A}, and applying Lemma~\ref{lemma.closureofK} gives
				\begingroup\begin{align}
					\nonumber&\hspace{-10pt} \| f^2_\bA (\bigcdot,t)-f^1_\bA (\bigcdot,t) \|_{1,w}\\
					\nonumber\leq \,&  \lambda_\bB\int_0^t 
					\bigg\{\|f^1_\bA (\bigcdot,u)-f^2_\bA (\bigcdot,u)\|_{1,w}
					+  \frac{2^{1+\nu}C_\bA }{\nu}\|f^1_\bB (\bigcdot,u)-f^2_\bB (\bigcdot,u)\|_{1,w}\\
					& + C_\bB\left[ \frac{2^{3+\nu}C_\bA }{\nu}
					\right] 
					\bigg( \frac{1}{k_\bB}
					\|V^1_\bB (\bigcdot,u)-V^2_\bB (\bigcdot,u) \|_{0,1/w} 
					+ \frac{1+k_\bA}{k_\bA} 
					\| V^2_\bA (\bigcdot,u)-V^1_\bA (\bigcdot,u) \|_{0,1/w}\bigg)\bigg\}du
					\label{estimate_fdiff1A1}
				\end{align}\endgroup
				for all $t\in[0,T]$. A similar bound holds for $\| f^2_\bB (\bigcdot,t)-f^1_\bB (\bigcdot,t) \|_{1,w}$. Hence, summing these yields
				\begingroup\begin{align}\label{estimate_fdiff1}
					&\sum_{\bI=\bA,\bB} \| f^1_\bI (\bigcdot,t) - f^2_\bI (\bigcdot,t) \|_{1,w} \notag \\
					& \leq \int_0^t
					\left(\lambda_\bA + \lambda_\bB \tfrac{2^{1+\nu} C_\bA}{\nu}\right)
					\vee
					\left(\lambda_\bB + \lambda_\bA \tfrac{2^{1+\nu} C_\bB}{\nu}\right)
					\sum_{\bI=\bA,\bB} \| f^1_\bI (\bigcdot,u) - f^2_\bI (\bigcdot,u) \|_{1,w} \notag\\
					& \quad\quad\quad 
					+ \frac{2^{3+\nu} C_\bA C_\bB}{\nu}
					\left(\dfrac{\lambda_\bA+\lambda_\bB(1+k_\bA)}{k_\bA}
					\vee  \dfrac{\lambda_\bB+\lambda_\bA(1+k_\bB)}{k_\bB}\right)
					\sum_{\bI=\bA,\bB} \| V^1_\bI (\bigcdot,u) - V^2_\bI (\bigcdot,u) \|_{0,1/w}du
				\end{align}\endgroup
				for all $t\in[0,T]$. Plugging \eqref{estimate_VAVB} in \eqref{estimate_fdiff1}, it holds that $
				\sum_{\bI=\bA,\bB} \| f^1_\bI (\bigcdot,t) - f^2_\bI (\bigcdot,t) \|_{1,w}
				\leq 
				\int_0^t M_2 \sum_{\bI=\bA,\bB} \| f^1_\bI (\bigcdot,u) - f^2_\bI (\bigcdot,u) \|_{1,w}
				+M_1H(\mu^1_\bA ,\mu^2_\bA, \mu^1_\bB ,\mu^2_\bB)du,$ where $
				M_1:=  \frac{2^{3+\nu}C_\bA C_\bB M_0}{\nu }
				\left[\frac{\lambda_\bA+\lambda_\bB(1+k_\bA)}{k_\bA}
				\vee  \frac{\lambda_\bB+\lambda_\bA(1+k_\bB)}{k_\bB}\right]$ and $M_2:=	\left(\lambda_\bA + \lambda_\bB \tfrac{2^{1+\nu} C_\bA}{\nu}\right)
				\vee
				\left(\lambda_\bB + \lambda_\bA \tfrac{2^{1+\nu} C_\bB}{\nu}\right)$. By Gr\"{o}nwall's inequality, we deduce that for all $t\in[0,T]$,
				\begingroup\begin{align}
					\sum_{\bI=\bA,\bB} \| f^1_\bI (\bigcdot,t) - f^2_\bI (\bigcdot,t) \|_{1,w}
					\leq T 
					M_1 H(\mu^1_\bA ,\mu^2_\bA, \mu^1_\bB ,\mu^2_\bB)
					e^{M_2 T},
					\label{2925}
				\end{align}\endgroup
				where $H$ is defined below equation \eqref{estimate_VAVB}.
				
				Finally, define the probability measure $\mu^{*,j}_{\bI}:=q_{\bI}^j\delta_{\textup{D}} +f^j_\bI d\mathcal{L}^1$ where $q_{\bI}^j(t) :=1-\int^\infty_0 f_{\bI}^j (z,t)dz$, for $j=1,2$ and $\bI\in\{\bA,\bB\}$. Then, for any $t \in [0,T]$ and $\varphi\in\text{Lip}_1(\mathbb{R}_{\ge 0})$ with $\varphi(0)=0$, $\int_{[0,\infty)} \varphi(z)d\left[\mu^{*,2}_\bA (z,t)- \mu^{*,1}_\bA (z,t)\right]
				\leq    \,                                                                               \int_0^{\infty} z\left|f^2_\bA (z,t)- f^1_\bA (z,t)\right|dz                                 
				\leq   \,                                                                                \| f^2_\bA (\bigcdot,t)-f^1_\bA (\bigcdot,t) \|_{1,w}.$ Thus, the Kantorovich-Rubinstein duality deduces $
				W_1(\mu^{*,1}_\bA(\bigcdot,t) ,\mu^{*,2}_\bA (\bigcdot,t))
				\le \| f^2_\bA (\bigcdot,t)-f^1_\bA (\bigcdot,t) \|_{1,w}.$ A similar inequality holds for $W_1(\mu^{*,1}_\bB(\bigcdot,t) ,\mu^{*,2}_\bB (\bigcdot,t))$. We take the supremum over time of these two inequalities and then use \eqref{2925} to conclude that $\Phi$ is a continuous map with respect to the metric $D_1 \otimes D_1$. 
			\end{proof}
			
			In Step 3C of Appendix \ref{sec. Main result and Sketch of Proof}, we establish the compactness of $\mathcal{D}_{\bA}$ and $ \mathcal{D}_{\bB}$. 
			
			\begin{lemma}\label{theorem.compact}
				\sloppy	For each $\bI\in\{\bA,\bB\}$, the set $\mathcal{D}_{\bI}$ defined in \eqref{DA} is a compact subset of $C\big([0,T];\mathcal{M}^1_f(\mathbb{R}_{\ge 0})\big)$ under the topology induced by the metric $D_1$.
			\end{lemma}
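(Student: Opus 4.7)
The plan is to apply the Arzelà--Ascoli theorem for maps from $[0,T]$ into a compact metric space. By definition, $\mathcal{D}_\bI$ consists of curves $\mu \in C([0,T];\overline{\mathcal{K}_\bI})$ that are uniformly $c_\bI$-Lipschitz in $t$ with respect to $W_1$. By \Cref{lemma.closureofK}\ref{lemma.closureofK.compact}, $(\overline{\mathcal{K}_\bI}, W_1)$ is a compact metric space. Moreover, recall that the restriction of $\|\cdot\|_{\textup{KR}}$ to $\mathcal{P}_1(\mathbb{R}_{\geq 0})$ coincides with $W_1$ by Kantorovich--Rubinstein duality, so $\overline{\mathcal{K}_\bI}$ inherits the same topology from its embedding in $\mathcal{M}^1_f(\mathbb{R}_{\ge 0})$.

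The first step is to establish relative compactness of $\mathcal{D}_\bI$ in $(C([0,T];\overline{\mathcal{K}_\bI}), D_1)$. Uniform equicontinuity of $\mathcal{D}_\bI$ is immediate from the $c_\bI$-Lipschitz bound in the definition of $\mathcal{D}_\bI$: for every $\mu \in \mathcal{D}_\bI$ and every $s,t\in[0,T]$, $W_1(\mu(\cdot,t),\mu(\cdot,s)) \le c_\bI |t-s|$. Pointwise precompactness is also automatic since for each $t\in[0,T]$, $\{\mu(\cdot,t) : \mu \in \mathcal{D}_\bI\} \subseteq \overline{\mathcal{K}_\bI}$, which is compact. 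The Arzelà--Ascoli theorem for functions taking values in a compact (hence complete) metric space then yields that any sequence $\{\mu_n\} \subset \mathcal{D}_\bI$ admits a subsequence $\{\mu_{n_k}\}$ converging uniformly in $t$, i.e., converging in the $D_1$-metric to some limit $\mu_* \in C([0,T];\overline{\mathcal{K}_\bI})$.

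To upgrade this to compactness, I need to verify $\mu_* \in \mathcal{D}_\bI$, i.e., closedness of $\mathcal{D}_\bI$ under $D_1$. Since $\mu_{n_k}(\cdot,t) \in \overline{\mathcal{K}_\bI}$ for each $t$ and $\overline{\mathcal{K}_\bI}$ is $W_1$-closed, passing to the limit gives $\mu_*(\cdot,t) \in \overline{\mathcal{K}_\bI}$. For the Lipschitz bound, for any $s,t\in[0,T]$ with $s \neq t$, the triangle inequality for $W_1$ gives
\begin{equation*}
W_1\big(\mu_*(\cdot,t),\mu_*(\cdot,s)\big) = \lim_{k\to\infty} W_1\big(\mu_{n_k}(\cdot,t),\mu_{n_k}(\cdot,s)\big) \le c_\bI |t-s|,
\end{equation*}
so the Lipschitz constant passes to the limit. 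This places $\mu_* \in \mathcal{D}_\bI$, establishing closedness and hence compactness.

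I do not anticipate a serious obstacle here, since all the hard work has been done in \Cref{lemma.closureofK}: compactness of $\overline{\mathcal{K}_\bI}$ in $W_1$ makes Arzelà--Ascoli directly applicable. The only subtlety worth double-checking is that the ambient metric $D_1$ on $C([0,T];\overline{\mathcal{K}_\bI})$ and the uniform norm inherited from $C([0,T];\mathcal{M}^1_f(\mathbb{R}_{\ge 0}))$ induce the same topology on $\mathcal{D}_\bI$; this follows because $W_1$ and $\|\cdot\|_{\textup{KR}}$ agree on $\mathcal{P}_1(\mathbb{R}_{\geq 0}) \supseteq \overline{\mathcal{K}_\bI}$.
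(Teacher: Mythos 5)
Your argument is correct and follows essentially the same route as the paper's: compactness of $\overline{\mathcal{K}_\bI}$ from Lemma~\ref{lemma.closureofK}\ref{lemma.closureofK.compact}, equicontinuity from the $c_\bI$-Lipschitz bound in the definition of $\mathcal{D}_\bI$, and Arzelà--Ascoli for curves valued in a compact metric space. You spell out more carefully than the paper does why the limit stays in $\mathcal{D}_\bI$ (closedness of $\overline{\mathcal{K}_\bI}$ pointwise and preservation of the Lipschitz constant), but this is the same argument, just made explicit.
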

			\begin{proof} First, by Lemma \ref{lemma.closureofK}, the set $\overline{\mathcal{K}_\bI} $ is compact in $\mathcal{P}_1(\mathbb{R}_{\ge 0})$ with respect to metric $W_1$, guaranteeing the uniform boundedness of $\mathcal{D}_{\bI}$. Second, any sequence $\{\mu_n\}_{n\in \mathbb{N}} \subseteq \mathcal{D}_{\bI}$ satisfies the uniform Lipschitz bound $\sup_{n\in \mathbb{N}}\sup\limits_{ s,t \in [0,T],s \neq t}\frac{W_1(\mu_n(\bigcdot,t),\mu_n(\bigcdot,s))}{|t-s|}\le c_\bI$ which implies the equicontinuity of $\{\mu_n(\bigcdot,t)\}_{n\in \mathbb{N}}$ in variable $t$. Therefore, by the Arzela-Ascoli theorem, there exists a subsequence $\{\mu_{n_k}\}_{k\in\mathbb{N}}$ such that $\mu_{n_k} \rightarrow \mu \in \mathcal{D}_{\bI}$ with respect to $D_1$. Therefore, $\mathcal{D}_{\bI}$ is a compact subset of $C([0,T];\mathcal{P}_1(\mathbb{R}_{\ge 0}))$. {\color{black}Note that  $C([0,T];\mathcal{P}_1(\mathbb{R}_{\ge 0})) \subseteq C\big([0,T];\mathcal{M}^1_f(\mathbb{R}_{\ge 0})\big)$; and $\mathcal{P}_1(\mathbb{R}_{\ge 0})$ and $\mathcal{M}^1_f(\mathbb{R}_{\ge 0})$ share the same Kantorovich-Rubinstein metric $W_1$,} then $\mathcal{D}_{\bI}$ is a compact subset of $C\big([0,T];\mathcal{M}^1_f(\mathbb{R}_{\ge 0})\big)$. 
			\end{proof}

			For each $\bI\in\{\bA,\bB\}$, it is straightforward to verify that $\mathcal{D}_{\bI}$ is a convex subset of the Banach space $C\big([0,T];\mathcal{M}^1_f(\mathbb{R}_{\ge 0})\big)$. Combining Lemmas \ref{theorem.selfmap}, \ref{theorem.continuous}, and \ref{theorem.compact} with the Schauder fixed point theorem, we immediately deduce Theorem \ref{maintheorem}.

			\section{Proof of \Cref{thm unique}: Uniqueness of System \eqref{HJBx}-\eqref{boundary}}\label{sec. unique}
			The core of the proof of uniqueness parallels that of Lemma~\ref{theorem.continuous}, where we derived bounds on $V^1_\bA (\bigcdot,t)-V^2_\bA (\bigcdot,t)$ in terms of the $W_1$ metric.  In the present proof, we adopt those estimates to the $\|\bigcdot\|_{1,w}$-norm. As such, most steps remain unchanged from Lemma~\ref{theorem.continuous}, and we focus primarily on modifying the essential term previously controlled by $W_1$.

			\begin{proof}[Proof of \Cref{thm unique}]\sloppy We assume there are two solutions to \eqref{HJBx}-\eqref{boundary}, namely $(V^1_\bA ,V^1_\bB ,f^1_\bA ,f^1_\bB )$ and $(V^2_\bA ,V^2_\bB ,f^2_\bA ,f^2_\bB )$, both belong to the desired space. We reuse the arguments from Lemma~\ref{theorem.continuous}, replacing $g^j_\bI$ there by $f^j_\bI$ and replace $\mu^{*,j}_\bI$ there by $\mu^{j}_\bI$ for $j=1,2$ and $\bI \in \{\bA,\bB\}$.

				\noindent{\bf Part 1. Uniqueness under Condition \ref{condition.uniqueness.loc}:} Recall the term $I_{1,2}$ in Case 1A of the proof of Lemma \ref{theorem.continuous}. Since $V^1_\bA (x,u)\le (V^1_\bB )^{-1}(x,u)$ in Case 1A there, then we have $|I_{1,2}|\leq\int_{V^1_\bA (x,u)}^{(V_\bB^1 )^{-1}(x,u)}y\,\big|\,f^1_\bB (y,u)-f^2_\bB (y,u)\,\big|\,dy
				\leq \| f^1_\bB (\bigcdot,u)-f^2_\bB (\bigcdot,u) \|_{1,w}.
				$ Combining this with the original estimates for the remaining terms $I_{1,1}$, $I_{1,3}$, $I_{1,4}$, $I_{2}$ and $I_{3}$ in Part 1 of the proof of Lemma \ref{theorem.continuous}, inequality \eqref{estimate_VAdiff} implies
				\begingroup\begin{align} \label{2731}
						&\| V^1_\bA (\bigcdot,t)-V^2_\bA (\bigcdot,t) \|_{0,1/w}\nonumber\\
						&\le                                                          \lambda_\bB \int_t^{T} e^{-\rho (u-t)}   \bigg( M_\bB \sum_{\bI=\bA,\bB} \| V^2_\bI (\bigcdot,u)-V^1_\bI (\bigcdot,u) \|_{0,1/w} 
						+\| f^1_\bB (\bigcdot,u)-f^2_\bB (\bigcdot,u) \|_{1,w} \bigg) du        
				\end{align}%
				\endgroup
				for all $t\in[0,T]$. A similar estimate holds for $\| V^1_\bB (\bigcdot,t)-V^2_\bB (\bigcdot,t) \|_{0,1/w}$. Summing the two inequalities and applying an estimate analogous to~\eqref{estimate_VAVB}, we obtain
				\begingroup\begin{align}\label{2709}
					\sum_{\bI=\bA,\bB}\| V^1_\bI (\bigcdot,t)-V^2_\bI (\bigcdot,t) \|_{0,1/w}
					\leq e^{\left(\lambda_\bA M_\bA +\lambda_\bB M_\bB \right)(T-t)}  (\lambda_\bA  \vee \lambda_\bB )
					\left( \frac{1-e^{-\rho T}}{\rho} \right) 
					\sum_{\bI=\bA,\bB}
					\| f^1_\bI -f^2_\bI \|_{\mathcal{A}}
				\end{align}\endgroup
				for all $t\in[0,T]$. Next, applying Grönwall's inequality to \eqref{estimate_fdiff1} yields $\sum_{\bI=\bA,\bB} \| f^1_\bI (\bigcdot,t) - f^2_\bI (\bigcdot,t) \|_{1,w}
				\leq \left( \dfrac{ M_1}{M_0}
				\int^t_0 \sum_{\bI=\bA,\bB}\| V^1_\bI (\bigcdot,u)-V^2_\bI (\bigcdot,u) \|_{0,1/w}du\right) 
				e^{M_2 t}.$ Substituting \eqref{2709} into the above, we find
				\begingroup\begin{align*}
					\sum_{\bI=\bA,\bB} \| f^1_\bI - f^2_\bI \|_{\mathcal{A}}
					\leq   
					e^{TM_2}\dfrac{ M_1}{M_0}(\lambda_\bA  \vee \lambda_\bB )
					\left( \frac{1-e^{-\rho T}}{\rho} \right) 
					\left( \frac{e^{\left(\lambda_\bA M_\bA +\lambda_\bB M_\bB \right)T}-1 }{\lambda_\bA M_\bA +\lambda_\bB M_\bB} \right) 
					\sum_{\bI=\bA,\bB} \| f^1_\bI - f^2_\bI\|_{\mathcal{A}}.
				\end{align*}\endgroup
				Therefore, the solution is unique if \ref{condition.uniqueness.loc} holds.

				\noindent{\bf Part 2. Under Condition \ref{condition.uniqueness.global}:} Let $\eta>0$, we define the weighted (in time) norms: $\| V \|_{\mathcal{H},\eta}
				:=\sup\limits_{t\in [0,T]} e^{-\eta t} \| V(\bigcdot,t) \|_{0,1/w}$ and $\| f \|_{\mathcal{A},\eta}
				:=\sup\limits_{t\in [0,T]} e^{-\eta t} \| f(\bigcdot,t) \|_{1,w}.$ Multiplying both sides of~\eqref{2731} by $e^{-\eta t}$,
				\begingroup\begin{align*} 
					&\hspace{-13pt}e^{-\eta t}\| V^1_\bA (\bigcdot,t)-V^2_\bA (\bigcdot,t) \|_{0,1/w}\nonumber\\
					\le\,                                                          & \lambda_\bB \int_t^{T} e^{-(\rho-\eta) (u-t)}   
					e^{-\eta u}\bigg( M_\bB\sum_{\bI=\bA,\bB}\| V^2_\bI (\bigcdot,u)-V^1_\bI (\bigcdot,u) \|_{0,1/w} 
					+\| f^1_\bB (\bigcdot,u)-f^2_\bB (\bigcdot,u) \|_{1,w} \bigg) du.                                         
				\end{align*}\endgroup
				An analogous bound holds for $e^{-\eta t} \| V^1_\bB - V^2_\bB \|_{0,1/w}$. Summing both gives 
				\begingroup\begin{align*} 
						\sum_{\bI=\bA,\bB}\!\!
						e^{-\eta t}\| V^1_\bI (\bigcdot,t)-V^2_\bI (\bigcdot,t) \|_{0,1/w}
						\le\!\!                                                           \int_t^{T}  \!e^{-(\rho-\eta) (u-t)-\eta u}  \bigg[\!\! \left(\lambda_\bA M_\bA +\lambda_\bB M_\bB \right)
						\!\!\sum_{\bI=\bA,\bB}\!\!\| V^2_\bI (\bigcdot,u)-V^1_\bI (\bigcdot,u) \|_{0,1/w} \!                      
				\end{align*}%
				\endgroup
				\begingroup\begin{align*} 
						\hspace{250pt}+ (\lambda_\bA  \vee \lambda_\bB )
						\sum_{\bI=\bA,\bB}\| f^1_\bI (\bigcdot,u)-f^2_\bI (\bigcdot,u) \|_{1,w} \bigg] du. \!\!                                         
				\end{align*}%
				\endgroup
				If $\rho>\eta$, we rearrange the terms to get
				\begingroup\begin{align} \label{2979}
						\left[1- 
						\left(\lambda_\bA M_\bA +\lambda_\bB M_\bB \right)
						\dfrac{1-e^{-(\rho-\eta) T}}{\rho-\eta}\right] 
						\sum_{\bI=\bA,\bB}
						\| V^1_\bI-V^2_\bI \|_{\mathcal{H},\eta} 
						\le\,   
						(\lambda_\bA  \vee \lambda_\bB )	\dfrac{1-e^{-(\rho-\eta) T}}{\rho-\eta}        
						\sum_{\bI=\bA,\bB}\| f^1_\bI-f^2_\bI \|_{\mathcal{A},\eta}.                            
				\end{align}%
				\endgroup
				From~\eqref{estimate_fdiff1A1}, multiplying both sides by $e^{-\eta t}$ and using similar reasoning, we obtain
				\begingroup	\begin{align*}
					\nonumber&\hspace{-10pt}e^{-\eta t} \| f^2_\bA (\bigcdot,t)-f^1_\bA (\bigcdot,t) \|_{1,w}\\ 
					\nonumber\leq \,&  \lambda_\bB\int_0^t 
					e^{-\eta (t-u)}   	e^{-\eta u} 
					\bigg\{\|f^1_\bA (\bigcdot,u)-f^2_\bA (\bigcdot,u)\|_{1,w}
					+  \frac{2^{1+\nu}C_\bA }{\nu}\|f^1_\bB (\bigcdot,u)-f^2_\bB (\bigcdot,u)\|_{1,w}\\
					& + C_\bB\left[ \frac{2^{3+\nu}C_\bA }{\nu}
					\right] 
					\bigg( \frac{1}{k_\bB}
					\|V^1_\bB (\bigcdot,u)-V^2_\bB (\bigcdot,u) \|_{0,1/w} 
					+ \frac{1+k_\bA}{k_\bA} 
					\| V^2_\bA (\bigcdot,u)-V^1_\bA (\bigcdot,u) \|_{0,1/w}\bigg)\bigg\}du. 
				\end{align*}\endgroup
				Similar estimate applies to $f^1_\bB - f^2_\bB$. Summing both inequalities, we obtain
				\begingroup\begin{align}
					\nonumber&\hspace{-10pt}
					\sum_{\bI=\bA,\bB}
					e^{-\eta t}\| f^1_\bI (\bigcdot,t)-f^2_\bI (\bigcdot,t) \|_{1,w}\\
					\nonumber
					\leq \,&  \int_0^t 
					e^{-\eta (t-u)}   	e^{-\eta u} 
					\bigg\{\sum_{\bI=\bA,\bB}  M_2\| f^1_\bI (\bigcdot,u)-f^2_\bI (\bigcdot,u) \|_{1,w}
					+\sum_{\bI=\bA,\bB}
					\dfrac{M_1}{M_0}\| V^1_\bI (\bigcdot,u)-V^2_\bI (\bigcdot,u) \|_{0,1/w}\bigg\}du. 
				\end{align}\endgroup
				We rearrange the terms to give $\left( 1- M_2\frac{1-e^{-\eta T}}{\eta} \right) \sum_{\bI=\bA,\bB}\| f^1_\bI-f^2_\bI \|_{\mathcal{A},\eta}
				\leq 	       
				\frac{1-e^{-\eta T}}{\eta} \bigg\{  	 
				\frac{M_1}{M_0}	\sum_{\bI=\bA,\bB}\| V^1_\bI-V^2_\bI \|_{\mathcal{H},\eta}   \bigg\}.$ Assuming $\rho - \eta > \lambda_\bA M_\bA + \lambda_\bB M_\bB$, we can insert \eqref{2979} into the above inequality, yielding:
				\begingroup\begin{align*} 
					&\left( 1- M_2\dfrac{1-e^{-\eta T}}{\eta} \right) \sum_{\bI=\bA,\bB}\| f^1_\bI-f^2_\bI \|_{\mathcal{A},\eta}\nonumber\\
					&\leq 	       
					\dfrac{1-e^{-\eta T}}{\eta} 	 
					\dfrac{M_1}{M_0}  \left[1- 
					\left(\lambda_\bA M_\bA +\lambda_\bB M_\bB \right)
					\dfrac{1-e^{-(\rho-\eta) T}}{\rho-\eta}\right]^{-1}                                                 
					(\lambda_\bA  \vee \lambda_\bB )	\dfrac{1-e^{-(\rho-\eta) T}}{\rho-\eta}        
					\sum_{\bI=\bA,\bB}\| f^1_\bI-f^2_\bI \|_{\mathcal{A},\eta} 
					. 
				\end{align*}\endgroup
				To ensure uniqueness independent of the time horizon $T$, it suffices to choose $\eta > 0$ such that $\rho-\eta>  
				\left(\lambda_\bA M_\bA +\lambda_\bB M_\bB \right)$, 
				$ \eta > M_2$ and $\left( 1- M_2\frac{1}{\eta} \right)^{-1}
				\frac{1}{\eta} 	 
				\frac{M_1}{M_0}  \left[1- 
				\left(\lambda_\bA M_\bA +\lambda_\bB M_\bB \right)
				\frac{1}{\rho-\eta}\right]^{-1}                              
				(\lambda_\bA  \vee \lambda_\bB )	\frac{1}{\rho-\eta}<1.$ It is sufficient to assume that  there is $\eta>0$ such that $M_2<\eta<\rho -\left(\lambda_\bA M_\bA +\lambda_\bB M_\bB \right) $ and $\frac{1}{\eta-M_2} 	 
				\frac{M_1}{M_0}  
				\frac{1}{\rho-\eta- 
					\left(\lambda_\bA M_\bA +\lambda_\bB M_\bB \right)}                          
				(\lambda_\bA  \vee \lambda_\bB ) <1.$ The left-hand side, viewed as a function of $\eta$, attains its minimum at $\eta = [M_2 + \rho -\left(\lambda_\bA M_\bA +\lambda_\bB M_\bB \right) ]/2$. Thus, we assume that $\frac{2}{ -M_2 + \rho -\left(\lambda_\bA M_\bA +\lambda_\bB M_\bB \right)} 	 
				\frac{M_1}{M_0}  
				\frac{2(\lambda_\bA  \vee \lambda_\bB ) }{\rho- M_2  -\left(\lambda_\bA M_\bA +\lambda_\bB M_\bB \right) }                          
				<1$ which leads to the desired sufficient condition. 
			\end{proof}

			\section{Proof of \Cref{thm.verification}: Verification Theorem}\label{sec. thm.verification}
			
			\begin{proof}[Proof of \Cref{thm.verification}]
				
				By \eqref{optimalcontrol.A} and \eqref{HJB_maximize.VA}, the equation 
				\eqref{HJBx} leads to the inequality
				\begingroup\begin{align} \label{3211}
					\rho V_\bA^* (x,t)-\frac{\partial V_\bA^* (x,t) }{\partial t} 
					&\geq\mathbf{1}_{\{v (x,t) \leq (V_\bB^*) ^{-1}(x,t)\}}\lambda_\bB  \int_{v (x,t)}^{ (V_\bB^*) ^{-1}(x,t)}\big[y-V_\bA^* (x,t)\big] f_\bB^* (y,t) d y+r_\bA (x,t), 
				\end{align}\endgroup
				for any $v\in \mathcal{U}_\bA$. Hence, integrating the following inequality with respect to $t$ gives 
				\begingroup\begin{align*} 
					& \left(\rho+ \mathbf{1}_{\{v (x,t) \leq (V_\bB^*) ^{-1}(x,t)\}}\lambda_\bB  \int_{v (x,t)}^{ (V_\bB^*) ^{-1}(x,t)} f_\bB^* (y,t) d y\right) V_\bA^* (x,t)-\frac{\partial V_\bA^* (x,t) }{\partial t} \\ 
					&\geq\mathbf{1}_{\{v (x,t) \leq (V_\bB^*) ^{-1}(x,t)\}} \lambda_\bB  \int_{v (x,t)}^{ (V_\bB^*) ^{-1}(x,t)}y f_\bB^* (y,t) d y+r_\bA (x,t),
				\end{align*}\endgroup
				which deduces
				\begingroup\begin{align} 
					e^{-\rho t}V_\bA^* (x,t) \geq & \int_t^T \exp\left(-\rho s-\lambda_\bB \int_t^s  \int_{v (x, r )}^{(V_\bB^*)^{-1}(x, r )} \mathbf{1}_{\{v (x, r ) \leq (V_\bB^*) ^{-1}(x, r )\}}f_\bB^* (y, r ) d y d r \right)  \nonumber\\
					& \cdot\left(\mathbf{1}_{\{v (x,s) \leq (V_\bB^*) ^{-1}(x,s)\}}
					\lambda_\bB  \int_{v (x,s)}^{(V_\bB^*)^{-1}(x,s)}y f_\bB^* (y,s) d y+r_\bA (x,s)\right) ds\nonumber\\
					& + \exp\left(-\rho T-\lambda_\bB \int_t^T  \int_{v (x, r )}^{(V_\bB^*)^{-1}(x, r )} \mathbf{1}_{\{v (x, r ) \leq (V_\bB^*) ^{-1}(x, r )\}}f_\bB^* (y, r ) d y d r \right)h_\bA (x).
					\label{3231}
				\end{align}\endgroup
				
				For the thresholds $v \in \mathcal{U}_\bA$ and $V_\bB^* \in \mathcal{U}_\bB$, the matching region of type-$\bA$ agent $(x,s)$ is given by $\mathcal{O}_\bA(x,s):=\{y:v  (x,s) \leq y \leq (V_\bB^*) ^{-1}(x,s)\}$. By \eqref{645}, we have $\mathbb{P}_{\bA,x,s}\pig(\mathbf{QL}_{\bB,s} \in \mathcal{O}_\bA(x,s) \text{ and } \mathbf{ST}_{\bB,s} = 0{\color{black} \,\pig|\, \text{$\exists \, \tau^\bJ \in \mathcal{T}^\bJ$ s.t. $\tau^\bJ = s$}}\pig)=\mathbf{1}_{\{v  (x,s) \leq (V_\bB^*) ^{-1}(x,s)\}}
				\int_{v  (x,s)}^{(V_\bB^*)^{-1}(x,s)} f_\bB^* (y,s) d y$. Similar to \eqref{eq.Ptau}, we have, for $s\in[0,T)$, 
				\begingroup	\begin{equation}\label{eq.Ptau.ts}
					-\frac{\p \mathbb{P}_{\bA,x,0}(\tau_\bA(x,0) >s)}{\p s} = \mathbb{P}_{\bA,x,0}(\tau_\bA(x,0) >s)
					\mathbf{1}_{\{v  (x,s) \leq (V_\bB^*) ^{-1}(x,s)\}}\lambda_\bB
					\int_{v  (x,s)}^{(V_\bB^*)^{-1}(x,s)} f_\bB^* (y,s) d y, 
				\end{equation}\endgroup
				leading to $$\mathbb{P}_{\bA,x,0}(\tau_\bA(x,0) >s)
				=  \mathbb{P}_{\bA,x,0}(\tau_\bA(x,0) >t)
				\exp\left[-\lambda_\bB  
				\int_t^s\int_{v  (x,r)}^{(V_\bB^*)^{-1}(x,r)} 
				\mathbf{1}_{\{v  (x,r) \leq (V_\bB^*) ^{-1}(x,r)\}}
				f_\bB^* (y,r) d ydr\right].$$ Hence, by the memoryless property of $\tau_\bA$ as shown in \eqref{property.memoryless}, we have, for any $0 \leq t \leq s \leq T$, $
				\mathbb{E}_{\bA,x,t}\mathbf{1}_{\{\tau_\bA(x,t) >s\}}= \mathbb{P}_{\bA,x,t}(\tau_\bA(x,t) >s) = \mathbb{P}(\tau_\bA(x,0) >s \,|\, \tau_\bA(x,0) >t) = \frac{\mathbb{P}_{\bA,x,0}(\tau_\bA(x,0) >s )}{\mathbb{P}_{\bA,x,0}( \tau_\bA(x,0) >t)},$  or
				\begingroup	\begin{align}\label{Pts}
					\mathbb{E}_{\bA,x,t}\mathbf{1}_{\{\tau_\bA(x,t) >s\}} = \exp\left[-\lambda_\bB  
					\int_t^s\int_{v  (x,r)}^{(V_\bB^*)^{-1}(x,r)} 
					\mathbf{1}_{\{v  (x,r) \leq (V_\bB^*) ^{-1}(x,r)\}}
					f_\bB^* (y,r) d ydr\right].
				\end{align}\endgroup
				Putting \eqref{Pts} into \eqref{3231}, we deduce 
				\begingroup	\begin{align} 
					e^{-\rho t}V_\bA^* (x,t) \geq&\, \mathbb{E}_{\bA,x,t}\Bigg[\int_t^T \mathbf{1}_{\{\tau_\bA(x,t) >s\}}e^{-\rho s} 
					r_\bA (x,s) ds  + \mathbf{1}_{\{\tau_\bA(x,t) >T\}}
					e^{-\rho T} h_\bA (x)\Bigg]\nonumber\\
					&+\int_t^T \exp\left(-\rho s-\lambda_\bB \int_t^s  \int_{v (x, r )}^{(V_\bB^*)^{-1}(x, r )} \mathbf{1}_{\{v (x, r ) \leq (V_\bB^*) ^{-1}(x, r )\}}f_\bB^* (y, r ) d y d r \right)\nonumber\\
					& \hspace{20pt}\cdot\left(\mathbf{1}_{\{v (x,s) \leq (V_\bB^*) ^{-1}(x,s)\}}
					\lambda_\bB  \int_{v (x,s)}^{(V_\bB^*)^{-1}(x,s)}y f_\bB^* (y,s) d y\right) ds.
					\label{3271}
				\end{align}\endgroup
				
				On the one hand, we have
				\begingroup	\fontsize{9pt}{11pt}\begin{align*}
					&\int_t^\infty \mathbf{1}_{\{s \le T\}} e^{-\rho s}  \frac{\int_{v (x,s)}^{(V_\bB^*)^{-1}(x,s)}y f_\bB^* (y,s) d y}{\int_{v (x,s)}^{(V_\bB^*)^{-1}(x,s)}f_\bB^* (y,s) d y} \mathbb{P}_{\bA,x,t}(\tau_\bA(x,t) \in ds) \\
					&=\int_t^\infty   \frac{\mathbb{E}\Big[e^{-\rho s}\mathbf{QL}_{\bB, s} \mathbf{1}_{\{s \le T\}} \mathbf{1}_{\{ v  (x,s) \leq \mathbf{QL}_{\bB,s} \leq  (V_\bB^*) ^{-1}(x,s),\, \mathbf{ST}_{\bB,s}=0 \}}{\color{black} \,\pig|\, \text{$\exists \, \tau^\bJ \in \mathcal{T}^\bJ$ s.t. $\tau^\bJ = s$}}\Big]}{\mathbb{P}\Big(v  (x,s) \leq \mathbf{QL}_{\bB,s} \leq  (V_\bB^*) ^{-1}(x,s),\, \mathbf{ST}_{\bB,s}=0{\color{black} \,\pig|\, \text{$\exists \, \tau^\bJ \in \mathcal{T}^\bJ$ s.t. $\tau^\bJ = s$}}\Big)} \mathbb{P}_{\bA,x,t}(\tau_\bA(x,t) \in ds) \\
					&=\int_t^\infty \mathbb{E}\bigg[e^{-\rho s}\mathbf{QL}_{\bB, s}\cdot \mathbf{1}_{\{s \le T\}} \,\bigg|{\color{black}  \text{$\exists \, \tau^\bJ \in \mathcal{T}^\bJ$ s.t. $\tau^\bJ = s$,}}\, v  (x,s) \leq \mathbf{QL}_{\bB,s} \leq  (V_\bB^*) ^{-1}(x,s),\, \mathbf{ST}_{\bB,s}=0 \bigg] \mathbb{P}_{\bA,x,t}(\tau_\bA(x,t) \in ds)\hspace{-10pt}
					\hspace{-10pt}\end{align*}\normalsize\endgroup
				Suppose the denominator in the first line is not zero. Otherwise the analysis below can be completed by using simpler arguments. Since $\{\tau_\bA(x,t) = s\}$ is conditionally independent of $\mathbf{QL}_{\bB, s}$ given $\{{\color{black}  \text{$\exists \, \tau^\bJ \in \mathcal{T}^\bJ$ s.t. $\tau^\bJ = s$,}}\, v  (x,s) \leq \mathbf{QL}_{\bB,s} \leq  (V_\bB^*) ^{-1}(x,s),\, \mathbf{ST}_{\bB,s}=0\}$, we have
				\begingroup	\fontsize{10pt}{11pt}\begin{align}\label{3398}
					\nonumber &\int_t^\infty \mathbf{1}_{\{s \le T\}} e^{-\rho s}  \frac{\int_{v (x,s)}^{(V_\bB^*)^{-1}(x,s)}y f_\bB^* (y,s) d y}{\int_{v (x,s)}^{(V_\bB^*)^{-1}(x,s)}f_\bB^* (y,s) d y} \mathbb{P}_{\bA,x,t}(\tau_\bA(x,t) \in ds) \\
					\nonumber &=\int_t^\infty \mathbb{E}\bigg[e^{-\rho s}\mathbf{QL}_{\bB, s}\cdot \mathbf{1}_{\{s \le T\}} \,\bigg| \tau_\bA(x,t) = s, v  (x,s) \leq \mathbf{QL}_{\bB,s} \leq  (V_\bB^*) ^{-1}(x,s),\, \mathbf{ST}_{\bB,s}=0 \bigg] \mathbb{P}_{\bA,x,t}(\tau_\bA(x,t) \in ds) \\
					&=\int_t^\infty  \mathbb{E}\bigg[e^{-\rho s}\mathbf{QL}_{\bB, s} \mathbf{1}_{\{s \le T\}} \mathbf{1}_{\{v  (x,s) \leq \mathbf{QL}_{\bB,s} \leq  (V_\bB^*) ^{-1}(x,s), \mathbf{ST}_{\bB,s}=0\} } \,\bigg| \tau_\bA(x,t) = s \bigg] \mathbb{P}_{\bA,x,t}(\tau_\bA(x,t) \in ds).
				\end{align}\normalsize\endgroup
				The last equality is due to $\mathbb{P}\left(v  (x,s) \leq \mathbf{QL}_{\bB,s} \leq  (V_\bB^*) ^{-1}(x,s), \mathbf{ST}_{\bB,s}=0 \big| \tau_\bA(x,t) = s \right) = 1.$
				
				On the other hand, by \eqref{property.memoryless} and \eqref{eq.Ptau.ts}, we have
				\begingroup\begin{align*}
						\frac{\p \mathbb{P}_{\bA,x,t}(\tau_\bA(x,t) >  s)}{\p s} 
						&= \frac{\p }{\p s}\frac{\mathbb{P}_{\bA,x,0}(\tau_\bA(x,0) >  s)}{\mathbb{P}_{\bA,x,0}(\tau_\bA(x,0) >  t)}\\ 
						&= -\frac{\mathbb{P}_{\bA,x,0}(\tau_\bA(x,0) >s)}{\mathbb{P}_{\bA,x,0}(\tau_\bA(x,0) >t)}\cdot
						\mathbf{1}_{\{v  (x,s) \leq (V_\bB^*) ^{-1}(x,s)\}}\cdot\lambda_\bB
						\int_{v  (x,s)}^{(V_\bB^*)^{-1}(x,s)} \!\!f_\bB^* (y,s) d y \! \\
						&= -\mathbb{P}_{\bA,x,t}(\tau_\bA(x,t) >  s)\cdot \mathbf{1}_{\{v  (x,s) \leq (V_\bB^*) ^{-1}(x,s)\}}\cdot\lambda_\bB
						\int_{v  (x,s)}^{(V_\bB^*)^{-1}(x,s)} \!\!f_\bB^* (y,s) d y. \!
				\end{align*}%
				\endgroup
				Therefore, putting the above quantity into \eqref{3398} and then using \eqref{Pts}, we have
				\begingroup\begin{align}\label{3284}
						\nonumber&\mathbb{E}_{\bA,x,t}\left[e^{-\rho \tau_\bA(x,t)}\mathbf{QL}_{\bB, \tau_\bA(x,t)}
						\cdot \mathbf{1}_{\{\tau_\bA(x,t) \le T\}} \cdot \mathbf{1}_{\{\mathbf{ST}_{\bB, \tau_\bA(x,t)} =0,\, v  (x,\tau_\bA(x,t)) \leq \mathbf{QL}_{\bB,\tau_\bA(x,t)} \leq  (V_\bB^*) ^{-1}(x,\tau_\bA(x,t))\}} \right]\\
						\nonumber&=\int_t^T e^{-\rho s} \mathbb{P}_{\bA,x,t}(\tau_\bA(x,t) >  s)\cdot\mathbf{1}_{\{v  (x,s) \leq (V_\bB^*) ^{-1}(x,s)\}}\cdot\lambda_\bB
						\int_{v  (x,s)}^{(V_\bB^*)^{-1}(x,s)} y f_\bB^* (y,s) d y ds\\
						\nonumber&= \int_t^T \exp\left(-\rho s-\lambda_\bB \int_t^s  \int_{v (x, r )}^{(V_\bB^*)^{-1}(x, r )} \mathbf{1}_{\{v (x, r ) \leq (V_\bB^*) ^{-1}(x, r )\}}f_\bB^* (y, r ) d y d r \right)\\
						& \hspace{20pt}\cdot\left(\mathbf{1}_{\{v (x,s) \leq (V_\bB^*) ^{-1}(x,s)\}}
						\lambda_\bB  \int_{v (x,s)}^{(V_\bB^*)^{-1}(x,s)}y f_\bB^* (y,s) d y\right) ds.
				\end{align} %
				\endgroup          
				Bringing \eqref{3284} into \eqref{3271}, we have, for any $v\in \mathcal{U}_\bA$,
				\begingroup	\begin{align}
					\nonumber V_\bA^* (x,t) \geq\,  & \mathbb{E}_{\bA,x,t}\bigg[\int_t^{T \wedge \tau_\bA } e^{-\rho (s-t)} r_\bA(x,s) ds 
					+  e^{-\rho (\tau_\bA(x,t)-t)}\mathbf{QL}_{\bB, \tau_\bA(x,t)}
					\cdot \mathbf{1}_{\{\tau_\bA(x,t) \le T\}} \\
					\nonumber &\hspace{80pt}\cdot \mathbf{1}_{\{\mathbf{ST}_{\bB, \tau_\bA(x,t)} =0, \, v  (x,\tau_\bA(x,t)) \leq \mathbf{QL}_{\bB,\tau_\bA(x,t)} \leq  (V_\bB^*) ^{-1}(x,\tau_\bA(x,t))\}}\\
					\nonumber &\hspace{80pt}+ e^{-\rho (T-t)}\mathbf{1}_{\{\tau_\bA > T\}}h_\bA (x)\bigg] \\
					= &\, J_\bA (v;x,t,V_\bB^*,\pi_{\bB,\bigcdot}^*).	\label{inequaility.verification}                                  \end{align}\endgroup

				On the other hand, as elaborated in Section \ref{sec.HJB} that deducing  \eqref{eq HJB VA} from \eqref{HJB_maximize.VA}, if we choose $v  (x,s)  = V_\bA^* (x,s)$, then the inequality in \eqref{3211} becomes equality. We can repeat the above analysis and arrive at an equality sign for \eqref{inequaility.verification}. By \Cref{BilipschitzVA}, $V_\bA^*(\bigcdot,s)$ is a strictly increasing function for all $s\in[t,T]$, which means $V_\bA^*\in \mathcal{U}_\bA$. By definition \eqref{def.V_I.threshold}, $V_\bA^*(x,t) = \mathcal{V}_\bA (x,t,V_\bB^*,\pi_{\bB,\bigcdot}^*) = J_\bA (V_\bA^*;x,t,V_\bB^*,\pi_{\bB,\bigcdot}^*).$ By the uniqueness of the value function and the uniqueness of the FP equation \eqref{FPx}, we see that the joint distribution is uniquely given by \eqref{3343} when the objective functional is maximized, due to the discussion in \Cref{sec.markov}. Similar arguments apply to type-$\bB$ agents. The proof is completed. 
			\end{proof}
			
		\end{appendix}

	\end{document}